\documentclass[12pt, a4paper]{amsart}
\usepackage{amsmath}
\usepackage{geometry,amsthm,graphics,tabularx,amssymb,shapepar}
\usepackage[cmyk]{xcolor}
\usepackage{appendix}
\usepackage{amscd}
\usepackage[all]{xypic}
\usepackage{ulem}
\usepackage{bm}

\makeatletter
\newcommand*{\rom}[1]{\expandafter\@slowromancap\romannumeral #1@}
\makeatother

\newcommand{\BC}{{\mathbb {C}}}

\newcommand{\BN}{{\mathbb {N}}}

\newcommand{\BR}{{\mathbb {R}}}

\newcommand{\CF}{{\mathcal {F}}}
\newcommand{\CG}{{\mathcal {G}}}

\newcommand{\CL}{{\mathcal {L}}}

\newcommand{\CO}{{\mathcal {O}}}

\newcommand{\RC}{{\mathrm {C}}}
\newcommand{\RD}{{\mathrm {D}}}

\newcommand{\wt}{\widetilde}
\newcommand{\wh}{\widehat}

\newcommand{\q}{\mathfrak q}

\newcommand{\m}{\mathfrak m}

\newcommand{\C}{\mathbb{C}}
\newcommand{\R}{\mathbb R}

\newcommand{\abs}[1]{\lvert#1\rvert}

\newcommand{\cf}{\textit{cf}.~}
\newcommand{\la}{\langle}
\newcommand{\ra}{\rangle}

\newcommand{\be}{\begin {equation}}
\newcommand{\ee}{\end {equation}}
\newcommand{\bee}{\begin {equation*}}
\newcommand{\eee}{\end {equation*}}

\newcommand{\qaq}{\quad\textrm{and}\quad}

\renewcommand{\mid}{\,:\,}

\theoremstyle{Theorem}

\theoremstyle{Theorem}

\theoremstyle{Theorem}

\theoremstyle{Theorem}

\theoremstyle{Plain}

\theoremstyle{remark}

\theoremstyle{remark}

\theoremstyle{Definition}
\newtheorem{dfn}{Definition}[section]

\newtheorem{cord}[dfn]{Corollary}
\newtheorem{prpd}[dfn]{Proposition}
\newtheorem{thmd}[dfn]{Theorem}
\newtheorem{lemd}[dfn]{Lemma}
\newtheorem{remarkd}[dfn]{Remark}
\newtheorem{exampled}[dfn]{Example}
\numberwithin{equation}{section}

\begin{document}

	\title[Formal manifolds]{Function spaces on formal manifolds}
	
	\author[F. Chen]{Fulin Chen}
	\address{School of Mathematical Sciences, Xiamen University,
		Xiamen, 361005, China} \email{chenf@xmu.edu.cn}

	\author[B. Sun]{Binyong Sun}
	\address{Institute for Advanced Study in Mathematics \& New Cornerstone Science Laboratory, Zhejiang University,  Hangzhou, 310058, China}
	\email{sunbinyong@zju.edu.cn}
	
	\author[C. Wang]{Chuyun Wang}
	\address{Institute for Theoretical
		Sciences/ Institute of Natural Sciences, Westlake University/ Westlake Institute for Advanced Study,
		Hangzhou, 310030, China}
	\email{wangchuyun@westlake.edu.cn}
	
	\subjclass[2020]{46F25} \keywords{formal manifold, generalized function,  distribution}
	
	\begin{abstract}
		This is a paper in a series that studies smooth relative Lie
		algebra homologies and cohomologies based on the theory of formal manifolds
		and formal Lie groups. In a previous paper, we introduce the notion of formal manifolds and develop  the foundational framework
		of formal manifolds.
		In this paper, we study various function spaces on formal manifolds, including generalizations of vector-valued generalized functions and vector-valued distributions on smooth manifolds to the setting of formal manifolds.  
	\end{abstract}
	
	\maketitle
	
	\tableofcontents

	\section{Introduction and the main results}
	
	This is a sequel to \cite{CSW} in a series to study smooth (co)homogogy theory for representations of general Lie pairs. 
	
	\subsection{Motivation}
	For the construction of Harish-Chandra modules and unitary representations, 
	authors are often interested in (co)homologies of representations of certain Lie pairs $(\q,L)$. Here $\q$ is a finite-dimensional complex Lie
	algebra and $L$ is a compact  Lie group, with additional structures that satisfy specific compatibility conditions (see \cite[(1.64)]{KV} for details). 
	Motivated by the recent development in Lie group representations and  automorphic forms, it is essential to  establish a smooth
	(co)homology theory for representations of
	general Lie pairs $(\q,L)$, with $L$ not necessarily compact. We refer to this theory as smooth relative Lie algebra (co)homology theory, which can be viewed as a generalization of the smooth (co)homology theory for Lie groups (see  \cite{HM} and \cite{BW}) in a relative setting.

	In \cite{CSW}, we formulate and study a notion of what we call formal manifolds, inspired by the notion of formal schemes in algebraic geometry. 
	As one of the main results therein, we prove that finite products  exist in the category of formal manifolds. Then we have a notion of formal Lie gorups, which are the group objects in the category of formal manifolds. We will prove in a subsequent paper that there is an equivalence of categories between the category of formal Lie groups  and that of  Lie pairs. Using this equivalence, we will establish the smooth relative Lie algebra (co)homology theory by studying (co)homologies of representations of formal Lie groups.

	It is well-known that smooth representations of Lie groups are often 
	realized as various function spaces. 
	For example,  the maximal globalization of a Harish-Chandra module for a real reductive group $G$
	can be realized by using the space $\RC^{\infty}(G)$ or $\RC^{-\infty}(G)$,
	and the minimal globalization can be realized by using the space $\RD^{\infty}_c(G)$ or $\RD^{-\infty}_c(G)$ (see \cite{KS,Schm}). Here and below, for a smooth manifold $N$, we use the notations 
	\be\label{intro:funspaces}
	\RC^\infty(N),\quad \RC^\infty_c(N),\quad \RC^{-\infty}(N),\quad \RD_c^{\infty}(N),\quad \RD_c^{-\infty}(N)\quad\text{and}\quad \RD^{-\infty}(N)
	\ee
	to denote the spaces of (complex-valued) smooth functions, compactly supported smooth functions, generalized functions, compactly supported smooth densities, compactly supported distributions and distributions on $N$, respectively. 
	On the other hand, in the  smooth (co)homology theory of Lie groups,  the standard (projective or injective) resolutions of a smooth representation $V$
	can be constructed by using the spaces of suitable $V$-valued smooth functions, distributions or generalized functions (see \cite{HM,BW,KS}).
	
	Thus, to establish the  smooth (co)homology theory for general Lie pairs (or equivalently, formal Lie groups), it is desirable 
	to generalize Schwartz's (vector-valued) distribution theory  (see \cite{Sc2,Sc3,Sc4}) to the formal manifold setting. 
	% As a stone step towards the smooth relative Lie algebra (co)homology theory, we  need to  construct standard resolutions for formal Lie groups with coefficients in formal functions, formal generalized functions, compactly supported formal densities,and compactly supported formal distributions. To  achieve this goal, in a forthcoming paper, we will define the de Rham complexes for formal manifolds with coefficients in these spaces and prove the corresponding Poincar\'e's lemma.
	
	\subsection{Notations and conventions} Before presenting the main results of this paper, we provide some notations and conventions used throughout this paper. 
	For every smooth manifold $N$ and $k\in \BN$, denote by $N^{(k)}$ the locally ringed space $(N, \CO_N^{(k)})$ over $\mathrm{Spec}(\BC)$, where 
	\[
	\CO_N^{(k)}(U):=\RC^\infty(U)[[y_1, y_2, \dots, y_k]]\quad (U\ \text{is an open subset of}\ N).
	\]
	Recall from \cite{CSW} that a formal manifold  is a locally ringed space $(M, \CO)$ over $\mathrm{Spec}(\BC)$ such that
	\begin{itemize}
		\item the topological space $M$ is paracompact and Hausdorff; and
		\item for every $a\in M$, there is an open  neighborhood $U$ of $a$ in $M$ and $n,k\in \BN$ such that $(U, \CO|_U)$ is isomorphic to $(\R^n)^{(k)}$ as locally ringed spaces over $\mathrm{Spec}(\BC)$.
	\end{itemize} For every $a\in M$, the numbers $n$ and $k$  are respectively called the dimension and the degree of $M$ at $a$,  denoted by $\dim_a(M)$ and $\deg_a(M)$. We denote by $\pi_0(M)$ the set of all connected components in $M$, which may or may not be countable. An element in $\CO(M)$ is called a formal function on $M$.
	
	In this paper,  by an LCS, 
	we mean a 
	locally convex topological vector
	space over $\C$, which may or may not be Hausdorff. 
	However, a complete (or quasi-complete) LCS is always assumed to be Hausdorff. 
	All function spaces in \eqref{intro:funspaces} are equipped with the usual topologies so that they are naturally complete LCS. Additionally, we equip the spaces of formal power series and polynomials with the
	term-wise convergence topology and the usual inductive limit topology, respectively. 
	
	For two LCS $E_1$ and $E_2$, let $\CL(E_1,E_2)$ denote the space of all continuous linear maps from $E_1$ to $E_2$. When $E_2=\C$, we also set $E_1':=\CL(E_1,E_2)$. 
	Unless otherwise mentioned,  $\CL(E_1,E_2)$ is equipped with the strong topology.
	
	As stated in \cite{Gr}, there are three useful topological tensor products on LCS:
	the inductive tensor product $\otimes_{\mathrm{i}}$, the projective tensor product $\otimes_\pi$, and the epsilon tensor product $\otimes_\varepsilon$.
	We denote the quasi-completions and completions of these topological tensor products by
	\[\widetilde\otimes_{\mathrm{i}},\,\, \widetilde\otimes_\pi,\,\, \widetilde\otimes_{\varepsilon}\quad \text{and}\quad 
	\widehat\otimes_{\mathrm{i}}, \,\,\widehat\otimes_\pi,\,\, \widehat\otimes_{\varepsilon},\] respectively.
	Some basics on these topological tensor products are reviewed in Appendixes \ref{appendixA} and \ref{appendixB}.

	Throughout this paper, let $(M,\CO)$ be a formal manifold,
	and let $\CF$ be a sheaf of $\CO$-modules. Additionally, except in the appendix, let $E$ be a quasi-complete LCS.
	For every open subset $U$  of $M$,  we equip $\CF(U)$ with the smooth topology (see \cite[Definition 4.1]{CSW}). When $M=N^{(k)}$ with $N$ a smooth manifold and $k\in \BN$, we have the LCS identifications (see Example \ref{ex:E-valuedpowerseries})
	\[
	\CO(M)=\RC^\infty(N)\widetilde\otimes_\pi \C[[y_1,y_2,\dots,y_k]]=
	\RC^\infty(N)\widehat\otimes_\pi \C[[y_1,y_2,\dots,y_k]].
	\]

	\subsection{Main results}
	In Section \ref{sec:formalden},  for every open subset $U$  of $M$,  
	we introduce a subspace $\RD_c^{\infty}(U;\CF)$ of $\CF(U)'$, and equip it with a certain inductive limit topology (in the category of LCS). Set \[\RC^{-\infty}(U;\CF;E):=\CL(\RD_c^{\infty}(U;\CF),E).\]  
	Then we prove that the assignments (see Proposition \ref{prop:F'smisacosheaf} and Lemma \ref{lem:GFsheaf})
	\[\RD_c^{\infty}(\CF): U\mapsto \RD_c^{\infty}(U;\CF)\quad \text{and}\quad
	\RC^{-\infty}(\CF;E):U\mapsto \RC^{-\infty}(U;\CF;E)\]
	respectively form a cosheaf and a sheaf of $\CO$-modules on $M$. As usual, by a cosheaf    of 
	$\CO$-modules on $M$, we mean a cosheaf $\mathcal{G}$ (see \cite[Chapter V, Definition 1.1]{Br2}) on $M$ together with  an $\CO(U)$-module structure on $\mathcal{G}(U)$  for every open
	subset $U$ of $M$, with the following property: for all open subsets $V\subset U$ of $M$,
	the extension map $\mathcal G(V)\rightarrow \mathcal G(U)$ is an $\CO(U)$-module homomorphism. Here the $\CO(U)$-module structure on $\CG(V)$ is
	obtained from its $\CO(V)$-module structure through the homomorphism $\CO(U)\rightarrow \CO(V)$.

	On the other hand, for an open subset $U$  of $M$, denote by  $\CF_c(U)$  the space of compactly supported  sections 
	of  $\CF(U)$, equipped with the usual inductive limit topology.  In Sections \ref{sec:formaldis} and \ref{sec:comformaldis}, 
	we prove  that the assignments
	\[\RD^{-\infty}(\CF;E): U\mapsto \RD^{-\infty}(U;\CF;E)\quad\text{and}\quad
	\RD^{-\infty}_c(\CF;E): U\mapsto \RD^{-\infty}_c(U;\CF;E)
	\] respectively form  a sheaf and a cosheaf of  $\CO$-modules on $M$, where 
	\[\RD^{-\infty}(U;\CF;E):=\CL(\CF_c(U),E)\] and $\RD^{-\infty}_c(U;\CF;E)$ is the subspace of  $\RD^{-\infty}(U;\CF;E)$ consisting of the compactly
	supported sections.
	Write \[\RC^{-\infty}(\CF)=\RC^{-\infty}(\CF;\C) \qaq \RC^{-\infty}(U;\CF)=\RC^{-\infty}(U;\CF;\C).\]
	Similarly, we use the notations $\RD^{-\infty}(\CF), \RD^{-\infty}(U;\CF)$, $\RD_c^{-\infty}(\CF)$, and $\RD^{-\infty}_c(U;\CF)$.
	
	Let $\mathcal{S}$ be one of the sheaves
	\[
	\CF,\quad \RC^{-\infty}(\CF;E),\quad \RD^{-\infty}(\CF;E),
	\]
	and let $\mathcal{C}$ be one of the  cosheaves 
	\[
	\RD_c^{\infty}(\CF),\quad \CF_c,\quad \RD^{-\infty}_c(\CF;E).
	\]
	We prove that the restriction maps in the sheaf $\mathcal{S}$ and the extension maps in the cosheaf $\mathcal{C}$ are automatically continuous.  
	Furthermore, let $\{U_\gamma\}_{\gamma\in \Gamma}$ be an open cover of $M$. 
	Then we prove that  the continuous linear map 
	\[
	\mathcal{S}(M)\rightarrow \prod_{\gamma\in \Gamma} \mathcal{S}(U_\gamma)
	\]
	induced by restriction maps is a  closed topological embedding under certain natural conditions, and that the continuous linear map 
	\[
	\bigoplus_{\gamma\in \Gamma} \mathcal{C}(U_\gamma)\rightarrow \mathcal{C}(M)
	\]
	induced by extension maps is open and surjective. 
	See \cite[Lemmas 4.5 and 4.6]{CSW}, and Propositions \ref{prop:GFclosedembedding}, \ref{prop:D-inftyclosdtopim}, \ref{prop:extconD},  \ref{prop:FcMopen} and \ref{prop:DFcEopenonto} for details. 
	
	Thus, the structure of LCS $\mathcal{S}(M)$ and $\mathcal{C}(M)$ is in general locally determined. 
	On the other hand, we have the following local result (see Propositions \ref{prop:charlcsosm'}, \ref{prop:charGO}, \ref{prop:localD-infty}, and \ref{prop:desDcMO}).

	\begin{thmd}\label{thm:introlocally}
		Assume that $M=N^{(k)}$ with $N$ a smooth manifold and $k\in \BN$. Then we have the following LCS identifications: 
		\begin{eqnarray*}
			&&\RD_c^{\infty}(M;\CO)=\RD_c^\infty(N)\widetilde\otimes_{\mathrm{i}}\C[y_1^*,y_2^*,\dots,y_k^*]=\RD_c^\infty(N)\widehat\otimes_{\mathrm{i}}\C[y_1^*,y_2^*,\dots,y_k^*],\\
			&&\RC^{-\infty}(M;\CO)=\RC^{-\infty}(N)\widetilde\otimes_\pi \C[[y_1,y_2,\dots,y_k]]=\RC^{-\infty}(N)\widehat\otimes_\pi \C[[y_1,y_2,\dots,y_k]],\\
			%\CO_c(M)&=& \RC_c^\infty(N)\widehat\otimes_{\mathrm{i}}\C[[y_1,y_2,\dots,y_k]],\\
			&&\RD^{-\infty}(M;\CO)= \RD^{-\infty}(N)\widetilde\otimes_\pi \C[y_1^*,y_2^*,\dots,y_k^*]= \RD^{-\infty}(N)\widehat\otimes_\pi \C[y_1^*,y_2^*,\dots,y_k^*],\\
			&&\RD^{-\infty}_c(M;\CO)= \RD^{-\infty}_c(N)\widetilde\otimes_{\mathrm{i}} \C[y_1^*,y_2^*,\dots,y_k^*]=\RD^{-\infty}_c(N)\widehat\otimes_{\mathrm{i}} \C[y_1^*,y_2^*,\dots,y_k^*].
		\end{eqnarray*} 
	\end{thmd}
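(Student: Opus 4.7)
The plan is to prove the four LCS identifications one at a time, reducing each to a topological tensor product computation via the already-established decomposition $\CO(M)=\RC^\infty(N)\widehat\otimes_\pi \C[[y_1,y_2,\dots,y_k]]$ together with the natural pairing $\langle y^{*\alpha}, y^\beta\rangle = \delta_{\alpha\beta}$, which identifies $\C[y_1^*,\dots,y_k^*]$ as the strong dual of $\C[[y_1,\dots,y_k]]$ (and vice versa, since the latter is the termwise-convergence completion of the former's algebraic dual). A preliminary step will be to identify $\CO_c(M)=\RC_c^\infty(N)\widehat\otimes_i \C[[y_1,\dots,y_k]]$ directly from the inductive limit definition of compactly supported sections, parallel to the already established description of $\CO(M)$.

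For $\RD_c^\infty(M;\CO)$: I will unwind its definition (from Section \ref{sec:formalden}) as a subspace of $\CO(M)'$, and use the tensor decomposition of $\CO(M)$ to argue that each element writes uniquely as a finite sum $\sum_\alpha T_\alpha\otimes y^{*\alpha}$ with $T_\alpha\in \RD_c^\infty(N)$, then verify that the inductive limit topology on the left-hand side matches the inductive tensor product topology on the right-hand side. Dualizing this identification, and applying the general principle (recorded in Appendix \ref{appendixB}) that the strong dual of a completed inductive tensor product of nuclear LF-type spaces is the completed projective tensor product of strong duals, together with $\RD_c^\infty(N)'=\RC^{-\infty}(N)$ and $\C[y_1^*,\dots,y_k^*]'=\C[[y_1,\dots,y_k]]$, yields the formula for $\RC^{-\infty}(M;\CO)=\CL(\RD_c^\infty(M;\CO),\C)$. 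The treatment of $\RD^{-\infty}(M;\CO)=\CL(\CO_c(M),\C)$ and its compactly supported variant $\RD^{-\infty}_c(M;\CO)$ follows the same scheme, but starting from the decomposition of $\CO_c(M)$ rather than $\CO(M)$; the compact support condition in the last identification forces bounded $y$-degree and hence the polynomial factor $\C[y_1^*,\dots,y_k^*]$ in place of $\C[[y_1,\dots,y_k]]$.

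The main obstacle will be matching topologies rather than identifying the underlying vector spaces. Two specific points deserve care. First, showing that the quasi-completion coincides with the completion in each case will rely on nuclearity of $\RC^\infty(N)$, $\RC_c^\infty(N)$, and their strong duals, combined with the fact that $\C[y_1^*,\dots,y_k^*]$ is a countable strict inductive limit of finite-dimensional spaces, so that completion is automatic on the polynomial factor. Second, commuting strong duals with completed tensor products will require the specific nuclear Fréchet or nuclear LF structure of each factor and the duality results of Appendix \ref{appendixB}; in particular, the switch between $\otimes_\pi$ and $\otimes_i$ when passing from a space to its strong dual is precisely what produces the asymmetry between the first/third and second/fourth lines of the theorem. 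Once these functional-analytic tools are assembled, the four identifications reduce to essentially routine verifications.
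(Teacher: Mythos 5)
Your plan for the first three identifications tracks the paper's own proof closely: the paper computes $\mathrm{Diff}_c(\CO|_U,\underline{\mathcal{D}}|_U)$ explicitly to get $\RD_c^\infty(U;\CO)=\RD_c^\infty(U)[y_1^*,\dots,y_k^*]$ (Proposition \ref{prop:charocd}), matches the inductive limit topology at the level of the pieces $\mathrm{D}^\infty_{K,r}$ (Proposition \ref{prop:charlcsosm'}), obtains $\RC^{-\infty}(M;\CO)$ by taking strong duals exactly as you propose (Proposition \ref{prop:charGO}), and obtains $\RD^{-\infty}(M;\CO)$ by dualizing $\CO_c(M)=\RC_c^\infty(N)\widehat\otimes_{\mathrm{i}}\C[[y_1,\dots,y_k]]$ (Example \ref{ex:DON} and Proposition \ref{prop:localD-infty}). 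The duality results you invoke are Lemmas \ref{lem:indtensorLF} and \ref{lem:dualofprodFot}; they live in Appendix \ref{appendixA}, not Appendix \ref{appendixB}, but that is cosmetic.

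The fourth identification is where your proposal has a genuine gap. You assert that the compact support condition ``forces bounded $y$-degree and hence the polynomial factor $\C[y_1^*,\dots,y_k^*]$ in place of $\C[[y_1,\dots,y_k]]$.'' That cannot be the mechanism: the polynomial factor is already present in the third line, where no support condition is imposed; it arises from dualizing the power-series factor $\C[[y_1,\dots,y_k]]$ sitting inside $\CO_c(M)$ (resp.\ $\CO(M)$), and what compact support actually changes is the $N$-factor ($\RD^{-\infty}(N)$ versus $\RD_c^{-\infty}(N)$) and the flavour of tensor product ($\otimes_\pi$ versus $\otimes_{\mathrm{i}}$); correspondingly, the $\pi$/$\mathrm{i}$ asymmetry pairs the first line with the fourth and the second with the third, not first/third against second/fourth as you state. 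More importantly, the substantive step for this line is missing from your plan. The paper does not carve $\RD_c^{-\infty}(M;\CO)$ out of $\RD^{-\infty}(M;\CO)$ and then match the inductive limit topology over compact sets against an inductive tensor product topology; it first proves the LCS identification $\RD_c^{-\infty}(M;\CO)=\CO(M)_b'$ (Proposition \ref{prop:DFcM=FM'}), which rests on $\CO(M)$ being barreled and $\CO(M)_b'$ being bornological (Lemma \ref{lem:iotaCtopiso}), and only then dualizes $\CO(M)=\RC^\infty(N)\widehat\otimes_\pi\C[[y_1,\dots,y_k]]$ via Lemma \ref{lem:dualofprodFot}. If you insist on your route, the comparison of $\varinjlim_K\RD_K^{-\infty}(M;\CO)$ with $\RD_c^{-\infty}(N)\widetilde\otimes_{\mathrm{i}}\C[y_1^*,\dots,y_k^*]$ as topological vector spaces is precisely the point that needs an argument, and nothing in your proposal supplies it.
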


	In view of the above LCS identifications, we refer to  an element in $\RD_c^\infty(M;\CO)$
	(resp.\,$\RC^{-\infty}(M;\CO)$; $\RD^{-\infty}(M;\CO)$; $\RD^{-\infty}_c(M;\CO)$) as a compactly supported formal density (resp.\,a formal generalized function;
	a formal distribution; a compactly supported formal distribution) on $M$.

	Recall that the sheaf $\CF$ of $\CO$-modules is said to be locally free of finite rank if every point of $M$ has an open neighborhood $U$ such that $\CF|_U$ is free of finite rank.  
	By generalizing the tensor product characterization of (compactly supported) vector-valued distributions on $\BR^n$ (see \cite[Proposition 12]{Sc3} and \cite[Page 63]{Sc3}), we have the following result (see Propositions \ref{prop:GFMEten}, \ref{prop:charDFME}, and \ref{prop:charDFcME}).
	
	\begin{thmd} Assume that $\CF$ is locally free of finite rank. Then we have the  following LCS identifications:
		\begin{itemize}
			\item $\RC^{-\infty}(M;\CF;E)=\RC^{-\infty}(M;\CF)\widetilde\otimes_\pi E$, and 
			$\RC^{-\infty}(M;\CF;E)=\RC^{-\infty}(M;\CF)\widehat\otimes_\pi E$ provided that $E$ is complete; 
			\item 
			$\RD^{-\infty}(M;\CF;E)=\RD^{-\infty}(M;\CF)\widetilde\otimes_\pi E$, and 
			$\RD^{-\infty}(M;\CF;E)=\RD^{-\infty}(M;\CF)\widehat\otimes_\pi E$ provided that $E$ is complete; 
			\item $\RD^{-\infty}_c(M;\CF;E)=\RD^{-\infty}_c(M;\CF)\widetilde\otimes_\mathrm{i} E=\RD^{-\infty}_c(M;\CF)\widehat\otimes_\mathrm{i} E$ provided that $E$ is a barreled DF space.
		\end{itemize}  
	\end{thmd}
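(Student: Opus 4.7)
The plan is to reduce each of the three identifications to the local model via the sheaf/cosheaf gluing properties, then in the local case combine Theorem \ref{thm:introlocally} with Schwartz's classical vector-valued tensor product theorems on smooth manifolds via associativity of the appropriate topological tensor product.

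First I would use the local freeness of $\CF$ to choose an open cover $\{U_\gamma\}_{\gamma\in\Gamma}$ of $M$ such that each $(U_\gamma,\CO|_{U_\gamma})$ is isomorphic to some $N_\gamma^{(k_\gamma)}$ and $\CF|_{U_\gamma}$ is free of finite rank; by additivity in finite direct sums it suffices to treat the case $\CF|_{U_\gamma}=\CO|_{U_\gamma}$. In this local case I would first establish the vector-valued analogues of Theorem \ref{thm:introlocally}, for instance
\[
\RC^{-\infty}(N^{(k)};\CO;E)=\RC^{-\infty}(N;E)\,\widehat\otimes_\pi\,\C[[y_1,\dots,y_k]]
\]
and similarly for $\widetilde\otimes_\pi$ when $E$ is only quasi-complete, by running the same arguments that prove Theorem \ref{thm:introlocally} with the scalar target replaced by $E$. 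Next, applying Schwartz's identifications $\RC^{-\infty}(N;E)=\RC^{-\infty}(N)\,\widehat\otimes_\pi\,E$ together with the analogues for $\RD^{-\infty}$ and $\RD^{-\infty}_c$ (the last requiring $E$ to be a barreled DF space), and using associativity of $\widehat\otimes_\pi$ (respectively $\widehat\otimes_\mathrm{i}$), would yield the desired decomposition in the local case.

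To globalize I would invoke the sheaf/cosheaf gluing results stated in the introduction: the closed topological embeddings $\CS(M)\hookrightarrow\prod_\gamma \CS(U_\gamma)$ for the sheaves and the open surjections $\bigoplus_\gamma \CC(U_\gamma)\twoheadrightarrow\CC(M)$ for the cosheaves, exploiting the fact that $\RC^{-\infty}(M;\CF)$, $\RD^{-\infty}(M;\CF)$ and $\RD^{-\infty}_c(M;\CF)$ are nuclear, so that the relevant tensor products commute with the required product and direct-sum limits. The hard part will be the third identification: unlike $\widehat\otimes_\pi$, the inductive tensor product $\widehat\otimes_\mathrm{i}$ is delicate with respect to the strict LF structure on $\CF_c$ and to countable direct sums, and the barreled DF hypothesis on $E$ is exactly what ensures that $\widehat\otimes_\mathrm{i} E$ preserves both the cosheaf gluing and the inductive-limit topology; controlling this interaction, and verifying that the natural map $\RD^{-\infty}_c(M;\CF)\otimes_\mathrm{i} E\to\RD^{-\infty}_c(M;\CF;E)$ is a topological embedding with dense image, will be the technical crux.
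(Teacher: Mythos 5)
Your plan inverts the logical order of the paper's argument, and the inversion creates a genuine gap at the globalization step. The paper does \emph{not} prove these identifications by establishing them locally and gluing over an open cover. It proves them globally and directly: the point is that the relevant test spaces $\RD^\infty_c(M;\CF)$ and $\CF_c(M)$ are (direct sums of) nuclear LF spaces possessing the strict approximation property (Proposition \ref{cor:NLFFMsm'}, Corollary \ref{lem:strictapprFMsm'}, Corollary \ref{cor:CFcMisNLF}, Lemma \ref{lem:CFcMappro}); one then applies the Schwartz $\varepsilon$-product theorem in the form of Corollary \ref{cor:conmaps=tensorproduct} to get $\CL_b(E_0,E)=(E_0)_b'\,\widetilde\otimes\,E$ for $E_0$ such a test space, which is exactly the first two bullets. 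The only ``gluing'' performed is over connected components, where the decomposition is an honest topological product (resp.\ direct sum), compatible with $\widetilde\otimes_\pi$ by \cite[Lemma 6.6]{CSW} (resp.\ with $\widetilde\otimes_{\mathrm i}$ by \eqref{eq:sumcomiten}). The local descriptions you want to start from (Propositions \ref{prop:charGO} and \ref{prop:localD-infty}) are in fact \emph{deduced} from Propositions \ref{prop:GFMEten} and \ref{prop:charDFME}, not used to prove them; and the third bullet is obtained (Propositions \ref{prop:DFE=LME}, \ref{prop:charDFcME}) by commuting the inductive limit over compact sets with $\widehat\otimes_\pi E$ using the DF hypothesis, then converting $\widehat\otimes_\pi$ to $\widehat\otimes_{\mathrm i}$ on complete barreled DF spaces -- again no gluing over a cover.

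The concrete obstruction to your route is this: for the sheaf cases, Proposition \ref{prop:GFclosedembedding} gives a closed topological embedding $\mathcal{S}(M)\rightarrow\prod_\gamma\mathcal{S}(U_\gamma)$, but applying $-\,\widetilde\otimes_\pi E$ to a closed subspace of a product is not a well-behaved operation. Nuclearity (so that $\otimes_\pi=\otimes_\varepsilon$) saves the injectivity and the subspace topology, but the substantive claim of the theorem is the \emph{surjectivity} of the canonical map $\RC^{-\infty}(M;\CF)\,\widetilde\otimes_\pi E\rightarrow\RC^{-\infty}(M;\CF;E)$, i.e.\ that every continuous $E$-valued functional on $\RD^\infty_c(M;\CF)$ lies in the quasi-completion of the finite-rank ones. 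This does not follow from knowing the analogous surjectivity on each $U_\gamma$ together with the closed embedding into the product: an element of the closed subspace $\RC^{-\infty}(M;\CF;E)$ of $\prod_\gamma\bigl(\RC^{-\infty}(U_\gamma;\CF)\,\widetilde\otimes_\pi E\bigr)$ need not be approximable by elements of $\RC^{-\infty}(M;\CF)\otimes E$ for formal reasons; some global approximation-property input is required, which is precisely what your plan omits. (A symmetric difficulty, which you do flag but do not resolve, afflicts the cosheaf case: an open surjection $\bigoplus_\gamma\CC(U_\gamma)\twoheadrightarrow\CC(M)$ does not stay an identification after $\widehat\otimes_{\mathrm i}E$ without controlling the kernel.) Moreover, even your local step is not easier than the global one: proving $\RC^{-\infty}(N^{(k)};\CO;E)=\RC^{-\infty}(N;E)\,\widehat\otimes_\pi\C[[y_1,\dots,y_k]]$ directly from the definition $\CL_b(\RD^\infty_c(N)\widehat\otimes_{\mathrm i}\C[y^*],E)$ already requires the same $\varepsilon$-product and approximation-property machinery, so the localization buys nothing and costs the gluing problem above.
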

	
	It is a classical result of Schwartz that the space of compactly supported distributions on $\R^n$ coincides with the strong dual of the space of smooth functions on $\R^n$.
	More general, if $E$ is a DF space, then for every $n\in \BN$, 
	\be \label{eq:charcsEdN} 
	\mathrm D_c^{-\infty}(\R^n)\widetilde\otimes_\pi E=
	\mathrm D_c^{-\infty}(\R^n)\widehat\otimes_\pi E=
	\mathrm D_c^{-\infty}(\R^n;E)=\CL(\RC^\infty(\R^n),E)
	\ee
	as LCS (see \cite[Pages 62-63]{Sc3}). 
	The following result (see Propositions \ref{prop:DFcM=FM'} and \ref{prop:DFE=LME})  generalizes 
	these LCS identifications to the setting of formal manifolds.
	
	\begin{thmd} Assume that $\CF$ is locally free of finite rank.
		Then 
		\[\CF(M)'=\RD^{-\infty}_c(M)\]
		as LCS. Assume further that $M$ is secondly countable and $E$ is a DF space. Then 
		\[\mathrm{D}^{-\infty}_c(M;\CF)\wt\otimes_\pi E=\mathrm{D}^{-\infty}_c(M;\CF)\wh\otimes_\pi E=\mathrm{D}^{-\infty}_c(M;\CF;E)=\CL(\CF(M),E)\]
		as LCS.
	\end{thmd}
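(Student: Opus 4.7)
For Part 1, the plan is to construct the natural bijection and then establish bicontinuity by reducing to the local model. The map $\RD_c^{-\infty}(M;\CF)\to\CF(M)'$ sends a compactly supported functional $T$ to $f\mapsto T(\chi f)$, where $\chi$ is any compactly supported formal function equal to $1$ on a neighborhood of $\mathrm{supp}(T)$; independence of $\chi$ follows from the support condition. Conversely, restricting $\xi\in\CF(M)'$ along $\CF_c(M)\hookrightarrow\CF(M)$ gives a functional on $\CF_c(M)$ that is automatically compactly supported, because any continuous seminorm on $\CF(M)$ (with respect to its smooth topology from \cite{CSW}) is dominated by a seminorm involving only finitely many formal derivatives on a fixed compact subset of $M$. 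These constructions are mutually inverse; the topological equivalence is the substantive claim.

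To upgrade the bijection to an isomorphism of LCS, I would reduce to the affine case $M=N^{(k)}$ with $\CF=\CO$ by using the sheaf property (the closed embedding $\CF(M)\hookrightarrow\prod_\gamma\CF(U_\gamma)$) together with the cosheaf property (the open surjection $\bigoplus_\gamma\RD_c^{-\infty}(U_\gamma;\CF)\twoheadrightarrow\RD_c^{-\infty}(M;\CF)$) applied to a trivializing open cover. In the local case, Theorem~\ref{thm:introlocally} identifies
\[
\CO(M)=\RC^\infty(N)\wh\otimes_\pi\C[[y_1,\dots,y_k]] \qaq \RD_c^{-\infty}(M;\CO)=\RD_c^{-\infty}(N)\wh\otimes_{\mathrm{i}}\C[y_1^*,\dots,y_k^*].
\]
Schwartz's classical duality $\RC^\infty(N)'=\RD_c^{-\infty}(N)$ together with the elementary $\C[[y_1,\dots,y_k]]'=\C[y_1^*,\dots,y_k^*]$ and Grothendieck's duality for completed tensor products of nuclear Fr\'echet spaces then delivers the LCS isomorphism in the local case.

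For Part 2, the plan is to chain four equalities. The previous theorem gives $\RD_c^{-\infty}(M;\CF;E)=\RD_c^{-\infty}(M;\CF)\wh\otimes_{\mathrm{i}}E$. Second countability of $M$ forces $\CF(M)$ to be Fr\'echet, so Part~1 shows $\RD_c^{-\infty}(M;\CF)$ is the strong dual of a Fr\'echet space, hence DF; for two DF spaces the completed inductive and projective tensor products coincide, yielding $\wh\otimes_{\mathrm{i}}E=\wh\otimes_\pi E=\wt\otimes_\pi E$. The final equality $\RD_c^{-\infty}(M;\CF;E)=\CL(\CF(M),E)$ is the $E$-valued analogue of Part~1; it follows by the same local reduction, invoking the classical identity \eqref{eq:charcsEdN} in place of its scalar version, with $E$-valued tensor products replacing the scalar duality step.

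The main obstacle is the bicontinuity in Part~1. The bijection is straightforward to write down, but verifying it is an isomorphism of LCS requires controlling two distinct limit structures simultaneously: the inductive filtration by compact supports on the distribution side, and the formal completion in the variables $y_1,\dots,y_k$ on the function side. The identifications of Theorem~\ref{thm:introlocally} are precisely what make this tractable, by converting the question into a tensor-product duality inside the nuclear Fr\'echet category, where Grothendieck's machinery applies cleanly.
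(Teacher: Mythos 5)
Your Part 2 has a genuine gap. You begin by invoking the identification $\RD_c^{-\infty}(M;\CF;E)=\RD_c^{-\infty}(M;\CF)\wh\otimes_{\mathrm{i}}E$ together with the claim that for two DF spaces the completed inductive and projective tensor products coincide. Both of these require $E$ to be a \emph{barreled} DF space, whereas the statement only assumes $E$ is DF. Worse, in the paper the barreled-DF identification (Proposition \ref{prop:charDFcME}) is itself deduced \emph{from} the result you are proving (Proposition \ref{prop:DFE=LME}), so this step is circular in addition to being under-hypothesized. The paper instead works at the level of the filtration by compact supports: Proposition \ref{prop:charDFME} gives $\RD_K^{-\infty}(M;\CF;E)=\RD_K^{-\infty}(M;\CF)\wt\otimes_\pi E$ for each compact $K$ (this uses nuclearity and the strict approximation property, not barreledness of $E$), and Grothendieck's theorem that inductive limits commute with $\otimes_\pi E$ for $E$ a DF space (Lemma \ref{lem:indlimcommprotensorpre}) passes this to the limit over $K$; the final equality with $\CL(\CF(M),E)$ then comes from the strict approximation property of $\CF(M)$ (Proposition \ref{prop:charLFME}), proved by exhibiting cutoff operators $m_{f_i}$ converging to the identity, rather than by invoking \eqref{eq:charcsEdN} through a local reduction.

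For Part 1 your route is genuinely different from the paper's and workable in principle, but the reduction to the local model is not free. To transport the local identification to $M$ you need the strong dual of the closed embedding $\CF(M)\hookrightarrow\prod_\gamma\CF(U_\gamma)$ to be a topological quotient map $\bigoplus_\gamma\CF(U_\gamma)'_b\rightarrow\CF(M)'_b$, compatible with the open surjection on the distribution side; this duality statement happens to hold here (the spaces involved are products of nuclear Fr\'echet spaces, hence Montel), but it is precisely the kind of fact that requires proof and that your sketch leaves implicit. The paper avoids it entirely: it shows the bijection restricts to a topological embedding on each $\RD_K^{-\infty}(M;\CF)$ (Lemma \ref{CMK'=DFKM}), then uses that $\CF(M)$ is barreled --- so bounded subsets of $\CF(M)'_b$ are equicontinuous and hence supported in a fixed compact set --- and that $\CF(M)'_b$ is bornological to identify $\CF(M)'_b$ with $\varinjlim_K\RD_K^{-\infty}(M;\CF)$ as LCS (Lemma \ref{lem:iotaCtopiso}). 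In the paper the local description (Proposition \ref{prop:desDcMO}) is a corollary of this global duality, not an input to it.
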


	\section{Compactly supported formal densities}\label{sec:formalden}
	
	In this section, we introduce a cosheaf 
	$\mathrm{D}^\infty_c(\CF)$ of $\CO$-modules over $M$, which generalizes the concept of cosheaves of compactly supported smooth densities on smooth manifolds.
	
	\subsection{The cosheaf $\mathrm{D}^\infty_c(\CF)$}
	Let $\CF_1$ and $\CF_2$ be two sheaves of $\CO$-modules.
	Recall from \cite[Section 3.2]{CSW} that there is a sheaf 
	\[
	\mathcal{D}\mathrm{iff}(\CF_1,\CF_2): U\mapsto \mathrm{Diff}(\CF_1|_U,\CF_2|_U)\quad (U\ \text{is an open subset of}\ M)
	\]
	on $M$ consisting of differential operators, as well as a cosheaf 
	\[
	\mathcal{D}\mathrm{iff}_c(\CF_1,\CF_2): U\mapsto \mathrm{Diff}_c(\CF_1|_U,\CF_2|_U)\quad (U\ \text{is an open subset of}\ M)
	\]
	on $M$ consisting of compactly supported differential operators. Note that for every  $f\in \CO(M)$ and $D\in \mathrm{Diff}(\CF_1,\CF_2)$, we define  two differential operators $f\circ D$ and $D\circ f$ in $\mathrm{Diff}(\CF_1,\CF_2)$  by setting
	\be\label{eq:defofDf} (f\circ D)_U:\ u\mapsto f|_U(D_U(u))\quad\text{and}\quad (D\circ f)_U:\ u\mapsto D_U (f|_Uu ),
	\ee 
	where $u\in \CF_1(U)$ and $U$ is an open subset of $M$.
	%Then we have two natural  $\CO(M)$-module structures on $\mathrm{Diff}(\CF_1,\CF_2)$  with\begin{eqnarray}\label{eq:leftactiononD}  &&\CO(M)\times  \mathrm{Diff}(\CF_1,\CF_2)\rightarrow  \mathrm{Diff}(\CF_1,\CF_2),\quad (f,D)\mapsto f\circ D,\\\label{eq:rightactiononD} &&\CO(M)\times  \mathrm{Diff}(\CF_1,\CF_2)\rightarrow  \mathrm{Diff}(\CF_1,\CF_2),\quad (f,D)\mapsto D\circ f.\end{eqnarray}Each of the  two actions mentioned above makes$\mathcal D\mathrm{iff}(\CF_1,\CF_2)$ a sheaf of $\CO$-modules.
	
	Recall  the reduction $\underline{M}=(M,\underline{\CO})$ 
	of $M$ introduced in \cite[Definition 1.2]{CSW}, which is a smooth manifold. 
	Write $\underline{\mathcal{D}}$ for the sheaf of all (complex-valued) smooth densities on $\underline{M}$ (\cf \cite[Chapter 16, Page 430]{L}).
	Via the reduction map $\underline{M}\rightarrow M$ (see \cite[(1.4)]{CSW}), $\underline{\mathcal{D}}$ is naturally a sheaf of $\CO$-modules.
	Let $U$ be an open subset of $M$. Then  we have a linear  map
	\begin{equation}\begin{split}\label{eq:defcomsuppden}
			\rho_U:\ \mathrm{Diff}_{c}(\CF|_U,\underline{\mathcal{D}}|_U)&\rightarrow \mathrm{Hom}_\C(\CF(U),\C),\\
			D&\mapsto \left(u\mapsto  \int_{U} D(u)\right).
	\end{split}\end{equation} 
	% for every open subset $U$ of $M$, where $\mathrm{Diff}_{c}(\CF|_U,\underline{\mathcal{D}}|_U)$ is the space of all compactly supported differential operators (see \cite[(3.10)]{CSW}).%Section \ref{sec:diff} for more details).
	For every $D\in \mathrm{Diff}_c(\CF|_U,\underline{\mathcal{D}}|_U)$,
	it is clear that the linear functional $\rho_U(D)$ on $\CF(U)$ is continuous, where $\CF(U)$ is equipped
	with the smooth topology (see \cite[Definition 4.1]{CSW}).
	Write
	\be \label{eq:Dcindual} \mathrm{D}^\infty_c(U;\CF):=\rho_U(\mathrm{Diff}_{c}(\CF|_U,\underline{\mathcal{D}}|_U))\subset (\CF(U))'.\ee
	Here $(\CF(U))'$ is the space of all continuous linear maps from $\CF(U)$ to $\BC$.
	\vspace{3mm}

	As stated in the Introduction, we make the following definition.
	
	\begin{dfn}\label{de:formalden}
		An element in $\mathrm{D}^\infty_c(M;\CO)$ 
		is called a compactly supported formal density on $M$.
	\end{dfn}

	When $M$ is a smooth manifold, we will show in  Proposition \ref{prop:charocd} that $\mathrm{D}^\infty_c(M;\CO)$ 
	is nothing but the space of compactly supported smooth densities on $M$.

	For an open subset $U$ of M, given a formal function $f\in \CO(M)$  supported in $U$ and $u\in \CF(U)$, we define 
	\[f u\in \CF(M)
	\]
	by requiring that
	\[
	(fu)|_U=f|_U\, u \quad \textrm{and}\quad (fu)|_{M\setminus \mathrm{supp}\,f}=0.
	\]

	\begin{lemd}\label{lem:defextmap} Let $V\subset U$ be open subsets  of $M$. Then
		the transpose \[{}^t\mathrm{res}_{V,U}: (\CF(V))'\rightarrow (\CF(U))'\]  of the restriction map $\mathrm{res}_{V,U}:\CF(U)\rightarrow \CF(V)$
		yields an injective linear map
		\be\label{eq:extmap}\mathrm{ext}_{U,V}:={}^t\mathrm{res}_{V,U}|_{\mathrm{D}^\infty_c(V;\CF)}:\quad \mathrm{D}^\infty_c(V;\CF)\rightarrow \mathrm{D}^\infty_c(U;\CF).\ee
	\end{lemd}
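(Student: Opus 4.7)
The plan is to prove two things: that ${}^t\mathrm{res}_{V,U}$ actually carries $\mathrm{D}^\infty_c(V;\CF)$ into $\mathrm{D}^\infty_c(U;\CF)$ (well-definedness of $\mathrm{ext}_{U,V}$), and that its restriction to this subspace is injective.

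For well-definedness, I would start with an arbitrary element $\sigma \in \mathrm{D}^\infty_c(V;\CF)$, written by definition as $\sigma = \rho_V(D)$ for some $D \in \mathrm{Diff}_c(\CF|_V, \underline{\mathcal{D}}|_V)$ with some compact support $K \subset V$. Using the cosheaf structure on $\mathcal{D}\mathrm{iff}_c$ recalled at the start of Section 2, the compactly supported operator $D$ extends by zero to an operator $\widetilde D \in \mathrm{Diff}_c(\CF|_U, \underline{\mathcal{D}}|_U)$ with the same support $K$. The main point is then the identity
\[
\rho_U(\widetilde D)(u) \;=\; \int_U \widetilde D(u) \;=\; \int_V D(u|_V) \;=\; \rho_V(D)(\mathrm{res}_{V,U}(u)) \;=\; \bigl({}^t\mathrm{res}_{V,U}(\sigma)\bigr)(u),
\]
valid for every $u \in \CF(U)$, which holds because $\widetilde D$ and $D(\,\cdot\,|_V)$ coincide on a neighborhood of $K$ and both vanish outside $K$. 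This proves ${}^t\mathrm{res}_{V,U}(\sigma) = \rho_U(\widetilde D) \in \mathrm{D}^\infty_c(U;\CF)$.

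For injectivity, suppose $\sigma \in \mathrm{D}^\infty_c(V;\CF)$ satisfies $\mathrm{ext}_{U,V}(\sigma) = 0$; fix a presentation $\sigma = \rho_V(D)$ with $K := \mathrm{supp}\,D$ compact in $V$. The aim is to show $\sigma(u) = 0$ for every $u \in \CF(V)$. For this I would choose a smooth cutoff $\chi$ on the reduction $\underline{M}$ with $\chi \equiv 1$ on a neighborhood of $K$ and $\mathrm{supp}\,\chi$ compact in $V$, and view $\chi$ as an element of $\CO(M)$ via the canonical inclusion of smooth functions into formal functions. Then $\chi|_V\, u \in \CF(V)$ is compactly supported in $V$ and therefore extends by zero to $\widetilde u \in \CF(U)$ (in the sense of the paragraph preceding the lemma, applied to the sheaf $\CF$). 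Since $\widetilde u|_V$ agrees with $u$ on a neighborhood of $K$, the differential operator $D$ produces $D(\widetilde u|_V) = D(u)$ on a neighborhood of $\mathrm{supp}\,D$, while both sides vanish outside $\mathrm{supp}\,D$. Consequently
\[
\sigma(u) \;=\; \int_V D(u) \;=\; \int_V D(\widetilde u|_V) \;=\; \sigma(\mathrm{res}_{V,U}(\widetilde u)) \;=\; \bigl({}^t\mathrm{res}_{V,U}(\sigma)\bigr)(\widetilde u) \;=\; 0,
\]
which gives $\sigma = 0$ and hence injectivity.

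The main obstacle is purely bookkeeping in the formal setting: namely, making sure that extension by zero is available both for compactly supported differential operators (which is the cosheaf property of $\mathcal{D}\mathrm{iff}_c$ from \cite{CSW}) and for compactly supported sections of $\CF$, and that smooth cutoff functions from $\underline{M}$ can be legitimately multiplied against formal sections. Once these formal-geometric ingredients from \cite{CSW} are invoked, what remains is the support-based computation displayed above, which is routine.
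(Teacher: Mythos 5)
Your proof is correct and follows essentially the same route as the paper's: well-definedness via the cosheaf extension of compactly supported differential operators together with the identity $\rho_U(\mathrm{ext}_{U,V}(D))=
{}^t\mathrm{res}_{V,U}(\rho_V(D))$, and injectivity via a cutoff supported in $V$ that is identically $1$ near $\mathrm{supp}\,D$, so that the extended section detects $\sigma(u)$. The one imprecision is your appeal to ``the canonical inclusion of smooth functions into formal functions'': for a general formal manifold there is only the reduction map $\CO\rightarrow\underline{\CO}$, and no canonical global section of it, so a smooth cutoff on $\underline{M}$ cannot in general be promoted to a formal function canonically. This is harmless here --- one simply chooses a \emph{formal} function $f$ on $U$ with $\mathrm{supp}\,f\subset V$ and $f\equiv 1$ on a neighborhood of $\mathrm{supp}\,D$ (such functions exist by the partition-of-unity arguments used throughout the paper, and this is exactly what the paper's proof does) --- but the phrase as written should be corrected.
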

	\begin{proof} Let $D\in \mathrm{Diff}_{c}(\CF|_V,\underline{\mathcal{D}}|_V)$. Write  \[\mathrm{ext}_{U,V}: \mathrm{Diff}_{c}(\CF|_V,\underline{\mathcal{D}}|_V) \rightarrow \mathrm{Diff}_{c}(\CF|_U,\underline{\mathcal{D}}|_U)\] 
		for the extension map in  $\mathcal{D}\mathrm{iff}_c(\CF,\underline{\mathcal{D}})$.
		Then 
		\begin{eqnarray*}\la\rho_U(\mathrm{ext}_{U,V}(D)),u\ra
			&=&\int_U \mathrm{ext}_{U,V}(D)(u)
			=\int_V D(u|_V)\\
			&=&\la \rho_V(D), u|_V\ra \\ &=& \la {}^t\mathrm{res}_{V,U}(\rho_V(D)),u \ra
		\end{eqnarray*} for every $u\in \CF(U)$. It follows that 
		the map \eqref{eq:extmap} is well-defined and  satisfies the relation
		\be\label{eq:cosheafhom} \rho_U(\mathrm{ext}_{U,V}(D))=\mathrm{ext}_{U,V}(\rho_V(D)).
		\ee
		
		To see the injectivity of \eqref{eq:extmap},
		assume  that $\mathrm{ext}_{U,V}(\rho_V(D))=0$.
		Take an open neighborhood $W$  of $\mathrm{supp}\,D$ in $V$ such that
		$\overline{W}\subset  V$, where $\overline{W}$ is the closure of $W$ in $U$.
		Let $v$ be a section in $\CF(V)$, and let $f$ be a  formal  function on $U$ such that \[f|_{\overline{W}}=1 \quad \text{and}\quad  \mathrm{supp}\,f\subset V.\]
		Then we have a section $fv\in \CF(U)$.
		By  \eqref{eq:cosheafhom}, we have that 
		\begin{eqnarray*}\la\rho_V(D),v\ra&=&\int_V D(v)=\int_W D(v)|_W \\&=&
			\int_U(\mathrm{ext}_{U,V}(D))(fv)
			=\la\rho_U(\mathrm{ext}_{U,V}(D)),fv\ra\\
			&=&\la \mathrm{ext}_{U,V}(\rho_V(D)),fv\ra=0.
		\end{eqnarray*}  This finishes the proof.
	\end{proof}

	With \eqref{eq:extmap} as the extension maps,
	the assignment
	\be\label{eq:cosheafd} U\mapsto \mathrm{D}^\infty_c(U; \CF)\quad(\text{$U$ is an open subset of $M$})\ee
	forms a precosheaf of complex vector spaces on $M$ (see \cite[Chapter V, Definition 1.1]{Br2}), which we denote as $\mathrm{D}^\infty_c(\CF)$. Recall that a precosheaf $\CG$ over $M$ is called flabby if the extension map \[\mathrm{ext}_{U,V}: \CG(V)\rightarrow \CG(U)\] is injective for all open sets $V\subset U$ of $M$ (see \cite[Chapter V, Definition 1.2]{Br2}).
	Then Lemma \ref{lem:defextmap} implies that the precosheaf  $\mathrm{D}^\infty_c(\CF)$ is flabby.

	From \eqref{eq:cosheafhom}, one  concludes that the family 
	\be\label{eq:rho}
	\rho=\{\rho_U\}_{\text{$U$ is an open subset of $M$}}:\quad\mathcal{D}\mathrm{iff}_c(\CF,\underline{\mathcal{D}})
	\rightarrow \mathrm{D}^\infty_c(\CF)\ee is a  homomorphism of precosheaves.
	The following result shows that $\rho$ is in fact a homomorphism of cosheaves.

	\begin{prpd}\label{prop:F'smisacosheaf} The precosheaf  $\mathrm{D}^\infty_c(\CF)$ is a  cosheaf.
	\end{prpd}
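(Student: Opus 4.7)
The plan is to verify Bredon's cosheaf axiom directly by a partition of unity argument. Given an open cover $\{U_\gamma\}_{\gamma\in\Gamma}$ of an open subset $U\subseteq M$, I choose a locally finite smooth partition of unity $\{f_\gamma\}_{\gamma\in\Gamma}$ on $U$ subordinate to $\{U_\gamma\}$ --- available because the reduction $\underline{M}$ is paracompact and Hausdorff --- viewed as formal functions on $M$ via the reduction map, with each $f_\gamma$ supported in $U_\gamma$.

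The technical heart of the proof, which I expect to be the main obstacle, is the following lemma: multiplication by such a cutoff preserves $\mathrm{D}^\infty_c$. Precisely, for any $\sigma = \rho_V(D) \in \mathrm{D}^\infty_c(V;\CF)$ and any $f \in \CO(M)$ with compact support in an open subset $W \subseteq V$, the functional $f\sigma \in \CF(W)'$ defined by $\langle f\sigma, u\rangle := \langle \sigma, fu\rangle$ (using the product $fu$ introduced just before Lemma \ref{lem:defextmap}) lies in $\mathrm{D}^\infty_c(W;\CF)$. Indeed, $D^f(u) := D(fu)|_W$ yields an element of $\mathrm{Diff}_c(\CF|_W,\underline{\mathcal{D}}|_W)$ supported in $\mathrm{supp}(f) \cap \mathrm{supp}(D)$; by locality of $D$ the density $D(fu)$ already vanishes outside $W$, so $\rho_W(D^f) = f\sigma$. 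The other steps reduce to bookkeeping once this lemma is in place.

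Granting the lemma, the cosheaf axiom splits into two verifications. For surjectivity of $\bigoplus_\gamma \mathrm{D}^\infty_c(U_\gamma;\CF) \to \mathrm{D}^\infty_c(U;\CF)$: given $\sigma$, put $\sigma_\gamma := f_\gamma \sigma \in \mathrm{D}^\infty_c(U_\gamma;\CF)$ (only finitely many nonzero by compactness of $\mathrm{supp}(D)$); then $\sum_\gamma \mathrm{ext}_{U,U_\gamma}(\sigma_\gamma) = \sigma$ follows from $\sum_\gamma f_\gamma = 1$ via the transpose definition of $\mathrm{ext}$. For exactness in the middle: suppose $(\sigma_\alpha) \in \bigoplus_\alpha \mathrm{D}^\infty_c(U_\alpha;\CF)$ satisfies $\sum_\alpha \mathrm{ext}_{U,U_\alpha}(\sigma_\alpha) = 0$; applying the lemma on each $U_\alpha$, put $\tau_{\alpha\beta} := f_\beta \sigma_\alpha \in \mathrm{D}^\infty_c(U_\alpha \cap U_\beta;\CF)$. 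A direct computation gives $\sum_\beta \mathrm{ext}_{U_\alpha, U_\alpha\cap U_\beta}(\tau_{\alpha\beta}) = \sigma_\alpha$ (from $\sum_\beta f_\beta = 1$) and $\sum_\alpha \mathrm{ext}_{U_\beta, U_\alpha\cap U_\beta}(\tau_{\alpha\beta}) = 0$ (since this pairs with $u \in \CF(U_\beta)$ as $\langle \sum_\alpha \mathrm{ext}_{U,U_\alpha}(\sigma_\alpha), f_\beta u\rangle = 0$). Fixing a total order on $\Gamma$ and setting $s_{\alpha\beta} := \tau_{\alpha\beta} - \tau_{\beta\alpha}$ for $\alpha<\beta$, the cosheaf boundary of $(s_{\alpha\beta})$ recovers $(\sigma_\alpha)$, completing the argument.
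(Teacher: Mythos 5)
Your proposal is correct, and its technical heart --- realizing the cutoff operation $\sigma\mapsto f\sigma$ at the level of differential operators as $D\mapsto (D\circ f)|_W$, which is legitimate because $D\circ f$ is supported in the compact set $\mathrm{supp}\,f\cap\mathrm{supp}\,D\subset W$ --- is exactly the device the paper uses (it appears there as $D_i':=(D_i\circ g_i)|_V$ in Lemma \ref{lem:MV2}, and is later codified in Lemmas \ref{lem:OmoduleonFsm} and \ref{lem:rescosheafonopensubset}). The organization, however, is genuinely different. The paper invokes Bredon's criterion \cite[Chapter VI, Proposition 1.4]{Br2} to reduce the cosheaf axiom to exactness of the two-set Mayer--Vietoris sequence plus the directed-union condition, and it obtains surjectivity and the directed-union statement essentially for free by pushing forward along $\rho$ from the already-established cosheaf $\mathcal{D}\mathrm{iff}_c(\CF,\underline{\mathcal{D}})$, with flabbiness (Lemma \ref{lem:defextmap}) doing the remaining work; only the middle exactness for two open sets requires the partition-of-unity argument. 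You instead verify Bredon's defining exact sequence directly for an arbitrary cover by the standard \v{C}ech computation with a single subordinate partition of unity, which treats surjectivity and middle exactness uniformly and needs neither the reduction lemma, nor the auxiliary cosheaf, nor flabbiness (you check $\sum_\alpha \mathrm{ext}_{U_\beta,U_\alpha\cap U_\beta}(\tau_{\alpha\beta})=0$ by pairing with test sections rather than by injectivity of extension). Your route is more self-contained; the paper's buys shorter verifications by recycling structure one level up. Two points worth tightening in a write-up: the identities $\sum_\gamma \mathrm{ext}_{U,U_\gamma}(f_\gamma\sigma)=\sigma$ and $\sum_\beta \mathrm{ext}_{U_\alpha,U_\alpha\cap U_\beta}(\tau_{\alpha\beta})=\sigma_\alpha$ do not follow from $\sum_\gamma f_\gamma=1$ alone, since only a finite subfamily survives; you also need the locality statement that $\langle\sigma,v\rangle=0$ whenever $\mathrm{supp}\,v\cap\mathrm{supp}\,D=\emptyset$ (the paper isolates the analogous fact as Lemma \ref{lem:MV1}). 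Also, only $\mathrm{supp}\,f_\gamma\cap\mathrm{supp}\,D$ needs to be compact, so no compact-support hypothesis on the partition of unity is actually required.
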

	
	The rest of this subsection is devoted to a proof of Proposition \ref{prop:F'smisacosheaf}.
	Let $U_1$ and $U_2$ be two open subsets of $M$, and set $U:=U_1\cup U_2$.
	We consider the  map
	\[\begin{array}{rcl}
		\phi:\,\mathrm{D}^\infty_c(U_1;\CF)\oplus \mathrm{D}^\infty_c(U_2;\CF) &\rightarrow& \mathrm{D}^\infty_c(U;\CF),\\(\eta_1,\eta_2)
		&\mapsto&\mathrm{ext}_{U,U_1}(\eta_1)+
		\mathrm{ext}_{U,U_2}(\eta_2).
	\end{array}
	\]
	Fix an element
	\be\label{eq:defw} \eta =(\eta_1,\eta_2)\in \ker\phi,\ee
	and  choose a differential operator $D_i\in \mathrm{Diff}_c(\CF|_{U_i},\underline{\mathcal{D}}|_{U_i})$ such
	that $\eta_i=\rho_{U_i}(D_i)$ for each $i=1,2$.
	Set $K:=\mathrm{supp}\, D_1\cap \mathrm{supp}\, D_2$.
	
	\begin{lemd}\label{lem:MV1}
		Let $i\in\{1,2\}$. If $u_i\in \CF(U_i)$  satisfies that $\mathrm{supp}\,u_i\cap K=\emptyset$, then $\la \eta_i,u_i\ra=0$.
	\end{lemd}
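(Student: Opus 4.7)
By symmetry, assume $i=1$. The plan is to test the defining relation $\mathrm{ext}_{U,U_1}(\eta_1)+\mathrm{ext}_{U,U_2}(\eta_2)=0$ in $(\CF(U))'$ against a carefully chosen $v\in\CF(U)$ for which $v|_{U_1}$ differs from $u_1$ only outside $\mathrm{supp}\,D_1$, while $v|_{U_2}$ vanishes on a neighborhood of $\mathrm{supp}\,D_2$. For such a $v$ the first summand will evaluate to $\la\eta_1,u_1\ra$ and the second to $0$, forcing $\la\eta_1,u_1\ra=0$. The compatibility of the two prescriptions on $K=\mathrm{supp}\,D_1\cap\mathrm{supp}\,D_2$ is precisely the standing hypothesis $\mathrm{supp}\,u_1\cap K=\emptyset$: it already forces $u_1$ to vanish on a neighborhood of $K$.

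To construct such a $v$, I would first choose an open neighborhood $V$ of $K$ in $U_1\cap U_2$ with $\overline{V}$ compact in $U_1\cap U_2$ and $\overline{V}\cap\mathrm{supp}\,u_1=\emptyset$; this is possible since $K$ is compact and disjoint from the closed set $\mathrm{supp}\,u_1\cap(U_1\cap U_2)$. Next, observe that the compact set $\mathrm{supp}\,D_1\setminus V$ lies in $\mathrm{supp}\,D_1\setminus K$ and is therefore contained in the open subset $U_1\setminus\mathrm{supp}\,D_2$ of $U$ (note that $\mathrm{supp}\,D_2$ is compact, hence closed in $U$). Applying the formal bump-function machinery from \cite{CSW}, pick $f\in\CO(U)$ equal to $1$ on some open neighborhood $W$ of $\mathrm{supp}\,D_1\setminus V$ and with compact support contained in $U_1\setminus\mathrm{supp}\,D_2$. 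Then define $v:=fu_1\in\CF(U)$ via the extension-by-zero construction recalled just above the lemma, so that $v|_{U_1}=f|_{U_1}u_1$ and $v$ vanishes on $U\setminus\mathrm{supp}\,f$.

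It remains to verify the two properties above. On $U_1$, the difference $v|_{U_1}-u_1=(f|_{U_1}-1)u_1$ vanishes on $V$ (since $u_1=0$ there) and on $W$ (since $f\equiv 1$ there); as $V\cup W$ is an open neighborhood of $\mathrm{supp}\,D_1$, locality of the differential operator $D_1$ gives $D_1(v|_{U_1})=D_1(u_1)$, so integrating yields $\la\mathrm{ext}_{U,U_1}(\eta_1),v\ra=\la\eta_1,u_1\ra$. On $U_2$, the support of $v|_{U_2}$ is contained in $\mathrm{supp}\,f\cap U_2$, which is disjoint from $\mathrm{supp}\,D_2$; hence $D_2(v|_{U_2})=0$ and $\la\mathrm{ext}_{U,U_2}(\eta_2),v\ra=0$. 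Evaluating the relation $\phi(\eta)=0$ at $v$ then gives $\la\eta_1,u_1\ra=0$, as desired.

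The main obstacle is the construction of the cutoff $f$: one needs a formal analogue of a bump function supported in a prescribed open set and identically $1$ on a neighborhood of a given compact subset. This is the only nontrivial ingredient and is expected to follow from the foundational partition-of-unity results for formal manifolds in \cite{CSW} (a similar cutoff is already invoked in the proof of Lemma \ref{lem:defextmap}).
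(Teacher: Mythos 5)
Your proof is correct and follows essentially the same route as the paper's: both test the relation $\phi(\eta)=0$ against a section of the form $fu_1\in\CF(U)$, where $f$ is a bump function chosen so that $D_1$ cannot distinguish $fu_1$ from $u_1$ while $\mathrm{supp}\,(fu_1)$ misses $\mathrm{supp}\,D_2$. The only difference is cosmetic — the paper makes $f$ equal to $1$ near $\mathrm{supp}\,u_1\cap\mathrm{supp}\,D_1$ and supported away from $\mathrm{supp}\,u_1\cap\mathrm{supp}\,D_2$, whereas you make $f$ equal to $1$ near $\mathrm{supp}\,D_1\setminus V$ and supported away from $\mathrm{supp}\,D_2$ — and both verifications go through.
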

	\begin{proof}
		Without loss of generality, suppose that $i=1$. Set \[K_1:=\mathrm{supp}\,u_1\cap \mathrm{supp}\,D_1\quad\text{and}\quad  K_2:=\mathrm{supp}\,u_1\cap \mathrm{supp}\,D_2.\]
		Since $K_1$ is compact in $U_1$ and $K_1\cap K_2\subset \mathrm{supp}\,u_1\cap K=\emptyset$, there are
		open neighborhoods $U'$ and $U''$ of $K_1$ in $U_1$ such that \[\overline{U'}
		\subset U''\quad \text{and} \quad U''\cap K_2=\emptyset.\] Here $\overline{U'}$ is the closure of $U'$ in $U$.
		Let $f$ be a
		formal function  on $U$ such that \[f|_{\overline{U'}}=1 \quad \text{and} \quad f|_{U\setminus U''}=0.\]
		This yields an element $fu_1\in\CF(U)$.
		Then we have that 
		\begin{eqnarray*}
			&&\la \eta_1, u_1\ra=\la\rho_{U_1}(D_1),u_1\ra\\
			&=& \int_{U_1} D_1(u_1)
			=\int_{U'} (D_1(u_1))|_{U'}\,\,\,\quad \quad \quad  \quad \quad (\text{as $\mathrm{supp}\,D_1(u_1) \subset K_1\subset U'$})\\
			&=&
			\int_U (\mathrm{ext}_{U,U_1}(D_1))(fu_1)\, \,\,\, \quad\quad \quad \quad\quad\quad \quad\quad (\text{as  $(fu_1)|_{U'}=u_1|_{U'}$})\\
			&=& \la\rho_{U}(\mathrm{ext}_{U,U_1}(D_1)),fu_1\ra\quad \\
			&=&
			\la \rho_{U}(\mathrm{ext}_{U,U_1}(D_1)+\mathrm{ext}_{U,U_2}(D_2)), fu_1\ra \quad \,\quad(\text{as $\mathrm{supp}\,(fu_1)\cap \mathrm{supp}\,D_2=\emptyset$})\\
			&=& \la \phi((\rho_U(D_1),\rho_U(D_2))),fu_1\ra\quad\quad \quad \quad \quad \quad \,\,(\text{by \eqref{eq:cosheafhom}})\\
			&=&\la \phi(\eta),fu_1\ra=0,
		\end{eqnarray*}
		which proves the lemma. 
	\end{proof}
	
	Set $V:=U_1\cap U_2$. For each $i=1,2$, take an
	open subset $V_i$ of $U_i$ such that
	\[
	K\subset V_i\subset \overline{V_i}\subset V\subset U_i\quad (\overline{V_i}\ \text{is the closure of}\ V_i\ \text{in}\ U_i).\]
	Let $\{g_i,g_i'\}$ be a partition of unity on $U_i$ subordinate to the open cover $\{V, U_i\setminus \overline{V_i}\}$.
	It is clear that $D_i\circ g_i \in \mathrm{Diff}_c(\CF|_{U_i},\underline {\mathcal{D}}|_{U_i} )$, where $D_i\circ g_i$ is defined as in \eqref{eq:defofDf}. 
	Set \[D_i':=(D_i\circ g_i)|_V \in \mathrm{Diff}_c(\CF|_V,\underline{\mathcal{D}}|_V)\quad \text{and}\quad  \eta'_i:=\rho_{V}(D_i')\in \mathrm{D}^\infty_c(V;\CF).\] 
	
	\begin{lemd}\label{lem:MV2} We have   that $\eta'_1+\eta'_2=0$, and $\eta_i=\mathrm{ext}_{U_i,V}(\eta'_i)$ for $i=1,2$. 
	\end{lemd}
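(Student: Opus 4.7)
The plan is to prove the second assertion $\eta_i = \mathrm{ext}_{U_i,V}(\eta_i')$ by a direct pairing computation, and then derive the first assertion $\eta_1' + \eta_2' = 0$ by pushing it forward via $\mathrm{ext}_{U,V}$ and invoking the injectivity guaranteed by Lemma \ref{lem:defextmap}.

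For the pointwise identity, take an arbitrary $u \in \CF(U_i)$ and split it using the partition of unity as $u = g_i u + g_i' u$. The piece $g_i' u$ has $\mathrm{supp}(g_i' u) \subset U_i \setminus \overline{V_i}$, which is disjoint from $K \subset V_i$, so Lemma \ref{lem:MV1} yields $\langle \eta_i, g_i' u\rangle = 0$. For the other piece, $g_i u$ is supported in $V$; since differential operators preserve supports, $D_i(g_i u)$ is also supported in $V$, and so
\[
\langle \eta_i, g_i u \rangle = \int_{U_i} D_i(g_i u) = \int_V D_i(g_i u)|_V = \int_V D_i'(u|_V),
\]
where the last equality is the definition $D_i' = (D_i\circ g_i)|_V$ together with the sheaf compatibility $((D_i\circ g_i)|_V)(u|_V) = ((D_i\circ g_i)(u))|_V$ of differential operators. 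The right-hand side is precisely $\langle \eta_i', u|_V\rangle = \langle \mathrm{ext}_{U_i,V}(\eta_i'), u\rangle$, and adding in the vanishing $g_i'$-piece gives $\langle \eta_i, u\rangle = \langle \mathrm{ext}_{U_i,V}(\eta_i'), u\rangle$ as desired.

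For the first assertion, I would apply $\mathrm{ext}_{U,V}$ to $\eta_1' + \eta_2'$. Functoriality of transposition combined with $\mathrm{res}_{V,U} = \mathrm{res}_{V,U_i}\circ \mathrm{res}_{U_i,U}$ gives $\mathrm{ext}_{U,V} = \mathrm{ext}_{U,U_i}\circ \mathrm{ext}_{U_i,V}$, so the identity just proved yields
\[
\mathrm{ext}_{U,V}(\eta_1' + \eta_2') = \mathrm{ext}_{U,U_1}(\eta_1) + \mathrm{ext}_{U,U_2}(\eta_2) = \phi(\eta) = 0,
\]
and by Lemma \ref{lem:defextmap} the map $\mathrm{ext}_{U,V}$ is injective, forcing $\eta_1' + \eta_2' = 0$. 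The only genuinely delicate point is the middle chain of equalities in the first step: that integration over $U_i$ of a density supported in $V$ reduces to integration of its $V$-restriction, and that $D_i\circ g_i$ is compatible with passage to the restricted sheaf of differential operators on $V$. Both facts are built into the sheaf structure of $\mathcal{D}\mathrm{iff}$ recalled from \cite{CSW}; granted these, the remainder of the argument is routine bookkeeping with supports and partitions of unity.
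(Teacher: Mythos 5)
Your proposal is correct and follows essentially the same route as the paper's proof: the same splitting $u=g_iu+g_i'u$ with Lemma \ref{lem:MV1} killing the $g_i'$-piece, the same computation identifying the $g_i$-piece with $\langle\mathrm{ext}_{U_i,V}(\eta_i'),u\rangle$, and the same use of $\mathrm{ext}_{U,V}=\mathrm{ext}_{U,U_i}\circ\mathrm{ext}_{U_i,V}$ together with the flabbiness (injectivity) from Lemma \ref{lem:defextmap} to conclude $\eta_1'+\eta_2'=0$.
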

	\begin{proof} For every $u_i'\in \CF(U_i)$, %we have element $g_iu'_i$ as in \eqref{eq:deffu}, 
		since $$\mathrm{supp}\,(g_i'u_i')\cap K=\emptyset, $$ it
		follows from Lemma \ref{lem:MV1} that $\la \eta_i, g_i'u_i'\ra=0$.
		Together with the fact that $\mathrm{supp}\,g_i\subset V$, we have that
		\begin{eqnarray*}
			\la \eta_i, u_i'\ra &=&\la \eta_i, (g_i+g_i')u_i'\ra=\la \eta_i,g_iu_i'\ra\\&=& \int_{U_i} D_i(g_iu_i')
			= \int_V D_i'(u_i'|_V)
			\\ &=&\la \eta'_i,u_i'|_V\ra=\la \mathrm{ext}_{U_i,V}(\eta'_i), u_i'\ra.
		\end{eqnarray*}
		This implies  that $\eta_i=\mathrm{ext}_{U_i,V}(\eta'_i)$. Moreover, we have the following equalities:
		\begin{eqnarray*}
			\mathrm{ext}_{U,V}(\eta'_1+\eta'_2)
			&=&\mathrm{ext}_{U,U_1}\mathrm{ext}_{U_1,V}(\eta'_1)
			+\mathrm{ext}_{U,U_2}\mathrm{ext}_{U_2,V}(\eta'_2)\\
			&=&\mathrm{ext}_{U,U_1}(\eta_1)+\mathrm{ext}_{U,U_2}(\eta_2)
			\\ &=&\phi(\eta)=0.
		\end{eqnarray*}
		Then the flabby property of $\mathrm{D}^\infty_c(\CF)$ forces that $\eta'_1+\eta'_2=0$, as required.
	\end{proof}
	
	\begin{lemd}\label{lem:MV3}  The
		Mayer-Vietoris sequence
		\bee
		\mathrm{D}^\infty_c(V;\CF)\xrightarrow{\psi}\mathrm{D}^\infty_c(U_1;\CF)
		\oplus \mathrm{D}^\infty_c(U_2; \CF)\xrightarrow{\phi} \mathrm{D}^\infty_c(U;\CF)\rightarrow 0
		\eee
		in $\mathrm{D}^\infty_c(\CF)$ is exact, where $\psi:=(\mathrm{ext}_{U_1,V},-\mathrm{ext}_{U_2,V})$.
	\end{lemd}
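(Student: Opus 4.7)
The plan is to verify exactness of the Mayer–Vietoris sequence at each of the three positions, leveraging the previous two lemmas for the middle and relying on partitions of unity made of formal functions (provided by the foundational paper \cite{CSW}) for the surjectivity on the right.

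\emph{Exactness at the left end $\phi\circ\psi=0$.} For any $\zeta\in \mathrm{D}^\infty_c(V;\CF)$, transitivity of restriction maps gives $\mathrm{res}_{V,U}=\mathrm{res}_{V,U_1}\circ\mathrm{res}_{U_1,U}=\mathrm{res}_{V,U_2}\circ\mathrm{res}_{U_2,U}$. Taking transposes, this yields $\mathrm{ext}_{U,U_1}\circ\mathrm{ext}_{U_1,V}=\mathrm{ext}_{U,V}=\mathrm{ext}_{U,U_2}\circ\mathrm{ext}_{U_2,V}$, so $\phi(\psi(\zeta))=\mathrm{ext}_{U,V}(\zeta)-\mathrm{ext}_{U,V}(\zeta)=0$.

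\emph{Exactness in the middle.} Take $\eta=(\eta_1,\eta_2)\in\ker\phi$. By Lemma \ref{lem:MV2}, there exist $\eta'_1,\eta'_2\in \mathrm{D}^\infty_c(V;\CF)$ with $\eta'_1+\eta'_2=0$ and $\eta_i=\mathrm{ext}_{U_i,V}(\eta'_i)$. Then $\psi(\eta'_1)=(\mathrm{ext}_{U_1,V}(\eta'_1),-\mathrm{ext}_{U_2,V}(\eta'_1))=(\eta_1,\mathrm{ext}_{U_2,V}(\eta'_2))=\eta$, so $\eta\in \Im\psi$.

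\emph{Surjectivity of $\phi$.} Given $\xi\in \mathrm{D}^\infty_c(U;\CF)$, choose $D\in \mathrm{Diff}_c(\CF|_U,\underline{\mathcal{D}}|_U)$ with $\xi=\rho_U(D)$, and set $K:=\mathrm{supp}\,D$, a compact subset of $U=U_1\cup U_2$. The strategy is to split $D$ using a partition of unity on $U$ subordinate to $\{U_1,U_2\}$. Concretely, pick a partition of unity $\{h_1,h_2\}$ of formal functions in $\CO(U)$ with $\mathrm{supp}\,h_i\subset U_i$ (the existence of such partitions in the formal setting is part of the foundational results in \cite{CSW}). Then $D_i:=(h_i\circ D)|_{U_i}\in \mathrm{Diff}(\CF|_{U_i},\underline{\mathcal{D}}|_{U_i})$, and since $\mathrm{supp}(h_i\circ D)\subset \mathrm{supp}\,h_i\cap K$ is a closed subset of $U$ contained in the compact set $K$ and in $U_i$, it is compact in $U_i$. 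Thus $D_i\in \mathrm{Diff}_c(\CF|_{U_i},\underline{\mathcal{D}}|_{U_i})$, and $\mathrm{ext}_{U,U_1}(D_1)+\mathrm{ext}_{U,U_2}(D_2)=(h_1+h_2)\circ D=D$. Applying $\rho_U$ and using \eqref{eq:cosheafhom} gives
\[
\xi=\rho_U(D)=\mathrm{ext}_{U,U_1}(\rho_{U_1}(D_1))+\mathrm{ext}_{U,U_2}(\rho_{U_2}(D_2))=\phi(\rho_{U_1}(D_1),\rho_{U_2}(D_2)).
\]

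The main obstacle is the surjectivity step: one must know that on a formal manifold there exist genuine partitions of unity consisting of formal functions (not merely smooth functions on the reduction) with the required support properties, and moreover that multiplication by such a formal function preserves the class of compactly supported differential operators between the relevant sheaves. Both facts are part of the foundational setup of \cite{CSW}; once invoked, the verification reduces to the purely algebraic identity $h_1\circ D+h_2\circ D=D$.
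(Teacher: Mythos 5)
Your proposal is correct, and its skeleton matches the paper's: the middle exactness is obtained from Lemma \ref{lem:MV2}, and the remaining two conditions are transported from the level of compactly supported differential operators through the homomorphism $\rho$ and the compatibility \eqref{eq:cosheafhom}. The one genuine divergence is in how those two conditions are obtained. The paper simply cites the fact that $\mathcal{D}\mathrm{iff}_c(\CF,\underline{\mathcal{D}})$ is a cosheaf (from \cite{CSW} together with Bredon's Mayer--Vietoris criterion), so that the exactness of the corresponding sequence of $\mathrm{Diff}_c$ spaces is available for free and yields both the surjectivity of $\phi$ and $\phi\circ\psi=0$ in one stroke. You instead unfold that citation: you verify $\phi\circ\psi=0$ directly from transitivity of the restriction maps and their transposes, and you exhibit an explicit $\phi$-preimage of $\rho_U(D)$ by splitting $D$ as $(h_1\circ D)+(h_2\circ D)$ with a formal partition of unity $\{h_1,h_2\}$ subordinate to $\{U_1,U_2\}$, checking that each $(h_i\circ D)|_{U_i}$ is compactly supported in $U_i$. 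This is in effect a self-contained re-proof of the right-exactness of the Mayer--Vietoris sequence for $\mathcal{D}\mathrm{iff}_c(\CF,\underline{\mathcal{D}})$; it costs a bit more writing but removes the reliance on the external cosheaf citation, and the ingredients it uses (formal partitions of unity, the operation $f\circ D$ of \eqref{eq:defofDf}, and the identity $\mathrm{ext}_{U,U_i}\circ\rho_{U_i}=\rho_U\circ\mathrm{ext}_{U,U_i}$) are all available in the paper's framework, so there is no gap.
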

	\begin{proof} 
		As $\mathcal{D}\mathrm{iff}_c(\CF,\underline{\mathcal{D}})$ is a cosheaf, the Mayer-Vietoris sequence
		\[
		\mathrm{Diff}_c(\CF|_V,\underline{\mathcal{D}}|_V)\rightarrow\mathrm{Diff}_c(\CF|_{U_1},\underline{\mathcal{D}}|_{U_1})
		\oplus \mathrm{Diff}_c(\CF|_{U_2},\underline{\mathcal{D}}|_{U_2})\rightarrow \mathrm{Diff}_c(\CF|_U,\underline{\mathcal{D}}|_U)\rightarrow 0
		\] in $\mathcal{D}\mathrm{iff}_c(\CF,\underline{\mathcal{D}})$ is exact (see \cite[Chapter VI, Proposition 1.4]{Br2}). Using 
		the homomorphism $\rho$, it follows 
		that $\phi$ is surjective and $\phi\circ \psi=0$. On the other hand, we have  that $\ker\phi\subset \mathrm{im}\, \psi$ by Lemma \ref{lem:MV2}. This finishes the proof. 
	\end{proof}
	
	\begin{lemd}\label{lem:MV4} Let $\{U_\alpha\}_{\alpha\in \Gamma}$ be a family   of open subsets of $M$ directed upwards by inclusions. Then
		the canonical map
		\be
		\label{eq:fsmisacosheaf1}
		\varinjlim_{\alpha\in \Gamma} \mathrm{D}^\infty_c(U_{\alpha}; \CF)\rightarrow\mathrm{D}^\infty_c(\cup_{\alpha\in \Gamma}U_{\alpha}; \CF)
		\ee
		is a linear isomorphism.
	\end{lemd}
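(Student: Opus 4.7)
The plan is to prove both injectivity and surjectivity of the canonical map
$$\Phi: \varinjlim_{\alpha\in\Gamma} \mathrm{D}^\infty_c(U_\alpha;\CF)\rightarrow \mathrm{D}^\infty_c(\cup_\alpha U_\alpha;\CF)$$
separately, using the flabbiness of the precosheaf $\mathrm{D}^\infty_c(\CF)$ (established right after Lemma \ref{lem:defextmap}) for injectivity, and exploiting the compactness of supports for surjectivity.

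For injectivity: since the family is directed, every element of the colimit is represented by some $\eta_\alpha\in \mathrm{D}^\infty_c(U_\alpha;\CF)$, and to say its image vanishes in $\mathrm{D}^\infty_c(U;\CF)$ (where $U:=\cup_\alpha U_\alpha$) means $\mathrm{ext}_{U,U_\alpha}(\eta_\alpha)=0$. But by the flabby property established via Lemma \ref{lem:defextmap}, this extension map is injective, so $\eta_\alpha=0$ already in $\mathrm{D}^\infty_c(U_\alpha;\CF)$, hence zero in the colimit.

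For surjectivity: given $\eta\in \mathrm{D}^\infty_c(U;\CF)$, write $\eta=\rho_U(D)$ for some $D\in \mathrm{Diff}_c(\CF|_U,\underline{\mathcal{D}}|_U)$, and set $K:=\mathrm{supp}\,D$. Since $K$ is compact and $\{U_\alpha\}_{\alpha\in\Gamma}$ is an open cover of $U$ directed upwards by inclusion, finitely many members of the family cover $K$, and by directedness a single $U_{\alpha_0}$ already contains $K$. Then $D$ restricts to an element $D'\in \mathrm{Diff}_c(\CF|_{U_{\alpha_0}},\underline{\mathcal{D}}|_{U_{\alpha_0}})$ with the same support $K$, and $D=\mathrm{ext}_{U,U_{\alpha_0}}(D')$ in the cosheaf $\mathcal{D}\mathrm{iff}_c(\CF,\underline{\mathcal{D}})$. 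Applying the relation \eqref{eq:cosheafhom}, we obtain
$$\eta=\rho_U(D)=\rho_U(\mathrm{ext}_{U,U_{\alpha_0}}(D'))=\mathrm{ext}_{U,U_{\alpha_0}}(\rho_{U_{\alpha_0}}(D')),$$
so $\eta$ is the image under $\Phi$ of the class of $\rho_{U_{\alpha_0}}(D')\in \mathrm{D}^\infty_c(U_{\alpha_0};\CF)$.

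I do not anticipate a serious obstacle here: the argument is formally parallel to the standard proof that cosheaves commute with directed colimits on open covers, and both the flabbiness of $\mathrm{D}^\infty_c(\CF)$ and the cosheaf property of $\mathcal{D}\mathrm{iff}_c(\CF,\underline{\mathcal{D}})$ (which is what justifies the ``restrict then re-extend'' step for $D$) have been established earlier in the paper. The only point requiring any care is the extraction of a single $U_{\alpha_0}$ containing $K$ from the directed cover, which is immediate from compactness plus upward-directedness.
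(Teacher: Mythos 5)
Your proposal is correct and follows essentially the same route as the paper: injectivity from the flabbiness of $\mathrm{D}^\infty_c(\CF)$, and surjectivity by transporting the corresponding statement for $\mathcal{D}\mathrm{iff}_c(\CF,\underline{\mathcal{D}})$ through $\rho$ via \eqref{eq:cosheafhom}. The only cosmetic difference is that you unwind the directed-colimit property of the cosheaf $\mathcal{D}\mathrm{iff}_c(\CF,\underline{\mathcal{D}})$ by hand (compactness of $\mathrm{supp}\,D$ plus upward-directedness to land in a single $U_{\alpha_0}$), whereas the paper simply cites it from \cite[Chapter VI, Proposition 1.4]{Br2}.
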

	\begin{proof} 
		As $\mathcal{D}\mathrm{iff}_c(\CF,\underline{\mathcal{D}})$ is a cosheaf, the canonical map \[\varinjlim_{\alpha\in \Gamma}\mathrm{Diff}_c(\CF|_{U_{\alpha}},\underline{\mathcal{D}}|_{U_{\alpha}})\rightarrow\mathrm{Diff}_c(\CF|_{\cup_{\alpha\in \Gamma}U_{\alpha}},\underline{\mathcal{D}}|_{\cup_{\alpha\in \Gamma}U_{\alpha}})\] is a linear isomorphism (see \cite[Chapter VI, Proposition 1.4]{Br2}). 
		Using the homomorphism $\rho$, it then follows 
		that the map \eqref{eq:fsmisacosheaf1} is surjective. 
		On the other hand, the injectivity of \eqref{eq:fsmisacosheaf1} follows from the flabby property of $\mathrm{D}^\infty_c(\CF)$, as required.
	\end{proof}

	Finally,  by \cite[Chapter VI, Proposition 1.4]{Br2}, Proposition \ref{prop:F'smisacosheaf} is implied by Lemmas \ref{lem:MV3} and \ref{lem:MV4}.

	\subsection{The topology on $\mathrm{D}^\infty_c(M;\CF)$}
	In this subsection, we define an 
	inductive limit topology on   $\mathrm{D}^\infty_c(M; \CF)$.
	
	We first endow a locally convex topology  on the space $\mathrm{Diff}_{\mathrm{fin}}(\CF,\underline{\mathcal{D}})$ of finite order differential
	operators (see \cite[Definition 3.6]{CSW}).
	Let  $X\in \mathrm{Diff}_c(\underline{\mathcal{D}},\underline{\CO})$ and $u\in \CF(M)$.
	For every $D\in \mathrm{Diff}_{\mathrm{fin}}(\CF,\underline{\mathcal{D}})$, define
	\[
	\abs{D}_{X,u}:=\sup_{a\in M}|((X\circ D)(u))(a)|.
	\]
	Then $\abs{\,\cdot\,}_{X,u}$ is a seminorm on $\mathrm{Diff}_{\mathrm{fin}}(\CF,\underline{\mathcal{D}})$, and
	$\mathrm{Diff}_{\mathrm{fin}}(\CF,\underline{\mathcal{D}})$ becomes an LCS with the topology defined by all such seminorms.

	Equip $\underline{\mathcal{D}}(M)$ with the smooth topology.
	Then we have the following straightforward result.

	\begin{lemd}\label{lem:convergechar} A net $\{D_i\}_{i\in I}$  in $\mathrm{Diff}_{\mathrm{fin}}(\CF,\underline{\mathcal{D}})$
		converges to $0$ if and only if $\{D_i(u)\}_{i\in I}$ converges to $0$ in $\underline{\mathcal{D}}(M)$ for all $u\in \CF(M)$. 
		Moreover, the topology
		on $\mathrm{Diff}_{\mathrm{fin}}(\CF,\underline{\mathcal{D}})$ is Hausdorff.
	\end{lemd}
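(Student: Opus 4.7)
The plan is to reduce both claims to direct computations with the defining seminorms. I would first confirm that the smooth topology on $\underline{\mathcal{D}}(M)$, as introduced in \cite[Definition 4.1]{CSW}, is generated by the seminorms $\eta \mapsto \sup_{a \in M} |X(\eta)(a)|$ as $X$ ranges over $\mathrm{Diff}_c(\underline{\mathcal{D}}, \underline{\CO})$. Granting this, the identity $X(D_i(u)) = (X \circ D_i)(u)$ gives $\abs{D_i}_{X,u} = \sup_a |X(D_i(u))(a)|$, and the equivalence in the first assertion becomes a rewriting: $D_i \to 0$ in $\mathrm{Diff}_{\mathrm{fin}}(\CF,\underline{\mathcal{D}})$ says that these suprema tend to $0$ for every admissible $X$ and $u$, which is the same as $D_i(u) \to 0$ in $\underline{\mathcal{D}}(M)$ for every fixed $u \in \CF(M)$.

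For the Hausdorff assertion, I would show that every nonzero $D$ is separated from $0$ by some $\abs{\cdot}_{X,u}$. Using naturality of $D$, pick an open $U$, a section $v \in \CF(U)$, and a point $a \in U$ at which the germ $(D_U(v))_a$ is nonzero. Choose a compactly supported formal function $\chi \in \CO(M)$ with $\mathrm{supp}(\chi) \subset U$ and $\chi \equiv 1$ near $a$; such $\chi$ exists by lifting a smooth bump on the reduction $\underline{M}$. Then, using the extension-by-zero construction $\chi v \in \CF(M)$ recalled just before Lemma \ref{lem:defextmap}, the section $u := \chi v$ is global and has the same germ as $v$ at $a$, so locality of $D$ forces $(D(u))_a = (D_U(v))_a \neq 0$.

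It remains to produce $X \in \mathrm{Diff}_c(\underline{\mathcal{D}}, \underline{\CO})$ with $(X \circ D)(u)(a) \neq 0$. Trivializing $\underline{\mathcal{D}}$ by a positive density $\mu$ on a coordinate chart around $a$, I would take $X$ to be $\eta \mapsto \chi'(\eta/\mu)$ for a compactly supported cutoff $\chi'$ equal to $1$ near $a$; this is a compactly supported differential operator from $\underline{\mathcal{D}}$ to $\underline{\CO}$, and its value at $a$ applied to $D(u)$ is the $\mu$-coefficient of $(D(u))_a$, which is nonzero by construction. Thus $\abs{D}_{X,u} > 0$. The main point of friction, and the step I would verify most carefully, is the opening identification of the smooth topology on $\underline{\mathcal{D}}(M)$ with the explicit seminorm system above; once that matches, everything else is either an unwinding of definitions or a standard bump-function argument.
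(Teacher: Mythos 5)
The paper states this lemma without proof (it is labelled ``straightforward''), and your argument is the intended one: the smooth topology on $\underline{\mathcal{D}}(M)$ is indeed generated by the seminorms $\eta\mapsto\sup_{a}|X(\eta)(a)|$ for $X\in\mathrm{Diff}_c(\underline{\mathcal{D}},\underline{\CO})$, so the first assertion is the rewriting $\abs{D_i}_{X,u}=|D_i(u)|_X$, and the Hausdorff assertion follows from your bump-function separation. One small correction in that last step: choose the point $a$ so that the \emph{value} $(D_U(v))(a)$ is nonzero (such a point exists because $D_U(v)$ is a nonzero continuous section of the density bundle), since a nonzero germ alone does not guarantee that your order-zero operator $X$ detects $D(u)$ at $a$.
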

	
	There is a  directed set $(\mathcal{C}(M),\preceq)$, where
	\be\label{eq:C(M)} \mathcal{C}(M):=\{(K,r)\mid \text{$K$ is a compact subset of $M$ and $r\in \BN$}\},\ee
	and for two pairs $(K,r),(K',r')\in \mathcal{C}(M)$, 
	\[\text{$(K,r)\preceq(K',r')$ if and only if $K\subset K'$ and $r\le r'$.}\]
	For  $(K,r)\in \mathcal{C}(M)$, define
	\[\mathrm{Diff}_{K,r}(\CF,\underline{\mathcal{D}}):=\mathrm{Diff}_K(\CF,\underline{\mathcal{D}})\cap
	\mathrm{Diff}_r(\CF,\underline{\mathcal{D}}),\]
	where $\mathrm{Diff}_K(\CF,\underline{\mathcal{D}})$ denotes the subspace of $\mathrm{Diff}_c(\CF,\underline{\mathcal{D}})$  consisting of all the
	differential operators supported in $K$, and $\mathrm{Diff}_r(\CF,\underline{\mathcal{D}})$ denotes the space of differential operators with order $\le r$ (see \cite[Definition 3.5]{CSW}).
	Equip $\mathrm{Diff}_{K,r}(\CF,\underline{\mathcal{D}})$ with the subspace topology of $\mathrm{Diff}_{\mathrm{fin}}(\CF,\underline{\mathcal{D}})$.

	Write  
	\[\mathrm{D}^\infty_{K,r}(M;\CF):=\rho_M(\mathrm{Diff}_{K,r}(\CF,\underline{\mathcal{D}}))\subset \mathrm{D}^\infty_{c}(M;\CF),\]
	which is endowed with the quotient topology of $\mathrm{Diff}_{K,r}(\CF,\underline{\mathcal{D}})$.
	Then $\mathrm{D}^\infty_{K,r}(M;\CF)$ becomes an LCS.
	Equip \be \label{eq:D_ctop} \mathrm{D}^\infty_c(M;\CF)=\varinjlim_{(K,r)\in \mathcal{C}(M)}
	\mathrm{D}^\infty_{K,r}(M;\CF)\ee with the inductive limit topology.
	%We equip $\mathrm{D}^\infty_c(M;\CF)$ with the inductive limit topology  of  the  directed system \be \label{eq:D_c} \{\mathrm{D}^\infty_{K,r}(M;\CF)\}_{(K,r)\in \mathcal{C}(M)}\ee of LCS.

	%\begin{proof} For two pairs $(K,r)$, $(K',r')\in \mathcal{C}(M)$ with $(K,r)\preceq (K',r')$, there is a commutative diagram \[\begin{CD}\mathrm{Diff}_{K,r}(\CF,\underline{\mathcal{D}})@>>> \mathrm{Diff}_{K',r'}(\CF,\underline{\mathcal{D}})\\@VVV @VVV\\ \mathrm{D}^\infty_{K,r}(M;\CF) @> >>\mathrm{D}^\infty_{K',r'}(M;\CF),\end{CD} \] where vertical arrows are quotient maps, and the top horizontal arrow is a topological embedding. It is clear that the bottom horizontal arrow is a topological embedding. The lemma then follows.\end{proof}
	
	\begin{lemd}\label{lem:Hausdorffandregular} 
		For every $(K,r)\in \mathcal{C}(M)$, the LCS $\mathrm{D}^\infty_{K,r}(M;\CF)$ is Hausdorff, and the LCS $\mathrm{D}^\infty_{c}(M;\CF)$ is also Hausdorff provided that the inductive limit 
		\[
		\varinjlim_{(K,r)\in \mathcal{C}(M)}
		\mathrm{D}^\infty_{K,r}(M;\CF)
		\] is strict (see Definition \ref{de:regualrlim}).
	\end{lemd}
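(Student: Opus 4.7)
The plan is to handle the two claims in turn. For the first, note that $\mathrm{D}^\infty_{K,r}(M;\CF)$ is by definition the quotient of the LCS $\mathrm{Diff}_{K,r}(\CF,\underline{\mathcal{D}})$ by the kernel $\CK:=\ker(\rho_M|_{\mathrm{Diff}_{K,r}(\CF,\underline{\mathcal{D}})})$, and a quotient of a topological vector space is Hausdorff if and only if the subspace divided out is closed. I will therefore verify closedness of $\CK$ by writing it as the intersection $\CK=\bigcap_{u\in\CF(M)}\ker L_u$ of closed hyperplanes, where $L_u(D):=\int_M D(u)$, and arguing that each $L_u$ is continuous on $\mathrm{Diff}_{K,r}(\CF,\underline{\mathcal{D}})$.

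The continuity of $L_u$ is the one nontrivial step. To establish it I will exhibit a single seminorm $|\cdot|_{X,u}$ in the defining family for $\mathrm{Diff}_{\mathrm{fin}}(\CF,\underline{\mathcal{D}})$ that dominates $|L_u|$. Pick a smooth positive density $\mu$ on the smooth manifold $\underline{M}$ (such a $\mu$ exists since the density bundle is globally trivializable) and a compactly supported smooth function $\chi$ with $\chi\equiv 1$ on a neighborhood of $K$. Define $X(\omega):=\chi\cdot(\omega/\mu)$, an order-zero differential operator in $\mathrm{Diff}_c(\underline{\mathcal{D}},\underline{\CO})$ with support contained in $\mathrm{supp}\,\chi$. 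For $D\in\mathrm{Diff}_{K,r}(\CF,\underline{\mathcal{D}})$ the density $D(u)$ is supported in $K$, whence
\[
L_u(D)=\int_M D(u)=\int_M \chi\,D(u)=\int_M (X\circ D)(u)\,\mu,
\]
and consequently $|L_u(D)|\le \vol_\mu(\mathrm{supp}\,\chi)\cdot|D|_{X,u}$. This delivers the continuity of $L_u$ and completes the proof that $\mathrm{D}^\infty_{K,r}(M;\CF)$ is Hausdorff.

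For the second assertion, under the strictness hypothesis on the inductive limit \eqref{eq:D_ctop}, I will invoke the standard fact (reviewed in Appendix \ref{appendixA}, see Definition \ref{de:regualrlim}) that a strict inductive limit of Hausdorff LCS is itself Hausdorff. Combined with the first assertion this immediately yields Hausdorffness of $\mathrm{D}^\infty_c(M;\CF)$.

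The only real obstacle is the construction of $X$ and the estimate $|L_u(D)|\le C\cdot|D|_{X,u}$ above; everything else is either definitional, a consequence of Lemma \ref{lem:convergechar}, or a direct appeal to a standard fact about strict inductive limits of Hausdorff LCS.
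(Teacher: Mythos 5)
Your proof of the first assertion is correct and takes essentially the same route as the paper's: the paper simply cites Lemma \ref{lem:convergechar} and declares that $\ker\rho_M\cap\mathrm{Diff}_{K,r}(\CF,\underline{\mathcal{D}})$ is closed, whereas you make this explicit by exhibiting, for each $u\in\CF(M)$, the order-zero operator $X(\omega)=\chi\cdot(\omega/\mu)$ and the estimate $|L_u(D)|\le \vol_\mu(\mathrm{supp}\,\chi)\cdot|D|_{X,u}$. That construction is legitimate (a global positive density exists by paracompactness, $X$ is a compactly supported differential operator of order $0$, and $\chi\,D(u)=D(u)$ since $D$ is supported in $K$), so this part is fine and in fact more detailed than the paper.

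The second assertion is where there is a genuine gap. The ``standard fact'' you invoke --- that a strict inductive limit of Hausdorff LCS is Hausdorff --- is standard only for limits indexed by $\BN$ (or by a directed set admitting a countable cofinal subfamily): it rests on Bourbaki's result that in a strict inductive \emph{sequence} each $E_n$ carries the subspace topology induced from the limit, and the paper itself is careful to state this embedding property only for $I=\BN$ in the remark following Definition \ref{de:regualrlim}. The index set $\mathcal{C}(M)$ has countable cofinality only when $M$ is $\sigma$-compact, while the paper explicitly allows $M$ to have uncountably many connected components; for strict inductive limits over general uncountable directed sets the canonical maps into the limit need not be embeddings, and Hausdorffness does not follow from the cited references. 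The paper closes this gap by first observing that
\[
\mathrm{D}^\infty_c(M;\CF)=\bigoplus_{Z\in \pi_0(M)}\ \varinjlim_{(K,r)\in \mathcal{C}(Z)}\mathrm{D}^\infty_{K,r}(Z;\CF),
\]
where each connected component $Z$ is $\sigma$-compact, so each inner limit is a countable strict inductive limit of Hausdorff spaces (hence Hausdorff), and a locally convex direct sum of Hausdorff LCS is Hausdorff; this is exactly Lemma \ref{lem:basicsonindlim}\,(b). You should either restrict your argument to the second countable case or insert this decomposition over $\pi_0(M)$ before appealing to the sequential result.
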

	\begin{proof}
		By Lemma \ref{lem:convergechar},  it is easy to see that the subspace $\ker \rho_M\cap \mathrm{Diff}_{K,r}(\CF,\underline{\mathcal{D}})$
		is closed in $\mathrm{Diff}_{K,r}(\CF,\underline{\mathcal{D}})$ for every $(K,r)\in \mathcal{C}(M)$.
		This implies that the LCS $\mathrm{D}^\infty_{K,r}(M;\CF)$ is Hausdorff.
		
		By definition, we have that 
		\be \label{eq:DC=directsumind}
		\mathrm{D}^\infty_c(M;\CF)
		=\varinjlim_{(K,r)\in \mathcal{C}(M)}
		\mathrm{D}^\infty_{K,r}(M;\CF)
		=\bigoplus_{Z\in \pi_0(M)}
		\varinjlim_{(K,r)\in \mathcal{C}(Z)}\mathrm{D}^\infty_{K,r}(Z;\CF)
		\ee as LCS.
		Then by Lemma \ref{lem:basicsonindlim} (b), the LCS $\mathrm{D}^\infty_c(M;\CF)$ is  Hausdorff.
	\end{proof}

	\begin{lemd}\label{lem:conextD}  Let $U$ be an open subset of $M$. Then the extension map
		\be \label{eq:conextF}\mathrm{ext}_{M,U}:\ \mathrm{D}^\infty_{c}(U;\CF)\rightarrow \mathrm{D}^\infty_{c}(M;\CF)
		\ee
		is continuous.
	\end{lemd}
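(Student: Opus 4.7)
The plan is to peel off the topology layer by layer: first reduce from the inductive limit topology on the source, then from the quotient topology on each component space, and finally arrive at a seminorm estimate at the level of differential operators, which is settled by a locality-plus-cutoff argument.

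By the universal property of the inductive limit topology on $\mathrm{D}^\infty_c(U;\CF) = \varinjlim_{(K,r) \in \mathcal{C}(U)} \mathrm{D}^\infty_{K,r}(U;\CF)$, continuity of $\mathrm{ext}_{M,U}$ reduces to continuity of its restriction to each $\mathrm{D}^\infty_{K,r}(U;\CF)$. Since every compact subset $K \subset U$ is also compact in $M$, one has $(K,r) \in \mathcal{C}(M)$, and the canonical map $\mathrm{D}^\infty_{K,r}(M;\CF) \to \mathrm{D}^\infty_c(M;\CF)$ is continuous by construction. Thus it suffices to show that $\mathrm{ext}_{M,U}$ sends $\mathrm{D}^\infty_{K,r}(U;\CF)$ continuously into $\mathrm{D}^\infty_{K,r}(M;\CF)$ for each fixed $(K,r) \in \mathcal{C}(U)$. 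Both of these spaces carry quotient topologies via $\rho_U$ and $\rho_M$, and the compatibility \eqref{eq:cosheafhom} shows that $\mathrm{ext}_{M,U}$ on densities is induced by the extension-by-zero map $\mathrm{ext}_{M,U}: \mathrm{Diff}_{K,r}(\CF|_U, \underline{\mathcal{D}}|_U) \to \mathrm{Diff}_{K,r}(\CF, \underline{\mathcal{D}})$. By the universal property of the quotient topology, we are reduced to checking continuity of this last map, with respect to the subspace topologies inherited from $\mathrm{Diff}_{\mathrm{fin}}$.

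For the estimate, fix $X \in \mathrm{Diff}_c(\underline{\mathcal{D}}, \underline{\CO})$ and $u \in \CF(M)$, and choose a smooth cutoff $\chi$ supported in a compact subset of $U$ with $\chi \equiv 1$ on a neighborhood of $K$. For every $D \in \mathrm{Diff}_{K,r}(\CF|_U, \underline{\mathcal{D}}|_U)$, locality of differential operators implies that $(X \circ \mathrm{ext}_{M,U}(D))(u)$ is supported in $K$ and agrees there with $((\chi\cdot X|_U) \circ D)(u|_U)$, yielding the bound
\begin{equation*}
\abs{\mathrm{ext}_{M,U}(D)}_{X,u}\;\leq\;\abs{D}_{\chi\cdot X|_U,\,u|_U}.
\end{equation*}
Since $\chi\cdot X|_U \in \mathrm{Diff}_c(\underline{\mathcal{D}}|_U, \underline{\CO}|_U)$ and $u|_U \in \CF(U)$, the right-hand side is one of the defining seminorms on $\mathrm{Diff}_{\mathrm{fin}}(\CF|_U, \underline{\mathcal{D}}|_U)$, which gives the desired continuity.

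The main obstacle is this cutoff maneuver itself: even when $X$ has compact support on $M$, its restriction $X|_U$ need not have compact support in $U$, and therefore cannot serve directly as a test operator for the seminorm topology on $\mathrm{Diff}_{\mathrm{fin}}(\CF|_U, \underline{\mathcal{D}}|_U)$. Exploiting the fact that $D$ is supported in the fixed compact set $K \subset U$ allows us to multiply $X|_U$ by $\chi$ without affecting the supremum that defines the seminorm, thereby bridging the two seminorm systems and completing the argument.
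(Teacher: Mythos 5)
Your proposal is correct and follows essentially the same route as the paper: reduce by the universal properties of the inductive limit and quotient topologies to the map $\mathrm{Diff}_{K,r}(\CF|_U,\underline{\mathcal{D}}|_U)\rightarrow \mathrm{Diff}_{K,r}(\CF,\underline{\mathcal{D}})$, then compare seminorms by cutting off $X$ with a function supported in $U$ that equals $1$ on $K$. The paper records this comparison as the equality $\abs{\mathrm{ext}_{M,U}(D)}_{X,u}=\abs{D}_{(f\circ X)|_U,\,u|_U}$, which is your estimate in the same cutoff maneuver.
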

	\begin{proof}
		Let $(K,r)\in \mathcal{C}(U)$. Note that the extension map $\mathrm{ext}_{M,U}$ on $\mathrm{Diff}_{c}(\CF|_U,\underline{\mathcal{D}}|_U)$ induces a linear map
		\be \label{eq:condiffkr2}
		\mathrm{Diff}_{K,r}(\CF|_U,\underline{\mathcal{D}}|_U)\rightarrow \mathrm{Diff}_{K,r}(\CF,\underline{\mathcal{D}}).
		\ee
		Then by taking the restriction, \eqref{eq:conextF} yields a linear map \be
		\label{eq:conextFKr}\mathrm{D}^\infty_{K,r}(U;\CF)\rightarrow \mathrm{D}^\infty_{K,r}(M;\CF).
		\ee

		Let $f$ be a formal function on $M$ such that $\mathrm{supp}\,f\subset U$ and $f|_K=1$.
		Note that
		\[
		\abs{\mathrm{ext}_{M,U}(D)}_{X,u}
		=\abs{D}_{(f\circ X)|_U,u|_U}\quad \text{for all $D\in  \mathrm{Diff}_{K,r}(\CF|_U,\underline{\mathcal{D}}|_U)$},
		\]
		where $X\in \mathrm{Diff}_c(\underline{\mathcal{D}},\underline{\mathcal{O}})$, $u\in \CF(M)$, and $f\circ X$ is as in \eqref{eq:defofDf}.
		Thus the map \eqref{eq:condiffkr2} is continuous, and so is the map \eqref{eq:conextFKr}.
		Therefore,  the composition of  
		\[\mathrm{D}^\infty_{K,r}(U;\CF)\rightarrow \mathrm{D}^\infty_{K,r}(M;\CF)\rightarrow \mathrm{D}^\infty_c(M;\CF)
		\]
		is continuous as well.
		The lemma then follows.
	\end{proof}
	
	Recall the definition of a cosheaf of $\mathcal{O}$-modules on $M$ from the Introduction. 
	In what follows, we give a canonical $\CO$-module structure on the cosheaf $\mathrm{D}^\infty_{c}(\CF)$. 
	For every open subset $U$ of $M$, we have a natural $\CO(U)$-structure on the space $(\CF(U))'$ with 
	\be\label{eq:actiononF'}
	\CO(U)\times (\CF(U))' \rightarrow(\CF(U))',\quad  (f,\eta)\mapsto\eta\circ f,   
	\ee
	where $\eta\circ f$ is the map 
	\be\label{eq:deffomega} \eta\circ f: \, \CF(U)\rightarrow \BC, \quad u\mapsto \eta(fu).  \ee
	\begin{lemd}\label{lem:OmoduleonFsm} Let $\eta\in \mathrm{D}^\infty_{c}(U;\CF)$ and $f\in \CO(U)$.
		Then the linear functional $\eta\circ f$
		lies in $\mathrm{D}^\infty_{c}(U;\CF)$.
	\end{lemd}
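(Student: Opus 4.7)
The plan is to exhibit $\eta \circ f$ as $\rho_U(D')$ for a suitable compactly supported differential operator $D'$. By the definition of $\mathrm{D}^\infty_c(U;\CF)$ in \eqref{eq:Dcindual}, I can choose $D \in \mathrm{Diff}_c(\CF|_U, \underline{\mathcal{D}}|_U)$ with $\eta = \rho_U(D)$, and the natural candidate for $D'$ is $D \circ f$, which lies in $\mathrm{Diff}(\CF|_U, \underline{\mathcal{D}}|_U)$ by the construction recalled around \eqref{eq:defofDf}.

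Two things must then be checked: (a) that $D \circ f$ is compactly supported, and (b) that $\rho_U(D \circ f) = \eta \circ f$. For (a), the local formula $(D \circ f)_W(v) = D_W(f|_W \cdot v)$ from \eqref{eq:defofDf} shows that $D \circ f$ vanishes on any open subset of $U$ on which $D$ vanishes, so $\mathrm{supp}(D \circ f) \subset \mathrm{supp}(D)$, which is compact by hypothesis. For (b), unwinding \eqref{eq:defcomsuppden}, \eqref{eq:defofDf}, and \eqref{eq:deffomega} yields, for every $u \in \CF(U)$,
\[
\rho_U(D \circ f)(u) = \int_U (D \circ f)(u) = \int_U D(fu) = \eta(fu) = (\eta \circ f)(u).
\]
Hence $\eta \circ f = \rho_U(D \circ f) \in \mathrm{D}^\infty_c(U;\CF)$.

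There is no real obstacle here: the lemma is essentially a direct consequence of the fact that the class of compactly supported differential operators is stable under right multiplication by formal functions, together with the tautological compatibility between $\rho_U$ and that operation. The only thing one might pause over is the support assertion, but it follows immediately from the locality of the definition of $D \circ f$.
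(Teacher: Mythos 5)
Your proof is correct and follows exactly the route the paper takes: pick $D$ with $\rho_U(D)=\eta$ and observe that $\eta\circ f=\rho_U(D\circ f)$ with $D\circ f$ still compactly supported. You simply spell out the two verifications (support containment and the identity $\int_U D(fu)=\eta(fu)$) that the paper leaves implicit.
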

	\begin{proof} Choose a differential operator $D\in \mathrm{Diff}_c(\CF|_U,\underline{\mathcal{D}}|_U)$ such that
		$\rho_U(D)=\eta$.
		The lemma follows from the fact that $\eta\circ f=\rho_U(D\circ f)$. Here $D\circ f\in \mathrm{Diff}_c(\CF|_U,\underline{\mathcal{D}}|_U)$ is defined as in \eqref{eq:defofDf}.
	\end{proof}
	
	By  
	Lemma \ref{lem:OmoduleonFsm} and \eqref{eq:actiononF'}, there is an $\CO$-module structure on the cosheaf $\mathrm{D}^\infty_{c}(\CF)$
	transferred from that on $\CF$.
	On the other hand,  the  $\CO$-module structure  given by ($U$ is an open subset of $M$)
	\[
	\CO(U)\times \mathrm{Diff}(\CF|_{U},\underline{\mathcal{D}}|_{U})\rightarrow \mathrm{Diff}(\CF|_{U},\underline{\mathcal{D}}|_{U}),\quad 
	(f,D)\mapsto D\circ f\]
	on $\mathcal{D}\mathrm{iff}(\CF,\underline{\mathcal{D}})$ induces
	an  $\CO$-module structure on  $\mathcal{D}\mathrm{iff}_c(\CF,\underline{\mathcal{D}})$.
	It is obvious that $\rho$ (see \eqref{eq:rho})  is an $\CO$-module homomorphism of cosheaves.

	\begin{lemd}\label{lem:rescosheafonopensubset}
		Let $(K,r)\in \mathcal{C}(M)$, $U$ an open subset of $M$, and $f$ a formal function on $M$ supported in $U$.
		Then for every $\eta\in\mathrm{D}^\infty_{K,r}(M;\CF)$, there is a unique element \be\label{eq:etaf|U} (\eta\circ f)|_U\in \mathrm{D}^\infty_{\mathrm{supp}f\cap K,r}(U;\CF)\ee
		such that
		$\mathrm{ext}_{M,U}((\eta\circ f)|_U)=\eta\circ f$. Furthermore, the linear map
		\be \label{eq:conmfCF} m_f:\ \mathrm{D}^\infty_{c}(M;\CF)\rightarrow \mathrm{D}^\infty_{c}(U;\CF),\quad \eta\mapsto (\eta\circ f)|_U
		\ee
		is continuous.
	\end{lemd}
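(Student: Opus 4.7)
The plan is to define $(\eta\circ f)|_U$ by lifting $\eta$ to a differential operator, performing the multiplication-and-restriction at the level of $\mathcal{D}\mathrm{iff}_c(\CF,\underline{\mathcal{D}})$, and then descending through $\rho_U$. Choose $D\in \mathrm{Diff}_{K,r}(\CF,\underline{\mathcal{D}})$ with $\rho_M(D)=\eta$. The operator $D\circ f$ lies in $\mathrm{Diff}_{\mathrm{supp}f\cap K,r}(\CF,\underline{\mathcal{D}})$, and since $\mathrm{supp}\,f\cap K\subset U$ is compact, the cosheaf $\mathcal{D}\mathrm{iff}_c(\CF,\underline{\mathcal{D}})$ provides a canonical preimage $(D\circ f)|_U\in \mathrm{Diff}_{\mathrm{supp}f\cap K,r}(\CF|_U,\underline{\mathcal{D}}|_U)$ under $\mathrm{ext}_{M,U}$. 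I set
\[
(\eta\circ f)|_U:=\rho_U\bigl((D\circ f)|_U\bigr)\in \mathrm{D}^\infty_{\mathrm{supp}f\cap K,r}(U;\CF).
\]
A short computation (essentially the same as the proof of Lemma \ref{lem:defextmap}) gives $\rho_M(D\circ f)=\rho_M(D)\circ f=\eta\circ f$, so that by \eqref{eq:cosheafhom},
\[
\mathrm{ext}_{M,U}\bigl((\eta\circ f)|_U\bigr)=\rho_M(D\circ f)=\eta\circ f.
\]
Independence of the chosen lift $D$, together with uniqueness of $(\eta\circ f)|_U$, will then both follow at once from the flabbiness of $\mathrm{D}^\infty_c(\CF)$ established in Lemma \ref{lem:defextmap}.

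For the continuity of $m_f$, I will use the universal property of the inductive limit \eqref{eq:D_ctop}. It suffices to show that for each $(K,r)\in \mathcal C(M)$, the restriction of $m_f$ to $\mathrm{D}^\infty_{K,r}(M;\CF)$ is continuous into $\mathrm{D}^\infty_{\mathrm{supp}f\cap K,r}(U;\CF)$ and hence into $\mathrm{D}^\infty_c(U;\CF)$. Since $\rho_U$ is a topological quotient, I may prove this at the level of differential operators. The map factors as
\[
\mathrm{Diff}_{K,r}(\CF,\underline{\mathcal{D}})\xrightarrow{D\mapsto D\circ f}\mathrm{Diff}_{\mathrm{supp}f\cap K,r}(\CF,\underline{\mathcal{D}})\xrightarrow{\;\cdot|_U\;}\mathrm{Diff}_{\mathrm{supp}f\cap K,r}(\CF|_U,\underline{\mathcal{D}}|_U),
\]
where the second arrow is the inverse of $\mathrm{ext}_{M,U}$ on the image, available because every operator in the middle space is supported in $U$.

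Continuity of the first arrow is immediate from the identity $\abs{D\circ f}_{X,u}=\abs{D}_{X,fu}$. For the second arrow, given a seminorm $\abs{\,\cdot\,}_{X',u'}$ with $X'\in \mathrm{Diff}_c(\underline{\mathcal{D}}|_U,\underline{\CO}|_U)$ and $u'\in \CF(U)$, I choose a cutoff $g\in \CO(M)$ supported in $U$ with $g\equiv 1$ on a neighborhood of $\mathrm{supp}\,f\cap K$; then $gu'\in \CF(M)$ is defined, and because any operator $D'$ in the middle space is supported where $g=1$, one checks the pointwise identity $X'(D'|_U(u'))=\mathrm{ext}_{M,U}(X')(D'(gu'))|_U$, giving $\abs{D'|_U}_{X',u'}=\abs{D'}_{\mathrm{ext}_{M,U}(X'),\,gu'}$. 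This furnishes the required seminorm bound. I expect the bookkeeping with cutoffs and supports in this last step, together with ensuring the identifications $(gu')|_U=g|_U u'$ and the compatibility of restriction of differential operators with restriction of sections, to be the only delicate point; everything else is formal once the cosheaf-of-differential-operators machinery of \cite{CSW} is invoked.
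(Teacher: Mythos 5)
Your proposal is correct and follows essentially the same route as the paper: define $(\eta\circ f)|_U$ by lifting to $\mathrm{Diff}_{K,r}(\CF,\underline{\mathcal{D}})$, composing with $f$, restricting to $U$ and descending via $\rho_U$, with uniqueness and well-definedness from flabbiness, and continuity obtained from seminorm identities at the level of differential operators plus the universal property of the inductive limit. The only cosmetic difference is that you split the seminorm estimate into two steps ($\abs{D\circ f}_{X,u}=\abs{D}_{X,fu}$ followed by the cutoff argument for restriction), whereas the paper performs the single combined computation $\abs{m_f(D)}_{X,u}=\abs{D}_{\mathrm{ext}_{M,U}(X),\,fgu}$.
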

	\begin{proof} The uniqueness follows from the flabby property of $\mathrm{D}^{\infty}_{c}(\CF)$ (see Lemma \ref{lem:defextmap}).
		Choose  $D\in \mathrm{Diff}_{K,r}(\CF,\underline{\mathcal{D}})$ such that $\eta=\rho_M(D)$.
		Note that \[
		(D\circ  f)|_U\in  \mathrm{Diff}_{\mathrm{supp}f\cap K,r}(\CF|_U,\underline{\mathcal{D}}|_U),\]
		where $D\circ f$ is defined as in \eqref{eq:defofDf}.
		By using the fact that $\rho$ is an $\CO$-module homomorphism of cosheaves,
		the first assertion then follows by setting \be
		\label{eq:defetaf}(\eta\circ f)|_U:=\rho_U((D\circ  f)|_U).\ee

		For the second one, we first prove that the map
		\be \label{eq:mfconpre1}
		m_f:\ \mathrm{Diff}_{K,r}(\CF,\underline{\mathcal{D}})\rightarrow \mathrm{Diff}_{K\cap \mathrm{supp}\,f,r}(\CF|_U,\underline{\mathcal{D}}|_U),\quad D\mapsto (D \circ f)|_U\ee
		is continuous.
		Let $X\in \mathrm{Diff}_c(\underline{\mathcal{D}}|_U,
		\underline{\CO}|_U)$, $u\in \CF(U)$, and let $g$ be a formal function  on $M$ such that \[\mathrm{supp}\,g\subset U \quad \text{and} \quad g|_{\mathrm{supp}\,f}=1.\]
		Then we have an element 
		$u':=fgu\in \CF(M)$, as well as an element  \[X':= \mathrm{ext}_{M,U}(X)\in \mathrm{Diff}_c(\underline{\mathcal{D}},\underline{\CO}).\]
		For every $D\in \mathrm{Diff}_{K,r}(\CF,\underline{\mathcal{D}})$, we have that 
		\begin{eqnarray*}
			(X'\circ D)(u')&=&X'(D(fgu))=X'((D\circ f)(gu))\\
			&=&(\mathrm{ext}_{M,U}(X))(\mathrm{ext}_{M,U}((D\circ f)|_U(u))\\
			&=&\mathrm{ext}_{M,U} ((X\circ (D\circ f)|_U)(u)).
		\end{eqnarray*} 
		Consequently, we obtain that
		\begin{eqnarray*}
			|D|_{X',u'}&=&\sup_{a\in K}|((X'\circ D)(u'))(a)|\\
			&=&\sup_{a\in K\cap \mathrm{supp}f}|((X\circ (D\circ f)|_U)(u))(a)|\\
			&=&|m_f(D)|_{X,u}. 
		\end{eqnarray*}
		This implies that \eqref{eq:mfconpre1} is continuous. 
		
		From \eqref{eq:defetaf} and the continuity of \eqref{eq:mfconpre1}, one concludes that the map 
		\be\label{eq:defmfDKr} m_f:\ \mathrm{D}^\infty_{K,r}(M;\CF)\rightarrow \mathrm{D}^\infty_{\mathrm{supp}f\cap K,r}(U;\CF),\quad \eta\mapsto (\eta\circ f)|_U
		\ee
		is continuous, and so is the composition of 
		\[
		\mathrm{D}^\infty_{K,r}(M;\CF)\xrightarrow{m_f} \mathrm{D}^\infty_{\mathrm{supp}f\cap K,r}(U;\CF)\rightarrow 
		\mathrm{D}^\infty_{c}(U;\CF).
		\]
		This implies that
		\eqref{eq:conmfCF} is continuous, and we complete the proof.
	\end{proof}
	
	We refer to Appendixes \ref{appendixA4} and \ref{appendixB3} for the following usual notations: 
	\[
	\CL_b(E_1,E_2),\quad\CL_c(E_1,E_2),\quad (E_1)_b',\quad (E_1)_c'\quad \text{($E_1$ and $E_2$ are LCS)},
	\]
	which are used to denote various LCS of continuous linear maps. 
	Let $f: E_1\rightarrow E_2$
	be a continuous linear map. Note that the transposes
	\be\label{eq:trancont}{}^tf:  \CL_b(E_2,E_3)\rightarrow \CL_b(E_1,E_3)\ \text{and}\ {}^tf:  \CL_c(E_2,E_3)\rightarrow \CL_c(E_1,E_3) 
	\ \text{are continuous},\ee 
	where $E_3$ is another LCS (\cf \cite[Corollary of Proposition 19.5]{Tr}). Together with  Lemma \ref{lem:rescosheafonopensubset}, 
	the following result is clear.
	\begin{cord} Let $U$ be an open subset of $M$, and let $f$ be  a formal function on $M$ supported in $U$.
		Then the linear map
		\be\begin{split}\label{eq:defTfFUsmtoFMsm} T_f:\ \CL_c(\mathrm{D}^\infty_{c}(U;\CF),\mathrm{D}^\infty_{c}(U;\CF))
			&\rightarrow \CL_c(\mathrm{D}^\infty_{c}(M;\CF),\mathrm{D}^\infty_{c}(M;\CF)),\\
			\phi&\mapsto \mathrm{ext}_{M,U}\circ \phi\circ m_f
		\end{split}\ee
		is continuous. Here $m_f$ is defined as in \eqref{eq:conmfCF}.
	\end{cord}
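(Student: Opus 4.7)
The plan is to factor $T_f$ as a composition $T_f = P_g\circ R_h$ of two continuous linear maps between spaces of operators equipped with the topology of uniform convergence on precompact sets, where
\[
R_h: \CL_c(\mathrm{D}^\infty_c(U;\CF),\mathrm{D}^\infty_c(U;\CF))\rightarrow \CL_c(\mathrm{D}^\infty_c(M;\CF),\mathrm{D}^\infty_c(U;\CF)),\quad \phi\mapsto \phi\circ m_f,
\]
is the precomposition with the continuous linear map $h := m_f$ (continuity supplied by Lemma \ref{lem:rescosheafonopensubset}), and
\[
P_g: \CL_c(\mathrm{D}^\infty_c(M;\CF),\mathrm{D}^\infty_c(U;\CF))\rightarrow \CL_c(\mathrm{D}^\infty_c(M;\CF),\mathrm{D}^\infty_c(M;\CF)),\quad \psi\mapsto \mathrm{ext}_{M,U}\circ \psi,
\]
is the postcomposition with the continuous linear map $g := \mathrm{ext}_{M,U}$ (continuity supplied by Lemma \ref{lem:conextD}). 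Once both factors are shown to be continuous, $T_f$ will be continuous as a composition.

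For the continuity of $R_h$, I would observe that $R_h$ is literally the transpose map ${}^t m_f$ in the generality of \eqref{eq:trancont} with $E_1 = \mathrm{D}^\infty_c(M;\CF)$, $E_2 = E_3 = \mathrm{D}^\infty_c(U;\CF)$, and the continuous linear map $f = m_f$. Hence the continuity follows directly from \eqref{eq:trancont}.

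For the continuity of $P_g$, I would argue directly from the definition of the topology on $\CL_c$: a subbasic $0$-neighborhood in the codomain has the form $W(K,V) := \{\chi : \chi(K)\subset V\}$, where $K$ is a precompact subset of $\mathrm{D}^\infty_c(M;\CF)$ and $V$ is a $0$-neighborhood in $\mathrm{D}^\infty_c(M;\CF)$. Its preimage under $P_g$ is
\[
P_g^{-1}(W(K,V)) = \{\psi : g(\psi(K))\subset V\} = \{\psi : \psi(K)\subset g^{-1}(V)\} = W(K,g^{-1}(V)),
\]
which is a subbasic $0$-neighborhood in the domain because $g^{-1}(V)$ is a $0$-neighborhood in $\mathrm{D}^\infty_c(U;\CF)$ by the continuity of $g$.

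Assembling these two steps, $T_f = P_g\circ R_h$ is continuous. The main (and only) obstacle is bookkeeping: one must correctly identify the domains, codomains, and the instance of \eqref{eq:trancont} to be invoked (in the version with a general target LCS $E_3$, not merely the scalar case). No new estimate is required beyond the continuity already established in Lemmas \ref{lem:conextD} and \ref{lem:rescosheafonopensubset}.
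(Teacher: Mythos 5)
Your proof is correct and follows essentially the same route the paper intends: the paper declares the corollary ``clear'' from \eqref{eq:trancont} (continuity of precomposition with the continuous map $m_f$ from Lemma \ref{lem:rescosheafonopensubset}) together with the continuity of $\mathrm{ext}_{M,U}$ (Lemma \ref{lem:conextD}), and you simply spell out the factorization $T_f=P_g\circ R_h$ and the standard neighborhood computation for the postcomposition factor. The only cosmetic discrepancy is that you describe the $\CL_c$-topology via precompact sets while the paper uses convex compact convergence; the argument is unaffected.
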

	
	Let $(K,r)\in \mathcal{C}(M)$.  Take a finite family $\{U_i\}_{i=1}^s$ of open subsets   of $M$ that covers  $K$, and
	take a family $\{f_i\}_{i=1}^s$ of formal functions on $M$ such that 
	\[\text{$\mathrm{supp}\,f_i\subset U_i$ for all $i=1,2,\dots,s$,\quad and\quad
		$\sum_{i=1}^s f_i|_K=1$.}\]
	Recall from the proof of Lemma \ref{lem:conextD} that the linear map \eqref{eq:conextFKr} is continuous. Then there is a continuous linear map
	\be\begin{split}
		\label{eq:compacthomonto2}
		\bigoplus_{i=1}^s \mathrm{D}^\infty_{K_i,r}(U_i;\CF)&\rightarrow \mathrm{D}^\infty_{K,r}(M;\CF),\\
		(\eta_1,\eta_2,\dots,\eta_s)&\mapsto 
		\sum_{i=1}^s \mathrm{ext}_{M,U_i}(\eta_i),
	\end{split}\ee
	where $K_i:=\mathrm{supp}\,f_i\cap K$ for all $i=1,2,\dots,s$. On the other hand, recall also from the proof of Lemma \ref{lem:rescosheafonopensubset} that the linear map \eqref{eq:defmfDKr} is continuous. Then there is  another continuous linear map  
	\be\begin{split}
		\label{eq:rinvcompacthomonto2}
		\mathrm{D}^\infty_{K,r}(M;\CF)&\rightarrow 
		\bigoplus_{i=1}^s \mathrm{D}^\infty_{K_i,r}(U_i;\CF),\\
		\eta&\mapsto
		((\eta\circ f_1)|_{U_1},(\eta\circ f_2)|_{U_2},\dots,(\eta\circ f_s)|_{U_s}).
	\end{split}\ee
	
	The following result is straightforward. 
	
	\begin{lemd}\label{lem:homextDKr} The continuous linear map \eqref{eq:rinvcompacthomonto2} is a right inverse of
		\eqref{eq:compacthomonto2}. 
		In particular, the continuous linear map  \eqref{eq:compacthomonto2} is open and surjective, and \eqref{eq:rinvcompacthomonto2} is a topological embedding.
	\end{lemd}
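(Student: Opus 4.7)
The plan is to verify the right-inverse identity by a direct computation, and then to deduce surjectivity, openness, and the topological embedding property from the elementary fact that any continuous linear map admitting a continuous linear right inverse is automatically open and surjective, while the right inverse is automatically a topological embedding.

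For the computation, I would fix $\eta \in \mathrm{D}^\infty_{K,r}(M;\CF)$ and choose $D \in \mathrm{Diff}_{K,r}(\CF,\underline{\mathcal D})$ with $\rho_M(D) = \eta$. Composing \eqref{eq:rinvcompacthomonto2} with \eqref{eq:compacthomonto2} sends $\eta$ to $\sum_{i=1}^s \mathrm{ext}_{M,U_i}((\eta\circ f_i)|_{U_i})$. By the defining property of $(\eta \circ f_i)|_{U_i}$ in Lemma \ref{lem:rescosheafonopensubset}, each summand equals $\eta \circ f_i$, so the total equals $\eta \circ (\sum_i f_i)$. Evaluating on an arbitrary $u \in \CF(M)$, the difference $\eta \circ (\sum_i f_i)(u) - \eta(u)$ becomes $\int_M D\bigl((\sum_i f_i - 1)\,u\bigr)$. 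Since the partition-of-unity condition $\sum_i f_i|_K = 1$ forces $\sum_i f_i - 1$ to vanish on a neighborhood of $K = \mathrm{supp}\,D$, and $D$ annihilates any section vanishing near its support, this integral is zero. Hence \eqref{eq:rinvcompacthomonto2} is a right inverse of \eqref{eq:compacthomonto2}.

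Write $\phi$ for \eqref{eq:compacthomonto2} and $\psi$ for \eqref{eq:rinvcompacthomonto2}, with $\phi \circ \psi = \mathrm{id}$ and both maps continuous. Then $\phi$ is surjective. For openness of $\phi$: given any open neighborhood $W$ of $0$ in the domain, continuity of $\psi$ makes $\psi^{-1}(W)$ an open neighborhood of $0$ in the codomain, and the identity $\phi \circ \psi = \mathrm{id}$ gives $\psi^{-1}(W) = \phi(\psi(\psi^{-1}(W))) \subset \phi(W)$, so $\phi(W)$ is a neighborhood of $0$. Since $\phi$ is linear, this implies it is open. For $\psi$: it is injective since $\phi$ is a left inverse, and its set-theoretic inverse on the image equals the restriction of the continuous map $\phi$, making $\psi$ a homeomorphism onto its image, i.e.\ a topological embedding.

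The only mildly delicate step is the support-cancellation identity $\int_M D((\sum_i f_i - 1)u) = 0$, which hinges on reading the partition-of-unity condition at the formal-manifold level as the assertion that $\sum_i f_i - 1$ lies in the ideal of formal functions vanishing in a neighborhood of $K$. Once this is granted, the whole lemma reduces to the right-inverse computation above together with the abstract categorical observation about continuous right inverses in the category of LCS, with no further estimates or constructions needed.
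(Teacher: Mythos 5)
Your proof is correct and fills in exactly the argument the paper has in mind (the paper omits it as ``straightforward''): the identity $\phi\circ\psi=\mathrm{id}$ follows from the defining property of $(\eta\circ f_i)|_{U_i}$ in Lemma \ref{lem:rescosheafonopensubset} together with $\eta\circ(\sum_i f_i)=\eta$, and the topological conclusions are the standard consequences of a continuous linear right inverse. Your reading of $\sum_i f_i|_K=1$ as a germ-level condition (so that $\sum_i f_i-1$ vanishes on a neighborhood of $K\supset\mathrm{supp}\,D$, whence $D((\sum_i f_i-1)u)=0$ by locality of $D$) is the intended one and is the only point requiring care.
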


	Furthermore, we have the following result. 
	
	\begin{prpd}\label{prop:extconD} For every  open cover  $\{U_\alpha\}_{\alpha\in \Gamma}$  of $M$,
		the canonical map
		\be\label{eq:openextDiff}
		\bigoplus_{\alpha\in \Gamma}
		\mathrm{D}^\infty_c(U_{\alpha};\CF)
		\rightarrow\mathrm{D}^\infty_c(M;\CF),\quad \{\eta_\alpha\}_{\alpha\in \Gamma}\mapsto \sum_{\alpha\in \Gamma}
		\mathrm{ext}_{M,U_\alpha}(\eta_\alpha)
		\ee
		has a continuous linear right inverse. In particular, the linear map 
		\eqref{eq:openextDiff}
		is continuous, open and surjective.
	\end{prpd}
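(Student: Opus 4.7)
The plan is to construct an explicit continuous linear right inverse to \eqref{eq:openextDiff} using a locally finite partition of unity subordinate to $\{U_\alpha\}_{\alpha\in\Gamma}$. Since the underlying topological space of $M$ is paracompact and Hausdorff, and partitions of unity by formal functions are available (as already exploited in Lemmas \ref{lem:MV1} and \ref{lem:MV2}), I would first fix a locally finite family $\{f_\alpha\}_{\alpha\in\Gamma}$ of formal functions on $M$ with $\mathrm{supp}(f_\alpha)\subset U_\alpha$ for every $\alpha$ and $\sum_{\alpha\in\Gamma}f_\alpha=1$. The candidate right inverse is then
\[ \sigma:\ \mathrm{D}^\infty_c(M;\CF)\longrightarrow \bigoplus_{\alpha\in\Gamma}\mathrm{D}^\infty_c(U_\alpha;\CF),\qquad \sigma(\eta):=\{m_{f_\alpha}(\eta)\}_{\alpha\in\Gamma}, \]
where each $m_{f_\alpha}$ is the continuous linear map from \eqref{eq:conmfCF}.

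To verify that $\sigma$ actually takes values in the algebraic direct sum, I would fix $\eta=\rho_M(D)\in\mathrm{D}^\infty_{K,r}(M;\CF)$ with $D\in\mathrm{Diff}_{K,r}(\CF,\underline{\mathcal{D}})$. By local finiteness of $\{f_\alpha\}_{\alpha\in\Gamma}$ combined with compactness of $K$, only finitely many indices $\alpha_1,\dots,\alpha_s$ satisfy $\mathrm{supp}(f_{\alpha_i})\cap K\neq\emptyset$; for any other $\alpha$, one has $D\circ f_\alpha=0$ (since its support sits in $K\cap\mathrm{supp}(f_\alpha)=\emptyset$), and therefore $m_{f_\alpha}(\eta)=0$ by \eqref{eq:defetaf}. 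The right-inverse property follows from Lemma \ref{lem:rescosheafonopensubset} (which ensures $\mathrm{ext}_{M,U_\alpha}(m_{f_\alpha}(\eta))=\eta\circ f_\alpha$) together with the $\CO$-module structure on $\mathrm{D}^\infty_c(\CF)$:
\[ \sum_{\alpha\in\Gamma}\mathrm{ext}_{M,U_\alpha}(m_{f_\alpha}(\eta)) =\sum_{\alpha\in\Gamma}\eta\circ f_\alpha =\eta\circ\Bigl(\sum_{\alpha\in\Gamma}f_\alpha\Bigr) =\eta, \]
where only finitely many summands in the middle are nonzero.

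The main technical point is continuity of $\sigma$. Since $\mathrm{D}^\infty_c(M;\CF)$ carries the inductive limit topology \eqref{eq:D_ctop}, it suffices to check that for each $(K,r)\in\mathcal{C}(M)$ the restriction $\sigma|_{\mathrm{D}^\infty_{K,r}(M;\CF)}$ is continuous. By the support argument above, this restriction factors as
\[ \mathrm{D}^\infty_{K,r}(M;\CF)\longrightarrow \bigoplus_{i=1}^s \mathrm{D}^\infty_{\mathrm{supp}(f_{\alpha_i})\cap K,\,r}(U_{\alpha_i};\CF)\hookrightarrow \bigoplus_{\alpha\in\Gamma}\mathrm{D}^\infty_c(U_\alpha;\CF), \]
where each component of the first map is continuous by \eqref{eq:defmfDKr} (making the whole first map continuous), and the inclusion of a finite partial sum into the full locally convex direct sum is continuous by the universal property.

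With $\sigma$ established as a continuous linear right inverse, the remaining assertions follow formally: continuity of \eqref{eq:openextDiff} is a direct consequence of continuity of each $\mathrm{ext}_{M,U_\alpha}$ (Lemma \ref{lem:conextD}) together with the universal property of the direct sum; surjectivity is automatic from the existence of a right inverse; and openness follows from the general fact that any continuous linear map between LCS admitting a continuous linear right inverse is open (the preimage under $\sigma$ of any $0$-neighborhood in the source lies inside the image of that neighborhood under \eqref{eq:openextDiff}). I expect the main obstacle to be the careful bookkeeping around the locally convex direct sum topology when $\Gamma$ is uncountable, together with the small support computation $\mathrm{supp}(D\circ f_\alpha)\subset \mathrm{supp}(D)\cap\mathrm{supp}(f_\alpha)$ in the formal setting, both of which should be accessible via the foundational material of \cite{CSW}.
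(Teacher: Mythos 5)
Your proposal is correct and follows essentially the same route as the paper: both take a partition of unity $\{f_\alpha\}_{\alpha\in\Gamma}$ subordinate to the cover, use $\eta\mapsto\{(\eta\circ f_\alpha)|_{U_\alpha}\}_{\alpha\in\Gamma}$ as the continuous linear right inverse, and establish its continuity by restricting to each $\mathrm{D}^\infty_{K,r}(M;\CF)$ and factoring through the finite sum over the indices with $\mathrm{supp}\,f_\alpha\cap K\ne\emptyset$, exactly as in the paper's appeal to the map \eqref{eq:rinvcompacthomonto2}. Your extra details (checking the image lands in the algebraic direct sum, and deducing openness from the existence of a continuous right inverse) are correct and merely make explicit what the paper leaves implicit.
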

	\begin{proof} Take a  partition of unity $\{f_\alpha\}_{\alpha\in \Gamma}$ on $M$ subordinate to  the cover $\{U_\alpha\}_{\alpha\in \Gamma}$. 
		It is easy to see that the linear map 
		\be\label{eq:DcMFtoDUF} \mathrm{D}^\infty_c(M;\CF) \rightarrow\bigoplus_{\alpha\in \Gamma}
		\mathrm{D}^\infty_c(U_{\alpha};\CF),\quad \eta\mapsto \{(\eta\circ f_\alpha)|_{U_\alpha}\}_{\alpha\in \Gamma}
		\ee
		is a  right inverse of \eqref{eq:openextDiff}.
		Thus, it suffices to prove that \eqref{eq:DcMFtoDUF} is continuous.

		For every $(K,r)\in \mathcal{C}(M)$ and $\alpha\in \Gamma$, write
		\[
		K_{\alpha}:=\mathrm{supp}\,f_\alpha\cap K\quad\text{and}\quad \Gamma_K:=\{\alpha\in \Gamma\mid K_{\alpha}\ne \emptyset\}.
		\] 
		As in \eqref{eq:rinvcompacthomonto2}, there is a continuous linear map 
		\[
		\mathrm{D}^\infty_{K,r}(M;\CF)\rightarrow \bigoplus_{\alpha\in \Gamma_K} \mathrm{D}^\infty_{K_{\alpha}, r}(U_{\alpha};\CF),\quad \eta\mapsto \{(\eta\circ f_\alpha)|_{U_\alpha}\}_{\alpha\in \Gamma_K}.\]
		This implies that the restriction of \eqref{eq:DcMFtoDUF} on $\mathrm{D}^\infty_{K,r}(M;\CF)$ is continuous, and so is the map \eqref{eq:DcMFtoDUF}, as required.

	\end{proof}

	The following result is a by-product of Proposition \ref{prop:extconD}.
	
	\begin{cord}\label{cor:Dc=directsum}
		There is an identification 
		\be\label{eq:Dc=directsum}
		\mathrm{D}^\infty_c(M;\CF)=\bigoplus_{Z\in \pi_0(M)}
		\mathrm{D}^\infty_c(Z;\CF)
		\ee  as LCS.
	\end{cord}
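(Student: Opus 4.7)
The plan is to obtain this identification as a direct corollary of Proposition \ref{prop:extconD} applied to the open cover of $M$ by its connected components.

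First I would verify that $\{Z\}_{Z\in \pi_0(M)}$ is indeed an open cover of $M$. Since each point of $M$ admits a neighborhood isomorphic to $(\BR^n)^{(k)}$ whose underlying topological space $\BR^n$ is connected, the manifold $M$ is locally connected; hence every $Z \in \pi_0(M)$ is open (and in fact clopen) in $M$. Applying Proposition \ref{prop:extconD} to this cover immediately yields that the canonical map
\[
\Phi:\ \bigoplus_{Z \in \pi_0(M)} \mathrm{D}^\infty_c(Z;\CF) \to \mathrm{D}^\infty_c(M;\CF), \quad \{\eta_Z\}_{Z\in\pi_0(M)} \mapsto \sum_{Z\in\pi_0(M)} \mathrm{ext}_{M,Z}(\eta_Z)
\]
is continuous, open and surjective. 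It remains only to upgrade $\Phi$ to a topological isomorphism by establishing injectivity.

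For injectivity I would exploit the fact that each $Z$ is clopen, so its characteristic function $\mathbf{1}_Z$ is locally constant and defines a formal function on $M$ supported in $Z$. The family $\{\mathbf{1}_Z\}_{Z\in\pi_0(M)}$ is then a partition of unity subordinate to the cover $\{Z\}$, and the corresponding continuous right inverse \eqref{eq:DcMFtoDUF} produced in the proof of Proposition \ref{prop:extconD} sends $\eta$ to $\{(\eta \circ \mathbf{1}_Z)|_Z\}_{Z}$. A brief computation using $\mathbf{1}_Z|_{Z'}=\delta_{Z,Z'}$ shows that for any family $\{\eta_Z\}$ one has $(\Phi(\{\eta_Z\}) \circ \mathbf{1}_{Z_0})|_{Z_0} = \eta_{Z_0}$, so this right inverse is simultaneously a left inverse, yielding the desired topological isomorphism.

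No substantial obstacle is anticipated here, as the argument is essentially bookkeeping on top of Proposition \ref{prop:extconD}. The only points requiring care are the verification that $\mathbf{1}_Z$ genuinely lies in $\CO(M)$ (clear from local connectedness) and that the right inverse furnished by the proposition also inverts $\Phi$ on the left in this special situation where the subordinate functions are mutually disjoint indicators.
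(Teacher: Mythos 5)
Your proposal is correct and follows exactly the route the paper intends: the paper states this corollary as a by-product of Proposition \ref{prop:extconD} (applied to the open cover by connected components) without further detail, and your argument — using that the components are clopen, that their indicator functions form a partition of unity, and that the resulting continuous right inverse of the extension map is also a left inverse — is precisely the bookkeeping needed to justify that claim.
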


	\subsection{Compactly supported formal densities}
	Let $N$ be a smooth manifold and $k\in \BN$. 
	In this subsection, we suppose that  $(M,\CO)=(N,\CO_N^{(k)})$, and we will give  a description of the LCS $\mathrm{D}^\infty_{c}(M;\CO)$ (see Propositions \ref{prop:charlcsosm'}).
	Particularly, if $k=0$, it follows  that  $\mathrm{D}^\infty_{c}(M;\CO)$ is exactly the space of  compactly supported (complex-valued) smooth densities on $N$. 
	This justifies  Definition  \ref{de:formalden}.

	Write  $\mathrm{D}_c^\infty(N)$ for the space of all compactly supported (complex-valued) smooth densities on $N$.
	With the obvious  extension maps, the assignment
	\[\mathcal{D}^{(k)}_{N,c}:\ U\mapsto \mathrm{D}^\infty_c(U)[y_1^*,y_2^*,\dots,y_k^*]\quad (\text{$U$ is an open subset of $N$})\]
	forms a cosheaf of complex vector spaces over $N$. For every open subset $U$ of $N$, there is a   non-degenerate pairing
	\be\label{eq:pairing} \CO_N^{(k)}(U)\times \mathcal{D}_{N,c}^{(k)}(U)\rightarrow \C
	\ee given by \be \label{eq:pairing2}
	\la \sum_{J\in \BN^k} f_J y^J,
	\tau \cdot  (y^*)^L\ra= L! \cdot \int_U f_L\tau,\ee
	where $f_J\in \mathrm{C}^\infty(U)$, $\tau\in \mathrm{D}^\infty_c(U)$, and \[(y^*)^L=(y_1^*)^{l_1}(y_2^*)^{l_2}\cdots(y_k^*)^{l_k}\] for every $L=(l_1,l_2,\dots,l_k)\in\BN^k$.
	
	For every open subset $U$ of $N$, we view $\mathcal{D}_{N,c}^{(k)}(U)$ as a subspace of $(\CO_N^{(k)}(U))'$ through the pairing \eqref{eq:pairing}. 
	On the other hand, recall that 
	$\mathrm{D}_c^{\infty}(U;\CO)$ is also a subspace of $(\CO(U))'=(\CO_N^{(k)}(U))'$ (see \eqref{eq:Dcindual}).
	We have the following result.
	
	\begin{prpd}\label{prop:charocd} Suppose that $M=N^{(k)}$ as above. Then
		$\mathrm{D}^\infty_{c}(\CO)=\mathcal{D}_{N,c}^{(k)}$ as cosheaves over $M$.
	\end{prpd}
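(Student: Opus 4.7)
The plan is to show that for each open subset $U\subset M$, the two subspaces $\mathrm{D}^\infty_c(U;\CO)$ and $\mathcal{D}^{(k)}_{N,c}(U)$ of $(\CO(U))'$ coincide, and then observe that these identifications intertwine the extension maps of the two cosheaves. Compatibility with extensions is essentially automatic: on the one hand, the extension in $\mathrm{D}^\infty_c(\CO)$ is by definition the transpose of restriction on $\CO$; on the other hand, the pairing \eqref{eq:pairing2} is manifestly compatible with extending $\tau$ and $f_L$ by zero, so the extension in $\mathcal{D}^{(k)}_{N,c}$ is also dual to restriction. Hence the whole argument reduces to a pointwise (in $U$) comparison.

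For the inclusion $\mathcal{D}^{(k)}_{N,c}(U)\subset \mathrm{D}^\infty_c(U;\CO)$, I will write down an explicit differential operator realizing each generator $\tau\cdot (y^*)^L$. Given $\tau\in\mathrm D^\infty_c(U)$ and $L\in\BN^k$, define
\[
D\Bigl(\sum_{J\in\BN^k} f_J\, y^J\Bigr):= L!\cdot f_L\cdot \tau.
\]
This is the composition of the formal differential operator $\frac{1}{L!}\partial_y^L|_{y=0}$ (which extracts $L!\,f_L$) with multiplication by the compactly supported density $\tau$, and so lies in $\mathrm{Diff}_c(\CO|_U,\underline{\mathcal{D}}|_U)$ with support in $\mathrm{supp}\,\tau$. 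The definition of $\rho_U$ together with \eqref{eq:pairing2} gives $\rho_U(D)=\tau\cdot (y^*)^L$ directly.

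For the reverse inclusion $\mathrm{D}^\infty_c(U;\CO)\subset \mathcal{D}^{(k)}_{N,c}(U)$, the key structural input from \cite[Section 3.2]{CSW} is that any $D\in\mathrm{Diff}_c(\CO|_U,\underline{\mathcal{D}}|_U)$ has finite order in both the smooth and formal directions (so $D\in\mathrm{Diff}_{K,r}$ for some compact $K$ and some $r$), and hence admits a finite decomposition
\[
D\Bigl(\sum_J f_J y^J\Bigr)=\sum_{|J|\le r} D_J(f_J),
\]
where each $D_J\colon \RC^\infty|_U\to\underline{\mathcal{D}}|_U$ is an ordinary compactly supported smooth differential operator on $N$, with $\mathrm{supp}\,D_J\subset \mathrm{supp}\,D$. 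Integrating and applying the classical result on smooth manifolds that for a compactly supported smooth differential operator $D_J$ from functions to densities there is a compactly supported smooth density $\tau_J$ with $\int_U D_J(f)=\int_U f\,\tau_J$ (this is the $k=0$ case obtained by iterated integration by parts), we get
\[
\rho_U(D)(u)=\sum_{|J|\le r}\int_U f_J\,\tau_J =\Bigl\langle \sum_{|J|\le r}\tfrac{1}{J!}\,\tau_J\cdot (y^*)^J,\,u\Bigr\rangle,
\]
which is an element of $\mathcal{D}^{(k)}_{N,c}(U)$ as required.

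The main obstacle is the decomposition step: one must know that a compactly supported differential operator on $N^{(k)}$ necessarily has bounded order in the formal variables and hence depends on only finitely many Taylor coefficients $f_J$, and also that the resulting coefficient operators $D_J$ are themselves compactly supported smooth differential operators on $N$. Once this local structural fact is in hand (which is provided by the framework of \cite{CSW}), everything else is direct bookkeeping, and the desired identification of cosheaves follows.
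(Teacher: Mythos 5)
Your proposal is correct and follows essentially the same route as the paper: both reduce the statement to comparing the two subspaces of $(\CO(U))'$ and rest on the structure theorem for compactly supported differential operators from $\CO$ to $\underline{\mathcal{D}}$ (\cite[Proposition 3.10]{CSW}), your two-inclusion presentation with integration by parts being equivalent to the paper's single formula \eqref{eq:onkusm=}. The only caveat is that the paper can invoke that structure theorem only after using the cosheaf property of both sides to reduce to the case where $N$ is an open submanifold of $\R^n$, a reduction you should make explicit before asserting the finite decomposition $D\bigl(\sum_J f_J y^J\bigr)=\sum_{|J|\le r}D_J(f_J)$.
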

	\begin{proof} Note that  the extension maps   in the cosheaf $\mathcal{D}_{N,c}^{(k)}$
		are compatible with the transposes of the restriction maps of the sheaf $\CO$.
		Thus, it suffices to prove that for every open subset $U$ of $N$, 
		\be \label{eq:cosheaveseq}\mathrm{D}^\infty_{c}(U;\CO)=\mathcal{D}_{N,c}^{(k)}(U)\ee
		as linear subspaces of $(\CO(U))'$. 
		
		Due to the cosheaf property of $\mathrm{D}^\infty_{c}(\CO)$ and $\mathcal{D}_{N,c}^{(k)}$, it is enough to focus on the case that $N$ is an open submanifold of $\R^n$ for some  $n\in \BN$.
		In this case, it is easy to check that the sheaf $\underline{\mathcal{D}}$ of $\CO$-modules  satisfies the two conditions in \cite[Proposition 3.10]{CSW}. It follows that 
		\be \label{eq:diffcod}
		\mathrm{Diff}_{c}(\CO|_U,\underline{\mathcal{D}}|_U)
		=\bigoplus_{I\in \BN^n,J\in \BN^k} \mathrm{D}^\infty_c(U)\circ \partial_x^I\partial_y^J.
		\ee
		
		For every $\tau\in \mathrm{D}^\infty_c(U)$,  $I\in \BN^n$, $L\in \BN^k$, and $f=\sum_{J\in \BN^k}f_J y^J\in \CO_{N}^{(k)}(U)$, we have that 
		\be\begin{split}\label{eq:oemgaILf}
			\la\rho_U(\tau\circ \partial_x^I\partial_y^L), f\ra &=
			\int_U (\tau\circ \partial_x^I\partial_y^L)(f)\\
			&=L!\cdot \int_U \tau(\partial_x^I(f_L))\\
			&=L!\cdot \la \tau \partial_x^I, f_L\ra
			\\&=\la (\tau \partial_x^I)(y^*)^L,f\ra.
		\end{split}\ee
		Here $\tau\partial_x^I$ denotes the element in $\RD^\infty_c(U)$ such that
		\[\la \tau \partial_x^I, g\ra= \la \tau,  \partial_x^I g\ra\quad\] for every $g\in\mathrm C^{\infty}(U)$.
		Using  \eqref{eq:oemgaILf}, it follows  that
		\be\label{eq:onkusm=}\rho_U\left(\sum_{I\in \BN^n, L\in \BN^k} \tau_{I,L}\circ \partial_x^I\partial_y^L\right)=
		\sum_{I\in \BN^n, L\in \BN^k} (\tau_{I,L}\partial_x^I) (y^*)^L,\ee
		where $\tau_{I,L}\in \mathrm{D}_c^{\infty}(U)$ for $I\in \BN^n, L\in \BN^k$.
		This, together with \eqref{eq:diffcod}, shows that
		\begin{eqnarray*}
			\mathrm{D}^\infty_{c}(U;\CO)=\rho_U(\mathrm{Diff}_c(\CO|_U,\underline{\mathcal{D}}|_U))
			=\mathrm{D}^\infty_c(U)[y^*_1,y^*_2,\dots,y^*_k]= \mathcal{D}_{N,c}^{(k)}(U)
		\end{eqnarray*}
		as linear subspaces of $(\CO(U))'$, as desired.
	\end{proof}

	Similar to the space $\mathrm C^\infty(N)$,  the space $\mathrm{D}^\infty(N)$ of all smooth densities on $N$ is naturally an LCS under the smooth topology. 
	For every compact subset $K$ of $N$, write $\mathrm{D}_K^\infty(N)$ for the space of all
	smooth densities supported in $K$, which is equipped with the subspace topology of $\mathrm{D}^\infty(N)$. It is obvious that $\mathrm D_{K}^{\infty}(N)$ is a nuclear Fr\'{e}chet space.
	
	For every $(K,r)\in \mathcal{C}(N)$, equip $\mathrm{D}^\infty_K(N)[y^*_1,y^*_2,\dots,y^*_k]_{\le r}$  with the term-wise convergence topology (see Example
	\ref{ex:toponpolynomial}). Then $\mathrm{D}^\infty_K(N)[y^*_1,y^*_2,\dots,y^*_k]_{\le r}$  becomes a nuclear Fr\'echet space as well.
	
	Endow  
	\be\label{inddd} \mathcal{D}_{N,c}^{(k)}(N)=\varinjlim_{(K,r)\in \mathcal C(N)}
	\mathrm{D}^\infty_K(N)[y^*_1,y^*_2,\dots,y^*_k]_{\le r}
	\ee with the inductive limit topology. By Lemma \ref{lem:indtensorLF}, we have that 
	\be\label{eq:mathcalD=mathrmD}
	\mathcal{D}_{N,c}^{(k)}(N)=\mathrm{D}^\infty_c(N)\widetilde\otimes_{\mathrm{i}}\C[y^*_1,y^*_2,\dots,y^*_k]=\mathrm{D}^\infty_c(N)\widehat\otimes_{\mathrm{i}} \C[y^*_1,y^*_2,\dots,y^*_k]\ee as LCS.

	\begin{remarkd}The inductive limit of the directed system
		\[\left\{\mathrm{D}^\infty_c(N)[y^*_1,y^*_2,\dots,y^*_k]_{\le r}\right\}_{r\in \BN}\] defines another topology on $\mathcal{D}_{N,c}^{(k)}(N)$. By  Example  \ref{ex:toponpolynomial} and \eqref{eq:mathcalD=mathrmD}, 
		this topology coincides with the topology given by the inductive limit  \eqref{inddd}, and also agrees with  the inductive tensor product topology on
		$\mathcal{D}_{N,c}^{(k)}(N)=\mathrm{D}^\infty_c(N)\otimes \C[y^*_1,y^*_2,\dots,y^*_k]$.
	\end{remarkd}

	\begin{prpd}\label{prop:charlcsosm'} Assume that $M=N^{(k)}$ with $N$  a smooth manifold and $k\in \BN$.
		Then for every $(K,r)\in \mathcal{C}(M)$, we have that
		\begin{eqnarray}\label{eq:CONkUKr=}
			\mathrm{D}^\infty_{K,r}(M;\CO)=\mathrm{D}^\infty_K(N)[y^*_1,y^*_2,\dots,y^*_k]_{\le r}
		\end{eqnarray}
		as LCS. Furthermore, 
		\[\mathrm{D}^\infty_{c}(M;\CO)= \mathcal{D}_{N,c}^{(k)}(N)=\mathrm{D}^\infty_c(N)\widetilde\otimes_{\mathrm{i}}\C[y^*_1,y^*_2,\dots,y^*_k]=\mathrm{D}^\infty_c(N)\widehat\otimes_{\mathrm{i}} \C[y^*_1,y^*_2,\dots,y^*_k]\] as LCS.
	\end{prpd}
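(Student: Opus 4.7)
The plan is to establish the first identification \eqref{eq:CONkUKr=} and then derive the second by passing to the inductive limit over $(K,r) \in \mathcal{C}(M)$ and invoking \eqref{eq:mathcalD=mathrmD}. By Proposition \ref{prop:charocd}, the first identification already holds as linear subspaces of $(\CO(M))'$; the task is to match the topologies. I would first treat the local case where $N$ is an open subset of $\BR^n$, then globalize with a partition of unity.

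In the local case, the decomposition \eqref{eq:diffcod} gives $\mathrm{Diff}_{K,r}(\CO|_N,\underline{\mathcal{D}}|_N) = \bigoplus_{|I|+|J|\le r} \mathrm{D}^\infty_K(N)\circ \partial_x^I \partial_y^J$ as an algebraic direct sum. I would construct a continuous linear section of $\rho_M$, namely
\[
s:\; \sum_{|L|\le r}\tau_L (y^*)^L \;\longmapsto\; \sum_{|L|\le r}\tau_L \circ \partial_y^L \;\in\; \mathrm{Diff}_{K,r}.
\]
Continuity of $s$ into the $|\cdot|_{X,u}$-topology is immediate: with $\partial_y^L(u) \in \mathrm{C}^\infty(N)$ fixed, the map $\tau_L \mapsto |\tau_L\circ \partial_y^L|_{X,u} = \sup_{a\in \underline{M}}|X(\tau_L\cdot \partial_y^L(u))(a)|$ is a continuous seminorm on $\mathrm{D}^\infty_K(N)$ since $X$ is a continuous linear operator from $\underline{\mathcal{D}}$ to $\underline{\CO}$. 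Since $\rho_M \circ s = \mathrm{id}$ by the computation in the proof of Proposition \ref{prop:charocd}, the target topology is coarser than the quotient topology.

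For the reverse inequality, I would show that $\rho_M$ is continuous into the term-wise topology by verifying that each coefficient map $D\mapsto \sigma_L \in \mathrm{D}^\infty_K(N)$ is continuous in the smooth topology. The coefficient $\sigma_L$ is characterized by $\int_N g\sigma_L = \frac{1}{L!}\int_N D(gy^L)$ for all $g\in \mathrm{C}^\infty(N)$, and writing $D = \sum \tau_{I,J}\circ \partial_x^I\partial_y^J$ locally with $\tau_{I,J} = t_{I,J}|dx|$ gives $\sigma_L = \sum_I (-1)^{|I|}(\partial_x^I t_{I,L})|dx|$. Continuity in the smooth topology then follows by choosing, for each target seminorm, an appropriate $X\in \mathrm{Diff}_c(\underline{\mathcal{D}},\underline{\CO})$ and $u = \psi(x)y^L/L!\in \CO(M)$ for a suitable bump function $\psi$, so that $|D|_{X,u}$ dominates it. Combined with the section $s$ above, this gives the local LCS identification.

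To handle general $N$, I would globalize via a partition of unity $\{f_i\}_{i=1}^s$ subordinate to a cover $\{U_i\}$ of a neighborhood of $K$ by coordinate charts, with $\sum_i f_i|_K = 1$. By Lemma \ref{lem:homextDKr}, the canonical map $\bigoplus_{i=1}^s \mathrm{D}^\infty_{K_i,r}(U_i;\CO) \to \mathrm{D}^\infty_{K,r}(M;\CO)$ (with $K_i := \mathrm{supp}\,f_i \cap K$) is a continuous open surjection with a continuous right inverse; an analogous partition-of-unity decomposition holds for the target $\mathrm{D}^\infty_K(N)[y^*_1,\ldots,y^*_k]_{\le r}$ acting coefficientwise. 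The local case applied to each chart, combined with the resulting commutative diagram, yields \eqref{eq:CONkUKr=} in full generality. The main obstacle I anticipate is verifying continuity of the coefficient extraction $D\mapsto \sigma_L$ into the smooth topology on $\mathrm{D}^\infty_K(N)$ (as opposed to the weaker distribution topology), which requires an explicit construction of test operators $X$ and formal functions $u$ controlling all derivatives of the density coefficient — a routine but somewhat delicate computation based on the explicit structure \eqref{eq:diffcod}.
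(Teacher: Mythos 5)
Your overall route is the same as the paper's: reduce to the case $N\subset\BR^n$ open via Lemma \ref{lem:homextDKr}, use the decomposition \eqref{eq:diffcod} and the formula \eqref{eq:onkusm=}, and then match the quotient topology on $\mathrm{D}^\infty_{K,r}(M;\CO)$ with the term-wise topology; the paper compresses this last step into a citation of \eqref{eq:onkusm=} and Lemma \ref{lem:convergechar}, and your continuous section $s$ together with continuity of coefficient extraction is a correct way to carry it out.

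One concrete correction to the step you yourself flag as delicate: the single test function $u=\psi(x)y^L/L!$ does not control $\sigma_L=\sum_I(-1)^{|I|}(\partial_x^I t_{I,L})\,|dx|$. Indeed $D(\psi y^L/L!)=\sum_I t_{I,L}\,(\partial_x^I\psi)\,|dx|$, which on the set where $\psi\equiv 1$ reduces to $t_{0,L}\,|dx|$ only, so varying $X$ over this one $u$ yields the derivatives of $t_{0,L}$ but says nothing about $t_{I,L}$ for $I\neq 0$. To get continuity of $D\mapsto\sigma_L$ (equivalently of each $D\mapsto t_{I,L}$) you need the full family of polynomial test functions $u=\psi(x)\,x^{I'}y^L$ with $|I'|\le r$: since $D(\psi x^{I'}y^L)=L!\sum_{I\le I'}t_{I,L}\,\partial_x^I(x^{I'})\,|dx|$ near $K$ is triangular in $I'$, an induction on $|I'|$ recovers each $t_{I',L}$ in the smooth topology from finitely many seminorms $\abs{D}_{X,u}$. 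With that repair the argument closes, and it is exactly the content the paper leaves implicit.
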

	\begin{proof} 
		In view of \eqref{eq:mathcalD=mathrmD}, we only need to prove the LCS identification \eqref{eq:CONkUKr=}. 
		Applying Lemma \ref{lem:homextDKr},  we may (and do) assume that  $N$ is an open submanifold of $\mathbb{R}^n$ ($n\in\BN$). 
		In such case,
		the LCS identification \eqref{eq:CONkUKr=} is implied by \eqref{eq:onkusm=} and Lemma \ref{lem:convergechar}. 
	\end{proof}

	\begin{prpd}\label{cor:NLFFMsm'} 
		Suppose that  $\CF$ is locally free of finite rank. Then 
		\begin{itemize}
			\item for every $(K,r)\in \mathcal{C}(M)$,
			$\mathrm{D}^\infty_{K,r}(M;\CF)$ is a nuclear Fr\'echet space;
			\item when $M$ is secondly countable, $\mathrm{D}^\infty_{c}(M;\CF)$ is a nuclear LF space (see Definition \ref{de:regualrlim}); and 
			\item in general, $\mathrm{D}^\infty_{c}(M;\CF)$ is a direct sum of nuclear LF spaces.
		\end{itemize}
	\end{prpd}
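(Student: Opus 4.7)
The plan is to handle the three assertions in order, with the first being the crucial one and the remaining two following from formal manipulations.

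\textbf{Part (1).} Let $(K,r)\in\mathcal{C}(M)$. My plan is to reduce to Proposition \ref{prop:charlcsosm'} via a partition of unity argument. Since $\CF$ is locally free of finite rank and $M$ is locally isomorphic to some $(\R^n)^{(k)}$, I can cover $K$ by finitely many open sets $U_1,\dots,U_s\subset M$ such that each $(U_i,\CO|_{U_i})$ is isomorphic to $(W_i)^{(k_i)}$ for some open $W_i\subset\R^{n_i}$ and each $\CF|_{U_i}$ is free of some rank $m_i$. Choose formal functions $f_i$ with $\mathrm{supp}\,f_i\subset U_i$ and $\sum_i f_i|_K=1$, and set $K_i:=\mathrm{supp}\,f_i\cap K$. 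Lemma \ref{lem:homextDKr} then identifies $\mathrm{D}^\infty_{K,r}(M;\CF)$ with a closed subspace of the finite direct sum $\bigoplus_{i=1}^s \mathrm{D}^\infty_{K_i,r}(U_i;\CF)$ via the topological embedding \eqref{eq:rinvcompacthomonto2}. A chosen trivialization $\CF|_{U_i}\cong\CO|_{U_i}^{m_i}$ induces, by the functorial definition of $\mathrm{Diff}_{K_i,r}$, an LCS identification $\mathrm{D}^\infty_{K_i,r}(U_i;\CF)\cong \mathrm{D}^\infty_{K_i,r}(U_i;\CO)^{m_i}$. By Proposition \ref{prop:charlcsosm'}, each factor equals $\mathrm{D}^\infty_{K_i}(W_i)[y_1^*,\dots,y_{k_i}^*]_{\le r}$, which is a nuclear Fr\'echet space (product of the NF space $\mathrm{D}^\infty_{K_i}(W_i)$ with a finite-dimensional polynomial factor). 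Since the class of NF spaces is stable under finite direct sums and closed subspaces, $\mathrm{D}^\infty_{K,r}(M;\CF)$ is nuclear Fr\'echet.

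\textbf{Part (2).} Assume $M$ is second countable. Then $M$ admits a compact exhaustion $K_1\subset\mathrm{int}(K_2)\subset K_2\subset\mathrm{int}(K_3)\subset\cdots$, and the sequence $\{(K_n,n)\}_{n\in\BN}$ is cofinal in $\mathcal{C}(M)$. Hence
\[
\mathrm{D}^\infty_{c}(M;\CF)=\varinjlim_{n\in\BN}\mathrm{D}^\infty_{K_n,n}(M;\CF)
\]
as LCS, where by Part (1) each term on the right is a nuclear Fr\'echet space. I would then verify that each transition map $\mathrm{D}^\infty_{K_n,n}(M;\CF)\hookrightarrow\mathrm{D}^\infty_{K_{n+1},n+1}(M;\CF)$ is a topological embedding with closed image, which yields the strict LF property (Definition \ref{de:regualrlim}). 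This step I expect to be the main subtlety: the topologies on both sides are defined as quotients of the ambient LCS $\mathrm{Diff}_{\mathrm{fin}}(\CF,\underline{\mathcal{D}})$, so strictness follows from the compatibility of the seminorms $|\cdot|_{X,u}$ across the filtration, combined with the fact that $\mathrm{Diff}_{K_n,n}(\CF,\underline{\mathcal{D}})$ carries the subspace topology of $\mathrm{Diff}_{K_{n+1},n+1}(\CF,\underline{\mathcal{D}})$ by definition. Once strictness is established, $\mathrm{D}^\infty_{c}(M;\CF)$ is a countable strict inductive limit of NF spaces, hence a nuclear LF space.

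\textbf{Part (3).} In general, by Corollary \ref{cor:Dc=directsum},
\[
\mathrm{D}^\infty_{c}(M;\CF)=\bigoplus_{Z\in\pi_0(M)}\mathrm{D}^\infty_{c}(Z;\CF)
\]
as LCS. Every connected component $Z$ of a paracompact Hausdorff manifold (taken with its underlying topology) is second countable, so Part (2) applies to each $\mathrm{D}^\infty_{c}(Z;\CF)$. This exhibits $\mathrm{D}^\infty_{c}(M;\CF)$ as a direct sum of nuclear LF spaces, completing the proof.

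The main obstacle is the strictness verification in Part (2); Parts (1) and (3) are essentially bookkeeping once Proposition \ref{prop:charlcsosm'} and Lemma \ref{lem:homextDKr} are in hand.
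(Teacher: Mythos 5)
Your parts (1) and (3) follow essentially the paper's route: reduce to finitely many trivializing charts via \eqref{eq:rinvcompacthomonto2}/\eqref{eq:compacthomonto2}, identify the local pieces with $(\mathrm{D}^\infty_{K_i,r}(U_i;\CO))^{m_i}$ and invoke Proposition \ref{prop:charlcsosm'} (the paper presents $\mathrm{D}^\infty_{K,r}(M;\CF)$ as an open continuous surjective image of the direct sum rather than a closed subspace, but your section $\iota_{K,r}$ is a right inverse of a continuous map, so its image is indeed closed and either reading works). The component-wise reduction in part (3) via Corollary \ref{cor:Dc=directsum} is also exactly what the paper does.

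The gap is in your justification of strictness in part (2). You propose to deduce that $\mathrm{D}^\infty_{K_1,r_1}(M;\CF)\rightarrow \mathrm{D}^\infty_{K_2,r_2}(M;\CF)$ is a topological embedding from the fact that $\mathrm{Diff}_{K_1,r_1}(\CF,\underline{\mathcal{D}})$ carries the subspace topology of $\mathrm{Diff}_{K_2,r_2}(\CF,\underline{\mathcal{D}})$ together with ``compatibility of the seminorms $|\cdot|_{X,u}$.'' This does not suffice: $\mathrm{D}^\infty_{K,r}(M;\CF)$ is by definition the \emph{quotient} of $\mathrm{Diff}_{K,r}(\CF,\underline{\mathcal{D}})$ by $\ker\rho_M\cap \mathrm{Diff}_{K,r}(\CF,\underline{\mathcal{D}})$, and passing to quotients does not preserve subspace topologies. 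Concretely, if $E_1\subset E_2$ is a topological subspace and $N\subset E_2$ is a closed subspace with $N\cap E_1=N_1$, the induced injection $E_1/N_1\rightarrow E_2/N$ is continuous but in general not an embedding, because a neighborhood $U+N_1$ of $0$ in $E_1/N_1$ need not contain a set of the form $E_1\cap(V+N)$. So the seminorm compatibility at the level of differential operators does not transfer to the quotients where the actual inductive limit is taken.

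The fix uses an ingredient you already set up in part (1). The paper proves the claim \eqref{eq:claimstrict} by placing the two quotients into the commutative square whose vertical arrows are the embeddings $\iota_{K_1,r_1}$ and $\iota_{K_2,r_2}$ into $\bigoplus_{\gamma}\mathrm{D}^\infty_{K_{i,\gamma},r_i}(U_\gamma;\CF)$, and whose top arrow is the transition map between these direct sums of \emph{local} spaces. On the local pieces the explicit identification \eqref{eq:CONkUKr=}, $\mathrm{D}^\infty_{K',r'}(U_\gamma;\CO)=\mathrm{D}^\infty_{K'}(N)[y_1^*,\dots,y_k^*]_{\le r'}$, makes the top arrow visibly a topological embedding; since both vertical arrows are embeddings, so is the bottom arrow $\mathrm{D}^\infty_{K_1,r_1}(M;\CF)\rightarrow \mathrm{D}^\infty_{K_2,r_2}(M;\CF)$. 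You should replace your seminorm-compatibility argument by this diagram chase (and note that the paper establishes strictness for the whole directed family $\mathcal{C}(M)$, not only along a cofinal sequence, which is what \eqref{eq:DC=directsumind} and Lemma \ref{lem:Hausdorffandregular} require).
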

	\begin{proof} 
		Take an atlas $\{U_\gamma\}_{\gamma\in \Gamma}$ of $M$ such that for every $\gamma\in \Gamma$,  $\CF|_{U_\gamma}$ is a free $\CO|_{U_\gamma}$-module of
		finite rank $r_\gamma$. 
		For every $\gamma\in \Gamma$ and $(K',r')\in \mathcal{C}(U_{\gamma})$, we have that 
		\be\begin{split}\label{eq:locfinFsm}
			&\mathrm{D}^\infty_{K',r'}(U_\gamma;\CF)\\
			=\ &\rho_{U_\gamma}\left(\mathrm{Diff}_{K',r'}(\CF|_{U_\gamma},\underline{\mathcal{D}}|_{U_\gamma})\right)\cong\rho_{U_\gamma}\left(\mathrm{Diff}_{K',r'}\left((\CO|_{U_\gamma})^{r_\gamma},\underline{\mathcal{D}}|_{U_\gamma}\right)\right)
			\\
			=\ &\rho_{U_\gamma}\left(\left(\mathrm{Diff}_{K',r'}(\CO|_{U_\gamma},\underline{\mathcal{D}}|_{U_\gamma})\right)^{ r_\gamma}\right)
			=\left(\mathrm{D}^\infty_{K',r'}(U_\gamma;\CO)\right)^{ r_\gamma}
		\end{split}\ee 
		as LCS. This together with \eqref{eq:CONkUKr=} implies that \be\label{eq:DK'r'NF}
		\mathrm{D}^\infty_{K',r'}(U_\gamma;\CF)\ \text{is a nuclear Fr\'echet space}.\ee 
		
		Take a partition of unity $\{f_\gamma\}_{\gamma\in \Gamma}$ on $M$ subordinate to  $\{U_\gamma\}_{\gamma\in \Gamma}$.
		For every $(K,r)\in \mathcal{C}(M)$, it follows from Lemma \ref{lem:homextDKr} that there is a surjective, open and continuous linear map 
		\be \label{eq:DKUtoDMsur}
		\bigoplus_{\gamma\in \Gamma_K} \mathrm{D}^\infty_{K_\gamma,r}(U_\gamma;\CF)
		\rightarrow \mathrm{D}^\infty_{K,r}(M;\CF),
		\ee
		as well as a linear topological embedding 
		\[
		\iota_{K,r}:\ \mathrm{D}^\infty_{K,r}(M;\CF)\rightarrow 
		\bigoplus_{\gamma\in \Gamma_K} \mathrm{D}^\infty_{K_\gamma,r}(U_\gamma;\CF),\quad 
		\eta\mapsto  \{ (\eta\circ f_\gamma)|_{U_\gamma}\}_{\gamma\in \Gamma_K},
		\]
		where 
		$K_\gamma:=K\cap \mathrm{supp}\,f_\gamma$ and $ \Gamma_K:=\{\gamma\in \Gamma\mid K_\gamma\ne \emptyset\}$.
		
		We claim that 
		\be \label{eq:claimstrict}
		\text{the inductive limit}\ \varinjlim_{(K,r)\in \mathcal{C}(M)} \mathrm{D}^\infty_{K,r}(M;\CF)\ \text{is strict}.
		\ee
		Indeed, 
		let $(K_1,r_1)$ and $(K_2,r_2)$ be two pairs in $\mathcal{C}(M)$ such that $(K_1,r_1)\preceq (K_2,r_2)$. 
		Then we have the following commutative diagram:
		\[
		\begin{CD}
			\bigoplus\limits_{\gamma\in \Gamma_{K_1}} \mathrm{D}^\infty_{K_{1,\gamma},r_1}(U_\gamma;\CF)@>  >>  \bigoplus\limits_{\gamma\in \Gamma_{K_2}} \mathrm{D}^\infty_{K_{2,\gamma},r_2}(U_\gamma;\CF)\\
			@A \iota_{K_1,r_1}    AA
			@AA  \iota_{K_2,r_2} A\\
			\mathrm D^{\infty}_{K_1,r_1}(M;\CF) @ >> >
			\mathrm D^{\infty}_{K_2,r_2}(M;\CF).
		\end{CD}
		\]
		It follows from \eqref{eq:CONkUKr=} and \eqref{eq:locfinFsm} that the top horizontal arrow is a  linear topological embedding.
		This, together with the fact that both $\iota_{K_1,r_1}$ and $\iota_{K_2,r_2}$  are linear topological embeddings, implies that the bottom horizontal arrow is  also a linear topological embedding, as required.

		By using \eqref{eq:DK'r'NF} and \eqref{eq:DKUtoDMsur},  we obtain  that $\mathrm{D}^\infty_{K,r}(M;\CF)$ is a nuclear Fr\'echet space for every $(K,r)\in \mathcal{C}(M)$. 
		Furthermore, in view of the claim \eqref{eq:claimstrict}, 
		$\mathrm{D}^\infty_{c}(M;\CF)$ is  a nuclear LF space when $M$ is secondly countable, and in general,
		$\mathrm{D}^\infty_{c}(M;\CF)$ is a direct sum of nuclear LF spaces by \eqref{eq:Dc=directsum}.
	\end{proof}
	
	\begin{cord}\label{cor:DcFregular} 
		Suppose that  $\CF$ is locally free of finite rank. Then the inductive limit \[\varinjlim_{(K,r)\in \mathcal{C}(M)}\mathrm{D}^\infty_{K,r}(M;\CF)\] is regular (see Definition \ref{de:regualrlim}).
	\end{cord}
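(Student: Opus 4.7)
The plan is to leverage the strictness of the inductive limit that was already verified in the course of proving Proposition \ref{cor:NLFFMsm'} (see claim \eqref{eq:claimstrict} there), together with the fact that each $\mathrm{D}^\infty_{K,r}(M;\CF)$ is a nuclear Fréchet space (also established there). Once this is in hand, the corollary should follow from the classical fact that strict countable inductive limits of Fréchet spaces (strict LF spaces) are automatically regular, plus a direct sum argument to handle the general (not secondly countable) case.

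First, I would reduce to the secondly countable case. By the canonical LCS identification
\[
\mathrm{D}^\infty_c(M;\CF)=\bigoplus_{Z\in \pi_0(M)} \mathrm{D}^\infty_c(Z;\CF)
\]
from Corollary \ref{cor:Dc=directsum}, every compact subset $K$ of $M$ is contained in a finite union of connected components, so the inductive limit over $\mathcal{C}(M)$ decomposes accordingly. Each connected component $Z$ of $M$ is paracompact, connected, and locally isomorphic to some $(\mathbb{R}^n)^{(k)}$, and hence secondly countable. Thus it suffices to prove regularity of $\varinjlim_{(K,r)\in \mathcal{C}(Z)}\mathrm{D}^\infty_{K,r}(Z;\CF)$ for each such $Z$, since a bounded subset of the LCS direct sum is contained in a finite subsum, and bounded sets in a finite direct sum are products of bounded sets; hence regularity of the summands yields regularity of the sum.

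Second, when $M$ is secondly countable, $M$ admits a compact exhaustion $K_1\subset K_2\subset \cdots$ with $M=\bigcup_{n\in \BN}K_n$, and the pairs $\{(K_n,n)\}_{n\in \BN}$ form a countable cofinal subfamily of $\mathcal{C}(M)$. Hence
\[
\mathrm{D}^\infty_{c}(M;\CF)=\varinjlim_{n\in \BN}\mathrm{D}^\infty_{K_n,n}(M;\CF)
\]
as LCS. By Proposition \ref{cor:NLFFMsm'}, each term is a nuclear Fréchet space, and by claim \eqref{eq:claimstrict} the transition maps are topological embeddings, so this is a strict LF space. I would then invoke the classical fact that every strict LF space is regular: any bounded set $B$ of the inductive limit, being bounded in the locally convex topology, must lie in some step $\mathrm{D}^\infty_{K_n,n}(M;\CF)$ and be bounded there (see, e.g., the standard treatment in Trèves or Schaefer).

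The only nontrivial step is the last one, and even there the main content has already been packaged into the strictness claim \eqref{eq:claimstrict}; the passage from strict LF to regular is standard. The direct sum reduction is routine once one recalls that bounded sets in LCS direct sums are supported in finitely many summands.
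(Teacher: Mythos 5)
Your proposal is correct and takes essentially the same route as the paper: the paper's one-line proof combines Proposition \ref{cor:NLFFMsm'} (each step $\mathrm{D}^\infty_{K,r}(M;\CF)$ is a nuclear Fr\'echet space and the limit is strict) with Lemma \ref{lem:basicsonindlim}\,(c), the general statement that a direct sum of strict sequential inductive limits of Hausdorff steps, each closed in the next, is regular. Your argument simply unpacks that appendix lemma into its classical ingredients --- the Dieudonn\'e--Schwartz regularity theorem for strict LF spaces applied componentwise, plus the fact that bounded subsets of a locally convex direct sum are supported in a finite subsum --- so the content is the same.
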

	\begin{proof}
		The assertion follows from Proposition  \ref{cor:NLFFMsm'}  and Lemma \ref{lem:basicsonindlim} (c).
	\end{proof}

	\begin{cord}\label{lem:strictapprFMsm'} Suppose  that  $\CF$  is  locally free of finite rank. 
		Then the LCS $\mathrm{D}^\infty_{c}(M;\CF)$ has the strict approximation property (see Definition \ref{de:strictappro}).
	\end{cord}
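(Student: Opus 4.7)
The plan is to deduce the strict approximation property from the structural results already established in this subsection, together with (presumably) a permanence result from Appendix~\ref{appendixA} or~\ref{appendixB} asserting that the strict approximation property is preserved under strict regular inductive limits and arbitrary locally convex direct sums of nuclear Fr\'echet spaces.

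First I would reduce to the second countable case. By Corollary~\ref{cor:Dc=directsum} we have the LCS decomposition
\[
\mathrm{D}^\infty_c(M;\CF)=\bigoplus_{Z\in \pi_0(M)} \mathrm{D}^\infty_c(Z;\CF),
\]
and each connected component $Z$ of $M$ is paracompact, Hausdorff and connected, hence second countable. So if the statement is established whenever $M$ is second countable, one then invokes the standard fact that the strict approximation property (as defined in Appendix~\ref{appendixA} or~\ref{appendixB}) passes to arbitrary locally convex direct sums, yielding the general case.

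Next, assuming $M$ is second countable, Proposition~\ref{cor:NLFFMsm'} tells us that $\mathrm{D}^\infty_c(M;\CF)$ is a nuclear LF space, i.e.\ a countable strict inductive limit
\[
\mathrm{D}^\infty_c(M;\CF)=\varinjlim_{(K,r)\in \mathcal{C}(M)} \mathrm{D}^\infty_{K,r}(M;\CF)
\]
of nuclear Fr\'echet spaces, and by Corollary~\ref{cor:DcFregular} this inductive limit is regular. Each step $\mathrm{D}^\infty_{K,r}(M;\CF)$, being nuclear Fr\'echet, has the approximation property; the finer statement that it in fact has the strict approximation property should follow from the characterization in the appendix, since nuclear Fr\'echet spaces admit equicontinuous approximations of the identity by finite rank operators on every bounded (equivalently, every compact, by nuclearity) subset.

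The main step is then to promote the strict approximation property from the defining nuclear Fr\'echet steps to the regular strict inductive limit. Concretely, given a bounded subset $B\subset \mathrm{D}^\infty_c(M;\CF)$, regularity provides $(K,r)\in \mathcal{C}(M)$ such that $B$ is contained and bounded in $\mathrm{D}^\infty_{K,r}(M;\CF)$; then finite rank approximants of the identity on this Fr\'echet step can be composed with the continuous embedding into $\mathrm{D}^\infty_c(M;\CF)$ to yield the required approximants on $B$. The only real obstacle is checking that these lifted operators retain the ``strictness'' built into the definition (presumably that they factor through a single step of the inductive limit, or that the approximation is uniform on all bounded sets of a cofinal family simultaneously), which is exactly where regularity of the inductive limit is used; this is the hinge of the argument and the point where one must be careful about the precise formulation of Definition~\ref{de:strictappro}.
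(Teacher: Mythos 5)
There is a genuine gap, in fact two. First, your claim that each nuclear Fr\'echet step $\mathrm{D}^\infty_{K,r}(M;\CF)$ has the strict approximation property because ``nuclear Fr\'echet spaces admit equicontinuous approximations of the identity by finite rank operators'' is not justified and is false as a general statement: there exist nuclear Fr\'echet spaces without the bounded approximation property (Dubinsky), so nuclearity plus metrizability alone does not give you a bounded net of finite-rank operators converging to the identity. Any proof must exploit the concrete description of these spaces. Second, and more seriously, the step you yourself flag as ``the hinge of the argument'' --- promoting the strict approximation property from the Fr\'echet steps to the regular strict inductive limit --- is exactly the hard part, and you do not supply it. Producing, for each bounded set $B$, finite-rank approximants of the identity that work on $B$ is not enough: strict density in $\CL_c(E,E)$ requires the identity to lie in the \emph{quasi-closure} of $E'\otimes E$, i.e.\ it must be reached through closures of \emph{bounded} subsets of $\CL_c(E,E)$, which means your approximating operators must be uniformly controlled on all compact sets simultaneously. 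Regularity of the inductive limit gives no such uniformity across steps, and no permanence result of this kind appears in the appendices. (The reduction to the second countable case via direct sums likewise invokes an unproven permanence statement, though that one is less delicate.)

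The paper's proof takes a different route that sidesteps the inductive limit entirely. The strict approximation property is first established for the explicit local models $\RD^\infty_c(\R^n)[y_1^*,\dots,y_k^*]$ in Lemma \ref{lem:strictapppoly}, by writing down concrete finite-rank operators $P_j$ built from Schwartz's approximants $L_j$ and checking boundedness and convergence by hand. It is then globalized with a partition of unity $\{f_\gamma\}$ subordinate to a trivializing atlas, using the continuous transfer maps $T_{f_\gamma}=\mathrm{ext}_{M,U_\gamma}\circ(\,\cdot\,)\circ m_{f_\gamma}$ of \eqref{eq:defTfFUsmtoFMsm}: since $T_{f_\gamma}$ sends finite-rank operators to finite-rank operators and preimages of quasi-closed sets are quasi-closed, each $T_{f_\gamma}(\mathrm{id})$ lies in the quasi-closure $A$ of the finite-rank operators, the finite partial sums form a bounded subset of $A$, and $\mathrm{id}=\sum_\gamma T_{f_\gamma}(\mathrm{id})$ lies in its closure. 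If you want to salvage your approach, you would need to either prove the inductive-limit permanence statement (which is doubtful in this generality) or, as the paper does, work with the concrete local description and a partition of unity.
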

	\begin{proof}
		Let $\{U_\gamma\}_{\gamma\in \Gamma}$ be an open cover of $M$ such that for every $\gamma\in \Gamma$,
		\begin{itemize}
			\item $(U_\gamma,\CO|_{U_\gamma})\cong (\R^{n_\gamma})^{(k_\gamma)}$ for some $n_\gamma,k_\gamma\in \BN$; and
			\item $\CF|_{U_\gamma}$ is a free $\CO|_{U_\gamma}$-module.
		\end{itemize}
		Then $\mathrm{D}^\infty_{c}(U_{\gamma};\CF)$ is a finite direct sum of some copies of $\RD^\infty_c(\R^{n_\gamma})[y_1^*,y_2^*,\dots,y_{k_\gamma}^*]$
		(see Proposition \ref{prop:charlcsosm'} and \eqref{eq:locfinFsm}).
		Thus, by Lemma \ref{lem:strictapppoly} and \eqref{eq:mathcalD=mathrmD},  $\mathrm{D}^\infty_{c}(U_{\gamma};\CF)$ has the strict approximation property for every $\gamma\in \Gamma$.
		
		Take a partition of unity $\{f_\gamma\}_{\gamma\in \Gamma}$ on $M$ subordinate to the cover $\{U_\gamma\}_{\gamma\in \Gamma}$.
		Let $A$ be the quasi-closure  (see \cite[Definition 6.1]{CSW} or \cite[Page 91]{Sc}) of the subset $(\mathrm{D}^\infty_{c}(M;\CF))'\otimes \mathrm{D}^\infty_{c}(M;\CF)$ in $\CL_c(\mathrm{D}^\infty_c(M;\CF),\mathrm{D}^\infty_{c}(M;\CF))$.
		Recall from \eqref{eq:defTfFUsmtoFMsm} the continuous linear map \[T_{f_{\gamma}}:\ \CL_c(\mathrm{D}^\infty_{c}(U_{\gamma};\CF),\mathrm{D}^\infty_{c}(U_{\gamma};\CF))
		\rightarrow \CL_c(\mathrm{D}^\infty_{c}(M;\CF),\mathrm{D}^\infty_{c}(M;\CF)). \]
		Note that $T_{f_{\gamma}}$ maps $(\mathrm{D}^\infty_c(U_{\gamma};\CF))'\otimes \mathrm{D}^\infty_{c}(U_{\gamma};\CF)$
		into $(\mathrm{D}^\infty_{c}(M;\CF))'\otimes \mathrm{D}^\infty_{c}(M;\CF)$.
		%if  \[\phi:\quad  \mathrm{D}^\infty_{c}(U_{\gamma};\CF)\rightarrow \mathrm{D}^\infty_{c}(U_{\gamma};\CF)\] is a continuous linear map of finite rank, then so is
		%\[T_{f_\gamma}(\phi):\quad \mathrm{D}^\infty_{c}(M;\CF)\rightarrow \mathrm{D}^\infty_{c}(M;\CF).\]Here,$T_{f_\gamma}$ is as   in .
		As the inverse image of a quasi-closed set under a continuous linear map is still quasi-closed (see \cite[Page 92, $2^\circ$)]{Sc}), it follows that   $T_{f_\gamma}^{-1}(A)$ is a quasi-closed subset of $\CL_c(\mathrm{D}^\infty_{c}(U_{\gamma};\CF),\mathrm{D}^\infty_{c}(U_{\gamma};\CF))$ containing
		$(\mathrm{D}^\infty_{c}(U_{\gamma};\CF))'\otimes \mathrm{D}^\infty_c(U_{\gamma};\CF)$, and hence equals to $\CL_c(\mathrm{D}^\infty_{c}(U_{\gamma};\CF),\mathrm{D}^\infty_{c}(U_{\gamma};\CF))$.  Note that \[B:=\left\{\sum_{\gamma\in \Gamma_0}T_{f_\gamma}(\mathrm{id}_{\mathrm{D}^\infty_{c}(U_{\gamma};\CF)})\mid \Gamma_0\ \text{is a finite subset of}\ \Gamma\right\}\] is a bounded subset of $A$.
		The lemma then follows from the fact that
		\[\mathrm{id}_{\mathrm{D}^\infty_{c}(M;\CF)}=\sum_{\gamma\in \Gamma} T_{f_\gamma}(\mathrm{id}_{\mathrm{D}^\infty_{c}(U_{\gamma};\CF)})\in\text{ the closure of }B\subset A.\]
	\end{proof}

	\section{Formal generalized functions}\label{sec:formalgenfun}

	In this section,  we introduce a sheaf $\mathrm{C}^{-\infty}(\CF;E)$ of $\CO$-modules over $M$, which generalizes the concept of sheaves of $E$-valued generalized functions on smooth manifolds.
	
	\subsection{The sheaf $\mathrm{C}^{-\infty}(\CF;E)$}

	For an open subset $U$ of $M$, write
	\be\label{eq:defoffgf} \mathrm{C}^{-\infty}(U;\CF;E):=\CL_b(\mathrm{D}^\infty_{c}(U;\CF), E).\ee 
	By Proposition \ref{prop:charlcsosm'}, when $M$ is a smooth manifold and $\CF=\CO$, then $\mathrm{C}^{-\infty}(M;\CF;E)$ is the space of $E$-valued generalized functions on $M$.
	In view of this, we introduce the following definition. 
	
	\begin{dfn}\label{de:formalgen}
		An element in $\mathrm{C}^{-\infty}(M;\CO;E)$ 
		is called an $E$-valued formal generalized function on $M$. 
	\end{dfn}

	With the  transposes of the extension maps in $\mathrm{D}^\infty_{c}(\CF)$ as restriction maps, the assignment
	\[\mathrm{C}^{-\infty}(\CF;E):\ U\mapsto \mathrm{C}^{-\infty}(U;\CF;E) \quad\text{($U$ is an open subset of $M$)}\]
	forms a presheaf of complex vector spaces over $M$.
	When $E=\C$, set
	\[\mathrm{C}^{-\infty}(\CF):=\mathrm{C}^{-\infty}(\CF;E)\qquad\text{and}\qquad \mathrm{C}^{-\infty}(U;\CF):=\mathrm{C}^{-\infty}(U;\CF;E).\]
	% It is obvious that $\CF(M)$ is naturally identified with an $\CO(M)$-submodule of $\mathrm{C}^{-\infty}(M;\CF)$. 

	\begin{lemd}
		\label{cor:C-infty=product}
		We have the following LCS identification 
		\be\label{eq:C-infty=product}
		\mathrm{C}^{-\infty}(M;\CF;E)=\prod_{Z\in \pi_0(M)}
		\mathrm{C}^{-\infty}(Z;\CF;E).
		\ee
	\end{lemd}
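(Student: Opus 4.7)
The plan is to reduce the statement to a general LCS fact about strong duals (more generally, strong spaces of continuous linear maps with target $E$) of locally convex direct sums. The starting ingredient is Corollary \ref{cor:Dc=directsum}, which gives the LCS identification
\[
\mathrm{D}^\infty_c(M;\CF)=\bigoplus_{Z\in \pi_0(M)}\mathrm{D}^\infty_c(Z;\CF).
\]
Applying $\CL_b(\,\cdot\,,E)$ to both sides, I want to establish
\[
\CL_b\Bigl(\bigoplus_{Z\in \pi_0(M)}\mathrm{D}^\infty_c(Z;\CF),\,E\Bigr)
=\prod_{Z\in \pi_0(M)}\CL_b(\mathrm{D}^\infty_c(Z;\CF),E)
\]
as LCS, from which the lemma follows by the definition $\mathrm{C}^{-\infty}(U;\CF;E)=\CL_b(\mathrm{D}^\infty_c(U;\CF),E)$.

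First I would treat the underlying vector space identification. By the universal property of the locally convex direct sum, a linear map out of $\bigoplus_Z \mathrm{D}^\infty_c(Z;\CF)$ is the same as a family of linear maps out of each summand; and continuity with values in the LCS $E$ is equivalent to continuity on each summand. This produces the canonical algebraic bijection
\[
\Phi:\ \CL\Bigl(\bigoplus_{Z\in\pi_0(M)}\mathrm{D}^\infty_c(Z;\CF),E\Bigr)\longrightarrow \prod_{Z\in\pi_0(M)}\CL(\mathrm{D}^\infty_c(Z;\CF),E),\qquad T\mapsto (T\circ \iota_Z)_Z,
\]
where $\iota_Z$ is the canonical injection of the $Z$-summand.

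Next I would verify that $\Phi$ is a homeomorphism for the strong topologies. The key input is the standard characterization of bounded subsets of a locally convex direct sum: a subset $B\subset \bigoplus_{Z\in\pi_0(M)}\mathrm{D}^\infty_c(Z;\CF)$ is bounded if and only if there exists a finite set $\Gamma_0\subset \pi_0(M)$ and bounded subsets $B_Z\subset \mathrm{D}^\infty_c(Z;\CF)$ for $Z\in\Gamma_0$ such that $B\subset\bigoplus_{Z\in\Gamma_0}B_Z$ (inside the direct sum). This fact will be the main step; it is classical for locally convex direct sums and can be invoked directly. Given this, a seminorm on the strong $\CL_b$-space on the left generated by $B$ is dominated by a finite sum of seminorms generated by the $B_Z$'s, each of which corresponds to a defining seminorm of the product topology on the right. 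Conversely, every defining seminorm of the product topology comes from a single $B_Z$, hence from the bounded set $\iota_Z(B_Z)$ in the direct sum. Thus $\Phi$ and $\Phi^{-1}$ are both continuous.

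The hard (least routine) part is isolating and applying the characterization of bounded subsets of a locally convex direct sum indexed by the possibly uncountable set $\pi_0(M)$; once this is in hand, the rest is formal. I would either cite a standard reference for this fact (it is available in the general LCS literature and implicitly used elsewhere in the paper, \textit{cf.}\ the proof of Lemma \ref{lem:Hausdorffandregular} and Corollary \ref{cor:Dc=directsum}) or include a short auxiliary lemma in the appendix on tensor/direct sum duality ensuring the strong topology identification.
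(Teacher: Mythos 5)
Your proposal is correct and follows essentially the same route as the paper: reduce to Corollary \ref{cor:Dc=directsum} and then apply the LCS identification $\CL_b\bigl(\bigoplus_{i} E_i,F\bigr)=\prod_{i}\CL_b(E_i,F)$. The only difference is that the paper simply cites this identification (K\"othe, \S\,39.8\,(2), as recalled in \eqref{eq:strong}), whereas you sketch its proof via the characterization of bounded subsets of a locally convex direct sum — which is precisely the content of the cited result.
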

	\begin{proof}
		For a family  $\{E_i\}_{i\in I}$ of LCS and a Hausdorff LCS $F$, there is an identification \be \label{eq:strong}\CL_b
		\left(\bigoplus_{i\in I} E_i,F\right)=\prod_{i\in I}\CL_b(E_i,F)\ee of  LCS (\cf \cite[\S\,39.8 (2)]{Ko2}). This, together with \eqref{eq:Dc=directsum}, implies that 
		\begin{eqnarray*}
			\mathrm{C}^{-\infty}(M;\CF;E)&=&
			\CL_b(\mathrm{D}^\infty_{c}(M;\CF), E)\\
			&=&\CL_b\left(\bigoplus_{Z\in \pi_0(M)}\mathrm{D}^\infty_{c}(Z;\CF), E\right)\\
			&=&\prod_{Z\in \pi_0(M)}\CL_b(\mathrm{D}^\infty_{c}(Z;\CF), E)\\
			&=&\prod_{Z\in \pi_0(M)}\mathrm{C}^{-\infty}(Z;\CF;E).
		\end{eqnarray*}
	\end{proof}

	As usual, we have the following result  (\cf  \cite[Theorem 24.1]{Tr}). 
	
	\begin{lemd}\label{lem:GFsheaf} The presheaf $\mathrm{C}^{-\infty}(\CF;E)$ over $M$ is a sheaf.
	\end{lemd}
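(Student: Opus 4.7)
The plan is to reduce the sheaf axioms for $\mathrm{C}^{-\infty}(\CF;E)$ to the cosheaf property of $\mathrm{D}^\infty_c(\CF)$ (Proposition \ref{prop:F'smisacosheaf}) and the topological surjection established in Proposition \ref{prop:extconD}. Fix an open subset $U \subset M$ and an open cover $\{U_\alpha\}_{\alpha \in \Gamma}$ of $U$. By definition, the restriction map $\mathrm{C}^{-\infty}(U;\CF;E) \to \mathrm{C}^{-\infty}(U_\alpha;\CF;E)$ sends $s$ to $s \circ \mathrm{ext}_{U,U_\alpha}$.

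For the locality axiom, suppose $s \in \mathrm{C}^{-\infty}(U;\CF;E)$ satisfies $s \circ \mathrm{ext}_{U,U_\alpha} = 0$ for every $\alpha$. Then $s$ vanishes on the image of the canonical map
\[
\phi:\ \bigoplus_{\alpha \in \Gamma} \mathrm{D}^\infty_c(U_\alpha;\CF) \longrightarrow \mathrm{D}^\infty_c(U;\CF),\qquad \{\eta_\alpha\} \mapsto \sum_\alpha \mathrm{ext}_{U,U_\alpha}(\eta_\alpha),
\]
which is surjective by Proposition \ref{prop:extconD}; hence $s = 0$.

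For the gluing axiom, let $\{s_\alpha \in \mathrm{C}^{-\infty}(U_\alpha;\CF;E)\}_{\alpha \in \Gamma}$ be a compatible family. Define a linear map
\[
\tilde{s}:\ \bigoplus_{\alpha \in \Gamma} \mathrm{D}^\infty_c(U_\alpha;\CF) \longrightarrow E, \qquad \{\eta_\alpha\} \mapsto \sum_\alpha s_\alpha(\eta_\alpha),
\]
which is continuous because it is continuous on each summand of the direct sum. I next verify that $\tilde{s}$ vanishes on $\ker \phi$. By the cosheaf property of $\mathrm{D}^\infty_c(\CF)$ (via \cite[Chapter VI, Proposition 1.4]{Br2}, generalizing Lemma \ref{lem:MV3}), $\ker \phi$ is spanned by elements of the form $\mathrm{ext}_{U_\alpha, U_\alpha \cap U_\beta}(\eta) - \mathrm{ext}_{U_\beta, U_\alpha \cap U_\beta}(\eta)$ with $\eta \in \mathrm{D}^\infty_c(U_\alpha \cap U_\beta;\CF)$. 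On such an element,
\[
\tilde{s}\bigl(\mathrm{ext}_{U_\alpha, U_\alpha \cap U_\beta}(\eta) - \mathrm{ext}_{U_\beta, U_\alpha \cap U_\beta}(\eta)\bigr) = \la s_\alpha|_{U_\alpha \cap U_\beta},\eta\ra - \la s_\beta|_{U_\alpha \cap U_\beta},\eta\ra = 0
\]
by the compatibility hypothesis. Consequently $\tilde{s}$ descends to a linear map $s : \mathrm{D}^\infty_c(U;\CF) \to E$ with $\tilde{s} = s \circ \phi$. Since $\phi$ is open and surjective (Proposition \ref{prop:extconD}), it is a topological quotient, so $s$ is continuous; that is, $s \in \mathrm{C}^{-\infty}(U;\CF;E)$. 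By construction $s|_{U_\alpha} = s_\alpha$ for every $\alpha$, and uniqueness follows from the already established locality.

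The step I expect to require the most care is the identification of $\ker \phi$ with the Mayer--Vietoris relations for an arbitrary (possibly uncountable) cover, which is why I rely on the general cosheaf machinery of \cite[Chapter VI, Proposition 1.4]{Br2} rather than iterating the two-set version in Lemma \ref{lem:MV3}. Once this algebraic identification is in hand, continuity is automatic from the openness in Proposition \ref{prop:extconD}, and both sheaf axioms follow uniformly.
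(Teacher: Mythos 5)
Your proof is correct, but it takes a genuinely different route from the paper's. The paper argues directly with a partition of unity $\{f_\alpha\}$ subordinate to the cover: locality follows from the identity $\la\eta,u\ra=\sum_\alpha\la(\eta\circ f_\alpha)|_{U_\alpha},u|_{U_\alpha}\ra$, and the glued section is constructed explicitly by $\la\eta,u'\ra:=\sum_\alpha\la(\eta\circ f_\alpha)|_{U_\alpha},u'_\alpha\ra$, with continuity supplied by Lemma \ref{lem:rescosheafonopensubset}. You instead treat $\mathrm{D}^\infty_c(U;\CF)$ as the topological coequalizer of the Mayer--Vietoris diagram: the cosheaf property (Proposition \ref{prop:F'smisacosheaf}, in Bredon's sense of exactness for arbitrary covers) identifies $\ker\phi$ with the span of the relations $\mathrm{ext}_{U_\alpha,U_\alpha\cap U_\beta}(\eta)-\mathrm{ext}_{U_\beta,U_\alpha\cap U_\beta}(\eta)$, and the openness of $\phi$ from Proposition \ref{prop:extconD} makes it a topological quotient, so that applying $\CL(-,E)$ yields both sheaf axioms formally. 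Your argument is cleaner conceptually and makes transparent that the sheaf property of $\mathrm{C}^{-\infty}(\CF;E)$ is exactly the dual of the cosheaf property of $\mathrm{D}^\infty_c(\CF)$ together with the quotient topology statement; the price is that it leans on the two heavier propositions (whose proofs themselves use partitions of unity), whereas the paper's version only needs the elementary Lemma \ref{lem:rescosheafonopensubset} and hands you the glued section in closed form. Both establish the same statement; one small point worth making explicit in your write-up is that Proposition \ref{prop:extconD} is stated for covers of $M$ but applies to covers of any open $U$ since $(U,\CO|_U)$ is again a formal manifold.
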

	\begin{proof} Let $\{U_\alpha\}_{\alpha\in \Gamma}$ be an open cover of $M$, and let $\{f_\alpha\}_{\alpha\in \Gamma}$ be a partition of unity on $M$ subordinate to $\{U_\alpha\}_{\alpha\in \Gamma}$. By Lemma \ref{lem:rescosheafonopensubset}, we have that 
		\begin{eqnarray*}
			\la\eta,u\ra &=& \la \sum_{\alpha\in \Gamma}\eta\circ f_{\alpha},u \ra
			% \\&=& \sum_{\alpha\in\Gamma }\la \eta\circ f_{\alpha},u \ra 
			\\ &=&\sum_{\alpha\in\Gamma}\la \mathrm{ext}_{M,U_{\alpha}}((\eta\circ f_{\alpha})|_{U_{\alpha}}),u \ra \quad\quad \quad \text{(see \eqref{eq:etaf|U})}\\
			&=&\sum_{\alpha\in\Gamma} \la(\eta\circ f_{\alpha})|_{U_{\alpha}},u|_{U_{\alpha}}\ra  
		\end{eqnarray*}  for  every $u\in \mathrm C^{-\infty}(M;\CF;E)$ and $\eta\in \mathrm{D}^{\infty}_c(M;\CF)$. %Here $(\eta\circ f_{\alpha})|_{U_{\alpha}}$ is as in .
		This implies that the map \[\mathrm C^{-\infty}(M;\CF;E)\rightarrow \prod_{\alpha\in \Gamma}\mathrm C^{-\infty}(U_{\alpha};\CF;E),\quad u\mapsto\{u|_{U_\alpha}\}_{\alpha\in \Gamma}\] is injective.
		
		Now, let
		$\{u'_\alpha\in \mathrm{C}^{-\infty}(U_{\alpha};\CF;E)\}_{\alpha\in \Gamma} $ be a family of sections such that \[\text{$u'_\alpha|_{U_\alpha\cap U_\beta}
			=u'_\beta|_{U_\alpha\cap U_\beta}$\quad for all $\alpha,\beta\in \Gamma$.}\]
		We define a linear map $u':\ \mathrm{D}^{\infty}_c(M;\CF)\rightarrow E$ by setting
		\[ \la\eta,u'\ra:=\sum_{\alpha\in \Gamma} \la  (  \eta\circ f_\alpha)|_{U_\alpha}, u'_\alpha\ra,\]
		where $\eta\in \mathrm D^{\infty}_c(M;\CF)$. 
		Lemma \ref{lem:rescosheafonopensubset} implies that $u'$ is continuous and so lies in $\mathrm{C}^{-\infty}(M;\CF;E)$. Moreover, it is easy to check that $u'|_{U_\alpha}=u'_\alpha$ for all $\alpha\in \Gamma$.
		This finishes the proof.
	\end{proof}

	Let $U$ be an open subset of $M$. By Lemma \ref{lem:conextD} and \eqref{eq:trancont}, the 
	restriction map 
	\[
	\mathrm{C}^{-\infty}(M;\CF;E)\rightarrow \mathrm{C}^{-\infty}(U;\CF;E)
	\]
	is continuous. 
	Furthermore, we have the following generalization 
	of \eqref{eq:C-infty=product}.
	
	\begin{prpd}\label{prop:GFclosedembedding}
		Let $\{U_\gamma\}_{\gamma\in \Gamma}$ be an open cover of $M$. If   the inductive limit \[\varinjlim_{(K,r)\in \mathcal{C}(M)}\mathrm{D}^\infty_{K,r}(M;\CF)\] is regular, 
		then  the linear map
		\be\label{eq:gffembedding}
		\mathrm{C}^{-\infty}(M;\CF;E)\rightarrow \prod_{\gamma\in \Gamma}\mathrm{C}^{-\infty}(U_{\gamma};\CF;E), \quad u\mapsto \{u|_{U_\gamma}\}_{\gamma\in \Gamma}
		\ee
		is a closed  topological embedding.
	\end{prpd}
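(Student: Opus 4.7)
The plan is to realize the given map as the transpose of the canonical sum map from Proposition \ref{prop:extconD} and then deduce everything from the continuous linear right inverse it provides. Write
\[
\Phi : \bigoplus_{\gamma\in\Gamma}\mathrm D^\infty_c(U_\gamma;\CF)\to\mathrm D^\infty_c(M;\CF),\quad \{\eta_\gamma\}\mapsto\sum_{\gamma\in\Gamma}\mathrm{ext}_{M,U_\gamma}(\eta_\gamma),
\]
and let $\Psi$ be its continuous linear right inverse supplied by Proposition \ref{prop:extconD}. By \eqref{eq:strong},
\[
\prod_{\gamma\in\Gamma}\mathrm C^{-\infty}(U_\gamma;\CF;E)=\CL_b\Bigl(\bigoplus_{\gamma\in\Gamma}\mathrm D^\infty_c(U_\gamma;\CF),\,E\Bigr),
\]
and because the restriction maps on the sheaf $\mathrm C^{-\infty}(\CF;E)$ are by construction transposes of the extension maps on the cosheaf $\mathrm D^\infty_c(\CF)$, the map \eqref{eq:gffembedding} under this identification is exactly ${}^t\Phi$.

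By \eqref{eq:trancont}, continuity of $\Phi$ and $\Psi$ gives continuity of both ${}^t\Phi$ and ${}^t\Psi$. Transposing the identity $\Phi\circ\Psi=\mathrm{id}$ yields ${}^t\Psi\circ{}^t\Phi=\mathrm{id}$, so ${}^t\Phi$ admits a continuous left inverse and is in particular a topological embedding. To see the image is closed, I would identify it as
\[
\mathrm{Image}({}^t\Phi)=\bigl\{v\in\CL_b(\textstyle\bigoplus_{\gamma}\mathrm D^\infty_c(U_\gamma;\CF),E) : v|_{\ker\Phi}=0\bigr\}.
\]
The inclusion ``$\subseteq$'' is immediate; the reverse inclusion uses that $\Phi$ is a topological quotient (being a continuous open surjection by Proposition \ref{prop:extconD}), so any continuous $v$ vanishing on $\ker\Phi$ descends uniquely to a continuous $\tilde v$ with $v=\tilde v\circ\Phi$. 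Since each singleton in the direct sum is bounded, the point-evaluation $v\mapsto v(x)$ is continuous on $\CL_b(\cdot,E)$ for every $x$, so the image is the intersection of closed sets $\mathrm{ev}_x^{-1}(0)$ over $x\in\ker\Phi$ and is therefore closed.

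The main delicate clause is the topological-embedding part, and this is where the hypothesis enters: the regularity of $\varinjlim_{(K,r)\in\mathcal C(M)}\mathrm D^\infty_{K,r}(M;\CF)$ ensures that bounded subsets of $\mathrm D^\infty_c(M;\CF)$ are controlled by the Fr\'echet steps, so that the strong topology on $\mathrm C^{-\infty}(M;\CF;E)$ is the expected one and the pullback of seminorms through ${}^t\Phi$ indeed agrees with the native topology rather than merely dominating it. Granted this, the topological-embedding step is handled by the right inverse $\Psi$ (any bounded $B_0\subset\mathrm D^\infty_c(M;\CF)$ equals $\Phi(\Psi(B_0))$ with $\Psi(B_0)$ bounded), and closedness of the image is then a formal consequence of point-evaluation continuity on the strong dual.
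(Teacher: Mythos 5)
Your functional-analytic skeleton is sound as far as it goes: $\Phi$ does have the continuous right inverse $\Psi$ from Proposition \ref{prop:extconD}, transposing gives ${}^t\Psi\circ{}^t\Phi=\mathrm{id}$, and this exhibits ${}^t\Phi$ as a topological embedding with closed image \emph{into $\CL_b\bigl(\bigoplus_\gamma\mathrm{D}^\infty_c(U_\gamma;\CF),E\bigr)$ equipped with the strong topology}. The gap is your very first displayed identification, $\prod_{\gamma}\mathrm{C}^{-\infty}(U_\gamma;\CF;E)=\CL_b\bigl(\bigoplus_{\gamma}\mathrm{D}^\infty_c(U_\gamma;\CF),E\bigr)$. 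The statement to be proved concerns the \emph{product} topology on the target, which is a priori coarser than the strong topology of the dual of the direct sum; the identification \eqref{eq:strong} of the two requires that every bounded subset of the direct sum be contained and bounded in a finite partial sum, which in turn requires the summands to be Hausdorff --- and Hausdorffness of $\mathrm{D}^\infty_c(U_\gamma;\CF)$ is itself only conditional (cf. Lemma \ref{lem:Hausdorffandregular}). Concretely, what must be shown is that for each bounded $B\subset\mathrm{D}^\infty_c(M;\CF)$ and continuous seminorm $\abs{\,\cdot\,}_\nu$ on $E$, the seminorm $u\mapsto\abs{u}_{B,\abs{\,\cdot\,}_\nu}$ is dominated by a \emph{finite} sum $\sum_{\gamma\in J}\abs{(u|_{U_\gamma})}_{B_\gamma,\abs{\,\cdot\,}_\nu}$; your identity $\abs{u}_{B,\abs{\,\cdot\,}_\nu}=\abs{{}^t\Phi(u)}_{\Psi(B),\abs{\,\cdot\,}_\nu}$ with $\Psi(B)$ bounded does not deliver this unless you additionally show that $\Psi(B)$ sits inside a finite partial sum with bounded components.

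That missing finiteness is exactly where the regularity hypothesis must enter, and it is telling that your displayed argument never uses it: taken at face value your proof would establish the proposition with no hypothesis at all, which should have been a warning sign. The paper supplies the missing step directly: regularity places $B$, as a bounded set, inside a single $\mathrm{D}^\infty_{K,r}(M;\CF)$; local finiteness of the partition of unity $\{f_\gamma\}$ makes $\{\gamma\mid\mathrm{supp}\,f_\gamma\cap K\ne\emptyset\}$ finite; Lemma \ref{lem:rescosheafonopensubset} makes each $B_\gamma=\{(\eta\circ f_\gamma)|_{U_\gamma}\mid\eta\in B\}$ bounded; and then $\abs{u}_{B,\abs{\,\cdot\,}_\nu}\le\sum_{\gamma:\,\mathrm{supp}\,f_\gamma\cap K\ne\emptyset}\abs{(u|_{U_\gamma})}_{B_\gamma,\abs{\,\cdot\,}_\nu}$ is a continuous seminorm of the product topology. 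Your closing paragraph gestures at this (``bounded subsets are controlled by the Fr\'echet steps'') but never connects it to the comparison between the strong topology on the dual of the direct sum and the product topology, which is the entire content of the embedding claim. The continuity, injectivity and closed-image parts of your argument are fine.
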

	\begin{proof} The continuity  of  \eqref{eq:gffembedding} is obvious, and one concludes from the sheaf property of $\mathrm{C}^{-\infty}(\CF;E)$
		that  \eqref{eq:gffembedding} is injective and has closed image.

		Let $B$ be a bounded subset of $\mathrm{D}^{\infty}_{c}(M; \CF)$, and let $\abs{\,\cdot\,}_\nu$ be a continuous seminorm on $E$.
		Pick a partition of unity $\{f_\gamma\}_{\gamma\in \Gamma}$ on $M$ subordinate to
		$\{U_\gamma\}_{\gamma\in \Gamma}$.
		By Lemma \ref{lem:rescosheafonopensubset}, for every $\gamma\in \Gamma$, the set
		\[
		B_\gamma:=\{(\eta\circ f_\gamma)|_{U_\gamma}\mid \eta\in B\}
		\]
		is  bounded in $\mathrm{D}^{\infty}_{c}(U_{\gamma}; \CF)$.
		
		Since the inductive limit \[\varinjlim_{(K,r)\in \mathcal{C}(M)}\mathrm{D}^\infty_{K,r}(M;\CF)\] is regular,
		there is a pair $(K,r)\in \mathcal{C}(M)$ such that
		$B$ is a bounded subset of $\mathrm{D}^{\infty}_{K,r}(M; \CF)$.
		The lemma then follows from the following inequality:
		\[|u|_{B,\abs{\,\cdot\,}_\nu}\le \sum_{\gamma\in \Gamma;\, \mathrm{supp}\,f_\gamma\cap K\ne \emptyset} |(u|_{U_\gamma})|_{B_\gamma,\abs{\,\cdot\,}_\nu}
		\qquad\text{for all}\ u\in \mathrm{C}^{-\infty}(M;\CF;E).\]
		Here the seminorms $|\cdot|_{B,\abs{\,\cdot\,}_\nu}$ and $ |\cdot|_{B_\gamma,\abs{\,\cdot\,}_\nu}$ are as in 
		\eqref{eq:defabsBnu}.
	\end{proof}
	
	For every open subset $U$ of $M$, $\mathrm C^{-\infty}(U;\CF;E)$ admits an  $\CO(U)$-module structure as follows:
	\[\CO(U)\times \mathrm C^{-\infty}(U;\CF;E) \rightarrow \mathrm C^{-\infty}(U;\CF;E), \quad (f,u)\mapsto fu,\]
	where $fu\in \mathrm C^{-\infty}(U;\CF;E)$ is defined by 
	\[\la\eta,fu\ra=\la \eta\circ f,u\ra \]
	for each 
	$\eta\in 
	\mathrm{D}^\infty_{c}(U;\CF)$. Here 
	$\eta\circ f$ is as in \eqref{eq:deffomega}. 
	This makes $\mathrm C^{-\infty}(\CF;E)$ 
	a sheaf of $\CO$-modules.
	
	\subsection{Formal generalized functions}
	Let $N$ be a smooth manifold. 
	As usual, we write 
	\[\RC^{-\infty}(N;E):=\CL_b(\RD^\infty_c(N);E)\] for the space of all $E$-valued generalized functions on $N$, and set $\RC^{-\infty}(N):=\RC^{-\infty}(N;\C)$. Let $n\in \BN$.
	Recall from \cite[Proposition 12]{Sc3} that  there is an LCS identification 
	\be \label{eq:charC-inftyNE}\RC^{-\infty}(\R^n;E)=\RC^{-\infty}(\R^n)\widetilde\otimes E,\ee
	and that $\RC^{-\infty}(\R^n;E)$ is complete provided that $E$ is complete.
	We have the following generalization in the setting of formal manifolds. 
	
	\begin{prpd}\label{prop:GFMEten}
		Suppose that  $\CF$ is locally free of finite rank.
		Then 
		\begin{itemize}
			\item $\mathrm{C}^{-\infty}(M;\CF)$ is a complete nuclear reflexive  LCS, and $\mathrm{C}^{-\infty}(M;\CF;E)$ is a  quasi-complete  LCS;
			\item the canonical linear map 
			$\mathrm{C}^{-\infty}(M;\CF)\otimes E\rightarrow \mathrm{C}^{-\infty}(M;\CF;E)$ induces an  identification
			\be \label{eq:CinftyE} \mathrm{C}^{-\infty}(M;\CF)\wt\otimes E=\mathrm{C}^{-\infty}(M;\CF;E)
			\ee
			as LCS; and
			\item 
			if $E$ is complete, then 
			\[ \mathrm{C}^{-\infty}(M;\CF)\wt\otimes E=\mathrm{C}^{-\infty}(M;\CF;E)=\mathrm{C}^{-\infty}(M;\CF)\wh\otimes E
			\] 
			as LCS.
		\end{itemize}
	\end{prpd}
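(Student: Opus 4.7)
The plan is to combine the structural results of Proposition \ref{cor:NLFFMsm'}, Corollary \ref{cor:DcFregular}, and Corollary \ref{lem:strictapprFMsm'} with standard Schwartz–Grothendieck tensor product theory for nuclear spaces. Throughout, set $F := \mathrm{D}^\infty_c(M;\CF)$.

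For the first bullet, I would first observe that by Proposition \ref{cor:NLFFMsm'} and \eqref{eq:Dc=directsum}, $F$ is a direct sum of nuclear LF spaces, each of which (by the strictness established in \eqref{eq:claimstrict}) is nuclear, barrelled, bornological, and Montel. Applying \eqref{eq:C-infty=product} reduces the claims for $\mathrm{C}^{-\infty}(M;\CF) = F_b'$ to the connected, secondly countable case, where the strong dual of a nuclear Montel LF space is a well-known complete nuclear reflexive LCS; arbitrary products preserve all three properties. For the $E$-valued version, quasi-completeness of $\mathrm{C}^{-\infty}(M;\CF;E) = \CL_b(F,E)$ follows from the standard fact (recalled in the appendix) that $\CL_b(F,E)$ is quasi-complete whenever $F$ is bornological and $E$ is quasi-complete.

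For the second bullet, I would invoke the following classical principle: if $F$ is a bornological nuclear LCS with the strict approximation property and $E$ is quasi-complete, then the canonical map
\[
F_b' \otimes E \longrightarrow \CL_b(F, E), \qquad \phi \otimes e \mapsto (\eta \mapsto \phi(\eta)\, e),
\]
extends to a topological isomorphism $F_b' \widetilde\otimes_\pi E \xrightarrow{\sim} \CL_b(F,E)$. Nuclearity collapses the three natural topologies on the tensor product, so the subscript $\pi$ can equivalently be $\varepsilon$ or $\mathrm i$. All hypotheses have been verified for $F$: nuclearity and bornologicality follow from Proposition \ref{cor:NLFFMsm'}, and the strict approximation property is precisely Corollary \ref{lem:strictapprFMsm'}. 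The continuity and injectivity of the canonical map are routine; density and continuity of the inverse follow by approximating $\mathrm{id}_F$ uniformly on bounded subsets by a bounded net of finite-rank operators (as supplied by the strict approximation property) and transporting this net via the continuity of the transpose operations recorded in \eqref{eq:trancont}.

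For the third bullet, when $E$ is complete, both $\mathrm{C}^{-\infty}(M;\CF) \widehat\otimes_\pi E$ and $\CL_b(F, E)$ are complete: the former because the completed projective tensor product of a complete nuclear space with a complete space is complete, the latter because $\CL_b(F,E)$ is complete whenever $F$ is bornological and $E$ is complete. Since both contain $\mathrm{C}^{-\infty}(M;\CF) \otimes E$ as a dense subspace with the same induced topology by the previous paragraph, they must coincide as LCS, giving the identification with $\widehat\otimes_\pi$. The main obstacle will be the identification in the second step: although the underlying framework is classical, the precise statement needed applies to a bornological nuclear LCS equipped only with the strict approximation property (rather than, say, the ordinary approximation property together with metrizability), and verifying that the induced bijection from the quasi-completion onto $\CL_b(F, E)$ is a topological isomorphism requires careful bookkeeping of seminorms and equicontinuous subsets along the lines of the appendix.
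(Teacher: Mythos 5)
Your overall architecture is the paper's: the structure theory of $\mathrm{D}^\infty_c(M;\CF)$ (Proposition \ref{cor:NLFFMsm'}), the strict approximation property (Corollary \ref{lem:strictapprFMsm'}), and the Schwartz $\varepsilon$-product mechanism (Lemma \ref{lem:stricappro}, packaged as Corollary \ref{cor:conmaps=tensorproduct} together with Lemma \ref{lem:dualofnF}) are exactly the ingredients the paper combines, and your treatment of the completeness upgrade in the third bullet matches the intended argument. However, there is a genuine gap in how you handle the case where $M$ is not secondly countable. You apply your ``classical principle'' to $F=\mathrm{D}^\infty_c(M;\CF)$ globally and assert that its nuclearity follows from Proposition \ref{cor:NLFFMsm'}; but that proposition only gives that $F$ is a direct sum of nuclear LF spaces indexed by $\pi_0(M)$, and an \emph{uncountable} locally convex direct sum of nonzero nuclear spaces is never nuclear (already $\bigoplus_{i\in I}\C$ fails for uncountable $I$). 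Since $\pi_0(M)$ may be uncountable, the hypothesis under which you collapse $\otimes_\pi$, $\otimes_\varepsilon$ and $\otimes_{\mathrm i}$ is simply false in general, and $F$ is then not an LF space either, so Corollary \ref{cor:conmaps=tensorproduct} does not apply directly.

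The repair is the step you omit: after proving the identification on each connected component (where second countability holds and your argument is fine), you must globalize by commuting $\wt\otimes_\pi E$ and $\wh\otimes_\pi E$ with the product $\mathrm{C}^{-\infty}(M;\CF)=\prod_{Z\in\pi_0(M)}\mathrm{C}^{-\infty}(Z;\CF)$ from \eqref{eq:C-infty=product}; this is precisely the appeal to \cite[Lemma 6.6]{CSW} in the paper's proof, and it is not automatic. (An alternative patch, closer to your global formulation, is to observe that nuclearity is really needed on the dual side: $F_b'$ is a \emph{product} of nuclear spaces and hence nuclear even when $F$ is not, while $F$ is still Montel, bornological and reflexive as a direct sum of nuclear LF spaces, so Lemma \ref{lem:stricappro} still applies; but as written your justification does not establish this.) A second, smaller imprecision: your ``classical principle'' as stated (bornological nuclear plus strict approximation) is not quite the correct hypothesis set; the Schwartz result also needs quasi-completeness of $F_c'$ and the identity $(F_c')_c'=F$, which for nuclear LF spaces are supplied by Lemma \ref{lem:dualofnF} and should be cited rather than absorbed into ``routine bookkeeping.''
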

	\begin{proof} When $M$ is secondly countable, the proposition is implied by  Corollaries \ref{cor:conmaps=tensorproduct},   \ref{lem:strictapprFMsm'}, Proposition \ref{cor:NLFFMsm'},  and Lemma \ref{lem:dualofnF}. 
		In general, the first assertion follows from \eqref{eq:C-infty=product} and the fact that  a product of nuclear (resp.\,reflexive; complete; quasi-complete) LCS is still a nuclear (resp.\,reflexive; complete; quasi-complete) LCS.
		For the second one,  by \eqref{eq:C-infty=product} and 
		\cite[Lemma 6.6]{CSW}, we have that 
		\begin{eqnarray*}
			\mathrm{C}^{-\infty}(M;\CF)\wt\otimes E&=&
			\left(\prod_{Z\in \pi_0(M)}\mathrm{C}^{-\infty}(Z;\CF)\right)\wt\otimes E\\
			&=&\prod_{Z\in \pi_0(M)} \mathrm{C}^{-\infty}(Z;\CF)\wt\otimes E\\
			&=&\prod_{Z\in \pi_0(M)}\mathrm{C}^{-\infty}(Z;\CF;E)\\
			&=&\mathrm{C}^{-\infty}(M;\CF;E).
		\end{eqnarray*}
		Similarly, when $E$ is complete, we  also obtain from \eqref{eq:C-infty=product} and 
		\cite[Lemma 6.6]{CSW} that 
		\[\mathrm{C}^{-\infty}(M;\CF;E)=\mathrm{C}^{-\infty}(M;\CF)\wh\otimes E.\]
	\end{proof}

	When $M=N^{(k)}$, just as \eqref{eq:pairing}, there is a canonical separately continuous bilinear map
	\[\RD^\infty_c(N)[y_1^*,y_2^*,\dots,y_k^*]\times \RC^{-\infty}(N;E)[[y_1,y_2,\dots,y_k]]\rightarrow E.\] Under this pairing, 
	$\RC^{-\infty}(N;E)[[y_1,y_2,\dots,y_k]]$ is a linear subspace of  $\mathrm{C}^{-\infty}(M;\CO;E)$.  
	Then  we have the following result.
	
	\begin{prpd}\label{prop:charGO} Assume that $M=N^{(k)}$ with $N$  a  smooth manifold and $k\in \BN$. Then we have that 
		\[
		\mathrm{C}^{-\infty}(M;\CO)
		=\RC^{-\infty}(N)\widehat\otimes\,\C[[y_1,y_2,\dots,y_k]]
		=\RC^{-\infty}(N)\widetilde\otimes\,\C[[y_1,y_2,\dots,y_k]]
		\]
		as LCS. For the general case, we also have that 
		\[\mathrm{C}^{-\infty}(M;\CO;E)=\RC^{-\infty}(N;E)[[y_1,y_2,\dots,y_k]]\] as LCS.
	\end{prpd}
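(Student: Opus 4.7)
The plan is to first establish the general (second) statement and then to deduce the first as a specialization to $E=\C$. The crucial observation is that $\mathrm{D}^\infty_c(M;\CO)$ admits a countable LCS direct sum decomposition indexed by $\BN^k$, so that passing to $\CL_b(\cdot,E)$ converts it into a countable product, which is by definition a space of formal power series.

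First, I would sharpen Proposition~\ref{prop:charlcsosm'} into the LCS identification
\[
\mathrm{D}^\infty_c(M;\CO) \;=\; \bigoplus_{L\in \BN^k} \RD^\infty_c(N)\,(y^*)^L.
\]
Indeed, for each $(K,r)\in \mathcal{C}(M)$, Proposition~\ref{prop:charlcsosm'} presents $\mathrm{D}^\infty_{K,r}(M;\CO) = \RD^\infty_K(N)[y_1^*,\dots,y_k^*]_{\le r}$ as the finite LCS direct sum $\bigoplus_{|L|\le r}\RD^\infty_K(N)(y^*)^L$. Since finite direct sums commute with LCS colimits, taking $\varinjlim$ over $\mathcal{C}(M)$ commutes both with the colimit over $K$ (giving $\RD^\infty_c(N)$ per slot) and with the colimit over $r$ (enlarging the index set from $\{|L|\le r\}$ to $\BN^k$), yielding the displayed identity. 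The regularity of the inductive limit (Corollary~\ref{cor:DcFregular}) further confirms that the topology obtained is the standard LCS direct sum topology.

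Applying \eqref{eq:strong}, which states $\CL_b(\bigoplus_i E_i, F) = \prod_i \CL_b(E_i, F)$ as LCS, then gives
\[
\RC^{-\infty}(M;\CO;E) \;=\; \CL_b\!\Bigl(\bigoplus_{L\in \BN^k} \RD^\infty_c(N)\,(y^*)^L,\, E\Bigr) \;=\; \prod_{L\in \BN^k}\RC^{-\infty}(N;E).
\]
The product on the right, equipped with the product (equivalently, term-wise convergence) topology, is by definition the space $\RC^{-\infty}(N;E)[[y_1,\dots,y_k]]$. The pairings \eqref{eq:pairing} and \eqref{eq:pairing2} show that this is the natural identification: the $L$-th coefficient of the formal power series corresponding to $u\in \RC^{-\infty}(M;\CO;E)$ is the generalized function $\tau\mapsto (L!)^{-1}\la \tau\,(y^*)^L,\,u\ra$ on $N$. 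This proves the general statement.

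For the first statement, specializing to $E=\C$ yields $\RC^{-\infty}(M;\CO) = \prod_{L\in \BN^k}\RC^{-\infty}(N)$, and it remains to match this product with $\RC^{-\infty}(N)\wh\otimes \C[[y_1,\dots,y_k]]$ and $\RC^{-\infty}(N)\wt\otimes \C[[y_1,\dots,y_k]]$. Writing $\C[[y_1,\dots,y_k]] = \prod_{L\in \BN^k}\C$ as a nuclear Fr\'echet space, and using the nuclearity of $\RC^{-\infty}(N)$ so that all three topological tensor products coincide, the desired LCS identification reduces to the commutation of the (quasi-)completed tensor product with countable products under a nuclearity hypothesis, which is precisely the content of \cite[Lemma~6.6]{CSW}. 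I anticipate the main technical subtlety to lie in carefully justifying the LCS-level direct sum decomposition of $\mathrm{D}^\infty_c(M;\CO)$ and the tensor-product/product commutation at the end, ensuring throughout that topological identities (not merely algebraic ones) are being asserted.
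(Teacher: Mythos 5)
Your proposal is correct, but it takes a genuinely different route from the paper. The paper first treats the scalar case by pure duality of topological tensor products: it writes $\mathrm{C}^{-\infty}(M;\CO)=(\mathrm{D}^\infty_c(M;\CO))_b'=(\RD^\infty_c(N)\wh\otimes_{\mathrm i}\C[y_1^*,\dots,y_k^*])_b'$ and invokes Lemma \ref{lem:indtensorLF} (the identity $(E\wh\otimes_{\mathrm i}F)_b'=E_b'\wh\otimes F_b'$ for direct sums of nuclear LF spaces); it then deduces the $E$-valued case from the identification $\mathrm{C}^{-\infty}(M;\CO;E)=\mathrm{C}^{-\infty}(M;\CO)\wt\otimes E$ of Proposition \ref{prop:GFMEten} together with Example \ref{ex:E-valuedpowerseries}. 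You go in the opposite order: you first decompose $\mathrm{D}^\infty_c(M;\CO)$ as the countable LCS direct sum $\bigoplus_{L\in\BN^k}\RD^\infty_c(N)(y^*)^L$ (which is legitimate --- colimits commute with colimits in the category of LCS, and the finite partial sums indexed by $\{|L|\le r\}$ are cofinal) and then apply the elementary duality $\CL_b(\bigoplus_i E_i,E)=\prod_i\CL_b(E_i,E)$ of \eqref{eq:strong} to obtain $\prod_{L\in\BN^k}\RC^{-\infty}(N;E)=\RC^{-\infty}(N;E)[[y_1,\dots,y_k]]$ directly. The payoff of your route is that the $E$-valued statement is obtained for an arbitrary quasi-complete $E$ by purely formal direct-sum/product duality, without passing through Proposition \ref{prop:GFMEten} and hence without the nuclearity and strict-approximation-property machinery that underlies it; nuclearity only enters at the very end, when you match $\prod_{L}\RC^{-\infty}(N)$ with $\RC^{-\infty}(N)\wt\otimes\C[[y_1,\dots,y_k]]$ (for which Example \ref{ex:E-valuedpowerseries} applied to $E=\RC^{-\infty}(N)$ would in fact suffice in place of \cite[Lemma 6.6]{CSW}). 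The paper's route is shorter given the results already established and produces the tensor-product formulation used elsewhere; one minor overstatement on your side is the claim that ``all three'' tensor topologies coincide on $\RC^{-\infty}(N)\otimes\C[[y_1,\dots,y_k]]$ --- only $\otimes_\pi=\otimes_\varepsilon$ is guaranteed by nuclearity and only that is needed.
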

	\begin{proof} From \eqref{eq:mathcalD=mathrmD}, Proposition \ref{prop:charlcsosm'} and Lemma \ref{lem:indtensorLF},  we have that
		\begin{eqnarray*}\mathrm{C}^{-\infty}(M;\CO)&=&(\RD^\infty_c(N)\widehat\otimes_{\mathrm{i}} \C[y_1^*,y_2^*,\dots,y_k^*])_b'\\
			&=&(\RD^\infty_c(N))_b'\widehat\otimes (\C[y_1^*,y_2^*,\dots,y_k^*])_b'\\
			&=&\RC^{-\infty}(N)\widehat\otimes\,\C[[y_1,y_2,\dots,y_k]]\\
			&=&\RC^{-\infty}(N)\widetilde\otimes\,\C[[y_1,y_2,\dots,y_k]].
		\end{eqnarray*}
		Then by  \eqref{eq:CinftyE} and Example \ref{ex:E-valuedpowerseries}, the following  equalities hold:
		\begin{eqnarray*}
			\mathrm{C}^{-\infty}(M;\CO;E)&=&\mathrm{C}^{-\infty}(M;\CO)\widetilde\otimes\, E\\
			&=&\RC^{-\infty}(N)\widetilde\otimes\,\C[[y_1,y_2,\dots,y_k]]\widetilde\otimes\, E\\
			&=& \RC^{-\infty}(N;E)\widetilde\otimes\,\C[[y_1,y_2,\dots,y_k]]\\
			&=&\RC^{-\infty}(N;E)[[y_1,y_2,\dots,y_k]].
		\end{eqnarray*}
	\end{proof}

	\section{Formal distributions}\label{sec:formaldis}
	In this section,  we introduce  a sheaf $\mathrm{D}^{-\infty}(\CF;E)$ of
	$\CO$-modules over $M$, which generalizes the concept of sheaves of $E$-valued distributions on smooth manifolds.
	
	\subsection{The cosheaf $\CF_c$}
	Recall from  \cite[Section 3.2]{CSW} that, with the extension by zero maps, the assignment
	\be\label{eq:Fcdef} \CF_c:\ U\mapsto \CF_c(U)\qquad (\text{$U$ is an open subset of $M$})\ee
	forms a flabby 
	cosheaf of $\CO$-modules over $M$.
	
	For every compact subset $K$  of $M$,
	write $\CF_K(M)$ for the space of all global sections of $\CF$ that are supported in $K$.
	Equip $\CF(M)$ with the smooth topology, and equip $\CF_K(M)$ with the subspace topology.

	\begin{lemd}\label{lem:HausFimplyclosed} If $\CF(M)$ is Hausdorff, then $\CF_K(M)$ is closed in $\CF(M)$ for every compact subset $K$  of $M$.
	\end{lemd}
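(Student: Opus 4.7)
The plan is to realize $\CF_K(M)$ as an intersection of kernels of continuous linear endomorphisms of $\CF(M)$, so that the Hausdorff hypothesis immediately forces closedness. Concretely, I would first prove the identity
\[
\CF_K(M) \;=\; \bigcap_{f \in \CI_K} \ker(m_f),
\]
where $\CI_K := \{f \in \CO(M) : \mathrm{supp}(f) \cap K = \emptyset\}$ and $m_f : \CF(M) \to \CF(M)$, $u \mapsto fu$, denotes multiplication by the formal function $f$.

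For the inclusion $\subset$, a sheaf-theoretic check suffices: if $\mathrm{supp}(u) \subset K$ and $f \in \CI_K$, then $fu$ vanishes on $M \setminus K$ (where $u = 0$) and also on $M \setminus \mathrm{supp}(f)$ (where $f = 0$); since $\mathrm{supp}(f) \subset M \setminus K$, these two open sets cover $M$, so $fu = 0$ on $M$. For $\supset$, I would use the standard bump-function machinery for formal manifolds already invoked throughout \cite{CSW} and in the proofs of Lemmas \ref{lem:defextmap} and \ref{lem:MV1}: given $u \in \CF(M)$ with $m_f(u) = 0$ for every $f \in \CI_K$ and given $a \in M \setminus K$, choose an open neighborhood $V$ of $a$ with $\overline{V} \subset M \setminus K$, together with a formal function $f \in \CO(M)$ satisfying $f|_{\overline{V}} = 1$ and $\mathrm{supp}(f) \subset M \setminus K$. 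Then $u|_V = (fu)|_V = 0$, so $u$ vanishes on a neighborhood of each point of $M \setminus K$; the sheaf property of $\CF$ then gives $u|_{M \setminus K} = 0$, i.e., $u \in \CF_K(M)$.

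Once the identity is in hand, the conclusion is automatic: each $m_f$ is a continuous linear endomorphism of $\CF(M)$, since the smooth topology of \cite[Definition 4.1]{CSW} makes $\CF$ a sheaf of topological $\CO$-modules; and the singleton $\{0\}$ is closed in $\CF(M)$ by the Hausdorff hypothesis. Therefore each $\ker(m_f)$ is closed in $\CF(M)$, and hence so is their intersection $\CF_K(M)$.

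The only non-bookkeeping point is the existence of the cutoff $f \in \CO(M)$ realizing a bump around $a$ with support missing $K$, but this is exactly the kind of partition-of-unity statement already in use in this manuscript, so no new technical obstacle arises.
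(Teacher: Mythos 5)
Your proof is correct and rests on exactly the same ingredients as the paper's: the continuity of multiplication $u\mapsto fu$ on $\CF(M)$ (\cite[Lemma 4.3]{CSW}) applied to formal bump functions supported off $K$, together with the Hausdorff hypothesis. You merely package the paper's net-plus-contradiction argument as the cleaner observation that $\CF_K(M)$ is the intersection of the closed kernels $\ker(m_f)$ over all $f$ with $\mathrm{supp}\,f\cap K=\emptyset$.
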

	\begin{proof} Let  $\{u_i\}_{i\in I}$ be a net in $\CF_K(M)$ that converges to an element $u\in \CF(M)$.
		It suffices to show that $u$ is also supported in $K$.  Otherwise,
		take a point $a\in\mathrm{supp}\,u$ and an open
		neighborhood $U$ of $a$ in $M$ such that $U\cap K=\emptyset$.
		Let $g$ be a formal function on $M$ such that $\mathrm{supp}\,g\subset U$ and $g_a=1$, where $g_a$ is the germ of $g$ in $\CO_a$.
		Then the zero net $\{g u_i\}_{i\in I}$ converges to a nonzero element $g u$ in $\CF(M)$ by \cite[Lemma 4.3]{CSW}, which leads to a contradiction.
	\end{proof}

	Equip \[\CF_c(M)=\varinjlim_{K \,\text{is a compact subset of}\, M}\CF_K(M)\] with
	the inductive limit topology, where the compact subsets of $M$ are directed by inclusions. 
	
	We refer to Appendix \ref{appendixB1} for the usual notations $\RC^\infty_K(N)$, $\RC^\infty_c(N;E)$, and etc. Here $N$ is a smooth manifold and $K$ is a compact subset of it.
	\begin{exampled}\label{ex:DON} If $M=N^{(k)}$ with $N$ a smooth manifold and $k\in \BN$,
		then
		\be \label{eq:desOKN} \CO_K(M)= \RC^\infty_K(N)[[y_1,y_2,\dots,y_k]]\quad (\text{$K$ is a compact subset of $M$})\ee is a nuclear Fr\'echet space. 
		Furthermore, we have that 
		\[\CO_c(M)=\RC^\infty_c(N;\C[[y_1,y_1,\dots,y_k]])
		=\RC^\infty_c(N)\widehat\otimes_{\mathrm{i}} \C[[y_1,y_2,\dots,y_k]]\]
		as LCS by Example \ref{ex:E-valuedcptsm}. 
	\end{exampled}

	\begin{lemd}\label{lem:FcMhau+regular} If $\CF(M)$ is Hausdorff, then the space $\CF_c(M)$ is Hausdorff, and the inductive limit ${\varinjlim}_{K}\CF_K(M)$ is regular (see Definition \ref{de:regualrlim}).
	\end{lemd}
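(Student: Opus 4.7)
The plan is to exploit the Hausdorff hypothesis to show that the inductive limit $\CF_c(M) = \varinjlim_K \CF_K(M)$ is strict, deduce Hausdorffness of $\CF_c(M)$ from this, and then reduce regularity to the countable case via the decomposition over connected components of $M$.

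First I would invoke Lemma \ref{lem:HausFimplyclosed}: under the Hausdorff hypothesis, $\CF_K(M)$ is a closed subspace of $\CF(M)$ for every compact $K$. Since the topology on each $\CF_K(M)$ is by definition the subspace topology inherited from $\CF(M)$, whenever $K\subset K'$ the inclusion $\CF_K(M)\hookrightarrow \CF_{K'}(M)$ is a closed topological embedding. This is precisely the strictness of the directed system $\{\CF_K(M)\}$, so Lemma \ref{lem:basicsonindlim}(b) delivers that $\CF_c(M)$ is Hausdorff.

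For regularity, I would first observe that the underlying topological space of a formal manifold is locally Euclidean, hence locally compact; combined with paracompactness, each connected component $Z\in \pi_0(M)$ is $\sigma$-compact and therefore admits a cofinal compact exhaustion $K_1\subset K_2\subset\cdots$. Since every compact subset of $M$ meets only finitely many components, the filtered system $\{\CF_K(M)\}_K$ decomposes, in the spirit of Corollary \ref{cor:Dc=directsum}, into
\[
\CF_c(M) = \bigoplus_{Z\in \pi_0(M)} \CF_c(Z)
\]
as LCS, and for each $Z$ one has $\CF_c(Z)=\varinjlim_n \CF_{K_n}(Z)$, a countable strict inductive limit of Hausdorff LCS. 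Such a limit is regular by Lemma \ref{lem:basicsonindlim}(c), and regularity is preserved under arbitrary direct sums because every bounded subset of a direct sum has finite support and is bounded componentwise. This then yields regularity of $\CF_c(M)$.

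The main technical point I anticipate is justifying the LCS identification $\CF_c(M)=\bigoplus_{Z}\CF_c(Z)$, that is, matching the inductive limit topology on the left with the direct sum topology on the right through the finite-support property of compact sets; this is entirely parallel to the argument for Corollary \ref{cor:Dc=directsum} and should go through with essentially the same reasoning. The rest is formal manipulation of strict inductive limits.
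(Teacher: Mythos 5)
Your proposal is correct and follows essentially the same route as the paper: the paper's proof is a one-line citation of Lemma \ref{lem:HausFimplyclosed} together with Lemma \ref{lem:basicsonindlim} (b), (c), and your argument simply unpacks what those two lemmas require — closedness of each $\CF_K(M)$ giving strictness, and the decomposition over the $\sigma$-compact connected components putting $\varinjlim_K\CF_K(M)$ into the form \eqref{eq:gencouindlim} to which Lemma \ref{lem:basicsonindlim} applies. The details you flag as the "main technical point" are exactly the ones the paper treats as implicit in its setup.
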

	\begin{proof}
		The assertion follows from Lemma  \ref{lem:HausFimplyclosed} and Lemma \ref{lem:basicsonindlim} (b), (c).
	\end{proof}

	\begin{lemd}\label{lem:mfofD} Let $U$ be an open subset of $M$, $K$ a compact subset of $M$, and  $f$  a formal function on $M$ supported in $U$.\\ \noindent (a) The extension  map
		\be
		\mathrm{ext}_{M,U}:\quad \CF_c(U)\rightarrow \CF_c(M)
		\ee
		is continuous.\\
		\noindent (b)  The linear map
		\be \label{eq:FKMtoFKU}
		m_f:\quad \CF_K(M) \rightarrow \CF_{K\cap \mathrm{supp}\,f}(U), \quad
		u\mapsto (fu)|_U 
		\ee
		is well-defined and continuous.\\
		\noindent (c)  The linear map
		\be \label{eq:FcMtoFcU}
		\begin{array}{rcl}
			m_f:\quad \CF_c(M) \rightarrow \CF_c(U), \quad
			u\mapsto (fu)|_U 
		\end{array}\ee
		is well-defined and continuous.\\
		\noindent (d) The linear map 
		\be
		\label{eq:LcUtoLcM}
		\begin{array}{rcl}
			T_f:\quad \CL_c(\CF_c(U),\CF_c(U))&\rightarrow &\CL_c(\CF_c(M),\CF_c(M)),\\
			\phi&\mapsto & \mathrm{ext}_{M,U}\circ\phi \circ m_f\end{array}
		\ee
		is continuous.
	\end{lemd}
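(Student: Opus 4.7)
The plan is to handle the four parts in order, with (a)--(c) reducing to the inductive limit description
\[
\CF_c(M)=\varinjlim_{K}\CF_K(M)
\]
and to standard continuity of restriction and multiplication in the smooth topology on $\CF(M)$ (see \cite[Definition 4.1 and Lemma 4.3]{CSW}), and with (d) being a formal consequence of (a) and (c).

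For part (a), I would invoke the universal property of the inductive limit: it suffices to show that for every compact $K'\subset U$, the composition
\[
\CF_{K'}(U)\xrightarrow{\mathrm{ext}_{M,U}} \CF_{K'}(M)\hookrightarrow \CF_c(M)
\]
is continuous. The second map is continuous by the definition of the inductive limit topology on $\CF_c(M)$, so only the first arrow requires justification. Here the smooth topology is locally determined (via \cite[Lemma 4.5]{CSW}), so one may cover $K'$ by finitely many chart neighborhoods contained in $U$, observe that on each such chart the extension-by-zero map is just the identity, and on charts disjoint from $K'$ it is identically zero. The resulting seminorms on $\CF_{K'}(M)$ pull back continuously to those on $\CF_{K'}(U)$, giving continuity.

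For (b), the support of $fu$ is contained in $\mathrm{supp}\,f\cap K\subset U$, which is compact, so $(fu)|_U$ is a well-defined element of $\CF_{K\cap \mathrm{supp}\,f}(U)$. Continuity is the composition of the multiplication operator $u\mapsto fu$ on $\CF(M)$, which is continuous by \cite[Lemma 4.3]{CSW}, with the restriction $\CF(M)\to \CF(U)$, which is continuous by the sheaf-of-topological-modules structure of $\CF$ on $M$; both then preserve the relevant supports and thus descend to the spaces in question. Part (c) is then immediate from (b) together with the universal property of the inductive limit $\varinjlim_K\CF_K(M)=\CF_c(M)$: the composition $\CF_K(M)\xrightarrow{m_f}\CF_{K\cap \mathrm{supp}\,f}(U)\hookrightarrow \CF_c(U)$ is continuous for every compact $K\subset M$.

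For part (d), I would argue formally using (a) and (c). Continuity in the $\CL_c$-topology means that for every compact subset $C\subset \CF_c(M)$ and every continuous seminorm $p$ on $\CF_c(M)$ the seminorm $\phi\mapsto \sup_{u\in C}p(\mathrm{ext}_{M,U}(\phi(m_f(u))))$ is continuous on $\CL_c(\CF_c(U),\CF_c(U))$. By (c), $m_f(C)$ is a compact subset of $\CF_c(U)$, and by (a) the continuous map $\mathrm{ext}_{M,U}$ pulls back the continuous seminorm $p$ to a continuous seminorm $q$ on $\CF_c(U)$. Thus the seminorm above is dominated by $\phi\mapsto \sup_{v\in m_f(C)}q(\phi(v))$, which is by definition a defining seminorm for the compact-open topology on $\CL_c(\CF_c(U),\CF_c(U))$, yielding continuity of $T_f$.

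The main obstacle I anticipate is part (a): one must confirm that extension by zero is continuous between the relevant subspaces with the subspace topology inherited from the smooth topology, and this requires a careful unwinding of the local definition of the smooth topology on $\CF(M)$ together with a chart cover argument. The remaining parts are essentially formal once the continuity of multiplication, restriction, and extension is in hand.
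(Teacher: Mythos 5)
Your proposal is correct and follows essentially the same route as the paper: parts (a)--(c) are reduced to the fixed-compact-support subspaces $\CF_{K'}(U)\rightarrow \CF_{K'}(M)$ and $\CF_K(M)\rightarrow\CF_{K\cap\mathrm{supp}\,f}(U)$ via the universal property of the inductive limit, and (d) is deduced formally from (a) and (c) (the paper simply cites the continuity of transposes in \eqref{eq:trancont}, which is exactly your seminorm computation). The only cosmetic difference is that in (a) the paper replaces your chart-cover localization by a single cutoff function $g$ with $g|_{K'}=1$ and $\mathrm{supp}\,g\subset U$, giving the exact seminorm identity $|\mathrm{ext}_{M,U}(u)|_{D}=|u|_{(g\circ D)|_U}$, and likewise proves (b) by the identity $|((fu)|_U)|_{D}=|u|_{(\mathrm{ext}_{M,U}D)\circ f}$ rather than by composing multiplication with restriction.
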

	\begin{proof} 
		Let $K'$ be a compact subset of $U$, and let $g$ be a formal function on $M$ such that 
		$\mathrm{supp}\,g\subset U$ and $g|_{K'}=1$.
		Then we have that 
		\[
		|\mathrm{ext}_{M,U}(u)|_{D}=|u|_{(g\circ D)|_U}\quad \text{for all $u\in \CF_{K'}(U)$},
		\]
		where $D\in \mathrm{Diff}_c(\CF,\underline{\CO})$ and 
		$g\circ D$ is as in \eqref{eq:defofDf}.
		This implies that the map 
		\[
		\mathrm{ext}_{M,U}:\quad \CF_{K'}(U)\rightarrow \CF_{K'}(M)
		\]
		is continuous. Then the assertion (a) follows. 
		
		Note that the assertion (d) follows from \eqref{eq:trancont}, the assertions (a) and (c). The assertion (c) is implied by the assertion (b). 
		Thus, it remains to prove the assertion (b). 
		
		It is clear that the map \eqref{eq:FKMtoFKU} is well-defined.
		Also, we have that
		\[
		|((fu)|_U)|_{D}=|u|_{(\mathrm{ext}_{M,U}D)\circ f}
		\quad \text{for all $u\in \CF_{K}(M)$},
		\]
		where $D\in \mathrm{Diff}_c(\CF|_U,\underline{\CO}|_U)$ and 
		$(\mathrm{ext}_{M,U}D)\circ f$ is as in \eqref{eq:defofDf}.
		These imply the assertion (b). 
	\end{proof}

	Similar to Corollary \ref{lem:strictapprFMsm'}, we have the following result.
	\begin{lemd}\label{lem:CFcMappro}
		If $\CF$ is locally free of finite rank,
		then the space $\CF_c(M)$ has the strict approximation property (see Definition \ref{de:strictappro}).
	\end{lemd}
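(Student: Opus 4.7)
The plan is to follow the strategy used in Corollary \ref{lem:strictapprFMsm'}, transferring the strict approximation property from local models to $\CF_c(M)$ via a partition of unity and the continuous maps $T_f$ constructed in Lemma \ref{lem:mfofD}(d).

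First, I would choose an open cover $\{U_\gamma\}_{\gamma \in \Gamma}$ of $M$ such that $(U_\gamma, \CO|_{U_\gamma}) \cong (\R^{n_\gamma})^{(k_\gamma)}$ for some $n_\gamma, k_\gamma \in \BN$, and $\CF|_{U_\gamma}$ is a free $\CO|_{U_\gamma}$-module of some finite rank $r_\gamma$. Using the freeness, $\CF_c(U_\gamma)$ identifies as an LCS with a finite direct sum of copies of $\CO_c(U_\gamma)$, which by Example \ref{ex:DON} is
\[
\RC^\infty_c(\R^{n_\gamma})\widehat\otimes_{\mathrm{i}}\C[[y_1,y_2,\dots,y_{k_\gamma}]].
\]

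Next, I would establish that this local model has the strict approximation property. The classical theory gives this for $\RC^\infty_c(\R^{n_\gamma})$ alone, and the required statement for the completed inductive tensor product with $\C[[y_1,\dots,y_{k_\gamma}]]$ should follow by an analogue of Lemma \ref{lem:strictapppoly}, adapted to formal power series in place of polynomials, together with the behavior of the strict approximation property under finite direct sums.

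With the local result in hand, pick a partition of unity $\{f_\gamma\}_{\gamma\in \Gamma}$ on $M$ subordinate to $\{U_\gamma\}_{\gamma\in \Gamma}$, and let $A$ denote the quasi-closure of $(\CF_c(M))'\otimes \CF_c(M)$ in $\CL_c(\CF_c(M),\CF_c(M))$. By Lemma \ref{lem:mfofD}(d), each map $T_{f_\gamma}:\CL_c(\CF_c(U_\gamma),\CF_c(U_\gamma))\to \CL_c(\CF_c(M),\CF_c(M))$ is continuous, and it clearly sends $(\CF_c(U_\gamma))'\otimes \CF_c(U_\gamma)$ into $(\CF_c(M))'\otimes \CF_c(M)$. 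Since the preimage of a quasi-closed set under a continuous linear map is quasi-closed, $T_{f_\gamma}^{-1}(A)$ is a quasi-closed subset of $\CL_c(\CF_c(U_\gamma),\CF_c(U_\gamma))$ containing $(\CF_c(U_\gamma))'\otimes \CF_c(U_\gamma)$, hence equals the whole space by the local strict approximation property. In particular, $T_{f_\gamma}(\mathrm{id}_{\CF_c(U_\gamma)})\in A$ for each $\gamma$. The set
\[
B:=\left\{\sum_{\gamma\in \Gamma_0} T_{f_\gamma}(\mathrm{id}_{\CF_c(U_\gamma)}) \mid \Gamma_0\text{ is a finite subset of }\Gamma\right\}
\]
is bounded in $A$, and $\mathrm{id}_{\CF_c(M)}=\sum_{\gamma\in \Gamma} T_{f_\gamma}(\mathrm{id}_{\CF_c(U_\gamma)})$ lies in the closure of $B$, which finishes the argument.

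The main obstacle I anticipate is the local verification for $\RC^\infty_c(\R^n)\widehat\otimes_{\mathrm{i}}\C[[y_1,\dots,y_k]]$: unlike the polynomial case handled by Lemma \ref{lem:strictapppoly}, the second factor carries the term-wise convergence (Fréchet) topology rather than an inductive-limit topology, so one must construct a bounded net of finite-rank operators converging pointwise-strongly to the identity that respects the interplay between the LF topology on the compactly supported smooth factor and the power series factor.
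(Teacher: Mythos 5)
Your proposal is correct and follows essentially the same route as the paper: reduce to the local models via Example \ref{ex:DON} and the freeness of $\CF|_{U_\gamma}$, then run the quasi-closure/partition-of-unity argument with the maps $T_{f_\gamma}$ from Lemma \ref{lem:mfofD}(d), exactly as in Corollary \ref{lem:strictapprFMsm'}. The only remark worth making is that the obstacle you anticipate at the end is not actually there: the first assertion of Lemma \ref{lem:strictapppoly} is precisely the strict approximation property for $\RC_c^\infty(\R^n)\wt\otimes_{\mathrm{i}}\C[[y_1,\dots,y_k]]$ (its proof builds the finite-rank operators $P_j$ directly on the power-series model, truncating in the $y$-degree and applying Schwartz's $L_j$ coefficientwise), so no adaptation from the polynomial case is required.
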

	\begin{proof}
		By Lemma \ref{lem:strictapppoly} and Example \ref{ex:DON}, the assertion holds when $M=(\R^n)^{(k)}$ ($n,k\in \BN$) and $\CF$ is free.  In the general case, by considering  Lemma \ref{lem:mfofD} (d), the assertion is proved in a way similar to that in  Corollary \ref{lem:strictapprFMsm'}.
		%In view of  Lemma \ref{lem:mfofD} (d), by  Corollary \ref{lem:strictapprFMsm'}a similar argument to that in the proof of Corollary \ref{lem:strictapprFMsm'}, the proof is reduced to the special case that $M=(\R^n)^{(k)}$ $(n,k\in \BN)$ and $\CF$ is free. However, in this special case, the assertion is implied by  Lemma  \ref{lem:strictapppoly} and Example \ref{ex:DON}.
	\end{proof}
	
	\begin{lemd}\label{lem:extFKopen} Let $K$ be a compact subset of $M$, $\{U_i\}_{i=1}^s$   a finite cover of $K$ by open subsets of $M$, and
		$\{f_i\}_{i=1}^s$  a family of formal functions on $M$ such that \[\mathrm{supp}\,f_i\subset U_i \quad   \text{for all $1\leq i\leq s$, and} \quad \sum_{i=1}^s f_i|_K=1.\]
		Write $K_i:=\mathrm{supp}\,f_i\cap K$.
		Then the linear map
		\be \label{eq:FKitoFK}
		\bigoplus_{i=1}^s \CF_{K_i}(U_i)  \rightarrow    \CF_K(M), \quad
		(u_1,u_2,\dots,u_s) \mapsto   \sum\limits_{i=1}^{s}\mathrm{ext}_{M,U_i}(u_i)
		\ee
		is continuous, open and surjective. 
	\end{lemd}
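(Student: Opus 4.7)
The plan is to mirror the strategy of Lemma \ref{lem:homextDKr} by constructing a continuous linear right inverse to the map \eqref{eq:FKitoFK}; once this is in place, surjectivity is immediate, and openness follows from the standard fact that a continuous linear map between locally convex spaces admitting a continuous linear right inverse is automatically a topological quotient map. The continuity of \eqref{eq:FKitoFK} itself is handled first: each extension $\mathrm{ext}_{M,U_i}:\CF_{K_i}(U_i)\to \CF_{K_i}(M)$ is continuous by the argument of Lemma \ref{lem:mfofD}(a), and composing with the inclusion $\CF_{K_i}(M)\hookrightarrow \CF_K(M)$ (a topological embedding, since $K_i\subset K$ and both carry the subspace topology from $\CF(M)$) and summing over $i$ yields the continuity of \eqref{eq:FKitoFK}.

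For the right inverse, I define
\[
\psi:\,\CF_K(M)\to \bigoplus_{i=1}^s \CF_{K_i}(U_i),\qquad u\mapsto \bigl((f_1 u)|_{U_1},(f_2 u)|_{U_2},\dots,(f_s u)|_{U_s}\bigr).
\]
Each component is well-defined because $f_i u$ has support contained in $\mathrm{supp}(f_i)\cap K=K_i\subset U_i$, and the continuity of $u\mapsto (f_i u)|_{U_i}$ as a map $\CF_K(M)\to \CF_{K_i}(U_i)$ is exactly the content of Lemma \ref{lem:mfofD}(b). Hence $\psi$ is a continuous linear map.

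To verify that $\psi$ is a right inverse of \eqref{eq:FKitoFK}, observe that since $f_i u$ is supported inside $U_i$, the extension $\mathrm{ext}_{M,U_i}((f_i u)|_{U_i})$ coincides with the global section $f_i u\in \CF(M)$, and therefore
\[
\sum_{i=1}^s \mathrm{ext}_{M,U_i}((f_i u)|_{U_i}) = \Bigl(\sum_{i=1}^s f_i\Bigr) u.
\]
On $K$ this equals $u$ by the partition-of-unity relation $\sum_i f_i|_K=1$, while off $K$ both sides vanish because $u$ is supported in $K$. Thus $\psi$ is a continuous linear right inverse of \eqref{eq:FKitoFK}, which completes the proof. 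No step is expected to pose a serious obstacle; the only care needed is the support bookkeeping for $f_i u$, which is routine and parallels the corresponding calculation in Lemma \ref{lem:homextDKr}.
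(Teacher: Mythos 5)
Your proposal is correct and follows essentially the same route as the paper: the paper's proof consists precisely of exhibiting the continuous right inverse $u\mapsto ((f_1u)|_{U_1},\dots,(f_su)|_{U_s})$ via Lemma \ref{lem:mfofD} (b) and concluding. Your additional remarks on the continuity of the forward map and the support bookkeeping are consistent with what the paper leaves implicit.
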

	\begin{proof}
		The assertion follows from the fact that the continuous linear map  (see Lemma \ref{lem:mfofD} (b))
		\[
		\CF_K(M) \rightarrow \bigoplus_{i=1}^s \CF_{K_i}(U_i)    , \quad
		u\mapsto ((f_1u)|_{U_1},(f_2u)|_{U_2},\dots,(f_su)|_{U_s}).
		\]
		is a right inverse of 
		\eqref{eq:FKitoFK}. 
	\end{proof}
	
	\begin{prpd}\label{prop:FcMopen} Let $\{U_\gamma\}_{\gamma\in \Gamma}$ be an open cover of $M$.
		Then the linear map
		\be \label{eq:FCUgammatoFCM}
		\bigoplus_{\gamma\in \Gamma}\CF_c(U_\gamma)\rightarrow \CF_c(M),\quad 
		\{u_{\gamma}\}_{\gamma\in \Gamma}\mapsto \sum_{\gamma\in \Gamma} \mathrm{ext}_{M,U_{\gamma}} (u_{\gamma}) \ee
		is continuous, open and surjective.
	\end{prpd}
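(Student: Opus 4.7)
The plan is to proceed exactly as in the proof of Proposition \ref{prop:extconD}, by exhibiting a continuous linear right inverse to the map \eqref{eq:FCUgammatoFCM}; once such a right inverse exists, the map is automatically surjective and open, and its continuity follows routinely from Lemma \ref{lem:mfofD} (a) applied componentwise (together with Lemma \ref{lem:basicsonindlim} on direct sums).

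First I would choose a partition of unity $\{f_\gamma\}_{\gamma\in\Gamma}$ on $M$ subordinate to $\{U_\gamma\}_{\gamma\in\Gamma}$ and define
\[
R:\ \CF_c(M)\rightarrow \bigoplus_{\gamma\in\Gamma}\CF_c(U_\gamma),\qquad u\mapsto \{(f_\gamma u)|_{U_\gamma}\}_{\gamma\in\Gamma}.
\]
For the target to genuinely land in the direct sum (not merely the product), I would observe that for any compact $K\subset M$ the set $\{\gamma\in\Gamma : \mathrm{supp}\,f_\gamma\cap K\ne\emptyset\}$ is finite by local finiteness of the partition of unity, so for $u\in \CF_K(M)$ only finitely many components of $R(u)$ are nonzero. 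A direct check using the partition of unity property $\sum_\gamma f_\gamma=1$ shows that $R$ is a right inverse of \eqref{eq:FCUgammatoFCM}.

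The heart of the argument is verifying that $R$ is continuous. For this I would use the fact that $\CF_c(M)=\varinjlim_K \CF_K(M)$ has the inductive limit topology, so it suffices to show that the restriction $R|_{\CF_K(M)}$ is continuous for every compact $K\subset M$. Setting $\Gamma_K:=\{\gamma\in\Gamma : \mathrm{supp}\,f_\gamma\cap K\ne\emptyset\}$ (finite) and $K_\gamma:=\mathrm{supp}\,f_\gamma\cap K$, the map $R|_{\CF_K(M)}$ factors as
\[
\CF_K(M)\rightarrow \bigoplus_{\gamma\in\Gamma_K}\CF_{K_\gamma}(U_\gamma)\hookrightarrow \bigoplus_{\gamma\in\Gamma}\CF_c(U_\gamma),
\]
where the first map sends $u\mapsto \{(f_\gamma u)|_{U_\gamma}\}_{\gamma\in\Gamma_K}$; its continuity follows from Lemma \ref{lem:mfofD} (b) componentwise (a finite direct sum is fine), and the inclusion into the full direct sum is continuous by the universal property.

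The continuity of \eqref{eq:FCUgammatoFCM} itself is immediate from Lemma \ref{lem:mfofD} (a) and the universal property of direct sums in the category of LCS. Finally, the existence of a continuous right inverse $R$ forces \eqref{eq:FCUgammatoFCM} to be surjective, and openness follows from the standard fact that any continuous linear surjection admitting a continuous linear section is open (the image of an open set $V$ is $R^{-1}(R(V))$ up to kernel shifts, or more directly $V\mapsto V+\ker$ and the quotient map is open). I expect no serious obstacle; the only subtle point is making sure $R(u)$ lies in the direct sum rather than the product, which is handled by the local finiteness of the partition of unity, exactly as in Proposition \ref{prop:extconD}.
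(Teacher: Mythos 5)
Your proposal is correct and follows essentially the same route as the paper: the paper's proof likewise takes a partition of unity $\{f_\gamma\}_{\gamma\in\Gamma}$ subordinate to the cover and exhibits $u\mapsto \{(f_\gamma u)|_{U_\gamma}\}_{\gamma\in\Gamma}$ as a continuous right inverse, with the continuity reduced to the compact pieces $\CF_K(M)$ exactly as in Proposition \ref{prop:extconD}. The details you supply (local finiteness ensuring the image lands in the direct sum, and Lemma \ref{lem:mfofD} (b) for continuity on each $\CF_K(M)$) are precisely the ones the paper leaves implicit.
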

	\begin{proof}
		As in the proof of  Proposition \ref{prop:extconD}, the assertion follows as the map 
		\[
		\CF_c(M)\rightarrow\bigoplus_{\gamma\in \Gamma}\CF_c(U_\gamma),\quad 
		u\mapsto \{(f_\gamma u)|_{U_\gamma}\}_{\gamma\in \Gamma}
		\]
		is a continuous right inverse of 
		\eqref{eq:FCUgammatoFCM}, where 
		$\{f_\gamma\in \CO(M)\}_{\gamma\in \Gamma}$ is a partition of unity subordinate to $\{U_\gamma\}_{\gamma\in \Gamma}$.
	\end{proof}

	As a special case of Proposition \ref{prop:FcMopen}, we have the following result.
	\begin{cord} 
		There is an LCS identification
		\be\label{eq:FcMdec}
		\CF_c(M)=\bigoplus_{Z\in \pi_0(M)} \CF_c(Z).
		\ee
	\end{cord}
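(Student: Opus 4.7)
The plan is to recognize $\{Z\}_{Z \in \pi_0(M)}$ as an open cover of $M$ and apply Proposition \ref{prop:FcMopen} directly. First, I would verify that each connected component of $M$ is open: every point of $M$ has an open neighborhood isomorphic to some $(\R^n)^{(k)}$, whose underlying topological space is $\R^n$, so $M$ is locally connected and its components are open (and also closed). This makes $\{Z\}_{Z \in \pi_0(M)}$ a legitimate open cover, and Proposition \ref{prop:FcMopen} then yields that the canonical linear map
\[
\Phi:\ \bigoplus_{Z \in \pi_0(M)}\CF_c(Z) \rightarrow \CF_c(M), \quad \{u_Z\}_Z \mapsto \sum_Z \mathrm{ext}_{M,Z}(u_Z),
\]
is continuous, open, and surjective.

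To obtain the desired LCS identification, it remains to verify that $\Phi$ is injective, since a continuous, open, bijective linear map between LCS is automatically a topological isomorphism. Suppose $\{u_Z\}$ is a tuple with finite support and $\Phi(\{u_Z\})=0$. Because extensions are by zero along the pairwise disjoint components, for each component $Z_0$ the restriction to $Z_0$ satisfies $\mathrm{ext}_{M,Z}(u_Z)|_{Z_0}=0$ for $Z\neq Z_0$, while $\mathrm{ext}_{M,Z_0}(u_{Z_0})|_{Z_0}=u_{Z_0}$ by the very definition of extension by zero. Hence $u_{Z_0}=0$ for every $Z_0$, proving injectivity. Alternatively, one can appeal to the flabbiness of the cosheaf $\CF_c$ together with the fact that supports are disjoint.

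There is essentially no real obstacle here: the corollary is a direct consequence of Proposition \ref{prop:FcMopen} combined with the elementary observation that a cover of $M$ by pairwise disjoint open sets makes the corresponding extension-by-zero map a bijection. The only mild subtlety worth flagging is that components of a formal manifold are open, which follows at once from the local model $(\R^n)^{(k)}$.
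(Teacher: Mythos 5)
Your proposal is correct and follows the same route as the paper, which simply states the corollary as a special case of Proposition \ref{prop:FcMopen} applied to the open cover by connected components. Your additional verifications (that components are open, and that disjointness of the cover forces injectivity of the extension map, upgrading the continuous open surjection to a topological isomorphism) are exactly the details the paper leaves implicit.
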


	\begin{cord}\label{cor:CFcMisNLF}  
		Assume that $\CF$ is locally free of finite rank.
		Then
		\begin{itemize}
			\item for every compact subset $K$ of $M$, $\CF_K(M)$ is a nuclear Fr\'echet space;
			\item when $M$ is secondly countable, $\CF_c(M)$ is a nuclear LF space (see Definition \ref{de:LFspace}); and
			\item in general, $\CF_c(M)$ is a direct sum of nuclear LF spaces. 
		\end{itemize}
	\end{cord}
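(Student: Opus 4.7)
The plan is to adapt the argument of Proposition \ref{cor:NLFFMsm'}, replacing $\mathrm{D}^\infty_c(\CF)$ by $\CF_c$ and substituting Example \ref{ex:DON} together with Lemma \ref{lem:extFKopen} for the ingredients about differential operators.

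For the first bullet, take an atlas $\{U_\gamma\}_{\gamma\in\Gamma}$ of $M$ such that $(U_\gamma,\CO|_{U_\gamma})\cong (\R^{n_\gamma})^{(k_\gamma)}$ and $\CF|_{U_\gamma}$ is a free $\CO|_{U_\gamma}$-module of finite rank $r_\gamma$. For every compact $K'\subset U_\gamma$, the LCS identification $\CF_{K'}(U_\gamma)\cong\CO_{K'}(U_\gamma)^{r_\gamma}$, combined with the description $\CO_{K'}(U_\gamma)=\RC^\infty_{K'}(\R^{n_\gamma})[[y_1,\dots,y_{k_\gamma}]]$ of Example \ref{ex:DON}, exhibits $\CF_{K'}(U_\gamma)$ as a nuclear Fréchet space. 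Pick a partition of unity $\{f_\gamma\}$ subordinate to $\{U_\gamma\}$, and for a compact $K\subset M$ set $K_\gamma:=K\cap\mathrm{supp}\,f_\gamma$ and $\Gamma_K:=\{\gamma\in\Gamma\mid K_\gamma\ne\emptyset\}$ (a finite set). Lemma \ref{lem:extFKopen} then produces a continuous, open and surjective linear map
\[\bigoplus_{\gamma\in\Gamma_K}\CF_{K_\gamma}(U_\gamma)\to \CF_K(M),\]
whose source is a finite direct sum of nuclear Fréchet spaces. Since $\CF(M)$ is Hausdorff under the local freeness hypothesis (the smooth topology embeds into a product of local Hausdorff LCS by the sheaf property, \emph{cf.}\ \cite[Lemma 4.5]{CSW}), $\CF_K(M)$ is a Hausdorff subspace of $\CF(M)$; the kernel of the surjection is therefore closed, and $\CF_K(M)$ inherits the nuclear Fréchet structure as the associated quotient.

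For the second bullet, assume $M$ is secondly countable and exhaust $M$ by an increasing sequence $K_1\subset K_2\subset\cdots$ of compact subsets. Cofinality yields $\CF_c(M)=\varinjlim_n\CF_{K_n}(M)$ as LCS, and each inclusion $\CF_{K_n}(M)\hookrightarrow\CF_{K_{n+1}}(M)$ is a topological embedding since both sides carry the subspace topology of $\CF(M)$; the limit is therefore strict, so $\CF_c(M)$ is a nuclear LF space. The third bullet follows by applying this to each summand in the identification \eqref{eq:FcMdec}: $\CF_c(M)=\bigoplus_{Z\in\pi_0(M)}\CF_c(Z)$, and every connected component $Z$ of $M$ is open and secondly countable.

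The main obstacle is the nuclear Fréchet claim for $\CF_K(M)$, which depends on transferring the structure along an open surjection; this requires confirming that the target is Hausdorff. The latter reduces to the Hausdorffness of $\CF(M)$ in the smooth topology, which is clear locally by Example \ref{ex:DON} and propagates globally via the sheaf property of $\CF$.
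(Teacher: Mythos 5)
Your proposal is correct and follows essentially the same route as the paper: the paper likewise deduces that each $\CF_K(M)$ is nuclear Fr\'echet from Example \ref{ex:DON} and the open surjection of Lemma \ref{lem:extFKopen}, then passes to a nuclear LF space via a compact exhaustion in the secondly countable case, and finally invokes the decomposition \eqref{eq:FcMdec}. The only difference is that you spell out the Hausdorffness of $\CF_K(M)$ (needed to close the kernel of the surjection) and the strictness of the inductive limit, which the paper leaves implicit.
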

	\begin{proof} 
		Using Example \ref{ex:DON} and  Lemma \ref{lem:extFKopen}, one concludes that  each $\CF_K(M)$ 
		is a nuclear Fr\'echet space. Then 
		$\CF_c(M)$ is a nuclear LF space when $M$ is secondly countable. 
		In general,  $\CF_c(M)$ is a direct sum of nuclear LF spaces by \eqref{eq:FcMdec}. 
	\end{proof}

	\subsection{The sheaf $\mathrm{D}^{-\infty}(\CF;E)$} In this subsection, we introduce a sheaf 
	$\mathrm{D}^{-\infty}(\CF;E)$ of $\CO$-modules over $M$ associated to $\CF$ and $E$.
	
	For every open subset $U$ of $M$, set 
	\[
	\mathrm{D}^{-\infty}(U;\CF;E):= \CL_b(\CF_c(U),E).
	\]
	If $M$ is a smooth manifold and  $\CF=\CO$, then $\mathrm{D}^{-\infty} (M;\CF;E)$ is the space of  $E$-valued distributions on $M$ (see \cite{Sc3}).
	
	\begin{dfn}\label{def:distribution}
		An element in $\mathrm{D}^{-\infty}(M;\CO;E)$ is called an $E$-valued formal distribution on $M$.   
	\end{dfn}

	With the transposes of the extension maps of $\CF_c$ as restriction maps, the assignment
	\[\mathrm{D}^{-\infty}(\CF;E):\ U\mapsto \mathrm{D}^{-\infty}(U;\CF;E)\quad (U\ \text{is an open subset of}\ M)\]
	forms a presheaf of complex vector spaces over $M$. We also write
	\[\mathrm{D}^{-\infty}(\CF):=\mathrm{D}^{-\infty}(\CF;\BC)\qquad\text{and}\qquad \mathrm{D}^{-\infty}(U;\CF):=\mathrm{D}^{-\infty}(U;\CF;\BC).\]

	\begin{lemd} The presheaf $\mathrm{D}^{-\infty}(\CF;E)$ is a sheaf on $M$.
	\end{lemd}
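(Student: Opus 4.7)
The plan is to mimic the proof of Lemma \ref{lem:GFsheaf} but replace $\mathrm{D}^\infty_c(\CF)$ with $\CF_c$ throughout, using the tools from Lemma \ref{lem:mfofD} in place of Lemma \ref{lem:rescosheafonopensubset}. Fix an open cover $\{U_\alpha\}_{\alpha\in\Gamma}$ of $M$ and a locally finite partition of unity $\{f_\alpha\}_{\alpha\in\Gamma}$ subordinate to it. The crucial decomposition is that for any $u\in\CF_c(M)$, we have $u=\sum_{\alpha}\mathrm{ext}_{M,U_\alpha}((f_\alpha u)|_{U_\alpha})$, which is a finite sum because $\mathrm{supp}\,u$ is compact and $\{f_\alpha\}$ is locally finite.

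First I would verify the separation axiom. Given $T\in\mathrm D^{-\infty}(M;\CF;E)$ with $T|_{U_\alpha}=0$ for every $\alpha$, apply $T$ to the above decomposition of an arbitrary $u\in\CF_c(M)$:
\[
\langle T,u\rangle=\sum_\alpha\langle T,\mathrm{ext}_{M,U_\alpha}((f_\alpha u)|_{U_\alpha})\rangle=\sum_\alpha\langle T|_{U_\alpha},(f_\alpha u)|_{U_\alpha}\rangle=0.
\]
This shows the restriction map into $\prod_\alpha\mathrm D^{-\infty}(U_\alpha;\CF;E)$ is injective.

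Next I would handle gluing. Given a compatible family $\{T_\alpha\in\mathrm D^{-\infty}(U_\alpha;\CF;E)\}$, define $T:\CF_c(M)\to E$ by
\[
\langle T,u\rangle:=\sum_{\alpha\in\Gamma}\langle T_\alpha,(f_\alpha u)|_{U_\alpha}\rangle,
\]
where the sum is finite for each fixed $u$. Continuity is verified on each space $\CF_K(M)$ ($K$ compact in $M$): only finitely many $\alpha$ satisfy $\mathrm{supp}\,f_\alpha\cap K\neq\emptyset$, and for each such $\alpha$, Lemma \ref{lem:mfofD}(b) gives continuity of $m_{f_\alpha}:\CF_K(M)\to\CF_{K\cap\mathrm{supp}\,f_\alpha}(U_\alpha)$, so the composition $T_\alpha\circ m_{f_\alpha}$ is continuous. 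Thus $T$ is continuous on each $\CF_K(M)$, and by the inductive limit topology on $\CF_c(M)$, $T\in\mathrm D^{-\infty}(M;\CF;E)$.

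The step requiring the most care, and likely the main obstacle, is verifying $T|_{U_\beta}=T_\beta$ for each $\beta$. For $v\in\CF_c(U_\beta)$, I compute
\[
\langle T|_{U_\beta},v\rangle=\langle T,\mathrm{ext}_{M,U_\beta}(v)\rangle=\sum_\alpha\langle T_\alpha,(f_\alpha\,\mathrm{ext}_{M,U_\beta}(v))|_{U_\alpha}\rangle.
\]
The section $(f_\alpha\,\mathrm{ext}_{M,U_\beta}(v))|_{U_\alpha}$ is supported in $U_\alpha\cap U_\beta$, so the compatibility hypothesis $T_\alpha|_{U_\alpha\cap U_\beta}=T_\beta|_{U_\alpha\cap U_\beta}$ lets me replace $T_\alpha$ with $T_\beta$ in each term, yielding $\langle T_\beta,(f_\alpha v)|_{U_\beta}\rangle$. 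Summing over $\alpha$ and using $\sum_\alpha f_\alpha=1$ gives $\langle T_\beta,v\rangle$. Independence of $T$ from the choice of partition of unity then follows as a by-product, completing the proof.
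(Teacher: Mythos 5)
Your proposal is correct and is essentially the paper's own argument: the paper's proof simply says the claim follows "similar to the proof of Lemma \ref{lem:GFsheaf}" from the continuity of the map \eqref{eq:FcMtoFcU}, and your write-up is exactly that adaptation, with the partition-of-unity decomposition of test sections in $\CF_c(M)$ and Lemma \ref{lem:mfofD} supplying the continuity. No gaps.
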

	\begin{proof} Similar to the proof of Lemma \ref{lem:GFsheaf}, the assertion follows easily from the continuity of
		the map \eqref{eq:FcMtoFcU}.
	\end{proof}
	
	\begin{cord}
		There is an LCS identification 
		\be \label{eq:D-inftydec}
		\mathrm{D}^{-\infty}(M;\CF;E)=\prod_{Z\in \pi_0(M)} \mathrm{D}^{-\infty}(Z;\CF;E).
		\ee
	\end{cord}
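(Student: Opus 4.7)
The plan is to mimic the proof of Lemma \ref{cor:C-infty=product} almost verbatim, since the result has exactly the same shape: a strong-topology $\CL_b(-,E)$ applied to a direct-sum decomposition of the cosheaf of compactly supported sections turns into a product. So the whole thing should reduce to two ingredients already established in the paper.

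First, I would unfold the definition $\mathrm{D}^{-\infty}(M;\CF;E) = \CL_b(\CF_c(M),E)$ and substitute the LCS identification \eqref{eq:FcMdec}, namely
\[
\CF_c(M) \;=\; \bigoplus_{Z\in\pi_0(M)} \CF_c(Z),
\]
which is a corollary of Proposition \ref{prop:FcMopen} applied to the open cover of $M$ by its connected components. Second, I would invoke the standard fact, recorded as \eqref{eq:strong}, that for a family of LCS $\{E_i\}_{i\in I}$ and a Hausdorff LCS $F$ one has
\[
\CL_b\!\left(\bigoplus_{i\in I}E_i,\,F\right)\;=\;\prod_{i\in I}\CL_b(E_i,F)
\]
as LCS. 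Chaining these two identifications yields
\[
\mathrm{D}^{-\infty}(M;\CF;E) \;=\; \CL_b\!\left(\bigoplus_{Z\in\pi_0(M)}\CF_c(Z),\,E\right) \;=\; \prod_{Z\in\pi_0(M)} \CL_b(\CF_c(Z),E) \;=\; \prod_{Z\in\pi_0(M)}\mathrm{D}^{-\infty}(Z;\CF;E),
\]
where the last equality is again by definition.

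There is essentially no obstacle here; the only point worth a sentence of justification is that the canonical decomposition map $\bigoplus_Z \CF_c(Z)\to\CF_c(M)$ is genuinely an isomorphism of LCS (not merely a continuous linear bijection), but that is exactly what \eqref{eq:FcMdec} asserts. Everything else is formal: no hypotheses on $\CF$ beyond those standing globally in the section are needed, and the argument is insensitive to whether $\pi_0(M)$ is countable, because both $\bigoplus$ (locally convex direct sum) and $\prod$ are defined for arbitrary index sets and \eqref{eq:strong} holds in that generality.
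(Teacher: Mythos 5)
Your proposal is correct and is exactly the paper's argument: the paper's proof of this corollary likewise cites \eqref{eq:strong} and \eqref{eq:FcMdec} and says the assertion follows as in the proof of Lemma \ref{cor:C-infty=product}. The one hypothesis of \eqref{eq:strong} you did not explicitly mention, that $E$ be Hausdorff, holds automatically since $E$ is quasi-complete and the paper's convention makes quasi-complete LCS Hausdorff.
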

	\begin{proof}
		By using \eqref{eq:strong} and \eqref{eq:FcMdec},
		the assertion follows in a way similar to the proof of  Lemma \ref{cor:C-infty=product}.    
	\end{proof}

	For an open subset $U$ of $M$, the restriction map 
	\be\mathrm{D}^{-\infty}(M;\CF;E)\rightarrow \mathrm{D}^{-\infty}(U;\CF;E)\ee is continuous by Lemma \ref{lem:mfofD} (a) and \eqref{eq:trancont}.  
	The following result is similar to Proposition \ref{prop:GFclosedembedding}.
	
	\begin{prpd}\label{prop:D-inftyclosdtopim}
		Let $\{U_\gamma\}_{\gamma\in \Gamma}$ be an open cover of $M$.
		If $\CF(M)$ is
		Hausdorff, then the canonical linear map
		\be\label{eq:dffembedding}
		\mathrm{D}^{-\infty}(M;\CF;E)\rightarrow \prod_{\gamma\in \Gamma} \mathrm{D}^{-\infty}(U_{\gamma};\CF;E), \quad \eta\mapsto \{\eta|_{U_\gamma}\}_{\gamma\in \Gamma}
		\ee
		is a closed topological embedding.
	\end{prpd}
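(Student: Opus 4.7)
The plan is to mimic the proof of Proposition \ref{prop:GFclosedembedding} with the roles of $\mathrm{D}^\infty_c(\CF)$ and $\mathrm{C}^{-\infty}(\CF;E)$ replaced by $\CF_c$ and $\mathrm{D}^{-\infty}(\CF;E)$. First I would note that the continuity of the map \eqref{eq:dffembedding} is immediate: each component is the transpose of the continuous extension map $\mathrm{ext}_{M,U_\gamma}: \CF_c(U_\gamma)\to \CF_c(M)$ (Lemma \ref{lem:mfofD} (a)) and hence continuous by \eqref{eq:trancont}, and a product of continuous maps into a product is continuous. Next, injectivity and closedness of the image follow formally from the sheaf property of $\mathrm{D}^{-\infty}(\CF;E)$: the image consists exactly of those families $\{\eta_\gamma\}$ satisfying $\eta_\gamma|_{U_\gamma\cap U_\delta}=\eta_\delta|_{U_\gamma\cap U_\delta}$ for all $\gamma,\delta$, which is the intersection of the kernels of continuous linear maps into the Hausdorff spaces $\mathrm{D}^{-\infty}(U_\gamma\cap U_\delta;\CF;E)$ (Hausdorffness holds because $E$ is Hausdorff).

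The heart of the argument is to verify the topological embedding property, i.e.\ that each continuous seminorm on $\mathrm{D}^{-\infty}(M;\CF;E)$ is dominated by one coming from the product. Fix a bounded subset $B\subset \CF_c(M)$ and a continuous seminorm $|\,\cdot\,|_\nu$ on $E$. Choose a partition of unity $\{f_\gamma\}_{\gamma\in\Gamma}$ on $M$ subordinate to $\{U_\gamma\}_{\gamma\in\Gamma}$. By Lemma \ref{lem:mfofD} (c), for each $\gamma$ the set
\[
B_\gamma:=\{(f_\gamma u)|_{U_\gamma}\mid u\in B\}
\]
is bounded in $\CF_c(U_\gamma)$. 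For any $u\in\CF_c(M)$ and any $\eta\in \mathrm{D}^{-\infty}(M;\CF;E)$, since $\mathrm{ext}_{M,U_\gamma}((f_\gamma u)|_{U_\gamma})=f_\gamma u$ and $\{f_\gamma\}$ is locally finite, we have
\[
\langle u,\eta\rangle=\sum_{\gamma\in\Gamma}\langle f_\gamma u,\eta\rangle=\sum_{\gamma\in\Gamma}\langle (f_\gamma u)|_{U_\gamma},\eta|_{U_\gamma}\rangle,
\]
as a finite sum.

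The decisive input is the hypothesis that $\CF(M)$ is Hausdorff: by Lemma \ref{lem:FcMhau+regular}, the inductive limit ${\varinjlim}_K\CF_K(M)$ is regular, so the bounded set $B$ is contained and bounded in some $\CF_K(M)$ with $K\subset M$ compact. For such $u\in B$, the above sum is supported on the finite index set $\Gamma_K:=\{\gamma\in\Gamma\mid \mathrm{supp}\,f_\gamma\cap K\ne\emptyset\}$. Taking the seminorm $|\,\cdot\,|_\nu$ and supremum over $u\in B$ then yields
\[
|\eta|_{B,|\,\cdot\,|_\nu}\le \sum_{\gamma\in\Gamma_K}|\eta|_{U_\gamma}|_{B_\gamma,|\,\cdot\,|_\nu}
\]
for every $\eta\in \mathrm{D}^{-\infty}(M;\CF;E)$, which is a continuous seminorm on the product. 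Hence \eqref{eq:dffembedding} induces the original topology on $\mathrm{D}^{-\infty}(M;\CF;E)$, completing the argument.

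The main subtle point (rather than a genuine obstacle) is to correctly invoke the regularity statement of Lemma \ref{lem:FcMhau+regular}; without it one cannot pass from a merely bounded $B\subset\CF_c(M)$ to boundedness in a single $\CF_K(M)$, and the sum on the right-hand side would in general be infinite, destroying continuity of the bounding seminorm.
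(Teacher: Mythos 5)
Your proposal is correct and follows essentially the same route as the paper: the paper's proof simply says the assertion follows from Lemma \ref{lem:FcMhau+regular} and the continuity of \eqref{eq:FcMtoFcU} in the same way as Proposition \ref{prop:GFclosedembedding}, which is exactly the partition-of-unity seminorm estimate you carry out, with regularity of $\varinjlim_K\CF_K(M)$ (from the Hausdorff hypothesis) used to confine the bounded set $B$ to a single $\CF_K(M)$ and make the sum finite.
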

	\begin{proof}
		By using Lemma \ref{lem:FcMhau+regular} and the continuity of \eqref{eq:FcMtoFcU},
		the assertion follows in a way similar to the proof of  Proposition \ref{prop:GFclosedembedding}.
	\end{proof}
	Note that $\mathrm{D}^{-\infty}(\CF;E)$ is naturally  a sheaf of $\CO$-modules with the module structure   given by \be\label{eq:omoduleDinfty} \CO(U)\times \mathrm{D}^{-\infty}(U;\CF;E)\rightarrow \mathrm{D}^{-\infty}(U;\CF;E), \quad (f,\eta)\mapsto \eta\circ f \ee for every open subset $U$ of $M$.
	Here, for $f\in \CO(U)$ and $\eta\in \mathrm{D}^{-\infty}(U;\CF;E)$, the element $\eta\circ f\in \mathrm{D}^{-\infty}(U;\CF;E)$  is defined  by 
	\be \label{eq:defofetafD}\la \eta\circ f, u\ra=\la \eta, fu\ra\ee
	for every $u\in \CF_c(U)$.
	
	%Let $U$ be an open subset of $M$, and let $f$ be a formal function on $M$ supported in $U$.As the map \eqref{eq:FcMtoFcU} is continuous, the linear map\be \label{eq:mfonD-infty} m_f:\ \mathrm{D}^{-\infty}(U;\CF;E)\rightarrow \mathrm{D}^{-\infty}(M;\CF;E),\quad \eta\mapsto \eta\circ f \eeis also continuous, where $\eta\circ f$ is defined by \[\la \eta\circ f,u\ra= \la\eta, (fu)|_U\ra\] for every $u\in \CF_c(M)$.

	As usual, for a smooth manifold $N$,
	write 
	\[\RD^{-\infty}(N;E):=\CL_b(\RC^\infty_c(N);E)\] for the space of  $E$-valued distributions on $N$, and also write 
	$\RD^{-\infty}(N):=\RD^{-\infty}(N;\C)$. 
	Recall from \cite[Proposition 12]{Sc3}   that for every $n\in \BN$, there is an LCS identification 
	\[\RD^{-\infty}(\R^n;E)=\RD^{-\infty}(\R^n)\widetilde\otimes E ,\]
	and $\RD^{-\infty}(\R^n;E)$ is complete provided that $E$ is complete. 
	We have the following generalizations in the setting of formal manifolds. 
	
	\begin{prpd}\label{prop:charDFME}
		Assume that $\CF$ is locally free of finite rank.
		Then 
		\begin{itemize}
			\item  $\mathrm{D}^{-\infty}(M;\CF)$ is a complete nuclear reflexive  LCS, and  $\mathrm{D}^{-\infty}(M;\CF;E)$ is a   quasi-complete  LCS;
			\item the canonical linear map 
			$\mathrm{D}^{-\infty}(M;\CF)\otimes E\rightarrow \mathrm{D}^{-\infty}(M;\CF;E)$ induces an  identification
			\[\mathrm{D}^{-\infty}(M;\CF)\wt\otimes E=\mathrm{D}^{-\infty}(M;\CF;E)
			\]
			of LCS; and
			\item  if $E$ is complete, then 
			\[\mathrm{D}^{-\infty}(M;\CF)\wt\otimes E=\mathrm{D}^{-\infty}(M;\CF;E)=\mathrm{D}^{-\infty}(M;\CF)\wh\otimes E
			\]
			as LCS.
		\end{itemize}
	\end{prpd}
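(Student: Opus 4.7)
The plan is to mirror the proof of Proposition \ref{prop:GFMEten}, with the role of $\mathrm{D}^\infty_c(M;\CF)$ replaced by $\CF_c(M)$ and $\mathrm{C}^{-\infty}(M;\CF;E)$ replaced by $\mathrm{D}^{-\infty}(M;\CF;E)$ throughout. The two ingredients on the predual side that made the previous proof work---namely that $\mathrm{D}^\infty_c(M;\CF)$ is a nuclear LF space in the secondly countable case (Proposition \ref{cor:NLFFMsm'}) and has the strict approximation property (Corollary \ref{lem:strictapprFMsm'})---have exact analogues here: Corollary \ref{cor:CFcMisNLF} and Lemma \ref{lem:CFcMappro}.

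First I would treat the case in which $M$ is secondly countable. By Corollary \ref{cor:CFcMisNLF}, $\CF_c(M)$ is a nuclear LF space, and by Lemma \ref{lem:CFcMappro} it has the strict approximation property. Since $\mathrm{D}^{-\infty}(M;\CF)=(\CF_c(M))'_b$, Lemma \ref{lem:dualofnF} yields that $\mathrm{D}^{-\infty}(M;\CF)$ is a complete nuclear reflexive LCS, while the quasi-completeness of $\mathrm{D}^{-\infty}(M;\CF;E)=\CL_b(\CF_c(M),E)$ is inherited from the quasi-completeness of $E$ together with the nuclear LF structure of the source. The tensor product identifications in the second and third bullets then follow directly from Corollary \ref{cor:conmaps=tensorproduct}, whose hypotheses---nuclearity and strict approximation property of the source space---have just been verified.

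For the general case, I would apply the product decomposition \eqref{eq:D-inftydec}. A product of complete (resp.\,quasi-complete, nuclear, reflexive) LCS retains each of these properties, which settles the first bullet. For the other two, invoking \cite[Lemma 6.6]{CSW} to commute $\wt\otimes E$ (or $\wh\otimes E$, when $E$ is complete) past the product yields
\begin{eqnarray*}
\mathrm{D}^{-\infty}(M;\CF)\wt\otimes E
&=& \left(\prod_{Z\in \pi_0(M)}\mathrm{D}^{-\infty}(Z;\CF)\right)\wt\otimes E\\
&=& \prod_{Z\in \pi_0(M)} \mathrm{D}^{-\infty}(Z;\CF)\wt\otimes E\\
&=& \prod_{Z\in \pi_0(M)} \mathrm{D}^{-\infty}(Z;\CF;E)\\
&=& \mathrm{D}^{-\infty}(M;\CF;E),
\end{eqnarray*}
and an identical chain works with $\wh\otimes$ when $E$ is complete.

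No serious obstacle is anticipated: the whole argument is formally parallel to Proposition \ref{prop:GFMEten}. The only point needing care is confirming that the nuclearity and approximation hypotheses of Corollary \ref{cor:conmaps=tensorproduct} transfer unchanged from $\mathrm{D}^\infty_c(M;\CF)$ to $\CF_c(M)$, which is precisely the content of Corollary \ref{cor:CFcMisNLF} and Lemma \ref{lem:CFcMappro}.
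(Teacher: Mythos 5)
Your proposal is correct and follows essentially the same route as the paper: the authors likewise dispatch the secondly countable case via Corollary \ref{cor:CFcMisNLF}, Lemma \ref{lem:CFcMappro}, Lemma \ref{lem:dualofnF} and Corollary \ref{cor:conmaps=tensorproduct}, and then reduce the general case to the product decomposition \eqref{eq:D-inftydec} together with \cite[Lemma 6.6]{CSW} and the stability of nuclearity, reflexivity and (quasi-)completeness under products.
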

	\begin{proof} When $M$ is secondly countable, the proposition is implied by  Corollaries \ref{cor:conmaps=tensorproduct}, \ref{cor:CFcMisNLF} and Lemmas \ref{lem:CFcMappro}, \ref{lem:dualofnF}. In  the general case, the assertion follows from  \cite[Lemma 6.6]{CSW} and \eqref{eq:D-inftydec} and the fact that  a product of  nuclear (resp.\,reflexive; complete; quasi-complete) LCS is still a nuclear (resp.\,reflexive; complete; quasi-complete) LCS.
	\end{proof}
	
	In particular, we have the following
	description of $\mathrm{D}^{-\infty}(M;\CO;E)$
	by using Proposition \ref{prop:charDFME}, Example \ref{ex:DON} and Lemma \ref{lem:indtensorLF}.
	
	\begin{prpd}\label{prop:localD-infty} Suppose that $M=N^{(k)}$, where $N$ is  a  smooth manifold and $k\in \BN$. Then 
		\begin{eqnarray*}
			\mathrm{D}^{-\infty}(M;\CO;E)&=&\RD^{-\infty}(N)\widetilde\otimes \C[y_1^*,y_2^*,\dots,y_k^*]\widetilde\otimes E\\
			&=&(\RD^{-\infty}(N)\widehat\otimes \C[y_1^*,y_2^*,\dots,y_k^*])\widetilde\otimes E\\
			&=&\RD^{-\infty}(N;\C[y_1^*,y_2^*,\dots,y_k^*]\widetilde\otimes E)
		\end{eqnarray*} as LCS. 
	\end{prpd}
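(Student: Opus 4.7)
The plan is to reduce the statement to the tensor product description of $\mathrm{D}^{-\infty}(M;\CO)$ and then apply the corresponding identity on the smooth manifold $N$.

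First, since $\CO$ is locally free of rank one, I would apply Proposition \ref{prop:charDFME} to obtain the LCS identification
\[
\mathrm{D}^{-\infty}(M;\CO;E) \,=\, \mathrm{D}^{-\infty}(M;\CO)\,\wt\otimes\, E.
\]
This reduces everything to describing $\mathrm{D}^{-\infty}(M;\CO) = \CL_b(\CO_c(M),\C)$.

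Next, using Example \ref{ex:DON}, I have
\[
\CO_c(M) \,=\, \RC^\infty_c(N)\,\wh\otimes_{\mathrm{i}}\,\C[[y_1,y_2,\dots,y_k]]
\]
as LCS. Taking strong duals and invoking Lemma \ref{lem:indtensorLF} (which relates the dual of a completed inductive tensor product to the completed projective tensor product of strong duals in the nuclear LF setting), together with the identification $(\C[[y_1,\dots,y_k]])'_b = \C[y_1^*,\dots,y_k^*]$, I would conclude
\[
\mathrm{D}^{-\infty}(M;\CO) \,=\, \RD^{-\infty}(N)\,\wh\otimes\,\C[y_1^*,y_2^*,\dots,y_k^*].
\]
Since $\RD^{-\infty}(N)$ is a complete nuclear LCS and $\C[y_1^*,\dots,y_k^*]$ is a (strict) inductive limit of finite-dimensional spaces, the completed and quasi-completed projective tensor products agree here, so this space also equals $\RD^{-\infty}(N)\,\wt\otimes\,\C[y_1^*,\dots,y_k^*]$. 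Plugging into the first step yields the first two claimed identifications:
\[
\mathrm{D}^{-\infty}(M;\CO;E) \,=\, \RD^{-\infty}(N)\,\wt\otimes\,\C[y_1^*,\dots,y_k^*]\,\wt\otimes\, E \,=\, \bigl(\RD^{-\infty}(N)\,\wh\otimes\,\C[y_1^*,\dots,y_k^*]\bigr)\,\wt\otimes\, E.
\]

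For the third equality, I would use associativity of $\wt\otimes$ to regroup as
$\RD^{-\infty}(N)\,\wt\otimes\,\bigl(\C[y_1^*,\dots,y_k^*]\,\wt\otimes\, E\bigr)$, then invoke the classical Schwartz identity $\RD^{-\infty}(N;F)=\RD^{-\infty}(N)\,\wt\otimes\, F$ applied to $F := \C[y_1^*,\dots,y_k^*]\,\wt\otimes\, E$, which is a quasi-complete LCS suitable for this identification.

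The main obstacle I expect is bookkeeping of the topological tensor product notation: ensuring that the unspecified $\wt\otimes$ symbols in the statement all refer to compatible topologies (all of $\wt\otimes_\pi$, given the context of Proposition \ref{prop:charDFME}), and carefully invoking Lemma \ref{lem:indtensorLF} with both $\RC^\infty_c(N)$ and $\C[[y_1,\dots,y_k]]$ in the right roles (one is a nuclear LF space, the other a nuclear Fr\'echet space), so that duality indeed swaps $\wh\otimes_{\mathrm{i}}$ for $\wh\otimes_\pi$. Once those compatibility checks are in place, the three identifications fall out by chaining the above steps.
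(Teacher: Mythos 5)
Your proposal is correct and follows essentially the same route as the paper, which derives the proposition precisely from Proposition \ref{prop:charDFME}, Example \ref{ex:DON}, and Lemma \ref{lem:indtensorLF} (with the last identification coming from the $E$-valued version of Proposition \ref{prop:charDFME} applied to the smooth manifold $N$, i.e.\ the Schwartz-type identity you invoke). The bookkeeping concerns you flag are handled exactly as you anticipate, so no changes are needed.
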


	\section{Compactly supported formal distributions}\label{sec:comformaldis}
	%Throughout this section, let $\CF$ be a sheaf of  $\CO$-modules, and let $E$ be a quasi-complete  LCS. 
	In this section, we introduce the cosheaf $\mathrm{D}^{-\infty}_c(\CF;E)$ of $\CO$-modules over $M$, which generalizes the concept of cosheaves of compactly supported vector-valued distributions on smooth manifolds.. %defined by  taking the compactly supported sections in $\mathrm{D}^{-\infty}(\CF;E)$.
	\subsection{The cosheaf $\mathrm{D}^{-\infty}_c(\CF;E)$}
	For every open subset $U$ of $M$,
	write $\mathrm{D}^{-\infty}_c(U;\CF;E)$ for the space of all compactly supported sections in $\mathrm{D}^{-\infty}(U;\CF;E)$.
	Then the assignment 
	\be \label{eq:defofDc-infty}
	\mathrm{D}^{-\infty}_c(\CF;E):\ U\mapsto \mathrm{D}^{-\infty}_c(U;\CF;E)\quad \text{($U$ is an open subset of $M$)}
	\ee
	forms a flabby cosheaf of $\CO$-modules on $M$.
	Here the module structure is induced by \eqref{eq:omoduleDinfty}.

	For every compact subset 
	$K$ of $M$, write $\mathrm{D}^{-\infty}_K(M;\CF;E)$ for the subspace of $\mathrm{D}^{-\infty}(M;\CF;E)$ consisting  of  global sections
	supported in $K$. Equip it with the subspace topology of $\mathrm{D}^{-\infty}(M;\CF;E)$. 
	We also write 
	\[
	\mathrm{D}^{-\infty}_K(M;\CF):=\mathrm{D}^{-\infty}_K(M;\CF;\BC).\]
	
	The following result is obvious.
	
	\begin{lemd}\label{lem:closedDisK}
		For every compact subset 
		$K$ of $M$,  the space $\mathrm{D}^{-\infty}_K(M;\CF;E)$ 
		is closed in $\mathrm{D}^{-\infty}(M;\CF;E)$.
	\end{lemd}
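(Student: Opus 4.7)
The plan is to exhibit $\mathrm{D}^{-\infty}_K(M;\CF;E)$ as the kernel of a continuous linear map into a Hausdorff LCS, which is automatically closed. The natural candidate map is the restriction to the open complement of $K$.

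More precisely, I would first set $U := M \setminus K$, which is an open subset of $M$ since $K$ is compact and $M$ is Hausdorff. Since $\mathrm{D}^{-\infty}(\CF;E)$ is a sheaf on $M$, the condition $\mathrm{supp}\,\eta \subset K$ for a global section $\eta$ is equivalent to the vanishing of $\eta$ on the largest open set disjoint from $K$, namely $U$. Hence
\[
\mathrm{D}^{-\infty}_K(M;\CF;E) \;=\; \ker\!\bigl(\mathrm{res}_{U,M}: \mathrm{D}^{-\infty}(M;\CF;E) \to \mathrm{D}^{-\infty}(U;\CF;E)\bigr),
\]
and the restriction map $\mathrm{res}_{U,M}$ is continuous by the remark made just before Proposition \ref{prop:D-inftyclosdtopim} (which in turn invokes Lemma \ref{lem:mfofD}(a) and \eqref{eq:trancont}).

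Next I would verify that the target $\mathrm{D}^{-\infty}(U;\CF;E) = \CL_b(\CF_c(U),E)$ is Hausdorff. Since $E$ is quasi-complete, it is Hausdorff by our convention, and for any nonzero $f\in \CL(\CF_c(U),E)$ there exists $u\in \CF_c(U)$ with $f(u)\neq 0$ and then a continuous seminorm $\nu$ on $E$ with $\nu(f(u))>0$; the seminorm $|f|_{\{u\},\nu}$ (see \eqref{eq:defabsBnu}) is continuous in the strong topology because $\{u\}$ is bounded, and it distinguishes $f$ from $0$. Thus $\{0\}$ is closed in the target, so the kernel of the continuous map $\mathrm{res}_{U,M}$ is closed in $\mathrm{D}^{-\infty}(M;\CF;E)$.

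There is no genuine obstacle here; the proof is formal and relies only on the continuity of the restriction map and the Hausdorffness of the strong dual target. The one point that must be invoked carefully is the equivalence between $\mathrm{supp}\,\eta \subset K$ and $\eta|_{M\setminus K}=0$, but this is immediate from the sheaf structure on $\mathrm{D}^{-\infty}(\CF;E)$ established earlier in the section.
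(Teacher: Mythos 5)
Your proof is correct; the paper itself offers no argument (it simply declares the lemma ``obvious''), and your reasoning --- identifying $\mathrm{D}^{-\infty}_K(M;\CF;E)$ as the kernel of the continuous restriction map to $M\setminus K$ and checking that the Hausdorff target makes this kernel closed --- is exactly the standard argument the authors evidently had in mind. All the ingredients you invoke (continuity of restriction via Lemma \ref{lem:mfofD}(a) and \eqref{eq:trancont}, the sheaf property giving the equivalence of $\mathrm{supp}\,\eta\subset K$ with $\eta|_{M\setminus K}=0$, and Hausdorffness of $\CL_b(\CF_c(U),E)$ from that of $E$) are available at this point in the paper.
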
 
	
	\begin{lemd}\label{lem:DKMFEtoDKUFE}
		Let $U$ be an open subset of $M$, $K$ a compact subset of $M$, and  $f$ a formal function on $M$ supported in $U$. Then the linear map 
		\[
		\mathrm{D}^{-\infty}_K(M;\CF;E)
		\rightarrow 
		\mathrm{D}^{-\infty}_{K\cap \mathrm{supp}\,f}(U;\CF;E),\quad 
		\eta\mapsto (\eta\circ f)|_U
		\]
		is well-defined and continuous. Here $\eta\circ f\in \mathrm D^{-\infty}(M;\CF; E)$ is defined as in \eqref{eq:defofetafD}.
	\end{lemd}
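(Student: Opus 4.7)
The plan is to verify well-definedness via a support calculation and then deduce continuity by writing the map as the transpose of an explicit continuous linear map between the relevant spaces of compactly supported sections.

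For well-definedness, I would start with $\eta\in \mathrm{D}^{-\infty}_K(M;\CF;E)$ and check that $(\eta\circ f)|_U$ has support contained in $K\cap\mathrm{supp}\,f$, which is compact and contained in $U$ since $\mathrm{supp}\,f\subset U$. Unwinding the definitions of restriction (transpose of extension) and of $\eta\circ f$, for every $v\in\CF_c(U)$ one has
\[
\langle(\eta\circ f)|_U,v\rangle=\langle\eta\circ f,\mathrm{ext}_{M,U}(v)\rangle=\langle\eta,f\cdot\mathrm{ext}_{M,U}(v)\rangle.
\]
Because $f$ is supported in $U$, the section $f\cdot\mathrm{ext}_{M,U}(v)$ agrees with $\mathrm{ext}_{M,U}(f|_U\cdot v)$, and its support lies in $\mathrm{supp}\,f\cap\mathrm{supp}\,v$. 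If $\mathrm{supp}\,v$ is disjoint from $K\cap\mathrm{supp}\,f$, then this support is disjoint from $K$, so the pairing vanishes as $\eta$ is supported in $K$. This confirms that $(\eta\circ f)|_U\in\mathrm{D}^{-\infty}_{K\cap\mathrm{supp}\,f}(U;\CF;E)$.

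For continuity, the identity above exhibits our map as the transpose of
\[
S_f:\CF_c(U)\rightarrow\CF_c(M),\quad v\mapsto\mathrm{ext}_{M,U}(f|_U\cdot v).
\]
Multiplication by $f|_U$ on $\CF_c(U)$ is continuous by Lemma \ref{lem:mfofD}(c) applied inside the formal manifold $U$, and the extension $\mathrm{ext}_{M,U}:\CF_c(U)\rightarrow\CF_c(M)$ is continuous by Lemma \ref{lem:mfofD}(a); hence $S_f$ is continuous. By \eqref{eq:trancont}, the strong transpose ${}^tS_f:\mathrm{D}^{-\infty}(M;\CF;E)\rightarrow\mathrm{D}^{-\infty}(U;\CF;E)$ is continuous. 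Restricting ${}^tS_f$ to the closed subspace $\mathrm{D}^{-\infty}_K(M;\CF;E)$ (closed by Lemma \ref{lem:closedDisK}) and corestricting to the closed subspace $\mathrm{D}^{-\infty}_{K\cap\mathrm{supp}\,f}(U;\CF;E)$ (closed by the same lemma applied with $U$ in place of $M$), both carrying their subspace topologies, yields the desired continuous map.

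I do not anticipate any serious obstacle. The main point to keep straight is the compatibility $f\cdot\mathrm{ext}_{M,U}(v)=\mathrm{ext}_{M,U}(f|_U\cdot v)$, which is where the hypothesis $\mathrm{supp}\,f\subset U$ is used; once this is in place, everything reduces to standard transpose continuity and previously established continuity of extension and multiplication on $\CF_c$.
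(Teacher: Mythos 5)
Your proof is correct and follows essentially the same route as the paper: the paper also reduces everything to the continuity of $v\mapsto f\cdot\mathrm{ext}_{M,U}(v)$ from $\CF_c(U)$ to $\CF_c(M)$, verifying the transpose identity $|((\eta\circ f)|_U)|_{B,\abs{\,\cdot\,}_\nu}=|\eta|_{B',\abs{\,\cdot\,}_\nu}$ with $B'=\{f\,\mathrm{ext}_{M,U}(u)\mid u\in B\}$ by hand rather than invoking \eqref{eq:trancont}. Your explicit support computation for well-definedness is a detail the paper leaves implicit, and is a welcome addition.
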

	
	\begin{proof} Let $B$ be a bounded subset of $\CF_c(U)$, and let $\abs{\,\cdot\,}_{\nu}$ be a continuous seminorm on $E$. 
		Set \[B':=\{f\,\mathrm{ext}_{M,U}(u)\mid u\in B\},\]
		which is a bounded subset in  $\CF_c(M)$.
		Then we have that (see \eqref{eq:defabsBnu})
		\[
		|((\eta\circ f)|_U)|_{B,\abs{\,\cdot\,}_{\nu}}
		=|\eta|_{B',\abs{\,\cdot\,}_{\nu}},
		\]
		which implies the lemma. 
	\end{proof}
	
	Endow \[\mathrm{D}^{-\infty}_c(M;\CF;E)=\varinjlim_{K \text{ is a compact subset of } M}\mathrm{D}^{-\infty}_K(M;\CF;E)\] with the inductive limit topology. Here compact subsets of $M$ 
	are directed by inclusions.
	We also write
	\[\mathrm{D}^{-\infty}_c(\CF):=\mathrm{D}^{-\infty}_c(\CF;\BC)\quad\text{and}\quad \mathrm{D}^{-\infty}_c(M;\CF):=\mathrm{D}^{-\infty}_c(M;\CF;\BC).\]

	The following result is straightforward (\cf Lemma \ref{lem:mfofD} (a)). 
	\begin{lemd}
		For every open subset $U$  of $M$, the extension map
		\[\mathrm{ext}_{M,U}:\ \mathrm{D}^{-\infty}_c(U;\CF;E)\rightarrow \mathrm{D}^{-\infty}_c(M;\CF;E)\]
		is continuous.
	\end{lemd}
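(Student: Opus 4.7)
The plan is to reduce to continuity on each piece of the inductive limit and then identify the extension map there with the transpose of a map already known to be continuous, namely $m_f$ from Lemma \ref{lem:mfofD} (c).

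First, by the universal property of the inductive limit topology on $\mathrm{D}^{-\infty}_c(U;\CF;E) = \varinjlim_K \mathrm{D}^{-\infty}_K(U;\CF;E)$ (with $K$ ranging over compact subsets of $U$), it suffices to prove that for every such $K$ the composition
\[
\mathrm{D}^{-\infty}_K(U;\CF;E) \xrightarrow{\,\mathrm{ext}_{M,U}\,} \mathrm{D}^{-\infty}_c(M;\CF;E)
\]
is continuous. Since any $\eta \in \mathrm{D}^{-\infty}_K(U;\CF;E)$ extends by zero to a section on $M$ still supported in $K$, the image lies in $\mathrm{D}^{-\infty}_K(M;\CF;E)$. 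As the canonical map $\mathrm{D}^{-\infty}_K(M;\CF;E) \to \mathrm{D}^{-\infty}_c(M;\CF;E)$ is continuous by definition of the inductive limit topology, it is enough to prove continuity of
\[
\mathrm{ext}_{M,U}:\ \mathrm{D}^{-\infty}_K(U;\CF;E) \to \mathrm{D}^{-\infty}_K(M;\CF;E).
\]

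For this, I would recall that both source and target carry the subspace topology from $\CL_b(\CF_c(U),E)$ and $\CL_b(\CF_c(M),E)$ respectively. Choose a formal function $f \in \CO(M)$ with $\mathrm{supp}\,f \subset U$ and $f \equiv 1$ on a neighborhood of $K$. For $\eta \in \mathrm{D}^{-\infty}_K(U;\CF;E)$ and $u \in \CF_c(M)$, since $\eta$ is supported in $K$ and $f \equiv 1$ near $K$, one checks that
\[
\langle \mathrm{ext}_{M,U}(\eta), u\rangle = \langle \eta, (fu)|_U\rangle = \langle \eta, m_f(u)\rangle,
\]
where $m_f:\CF_c(M)\to \CF_c(U)$ is the continuous linear map from Lemma \ref{lem:mfofD} (c). Thus $\mathrm{ext}_{M,U}$, restricted to $\mathrm{D}^{-\infty}_K(U;\CF;E)$, coincides with the restriction of the transpose ${}^t m_f$.

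Finally, by \eqref{eq:trancont}, the transpose ${}^t m_f: \CL_b(\CF_c(U),E) \to \CL_b(\CF_c(M),E)$ is continuous, and restricting it to the closed subspaces $\mathrm{D}^{-\infty}_K(U;\CF;E)$ and $\mathrm{D}^{-\infty}_K(M;\CF;E)$ (using Lemma \ref{lem:closedDisK}) preserves continuity. This completes the argument. The only non-routine point is the identification $\mathrm{ext}_{M,U}|_{\mathrm{D}^{-\infty}_K(U;\CF;E)} = {}^t m_f|_{\mathrm{D}^{-\infty}_K(U;\CF;E)}$, which is the genuine content of the lemma and relies on the support condition to decouple the extension from the particular choice of cutoff $f$.
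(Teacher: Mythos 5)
Your proof is correct and is exactly the argument the paper has in mind, where the lemma is stated as ``straightforward'' with only a cross-reference: reduce to each $\mathrm{D}^{-\infty}_K$, realize the extension there as the transpose of the cutoff map $m_f:\CF_c(M)\to\CF_c(U)$, and invoke the continuity of transposes. If anything, you cite the right ingredient more precisely than the paper does --- the paper points to Lemma~\ref{lem:mfofD}~(a), but the step that actually carries the weight is the continuity of $m_f$ from part~(c) (equivalently the boundedness of $m_f(B)$), which is what your identification $\mathrm{ext}_{M,U}|_{\mathrm{D}^{-\infty}_K(U;\CF;E)}={}^t m_f|_{\mathrm{D}^{-\infty}_K(U;\CF;E)}$ uses.
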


	\begin{lemd}\label{eq:KofD-infty} Let $K$ be a compact subset of $M$, $\{U_i\}_{i=1}^s$ a finite cover of $K$ by open subsets of $M$, and
		$\{f_i\}_{i=1}^s$ a family of formal functions on $M$ such that \[\mathrm{supp}\,f_i\subset U_i \, \, \text{ for $1\leq i\leq s$}\quad \text{and} \quad \sum_{i=1}^s f_i|_K=1.\]
		Write $K_i:=\mathrm{supp}\,f_i\cap K$.
		Then the linear  map
		\be \label{eq:DFKitoDFK}\begin{array}{rcl}
			\bigoplus_{i=1}^s\mathrm{D}^{-\infty}_{K_i}(U_i;\CF;E)&\rightarrow& \mathrm{D}^{-\infty}_K(M;\CF;E),\\
			(\eta_1,\eta_2,\dots,\eta_s)&\mapsto& \sum_{i=1}^s\mathrm{ext}_{M,U_i}(\eta_i)
		\end{array}
		\ee
		is continuous, open and surjective.
	\end{lemd}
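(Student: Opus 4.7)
The plan is to exhibit a continuous linear right inverse to the map \eqref{eq:DFKitoDFK}, following the same pattern as in Lemma \ref{lem:extFKopen} and Lemma \ref{lem:homextDKr}. From such a right inverse, surjectivity is automatic, and openness is a general fact: if $T\colon X\to Y$ is continuous linear between LCS with a continuous linear right inverse $r\colon Y\to X$, then for any open $\Omega\subset X$ and any $y=T(x)\in T(\Omega)$, the continuous map $y'\mapsto x+r(y')-r(y)$ sends $y$ to $x$ and pulls $\Omega$ back to an open neighborhood of $y$ which $T$ carries into $T(\Omega)$.

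For the construction I would set
$$r\colon \mathrm{D}^{-\infty}_K(M;\CF;E)\to \bigoplus_{i=1}^s\mathrm{D}^{-\infty}_{K_i}(U_i;\CF;E),\qquad \eta\mapsto \bigl((\eta\circ f_1)|_{U_1},\dots,(\eta\circ f_s)|_{U_s}\bigr).$$
Each $\eta\circ f_i$ is a section of $\mathrm{D}^{-\infty}(\CF;E)$ on $M$ supported in $K\cap \mathrm{supp}\,f_i=K_i\subset U_i$, so its sheaf-restriction to $U_i$ lies in $\mathrm{D}^{-\infty}_{K_i}(U_i;\CF;E)$; the continuity of each component of $r$ is precisely Lemma \ref{lem:DKMFEtoDKUFE}, and since the target is a finite direct sum, $r$ itself is continuous. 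Continuity of the map \eqref{eq:DFKitoDFK} is equally direct: the preceding lemma provides continuity of $\mathrm{ext}_{M,U_i}\colon \mathrm{D}^{-\infty}_c(U_i;\CF;E)\to \mathrm{D}^{-\infty}_c(M;\CF;E)$, which restricts to a continuous map into $\mathrm{D}^{-\infty}_K(M;\CF;E)$ since extensions preserve supports, and a finite sum of continuous maps is continuous.

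The remaining (and only nontrivial) check is the right-inverse identity $T\circ r=\mathrm{id}$. For $\eta\in \mathrm{D}^{-\infty}_K(M;\CF;E)$ I would compute
$$\sum_{i=1}^s\mathrm{ext}_{M,U_i}\bigl((\eta\circ f_i)|_{U_i}\bigr)=\sum_{i=1}^s\eta\circ f_i=\eta\circ\Bigl(\sum_{i=1}^s f_i\Bigr)=\eta,$$
where the first equality expresses that the sheaf-restriction of a distribution supported in a compact subset of $U_i$ reverses the cosheaf extension $\mathrm{ext}_{M,U_i}$, and the last equality uses $\sum_{i=1}^s f_i|_K=1$ together with $\mathrm{supp}\,\eta\subset K$ to conclude $\eta\circ\bigl(\sum_i f_i-1\bigr)=0$. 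I expect this final step to be the only place requiring care: the hypothesis $\sum_i f_i|_K=1$ must be strong enough, for formal functions, to annihilate $\eta\circ(\sum_i f_i-1)$ on distributions supported in $K$—the same subtlety that arises and is handled in the proofs of Lemma \ref{lem:extFKopen} and Lemma \ref{lem:homextDKr}.
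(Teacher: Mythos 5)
Your proof is correct and takes exactly the paper's route: the paper's entire proof consists of observing that $\eta\mapsto((\eta\circ f_1)|_{U_1},\dots,(\eta\circ f_s)|_{U_s})$ is a continuous right inverse of \eqref{eq:DFKitoDFK} by Lemma \ref{lem:DKMFEtoDKUFE}, from which continuity, openness and surjectivity all follow. The subtlety you flag at the end is real but resolves as you expect: reading $\sum_i f_i|_K=1$ as holding on a neighborhood of $K$, the function $\sum_i f_i-1$ vanishes near $K$, so for every $u\in\CF_c(U)$ the section $(\sum_i f_i-1)u$ is compactly supported in the open set $M\setminus K$ on which $\eta$ vanishes, whence $\eta\circ(\sum_i f_i-1)=0$.
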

	\begin{proof}
		Note that  the continuous linear map (see Lemma \ref{lem:DKMFEtoDKUFE}) 
		\bee \begin{array}{rcl}
			\mathrm{D}^{-\infty}_K(M;\CF;E)&\rightarrow& \bigoplus_{i=1}^s\mathrm{D}^{-\infty}_{K_i}(U_i;\CF;E),\\
			\eta&\mapsto& ((\eta\circ f_1)|_{U_1},(\eta\circ f_2)|_{U_2},\dots,(\eta\circ f_s)|_{U_s})
		\end{array}
		\eee
		is a right inverse of \eqref{eq:DFKitoDFK}. This implies the lemma.
	\end{proof}
	
	Furthermore, we have the following result.
	
	\begin{prpd}\label{prop:DFcEopenonto} Let $\{U_\gamma\}_{\gamma\in \Gamma}$ be an open cover of $M$.
		Then the linear  map
		\be\label{eq:extsurofDc} 
		\bigoplus_{\gamma\in \Gamma}\mathrm{D}^{-\infty}_c(U_{\gamma};\CF;E)\rightarrow \mathrm{D}^{-\infty}_c(M;\CF;E),\quad 
		\{\eta_\gamma\}_{\gamma\in \Gamma}\mapsto \sum_{\gamma\in \Gamma}\mathrm{ext}_{M,U_\gamma}(\eta_\gamma)
		\ee
		is continuous, open and surjective.
	\end{prpd}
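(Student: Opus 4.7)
The plan is to mimic the proofs of Proposition \ref{prop:extconD} and Proposition \ref{prop:FcMopen}: construct an explicit continuous linear right inverse to the map \eqref{eq:extsurofDc}, from which continuity, openness and surjectivity all follow.

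Fix a locally finite partition of unity $\{f_\gamma\}_{\gamma\in\Gamma}$ on $M$ subordinate to $\{U_\gamma\}_{\gamma\in\Gamma}$. For $\eta\in\mathrm{D}^{-\infty}_c(M;\CF;E)$, since $\mathrm{supp}\,\eta$ is compact and the partition of unity is locally finite, only finitely many $f_\gamma$ are nonzero in a neighborhood of $\mathrm{supp}\,\eta$. Hence the formula
\[
\eta\longmapsto \{(\eta\circ f_\gamma)|_{U_\gamma}\}_{\gamma\in\Gamma}
\]
makes sense as a map into $\bigoplus_{\gamma\in\Gamma}\mathrm{D}^{-\infty}_c(U_\gamma;\CF;E)$, and the identity $\eta=\sum_{\gamma}\eta\circ f_\gamma$ (proved by pairing against an arbitrary $u\in\CF_c(M)$ and using the partition of unity) shows that this is a right inverse of \eqref{eq:extsurofDc}. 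In particular, \eqref{eq:extsurofDc} is surjective.

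Next, the continuity of \eqref{eq:extsurofDc} reduces to the continuity of each summand $\mathrm{ext}_{M,U_\gamma}\colon \mathrm{D}^{-\infty}_c(U_\gamma;\CF;E)\to\mathrm{D}^{-\infty}_c(M;\CF;E)$, which is the preceding extension lemma (paired with the universal property of the direct sum topology). The crucial step is therefore continuity of the right inverse. Here I would argue using the inductive limit presentation: for every compact $K\subset M$, setting $K_\gamma:=K\cap\mathrm{supp}\,f_\gamma$ and $\Gamma_K:=\{\gamma\in\Gamma\mid K_\gamma\neq\emptyset\}$, local finiteness of $\{f_\gamma\}$ together with compactness of $K$ forces $\Gamma_K$ to be finite. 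Lemma \ref{lem:DKMFEtoDKUFE} then shows that each map
\[
\mathrm{D}^{-\infty}_K(M;\CF;E)\longrightarrow \mathrm{D}^{-\infty}_{K_\gamma}(U_\gamma;\CF;E),\qquad \eta\mapsto (\eta\circ f_\gamma)|_{U_\gamma},
\]
is continuous, so the combined map into the finite direct sum $\bigoplus_{\gamma\in\Gamma_K}\mathrm{D}^{-\infty}_{K_\gamma}(U_\gamma;\CF;E)$ is continuous. Composing with the canonical continuous embedding of this finite sum into $\bigoplus_{\gamma\in\Gamma}\mathrm{D}^{-\infty}_c(U_\gamma;\CF;E)$ and invoking the universal property of $\mathrm{D}^{-\infty}_c(M;\CF;E)=\varinjlim_K \mathrm{D}^{-\infty}_K(M;\CF;E)$ yields continuity of the right inverse.

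Finally, existence of a continuous right inverse forces \eqref{eq:extsurofDc} to be open: for any open neighborhood $V$ of $0$ in $\bigoplus_\gamma\mathrm{D}^{-\infty}_c(U_\gamma;\CF;E)$, its preimage under the right inverse is an open neighborhood $W$ of $0$ in $\mathrm{D}^{-\infty}_c(M;\CF;E)$, and the image of $V$ under \eqref{eq:extsurofDc} contains $W$. The main technical obstacle is verifying that the partition-of-unity decomposition interacts correctly with the strong topology on each $\mathrm{D}^{-\infty}_K(M;\CF;E)$, and this is precisely what Lemma \ref{lem:DKMFEtoDKUFE} (applied to the finitely many relevant $\gamma$) supplies; the rest is a routine adaptation of the arguments already carried out for $\mathrm D^\infty_c(\CF)$ and $\CF_c$.
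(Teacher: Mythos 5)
Your proof is correct and takes essentially the same approach the paper intends: the paper in fact omits this proof entirely, stating only that it is similar to Propositions \ref{prop:extconD} and \ref{prop:FcMopen}, which is precisely your construction of a continuous right inverse $\eta\mapsto\{(\eta\circ f_\gamma)|_{U_\gamma}\}_{\gamma\in\Gamma}$ via a locally finite partition of unity, with continuity verified on each $\mathrm{D}^{-\infty}_K(M;\CF;E)$ through Lemma \ref{lem:DKMFEtoDKUFE} and the finiteness of $\Gamma_K$. Nothing further is needed.
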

	\begin{proof} The proof is similar to that in Propositions \ref{prop:extconD} and \ref{prop:FcMopen},
		and it is omitted here.
	\end{proof}

	As  a special case of Proposition \ref{prop:DFcEopenonto}, we have the following result. 
	\begin{cord}\label{cor:Dc-infty=sum}
		There is an  LCS identification
		\be
		\mathrm{D}_c^{-\infty}(M;\CF;E)
		=\bigoplus_{Z\in \pi_0(M)} \mathrm{D}_c^{-\infty}(Z;\CF;E).
		\ee
	\end{cord}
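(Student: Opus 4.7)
The plan is to apply Proposition \ref{prop:DFcEopenonto} with the open cover $\{U_\gamma\}_{\gamma\in\Gamma} = \{Z\}_{Z\in\pi_0(M)}$ consisting of the connected components of $M$. Since every formal manifold is locally isomorphic to $(\R^n)^{(k)}$ and hence locally connected, each connected component $Z\in\pi_0(M)$ is open in $M$, so this is indeed an open cover of $M$. Proposition \ref{prop:DFcEopenonto} then gives that the extension map
\[
\Phi:\bigoplus_{Z\in\pi_0(M)}\mathrm{D}^{-\infty}_c(Z;\CF;E)\to \mathrm{D}^{-\infty}_c(M;\CF;E),\quad \{\eta_Z\}_{Z\in\pi_0(M)}\mapsto \sum_{Z\in\pi_0(M)}\mathrm{ext}_{M,Z}(\eta_Z)
\]
is continuous, open, and surjective.

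It remains to verify injectivity of $\Phi$. Suppose $\{\eta_Z\}_{Z\in\pi_0(M)}$ lies in $\ker \Phi$, so that only finitely many $\eta_Z$ are nonzero and $\sum_{Z}\mathrm{ext}_{M,Z}(\eta_Z)=0$ in $\mathrm{D}^{-\infty}_c(M;\CF;E)$. For any fixed $Z_0\in\pi_0(M)$, the connected components being pairwise disjoint means that for $Z\neq Z_0$ and any $u\in \CF_c(Z_0)$, we have $(\mathrm{ext}_{M,Z}(\eta_Z))(\mathrm{ext}_{M,Z_0}(u))=0$, because $u$ extended by zero is supported in $Z_0$ while $\eta_Z$ only pairs with sections supported in $Z$. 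Hence evaluating $\sum_Z \mathrm{ext}_{M,Z}(\eta_Z)$ on $\mathrm{ext}_{M,Z_0}(u)$ yields $\eta_{Z_0}(u)=0$, forcing $\eta_{Z_0}=0$. Thus $\Phi$ is injective.

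A continuous, open, surjective, and injective linear map between LCS is a linear homeomorphism, so $\Phi$ is the claimed LCS identification. No step presents a real obstacle; the only point requiring minor care is confirming that connected components of $M$ are open (so that the open cover hypothesis of Proposition \ref{prop:DFcEopenonto} is satisfied) and that disjointness of these components ensures injectivity of the extension map.
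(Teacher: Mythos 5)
Your proof is correct and follows exactly the route the paper intends: the paper simply declares this corollary a special case of Proposition \ref{prop:DFcEopenonto} applied to the open cover by connected components, and you have supplied the only missing routine detail, namely that disjointness of the components forces injectivity of the extension map (which, combined with continuity, openness, and surjectivity, gives the topological isomorphism). Your observations that components are open by local connectedness and that the pairing argument yields $\eta_{Z_0}=0$ are both accurate.
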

	
	% \begin{lemd}
		%     If  $\CF$ is locally free of finite rank, then $\mathrm{D}^{-\infty}_c(M;\CF;E)$  is quasi-complete and Hausdorff. 
		% \end{lemd}
	% \begin{proof}
		%     For every compact subset $K$ of $M$, it is easy to see that  $\mathrm{D}^{-\infty}_K(M;\CF;E)$ is a closed subspace of $\mathrm{D}^{-\infty}(M;\CF;E)$.
		% This together with Proposition \ref{prop:charDFME} implies that $\mathrm{D}^{-\infty}_K(M;\CF;E)$ is a quasi-complete Hausdorff LCS. 
		% The assertion then follows from Lemma \ref{lem:basicsonindlim} (e) and Corollary \ref{cor:Dc-infty=sum}.
		% \end{proof}
	
	\subsection{Structure of  $\RD_c^{-\infty}(M;\CF;E)$}\label{subsec:struofcompactdis}
	For a smooth manifold $N$, write
	$\mathrm D_c^{-\infty}(N;E)$ for the LCS of  compactly supported $E$-valued distributions
	on $N$, and write    \[\mathrm D_c^{-\infty}(N):=\mathrm D_c^{-\infty}(N;\C).\]
	It is known that if $E$ is a DF space (see \cite[\S\,29.3]{Ko1}), then for every $n\in \BN$, 
	\be \label{eq:charcsEdN} 
	\mathrm D_c^{-\infty}(\R^n)\widetilde\otimes E=
	\mathrm D_c^{-\infty}(\R^n)\widehat\otimes E=
	\mathrm D_c^{-\infty}(\R^n;E)=\CL_b(\RC^\infty(\R^n),E)
	\ee
	as LCS (see \cite[Pages 62-63]{Sc3}). 
	The main goal of this subsection is to generalize 
	these LCS identifications to the setting of formal manifolds.
	
	We start with the following lemma.

	\begin{lemd}\label{lem:FcdenseinF} The space $\CF_c(M)$ is dense in $\CF(M)$.
	\end{lemd}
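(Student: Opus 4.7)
The plan is to approximate any $u\in\CF(M)$ in the smooth topology by sections of the form $fu$, where $f\in\CO(M)$ is a compactly supported formal function equal to $1$ near a prescribed compact set. Recall from \cite[Definition 4.1]{CSW} that the smooth topology on $\CF(M)$ is defined by seminorms $|u|_D:=\sup_{a\in M}|(D(u))(a)|$ indexed by compactly supported differential operators $D\in\mathrm{Diff}_c(\CF,\underline{\CO})$; a basic neighborhood of $u$ is cut out by finitely many such seminorms, say $|\cdot|_{D_1},\dots,|\cdot|_{D_n}$. It therefore suffices to produce, for each such neighborhood, some $v\in\CF_c(M)$ with $|v-u|_{D_j}=0$ for all $j$.

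I would set $K:=\bigcup_{j=1}^n\mathrm{supp}\,D_j$, a compact subset of $M$. Using the paracompactness of $M$ and the existence of partitions of unity for formal manifolds (used repeatedly in the present paper, e.g.\ in the proof of Proposition \ref{prop:extconD}), I would construct a formal function $f\in\CO(M)$ with compact support such that $f=1$ on some open neighborhood $V$ of $K$. Concretely, I would pick a locally finite partition of unity $\{f_\gamma\}_{\gamma\in\Gamma}$ on $M$ subordinate to an open cover by relatively compact open subsets, exploit local finiteness and the compactness of $K$ to extract a finite subset $\Gamma_0\subset\Gamma$ such that only the $f_\gamma$ with $\gamma\in\Gamma_0$ are nonzero on some open neighborhood $V$ of $K$, and set $f:=\sum_{\gamma\in\Gamma_0}f_\gamma$; then $\mathrm{supp}\,f$ is compact and $f|_V=\sum_{\gamma\in\Gamma}f_\gamma|_V=1$.

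Next I would set $v:=fu$; since $\mathrm{supp}\,f$ is compact, $v\in\CF_c(M)$. For each $j$, the difference $v-u=(f-1)u$ vanishes on $V$. Since $D_j$ is local and $\mathrm{supp}\,D_j\subset K\subset V$, we have $D_j(v-u)=0$ on $V$ (because $v-u=0$ there) and on $M\setminus\mathrm{supp}\,D_j$ (because $D_j$ is supported in $\mathrm{supp}\,D_j$); as these two open sets cover $M$, it follows that $D_j(v-u)=0$ on all of $M$, hence $|v-u|_{D_j}=0$, completing the density argument. No step is expected to be a serious obstacle: the construction of the cut-off $f$ is the only point that genuinely uses the formal-manifold machinery, and it is an immediate consequence of the partition-of-unity results already invoked elsewhere; the remainder is a direct locality argument for the differential operators $D_j$.
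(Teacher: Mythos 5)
Your proof is correct and follows essentially the same approach as the paper's: the paper builds a net of compactly supported cutoffs $\{f_U\}$ equal to $1$ on an exhausting family of compact sets and asserts that $f_U u\to u$, which is justified by exactly the locality argument you spell out (for each $D_j$, eventually $f_U=1$ near $\mathrm{supp}\,D_j$, so $|f_Uu-u|_{D_j}=0$). Your version merely replaces the net formulation by a direct neighborhood-basis argument and is, if anything, slightly more explicit about why the seminorms vanish.
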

	\begin{proof} For every $Z\in \pi_0(M)$, choose a  sequence $\{U_{r,Z}\}_{r\in \BN}$ of relatively compact open subsets in $Z$ such that
		\[U_{0,Z}\subset \overline{U_{0,Z}}\subset  U_{1,Z}\subset \overline{U_{1,Z}}\subset U_{2,Z}\subset \overline{U_{2,Z}}\subset \cdots \]
		and
		$\cup_{r\in \BN} U_{r,Z}=Z$.
		Let $\Gamma$ be the directed set  of all  open subsets in $M$ that have the form
		\be\label{eq:defGamma} U_{j_1,Z_1}\cup U_{j_2,Z_2}\cup \cdots \cup U_{j_r,Z_r}\quad (j_1,\dots,j_r>0,\,
		Z_1,\dots,Z_r\in \pi_0(M)),
		\ee which are directed by inclusions.
		For every $U\in \Gamma$  as in \eqref{eq:defGamma},  take a formal function $f_{U}$ on $M$ such that
		\[\mathrm{supp}\,f_{U}\subset U\quad \text{and}\quad  f_{U}|_
		{\overline{U_{j_1-1,Z_1}}\cup \overline{U_{j_2-1,Z_2}}\cup \cdots \cup \overline{U_{j_r-1,Z_r}}}=1.\]
		The assertion then follows by the fact that for every $u\in \CF(M)$,
		the net $\{f_U u\}_{U\in \Gamma}$ in $\CF_c(M)$ converges to $u$.
	\end{proof}

	By Lemma \ref{lem:FcdenseinF}, the  transpose 
	\be \label{eq:FM'toDF}
	\CL_b(\CF(M),E)\rightarrow \mathrm{D}^{-\infty}(M;\CF;E)=\CL_b(\CF_c(M),E)
	\ee
	of the inclusion $\CF_c(M)\rightarrow \CF(M)$ is injective.
	
	\begin{lemd}\label{lem:DFcMEinLFME}The space  $\mathrm{D}^{-\infty}_c(M;\CF;E)$ is contained in the image of \eqref{eq:FM'toDF}.
	\end{lemd}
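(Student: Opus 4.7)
The plan is to construct, for each $\eta\in\RD^{-\infty}_c(M;\CF;E)$, an explicit preimage $\tilde\eta\in\CL_b(\CF(M),E)$ under the transpose map \eqref{eq:FM'toDF} by ``cutting off'' with a compactly supported formal function equal to $1$ near the support of $\eta$. Concretely, since $\eta$ is a compactly supported section of the sheaf $\RD^{-\infty}(\CF;E)$, it has a well-defined compact support $K=\mathrm{supp}\,\eta\subset M$, characterized by the property that $\eta$ vanishes on any $v\in\CF_c(M)$ with $\mathrm{supp}\,v\subset M\setminus K$ (this uses only the sheaf property of $\RD^{-\infty}(\CF;E)$). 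Using a partition of unity argument on $M$, choose an open neighborhood $W$ of $K$ with compact closure and a formal function $f\in\CO(M)$ such that $\mathrm{supp}\,f$ is compact and $f|_W=1$. Then for any $u\in\CF(M)$, the product $fu\in\CF(M)$ is supported in $\mathrm{supp}\,f$, hence lies in $\CF_c(M)$, and we define
\[
\tilde\eta(u):=\la\eta,fu\ra.
\]

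The second step is to verify that $\tilde\eta$ is continuous as a map from $\CF(M)$ (with the smooth topology) to $E$. The multiplication map $m_f\colon\CF(M)\to\CF_{\mathrm{supp}\,f}(M)$ is continuous by the seminorm identity $|fu|_D=|u|_{D\circ f}$ for $D\in\mathrm{Diff}_c(\CF,\underline{\CO})$, using \eqref{eq:defofDf}; composing with the continuous inclusion $\CF_{\mathrm{supp}\,f}(M)\hookrightarrow\CF_c(M)$ and the continuous functional $\eta\colon\CF_c(M)\to E$ gives continuity of $\tilde\eta$.

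The third step is to check that $\tilde\eta$ restricts to $\eta$ on $\CF_c(M)$ and is independent of the choice of $f$. For $u\in\CF_c(M)$, the section $(1-f)u\in\CF_c(M)$ vanishes on $W\supset K$, so $\mathrm{supp}((1-f)u)\subset M\setminus K$ and therefore $\la\eta,(1-f)u\ra=0$ by the support property, i.e.\ $\tilde\eta(u)=\la\eta,u\ra$. For two choices $f,f'$ of cutoff, the difference $(f-f')u$ vanishes on a neighborhood of $K$ and the same support argument forces $\la\eta,(f-f')u\ra=0$. Injectivity of \eqref{eq:FM'toDF} (already noted after Lemma \ref{lem:FcdenseinF}) then gives a unique $\tilde\eta$ mapping to $\eta$.

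I expect the only nontrivial point to be the support property of $\eta$: namely, making precise that the compact set $K$ with $\eta\in\RD^{-\infty}_K(M;\CF;E)$ really annihilates every $v\in\CF_c(M)$ with $\mathrm{supp}\,v\cap K=\emptyset$. This amounts to a routine invocation of the sheaf structure of $\RD^{-\infty}(\CF;E)$: such $v$ is in the image of $\CF_c(M\setminus K)\to\CF_c(M)$, and $\eta|_{M\setminus K}=0$ by definition of $\mathrm{supp}\,\eta\subset K$, so $\la\eta,v\ra=0$. Everything else is a formal manipulation once the cutoff function $f$ is available, which is guaranteed by standard existence of partitions of unity in the category of formal manifolds.
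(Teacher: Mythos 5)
Your proof is correct and follows essentially the same route as the paper: cut off by a compactly supported formal function $f$ equal to $1$ near the support of $\eta$, define the extension by $u\mapsto\la\eta,fu\ra$, and establish continuity via the seminorm identity $|fu|_D=|u|_{D\circ f}$. You are in fact slightly more careful than the paper in requiring $f=1$ on a neighborhood of $K$ rather than just on $K$, which is what actually makes the verification $\la\eta,(1-f)u\ra=0$ (i.e.\ that $\eta'$ really extends $\eta$) go through cleanly.
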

	\begin{proof} Let $\eta$ be an element in $\mathrm{D}^{-\infty}_K(M;\CF;E)$, where   $K$ is a compact subset of $M$.
		Extend $\eta$ to a linear map  $\eta':\CF(M)\rightarrow E$ by setting
		\[\la \eta', u\ra:=\la \eta, f u\ra\quad \text{for all}\ u\in \CF(M),\]
		where $f$ is  a compactly supported formal function  on $M$ such that $f|_K=1$. We only need to show that $\eta'$ is continuous. 
		
		Let  $\abs{\,\cdot\,}_\nu$  be a continuous seminorm on $E$.
		Since \[\eta|_{\CF_{\mathrm{supp}\,f}(M)}:\CF_{\mathrm{supp}\,f}(M)\rightarrow E\] is  continuous,
		there is a $D\in \mathrm{Diff}_c(\CF,\underline{\CO})$ such that
		\[|\la \eta,v\ra|_\nu\le |v|_D\quad\text{for all}\ v\in \CF_{\mathrm{supp}\,f}(M).\]
		Using this, we find that
		\[|\la \eta' , u\ra|_\nu=|\la \eta, f u\ra|_\nu\le |fu|_D=|u|_{D\circ f}\quad \text{for all}\ u\in \CF(M).\] Here $D\circ f\in \mathrm{Diff}_c(\CF,\underline{\CO}) $ is as in \eqref{eq:defofDf}.
		Thus $\eta'$ is contained in  $\CL(\CF(M),E)$, as required.
	\end{proof}
	
	In  general, the space  $\mathrm{D}^{-\infty}_c(M;\CF;E)$ may not be  the image of \eqref{eq:FM'toDF} (even when $M$ is reduced).
	However,  we still have the following partial result.
	Recall from \cite[\S\,40.2]{Ko2} that a linear map $\phi:E_1\rightarrow E_2$ of LCS is called bounded if
	there is a  neighborhood $U$ of $0$ in $E_1$ such that $\phi(U)$ is bounded in $E_2$.
	
	\begin{lemd}\label{lem:charDFcE} If every element in $\CL(\CF(M),E)$ is bounded, then
		the image of \eqref{eq:FM'toDF} is exactly $\mathrm{D}^{-\infty}_c(M;\CF;E)$.
	\end{lemd}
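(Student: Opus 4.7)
The plan is to prove the only nontrivial direction, namely that every $\phi \in \CL(\CF(M),E)$ restricts to a compactly supported distribution on $\CF_c(M)$; the opposite inclusion, that every element of $\mathrm D^{-\infty}_c(M;\CF;E)$ comes from some such $\phi$, is exactly Lemma \ref{lem:DFcMEinLFME}.

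Given $\phi \in \CL(\CF(M),E)$, first I would invoke the boundedness hypothesis to produce a neighborhood $U$ of $0$ in $\CF(M)$ with $\phi(U)$ bounded in $E$. Since the smooth topology on $\CF(M)$ is defined by the seminorms $\abs{\,\cdot\,}_D$ indexed by $D \in \mathrm{Diff}_c(\CF,\underline{\CO})$, such a $U$ contains a basic neighborhood of the form
\[
V := \{u \in \CF(M) \mid \abs{u}_{D_i} < \epsilon \text{ for } i = 1, \dots, k\}
\]
for some $D_1,\dots,D_k \in \mathrm{Diff}_c(\CF,\underline{\CO})$. Set $K := \bigcup_{i=1}^k \mathrm{supp}\,D_i$, which is a compact subset of $M$.

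Next I would verify that $\eta := \phi|_{\CF_c(M)}$, the image of $\phi$ under \eqref{eq:FM'toDF}, has support contained in $K$. For any $u \in \CF_c(M)$ with $\mathrm{supp}\,u \cap K = \emptyset$, the locality of each $D_i$ together with $\mathrm{supp}\,D_i \subseteq K$ forces $D_i(u) = 0$, so $\abs{u}_{D_i} = 0$ for every $i$. Hence $\lambda u \in V \subseteq U$ for every $\lambda > 0$, giving $\lambda\,\phi(u) = \phi(\lambda u) \in \phi(U)$; since $\phi(U)$ is bounded in the Hausdorff LCS $E$, this forces $\phi(u) = 0$. Therefore $\eta|_{M \setminus K} = 0$, so $\eta \in \mathrm D^{-\infty}_K(M;\CF;E) \subseteq \mathrm D^{-\infty}_c(M;\CF;E)$, completing the argument.

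The only mild technicality is the annihilation property $D(u) = 0$ when $\mathrm{supp}\,u \cap \mathrm{supp}\,D = \emptyset$ for $D \in \mathrm{Diff}_c(\CF,\underline{\CO})$: this is a direct consequence of the locality of differential operators combined with the definition of their support from \cite{CSW} (outside $\mathrm{supp}\,D$ the operator vanishes, and at points of $\mathrm{supp}\,D$ the section $u$ vanishes on some open neighborhood, so locality gives $D(u) = 0$ there as well). Beyond this routine check, the argument is essentially a one-line application of the bounded-map hypothesis, and no serious obstacle is anticipated.
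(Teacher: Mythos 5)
Your proof is correct and follows essentially the same route as the paper's: extract a basic neighborhood controlled by finitely many seminorms $\abs{\,\cdot\,}_{D_i}$ from the boundedness hypothesis, and use the scaling argument $\lambda u$ to force $\phi(u)=0$ whenever $\mathrm{supp}\,u$ misses $\bigcup_i \mathrm{supp}\,D_i$. The only difference is that you spell out the locality check and the appeal to Lemma \ref{lem:DFcMEinLFME} for the reverse inclusion, both of which the paper leaves implicit.
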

	\begin{proof}
		Let $\phi$ be an element in $\CL(\CF(M),E)$. Since $\phi$ is bounded,
		there exist $\varepsilon>0$ and $\{D_i\in \mathrm{Diff}_c(\CF,\underline{\CO})\}_{1\leq i\leq n}$ for some $n\in \BN$  such that the set
		\[B:= \left\{\la \phi,v\ra\mid v\in \CF(M)\ \text{with}\ \sum_{1\leq i\leq n}|v|_{D_i}<\varepsilon\right\}\]
		is bounded in $E$.
		Let $u$ be an element in $\CF_c(M)$ supported in $M\setminus(\cup_{1\leq i\leq n}\mathrm{supp}\,D_i)$.
		Then $\la \phi,\lambda u\ra$ lies in $B$ for all $\lambda\in\BC$.
		This forces that  $\la \phi,u\ra=0$, and so $\phi$ is supported in $\cup_{1\leq i\leq n}\mathrm{supp}\,D_i$, as required.
	\end{proof}
	
	\begin{cord}\label{cor:charDFc} When $E=\C$, the image of \eqref{eq:FM'toDF} is exactly $\mathrm{D}^{-\infty}_c(M;\CF)$.
	\end{cord}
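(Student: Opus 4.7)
The plan is to reduce the corollary to a direct application of Lemma \ref{lem:charDFcE} in the scalar case $E=\C$. One of the two inclusions is already established: by Lemma \ref{lem:DFcMEinLFME}, the image of \eqref{eq:FM'toDF} contains $\mathrm{D}^{-\infty}_c(M;\CF)$. So the content lies in the reverse inclusion, namely that every $\phi\in\CL(\CF(M),\C)$, viewed as an element of $\mathrm{D}^{-\infty}(M;\CF)$ via restriction to $\CF_c(M)$, has compact support.

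To invoke Lemma \ref{lem:charDFcE}, I must check its hypothesis for $E=\C$: every $\phi\in\CL(\CF(M),\C)$ is bounded in the sense of \cite[\S\,40.2]{Ko2}, i.e., there is a neighborhood $V$ of $0$ in $\CF(M)$ with $\phi(V)$ bounded in $\C$. This is the standard observation that continuous scalar-valued linear functionals are automatically bounded: by continuity of $\phi$ at the origin, applied with $\varepsilon=1$, there is a neighborhood $V$ of $0$ in $\CF(M)$ such that $|\phi(u)|<1$ for all $u\in V$; in particular $\phi(V)$ lies in the open unit disk of $\C$ and is therefore a bounded subset. Once this is noted, Lemma \ref{lem:charDFcE} applied with $E=\C$ yields the desired inclusion, completing the proof.

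I do not anticipate any real obstacle here; the argument is entirely a specialization of Lemma \ref{lem:charDFcE}. The reason this short argument does not already give the corresponding statement for general $E$ is precisely that boundedness of a continuous linear map into an arbitrary LCS is a strictly stronger condition than continuity, and this coincidence is special to the scalar target.
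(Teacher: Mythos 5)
Your proof is correct and is precisely the argument the paper intends: the corollary is stated without proof as an immediate specialization of Lemma \ref{lem:charDFcE}, using the standard fact that every continuous scalar-valued linear functional is bounded. Your explicit verification of that boundedness and your appeal to Lemma \ref{lem:DFcMEinLFME} for the other inclusion match the paper's route exactly.
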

	
	It is known that every continuous linear map from a Fr\'echet (resp.\,DF)  space to a DF (resp.\,Fr\'echet) space  is bounded (see \cite[Page 62, footnote (2)]{Sc3} and \cite[\S\,40.2 (9)]{Ko2}).

	\begin{cord}
		Assume that $\CF(M)$ is a Fr\'echet (resp.\,DF) space and $E$ is a DF (resp.\,Fr\'echet) space. Then the image of \eqref{eq:FM'toDF} is exactly $\mathrm{D}^{-\infty}_c(M;\CF;E)$.
	\end{cord}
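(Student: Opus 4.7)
The plan is to deduce this corollary directly from the preceding Lemma \ref{lem:charDFcE}, which reduces the task to verifying that every continuous linear map $\CF(M)\to E$ is bounded in the sense of \cite[\S\,40.2]{Ko2}. Under either of the two hypotheses stated, this boundedness is precisely the classical fact cited immediately before the corollary: a continuous linear map from a Fr\'echet space to a DF space (or from a DF space to a Fr\'echet space) automatically sends some zero-neighborhood to a bounded set.

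More concretely, I would first invoke Lemma \ref{lem:DFcMEinLFME} to obtain one inclusion, namely that the transposed map \eqref{eq:FM'toDF} already contains $\mathrm{D}^{-\infty}_c(M;\CF;E)$ in its image. This direction does not require any assumption on $\CF(M)$ or $E$. For the reverse inclusion, I would take an arbitrary $\phi\in\CL(\CF(M),E)$, observe that by the Fr\'echet/DF hypothesis $\phi$ is bounded, and then apply Lemma \ref{lem:charDFcE} to conclude that its image in $\RD^{-\infty}(M;\CF;E)$ is compactly supported.

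The proof is essentially a two-line deduction once the two ingredients are in place, so there is no real obstacle here; the substantive content has already been carried out in Lemma \ref{lem:charDFcE}. The only thing worth double-checking is that the Fr\'echet/DF duality between $\CF(M)$ and $E$ is invoked in the symmetric form stated in the corollary, i.e.\ covering both orderings Fr\'echet-to-DF and DF-to-Fr\'echet, which is exactly what is recorded in \cite[Page 62, footnote (2)]{Sc3} and \cite[\S\,40.2 (9)]{Ko2}.
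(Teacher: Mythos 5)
Your proposal is correct and is exactly the argument the paper intends: the corollary is stated immediately after Lemma \ref{lem:charDFcE} and the remark that continuous linear maps between Fr\'echet and DF spaces (in either order) are bounded, and the paper leaves the two-line combination of these facts implicit. Your additional appeal to Lemma \ref{lem:DFcMEinLFME} for the easy inclusion is already subsumed in the statement of Lemma \ref{lem:charDFcE}, so nothing further is needed.
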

	
	Lemma \ref{lem:DFcMEinLFME} implies that there is a natural injective linear map 
	\be\label{eq:DFcMEtoLFME}\iota_E:\quad \mathrm{D}^{-\infty}_c(M;\CF;E)\rightarrow \CL_b(\CF(M),E).\ee
	The following result implies that \eqref{eq:DFcMEtoLFME} is  continuous, at least when $\CF(M)$ is
	Hausdorff.
	
	\begin{lemd}\label{CMK'=DFKM} Let $K$ be a compact subset of $M$.
		If $\CF(M)$ is
		Hausdorff,
		then the injective linear map
		\[
		\iota_{E}|_{ \mathrm{D}_K^{-\infty}(M;\CF;E)}:\ \mathrm{D}^{-\infty}_K(M;\CF;E)\rightarrow  \CL_b(\CF(M),E)
		\]
		induced by \eqref{eq:DFcMEtoLFME}
		is a topological embedding.
	\end{lemd}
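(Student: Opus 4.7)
The strategy is to compare explicitly the two seminorm families on $\mathrm D_K^{-\infty}(M;\CF;E)$: the one it carries as a subspace of $\CL_b(\CF_c(M),E)$, and the one transported back from $\CL_b(\CF(M),E)$ via $\iota_E$. I would fix once and for all a compactly supported formal function $f\in\CO(M)$ with $f=1$ on a neighborhood of $K$ (which exists by paracompactness of $M$ and a partition-of-unity argument). Recall from the proof of Lemma \ref{lem:DFcMEinLFME} that $\iota_E(\eta)\in\CL_b(\CF(M),E)$ is then given by $\la\iota_E(\eta),u'\ra=\la\eta,fu'\ra$ for $u'\in\CF(M)$, and this is independent of the chosen $f$.

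For continuity of $\iota_E|_{\mathrm D_K^{-\infty}(M;\CF;E)}$, I would take an arbitrary bounded set $B'\subset\CF(M)$ and a continuous seminorm $\abs{\,\cdot\,}_\nu$ on $E$. Multiplication by $f$ yields a continuous linear map $m_f\colon\CF(M)\rightarrow\CF_{\mathrm{supp}\,f}(M)\hookrightarrow\CF_c(M)$, so $fB':=m_f(B')$ is bounded in $\CF_c(M)$, and the defining formula for $\iota_E(\eta)$ gives
\[
|\iota_E(\eta)|_{B',\abs{\,\cdot\,}_\nu}=|\eta|_{fB',\abs{\,\cdot\,}_\nu}
\]
in the notation of \eqref{eq:defabsBnu}, which is a continuous seminorm on the subspace $\mathrm D_K^{-\infty}(M;\CF;E)$.

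For the reverse inequality that upgrades continuity to a topological embedding, I would use the fact that for any $u\in\CF_c(M)$ the section $(1-f)u$ is supported in the complement of a neighborhood of $K$; since $\eta$ is supported in $K$, this forces $\la\eta,u\ra=\la\eta,fu\ra=\la\iota_E(\eta),u\ra$, so $\iota_E(\eta)$ restricts to $\eta$ on $\CF_c(M)$. Then, for any bounded $B\subset\CF_c(M)$, continuity of the inclusion $\CF_c(M)\hookrightarrow\CF(M)$ (immediate from how the two topologies are defined) makes $B$ bounded in $\CF(M)$, and the identity
\[
|\eta|_{B,\abs{\,\cdot\,}_\nu}=|\iota_E(\eta)|_{B,\abs{\,\cdot\,}_\nu}
\]
shows that every defining seminorm of the subspace topology on $\mathrm D_K^{-\infty}(M;\CF;E)$ is pulled back from a defining seminorm on $\CL_b(\CF(M),E)$. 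Together with the injectivity of $\iota_E$ already in hand, this yields the topological embedding.

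The Hausdorff hypothesis enters through Lemmas \ref{lem:HausFimplyclosed} and \ref{lem:FcMhau+regular}, which ensure that $\CF_c(M)$ is Hausdorff and each $\CF_K(M)$ is closed in $\CF(M)$, so that the notion of support for $\eta$ that drives the whole argument behaves as expected. The main point of care will be the rigorous handling of ``supported in $K$'' for formal distributions, in particular the independence of $\iota_E(\eta)$ from the cutoff $f$; this should reduce to the sheaf-theoretic observation that, for two such cutoffs $f,f'$, the difference $(f-f')u$ has support disjoint from a neighborhood of $K$ for every $u\in\CF_c(M)$.
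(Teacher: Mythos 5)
Your proposal is correct and follows essentially the same route as the paper's proof: continuity comes from the identity $|\iota_E(\eta)|_{B',\abs{\,\cdot\,}_\nu}=|\eta|_{fB',\abs{\,\cdot\,}_\nu}$ with a cutoff $f$ equal to $1$ near $K$, and openness onto the image comes from observing that a bounded subset of $\CF_c(M)$ is bounded in $\CF(M)$ and that $\iota_E(\eta)$ restricts to $\eta$ on $\CF_c(M)$, so the generating seminorms of the subspace topology are pulled back from $\CL_b(\CF(M),E)$. The only cosmetic difference is that you justify the boundedness transfer via continuity of the inclusion $\CF_c(M)\hookrightarrow\CF(M)$, whereas the paper cites the regularity statement of Lemma \ref{lem:FcMhau+regular}; both are valid.
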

	\begin{proof} Let $B$ be a bounded subset of $\CF_c(M)$ and let $\abs{\,\cdot\,}_\nu$ be a continuous seminorm on $E$.
		Since $\CF(M)$ is Hausdorff, it follows from  Lemma \ref{lem:FcMhau+regular} that
		$B$ is also bounded in  $\CF(M)$. Equip \[\iota_E(\mathrm{D}^{-\infty}_K(M;\CF;E))\subset \CL_b(\CF(M),E)\]  with the subspace topology.
		Then viewing as a seminorm on $\iota_E(\mathrm{D}^{-\infty}_K(M;\CF;E))$,  $\abs{\,\cdot\,}_{B,\abs{\,\cdot\,}_\nu}$ (see \eqref{eq:defabsBnu})
		is still continuous.
		This shows that the linear map \[\mathrm{D}^{-\infty}_K(M;\CF;E)\rightarrow  \iota_E(\mathrm{D}^{-\infty}_K(M;\CF;E))\] is open.
		
		On the other hand, let $B'$ be a bounded subset of $\CF(M)$  and let $\abs{\,\cdot\,}_\nu$ be a continuous seminorm on $E$.
		Take a compactly supported formal function $f$ on $M$ such that $f|_K=1$.
		Then we have a  bounded subset 
		\[fB':=\{fu\mid u\in B'\}\] of $\CF_c(M)$. Moreover, the following equality holds: 
		\[|\iota_{E}(\eta)|_{B',\abs{\,\cdot\,}_\nu}=|\eta|_{fB',\abs{\,\cdot\,}_\nu}\quad \text{for all}\ \eta\in\mathrm{D}^{-\infty}_K(M;\CF;E).\]
		This shows that $\iota_{E}$ is continuous, and we complete the proof.
	\end{proof}

	When $E=\C$, Corollary \ref{cor:charDFc} says that the map 
	\eqref{eq:DFcMEtoLFME} is a linear isomorphism.  
	Now we are going to give a sufficient condition for this map to be a topological linear isomorphism (\cf\,\cite[Pages 89-90]{Sc2}).

	\begin{lemd}\label{lem:iotaCtopiso} Assume that $\CF(M)$ is barreled 
		and $\CF(M)_b'$ is bornological.
		Then  the continuous map
		\[\iota_\C:\quad  \mathrm{D}^{-\infty}_c(M;\CF)\rightarrow\CF(M)_b' \]
		is a topological linear isomorphism.
	\end{lemd}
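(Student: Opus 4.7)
The plan is to combine three ingredients already at hand: Corollary \ref{cor:charDFc} gives bijectivity of $\iota_\C$, Lemma \ref{CMK'=DFKM} (specialized to $E=\C$) shows the restriction of $\iota_\C$ to each piece $\mathrm{D}^{-\infty}_K(M;\CF)$ is a topological embedding (and hence $\iota_\C$ itself is continuous by the universal property of the inductive limit topology on $\mathrm{D}^{-\infty}_c(M;\CF)=\varinjlim_K \mathrm{D}^{-\infty}_K(M;\CF)$), and the hypothesis that $\CF(M)'_b$ is bornological. The bornological hypothesis reduces continuity of $\iota_\C^{-1}\colon \CF(M)'_b\to \mathrm{D}^{-\infty}_c(M;\CF)$ to the statement that $\iota_\C^{-1}$ sends bounded sets to bounded sets. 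Thus the entire problem reduces to the following assertion: for every bounded subset $B'\subset \CF(M)'_b$, the preimage $\iota_\C^{-1}(B')$ is bounded in $\mathrm{D}^{-\infty}_c(M;\CF)$.

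To prove this boundedness transfer, I first invoke the barreled hypothesis on $\CF(M)$: a bounded subset of the strong dual of a barreled space is equicontinuous (Banach--Steinhaus). Hence $B'$ is equicontinuous on $\CF(M)$, so there is a continuous seminorm $p$ on $\CF(M)$ with $|\langle \phi,u\rangle|\le p(u)$ for all $\phi\in B'$ and $u\in \CF(M)$. By the very definition of the smooth topology on $\CF(M)$, there exist finitely many $D_1,\dots,D_n\in \mathrm{Diff}_c(\CF,\underline{\CO})$ and a constant $C>0$ with $p(u)\le C\max_i |u|_{D_i}$. Set $K:=\bigcup_{i=1}^n \mathrm{supp}\,D_i$, which is compact in $M$. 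If $u\in \CF(M)$ satisfies $\mathrm{supp}\,u\cap K=\emptyset$, then $u$ vanishes on an open neighborhood of $\mathrm{supp}\,D_i$ for each $i$; since differential operators are local, $D_i(u)=0$, and therefore $\langle \phi,u\rangle=0$ for every $\phi\in B'$. This shows that every element of $\iota_\C^{-1}(B')$ is supported in the single compact set $K$, so $\iota_\C^{-1}(B')\subset \mathrm{D}^{-\infty}_K(M;\CF)$.

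Finally, since $\iota_\C|_{\mathrm{D}^{-\infty}_K(M;\CF)}$ is a topological embedding into $\CF(M)'_b$ by Lemma \ref{CMK'=DFKM}, the subspace topology on its image coincides with that of $\mathrm{D}^{-\infty}_K(M;\CF)$. The set $B'\cap \iota_\C(\mathrm{D}^{-\infty}_K(M;\CF))$ is bounded in this subspace, hence $\iota_\C^{-1}(B')$ is bounded in $\mathrm{D}^{-\infty}_K(M;\CF)$, and therefore bounded in the inductive limit $\mathrm{D}^{-\infty}_c(M;\CF)$. By the bornological hypothesis this suffices to conclude that $\iota_\C^{-1}$ is continuous, completing the proof that $\iota_\C$ is a topological linear isomorphism. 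The main obstacle I anticipate is the equicontinuity-to-common-support step: it relies crucially on the concrete description of continuous seminorms on $\CF(M)$ via compactly supported differential operators combined with locality, and this is what makes the barreled hypothesis genuinely useful here.
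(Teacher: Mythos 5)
Your proposal is correct and follows essentially the same route as the paper: both use Corollary \ref{cor:charDFc} for bijectivity, Lemma \ref{CMK'=DFKM} for the embeddings on each $\mathrm{D}^{-\infty}_K(M;\CF)$, barreledness to upgrade bounded subsets of $\CF(M)'_b$ to equicontinuous sets supported in a common compact set $\bigcup_i\mathrm{supp}\,D_i$, and the bornological hypothesis to conclude continuity of the inverse. The only difference is cosmetic — you phrase the last step as ``$\iota_\C^{-1}$ maps bounded sets to bounded sets,'' while the paper phrases it via seminorms bounded on bounded sets — and these are equivalent characterizations of bornologicity.
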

	\begin{proof}
		By Corollary \ref{cor:charDFc} and Lemma \ref{CMK'=DFKM}, it suffices to prove that 
		\[
		\CF(M)'_b=\varinjlim_{K}\iota_\C(\mathrm{D}^{-\infty}_K(M;\CF))
		\]
		as LCS, where $\iota_\C(\mathrm{D}^{-\infty}_K(M;\CF))$ is equipped with the subspace topology. 
		This is equivalent to prove that for every  seminorm $\abs{\,\cdot\,}$  on $\CF(M)'_b$, if its 
		restriction  on $\iota_\C(\mathrm{D}^{-\infty}_K(M;\CF))$ is continuous for all compact subsets $K$ of $M$, then $\abs{\,\cdot\,}$ is continuous on $\CF(M)'_b$.
		
		Let $B$ be a bounded subset in $\CF(M)'_b$.
		Since $\CF(M)$ is barreled, $B$ is an equicontinuous set in $\CF(M)'_b$ (see \cite[Theorem 33.2]{Tr}).
		Thus there exist $D_1,D_2,\dots ,D_n\in \mathrm{Diff}_c(\CF,\underline{\CO})$ and $\varepsilon>0$ such that for all $ v\in \CF(M)$, if $|v|_{D_i}<\varepsilon$ for all  $1\leq i\leq n$, then
		\[ |\la \phi, v\ra|\le 1\quad \text{for all}\ \phi\in B.\]
		Let $\phi\in B$, and let $u$ be an element in $\CF(M)$ supported in $M\setminus \cup_{1\leq i\leq n}\mathrm{supp}\,D_i$.
		Then we have that  $|\la \phi,\lambda u\ra|\le 1$ for all $\lambda\in \C$. 
		This implies that $\la\phi,u\ra=0$, and hence \[\phi\in  \iota_\C(\mathrm{D}^{-\infty}_{\cup_{1\leq i\leq n}\mathrm{supp}\,D_i}(M;\CF)).\]
		Due to the assumption that $\abs{\,\cdot\,}$ is continuous on $\iota_\C(\mathrm{D}^{-\infty}_{\cup_{1\leq i\leq n}\mathrm{supp}\,D_i}(M;\CF))$,
		we see that $\abs{\,\cdot\,}$ sends $B$ into a bounded set.
		This, together with the fact that $\CF(M)'_b$ is bornological, implies that  $\abs{\,\cdot\,}$ is continuous on $\CF(M)'_b$, as required.
	\end{proof}

	\begin{prpd}\label{prop:DFcM=FM'} Assume that $\CF$ is locally free of finite rank.
		Then  \be\label{eq:DFcM=FM'}
		\mathrm{D}^{-\infty}_c(M;\CF)=\CF(M)_b'\ee
		as LCS.
	\end{prpd}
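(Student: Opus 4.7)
The plan is to deduce the identification $\mathrm{D}^{-\infty}_c(M;\CF)=\CF(M)_b'$ from Lemma \ref{lem:iotaCtopiso}. The two hypotheses to verify are that $\CF(M)$ is barreled and that $\CF(M)_b'$ is bornological; once these are in hand, Lemma \ref{lem:iotaCtopiso} upgrades the continuous bijection $\iota_\C$ (whose existence is provided by Corollary \ref{cor:charDFc} together with Lemma \ref{CMK'=DFKM}) into a topological linear isomorphism.

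I would first dispose of the connected case. When $M$ is connected, the underlying paracompact connected smooth manifold is secondly countable, so one can choose a \emph{countable} open cover $\{U_\gamma\}$ of $M$ such that $\CF|_{U_\gamma}$ is free of finite rank and $(U_\gamma,\CO|_{U_\gamma})\cong (\R^{n_\gamma})^{(k_\gamma)}$. By Example \ref{ex:DON}, each $\CF(U_\gamma)$ is then a (finite direct sum of copies of a) nuclear Fr\'echet space. Using the sheaf axiom together with \cite[Lemmas 4.5 and 4.6]{CSW}, one realizes $\CF(M)$ as a closed subspace of the countable product $\prod_\gamma \CF(U_\gamma)$, so $\CF(M)$ is itself nuclear Fr\'echet and in particular barreled. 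A nuclear Fr\'echet space is Fr\'echet--Schwartz, so its strong dual is a (DFS) space, and (DFS) spaces are ultrabornological; hence $\CF(M)_b'$ is bornological, and Lemma \ref{lem:iotaCtopiso} gives the result.

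For general $M$, combine the decomposition $M=\sqcup_{Z\in\pi_0(M)} Z$ with the sheaf/cosheaf properties to obtain the LCS identifications
\[
\CF(M)=\prod_{Z\in\pi_0(M)} \CF(Z),\qquad \mathrm{D}^{-\infty}_c(M;\CF)=\bigoplus_{Z\in\pi_0(M)} \mathrm{D}^{-\infty}_c(Z;\CF),
\]
the second being Corollary \ref{cor:Dc-infty=sum}. For a family $\{E_Z\}$ of Fr\'echet spaces, a direct inspection of bounded sets in $\prod_Z E_Z$ (each contained in a product of bounded sets) and of continuous linear functionals (each supported on finitely many indices) yields the LCS identification $(\prod_Z E_Z)_b'=\bigoplus_Z (E_Z)_b'$. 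Applied with $E_Z=\CF(Z)$ and combined with the connected case, this produces
\[
\CF(M)_b'=\bigoplus_{Z\in\pi_0(M)} \CF(Z)_b'=\bigoplus_{Z\in\pi_0(M)} \mathrm{D}^{-\infty}_c(Z;\CF)=\mathrm{D}^{-\infty}_c(M;\CF),
\]
completing the proof.

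The main obstacle is the classical LCS input that the strong dual of a nuclear Fr\'echet space is bornological; this does not follow directly from the material developed in the paper and must be invoked from the general theory of (DFS) spaces. A secondary technical point is the identification $(\prod_Z E_Z)_b'=\bigoplus_Z (E_Z)_b'$ for Fr\'echet $E_Z$, which is elementary but deserves an explicit verification or a precise citation.
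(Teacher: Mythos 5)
Your proof is correct and follows essentially the same route as the paper: both reduce the statement to Lemma \ref{lem:iotaCtopiso} by checking that $\CF(M)$ is barreled and $\CF(M)_b'$ is bornological. The paper does this in one line by quoting \cite[Corollary 4.7]{CSW} (which says $\CF(M)$ is a product of nuclear Fr\'echet spaces) together with Lemma \ref{lem:bornological}; you instead prove the connected case first and reassemble over $\pi_0(M)$ using $\left(\prod_Z \CF(Z)\right)_b'=\bigoplus_Z \CF(Z)_b'$ and Corollary \ref{cor:Dc-infty=sum}, which amounts to unwinding the proof of Lemma \ref{lem:bornological} by hand. The two ``obstacles'' you flag at the end are in fact already supplied by the paper's appendix: the bornologicity of the strong dual of a (product of) nuclear Fr\'echet space(s) is exactly Lemma \ref{lem:bornological}, and the duality of products and direct sums is \eqref{eq:sdualofprodandsum}.
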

	\begin{proof}
		Recall that, in this case, $\CF(M)$ is a product of nuclear Fr\'echet spaces (see \cite[Corollary 4.7]{CSW}). 
		Then Lemma \ref{lem:bornological} implies that  $\CF(M)$ is barreled and 
		$\CF(M)'_b$ is bornological.
		The proposition then follows from  Lemma \ref{lem:iotaCtopiso}.
	\end{proof}

	\begin{cord}\label{cord:Dc-infynDF}
		Assume that $\CF$ is locally free of finite rank. Then 
		$\RD_c^{-\infty}(M;\CF)$ is a direct sum of complete nuclear DF spaces. In particular, if $M$ is secondly countable, then $\RD_c^{-\infty}(M;\CF)$ is a complete nuclear DF space. 
	\end{cord}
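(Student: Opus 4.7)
The plan is to reduce to the previously established identification $\RD_c^{-\infty}(M;\CF)=\CF(M)_b'$ (Proposition \ref{prop:DFcM=FM'}) and then invoke classical facts about strong duals of nuclear Fr\'echet spaces. Concretely, I would first use Corollary \ref{cor:Dc-infty=sum} to decompose
\[
\RD_c^{-\infty}(M;\CF)=\bigoplus_{Z\in\pi_0(M)}\RD_c^{-\infty}(Z;\CF)
\]
as LCS, which reduces the general statement to the case of a single connected component, i.e.\ to the secondly countable case. Indeed, every connected component $Z$ of $M$ is a connected paracompact Hausdorff topological manifold (its underlying space is locally Euclidean), hence secondly countable.

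Next, I would establish the ``in particular'' clause: assume $M$ is secondly countable. By \cite[Corollary 4.7]{CSW}, the hypothesis that $\CF$ is locally free of finite rank implies that $\CF(M)$ is a nuclear Fr\'echet space. Proposition \ref{prop:DFcM=FM'} then identifies
\[
\RD_c^{-\infty}(M;\CF)=\CF(M)_b'
\]
as LCS. It is a classical fact (see for instance \cite[\S\,50]{Tr} or Appendix \ref{appendixA}) that the strong dual of a nuclear Fr\'echet space is a complete nuclear DF space. Hence $\RD_c^{-\infty}(M;\CF)$ is a complete nuclear DF space whenever $M$ is secondly countable.

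Finally, for arbitrary $M$, apply the secondly countable case to each connected component $Z$ (which is secondly countable as noted above, and over which $\CF|_Z$ is still locally free of finite rank): each $\RD_c^{-\infty}(Z;\CF)$ is a complete nuclear DF space, and the decomposition above exhibits $\RD_c^{-\infty}(M;\CF)$ as a direct sum of such spaces, proving the general statement. No serious obstacle is expected here; the only point that deserves care is the appeal to ``connected paracompact manifold implies secondly countable'' to justify that each component falls under the scope of Corollary 4.7 of \cite{CSW}, and the classical duality fact for nuclear Fr\'echet spaces, both of which are standard.
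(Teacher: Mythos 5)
Your proposal is correct and follows essentially the same route as the paper: both reduce to the identification $\RD_c^{-\infty}(M;\CF)=\CF(M)_b'$ from Proposition \ref{prop:DFcM=FM'} together with the classical fact that the strong dual of a nuclear Fr\'echet space is a complete nuclear DF space (Lemma \ref{lem:dualofDF} and Appendix \ref{appendixA5}). The only cosmetic difference is the order of operations: you decompose $\RD_c^{-\infty}(M;\CF)$ over $\pi_0(M)$ first via Corollary \ref{cor:Dc-infty=sum} and then dualize componentwise, whereas the paper dualizes the product decomposition $\CF(M)=\prod_Z\CF(Z)$ from \cite[Corollary 4.7]{CSW} using \eqref{eq:sdualofprodandsum}; both yield the same direct sum.
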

	\begin{proof}
		The assertion follows from 
		\eqref{eq:DFcM=FM'}, \eqref{eq:sdualofprodandsum}, \cite[Corollary 4.7]{CSW} and Lemma \ref{lem:dualofDF}.
	\end{proof}

	With the  transposes of the restriction maps in $\CF$ as extension maps, the assignment 
	\be\label{eq:CF'sheaf} U\mapsto \CF(U)_b'\qquad (U\ \text{is an open subset of}\ M) \ee 
	forms a precosheaf of $\CO$-modules over $M$  (see \eqref{eq:actiononF'}).

	\begin{cord}\label{cor:CF'cosheaf} Assume that $\CF$ is  locally free  of finite rank. 
		Then  the precosheaf  \eqref{eq:CF'sheaf} coincides with 
		$\mathrm{D}_c^{-\infty}(\CF)$ and hence is a cosheaf.
		Furthermore, $\RD_c^{\infty}(\CF)$ is naturally identified with a subcosheaf of $\RD_c^{-\infty}(\CF)$. 
		
	\end{cord}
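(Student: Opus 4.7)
The plan is to derive both assertions essentially from Proposition \ref{prop:DFcM=FM'} applied openwise, and then to check that the resulting identifications intertwine with the extension maps in the relevant precosheaves. For every open $U\subset M$, the restriction $\CF|_U$ is again locally free of finite rank, so Proposition \ref{prop:DFcM=FM'} yields an LCS isomorphism
\[
\iota_\C^U:\ \mathrm{D}_c^{-\infty}(U;\CF)\ \xrightarrow{\sim}\ \CF(U)_b'.
\]
To see that $\{\iota_\C^U\}$ is a morphism from the cosheaf $\mathrm{D}_c^{-\infty}(\CF)$ to the precosheaf \eqref{eq:CF'sheaf}, I would fix open $V\subset U$, take $\eta\in \mathrm{D}_c^{-\infty}(V;\CF)$ supported in a compact $K\subset V$, and pick a cutoff $f\in \CO(U)$ with $\mathrm{supp}\,f\subset V$ and $f|_K=1$. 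Using the defining formula $\langle \iota_\C^U(\xi),u\rangle = \langle \xi,fu\rangle$ (valid for any such cutoff, as in the proof of Lemma \ref{lem:DFcMEinLFME}) together with the flabby cosheaf extension in $\mathrm{D}_c^{-\infty}(\CF)$, a direct computation gives
\[
\langle \iota_\C^U(\mathrm{ext}_{U,V}(\eta)),u\rangle = \langle \eta,(fu)|_V\rangle = \langle \iota_\C^V(\eta),u|_V\rangle = \langle {}^t\mathrm{res}_{V,U}(\iota_\C^V(\eta)),u\rangle
\]
for every $u\in \CF(U)$. This identifies the precosheaf \eqref{eq:CF'sheaf} with $\mathrm{D}_c^{-\infty}(\CF)$, and since the latter is already known to be a cosheaf, so is the former.

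For the subcosheaf assertion, recall from \eqref{eq:Dcindual} that $\RD_c^{\infty}(U;\CF)=\rho_U(\mathrm{Diff}_c(\CF|_U,\underline{\mathcal{D}}|_U))$ is by construction a subspace of $\CF(U)'$, hence a vector subspace of $\CF(U)_b'=\RD_c^{-\infty}(U;\CF)$ under the identification just established. The one small verification is that each element of $\RD_c^{\infty}(U;\CF)$, when restricted to $\CF_c(U)$, has compact support in the sense of $\RD_c^{-\infty}(U;\CF)$: for $\eta=\rho_U(D)$ with $\mathrm{supp}\,D\subset K$ compact, any $u\in \CF_c(U)$ with $\mathrm{supp}\,u\cap K=\emptyset$ satisfies $D(u)=0$, hence $\langle\eta,u\rangle=\int_U D(u)=0$, so $\eta$ is indeed compactly supported. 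Finally, comparing \eqref{eq:extmap} with the extension in \eqref{eq:CF'sheaf}, both are the common transpose ${}^t\mathrm{res}_{V,U}:\CF(V)'\to\CF(U)'$ (the former restricted to $\RD_c^{\infty}(V;\CF)$), so the inclusion $\RD_c^{\infty}(\CF)\hookrightarrow \RD_c^{-\infty}(\CF)$ is automatically a morphism of precosheaves.

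The argument is essentially bookkeeping; the main point to watch is the interplay between three topologies, namely the smooth topology on $\CF(U)$, the strong dual topology on $\CF(U)_b'$, and the inductive limit topology defining $\RD_c^{-\infty}(U;\CF)$ through $\CF_c(U)$. Since $\CF_c(U)$ is dense in $\CF(U)$ by Lemma \ref{lem:FcdenseinF}, an element of $\CF(U)'$ is determined by its restriction to $\CF_c(U)$, which reconciles the two viewpoints and makes the compatibility of cutoffs with restrictions immediate.
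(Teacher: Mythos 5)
Your argument is correct and takes essentially the same route as the paper: the paper's proof is a one-line observation that, via the identification of Proposition \ref{prop:DFcM=FM'} applied to each open set, the extension maps of the precosheaf \eqref{eq:CF'sheaf} coincide with those of $\mathrm{D}_c^{-\infty}(\CF)$, which is exactly the computation you carry out (and you additionally spell out the subcosheaf assertion, which the paper leaves implicit). The only detail worth making explicit is that the cutoff $f$ in your chain of equalities must be \emph{compactly} supported (possible since $K$ is compact and $V$ is open), so that $fu\in\CF_c(U)$ and the defining formula for $\iota_\C^U$ applies.
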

	\begin{proof} 
		The assertion follows from the fact that, via the identification \eqref{eq:DFcM=FM'}, 
		the extension maps of  the precosheaf \eqref{eq:CF'sheaf} coincide with that of $\mathrm{D}^{-\infty}_c(\CF;E)$.
	\end{proof}

	\begin{prpd}\label{prop:desDcMO}
		Assume that $M=N^{(k)}$ with $N$ a smooth manifold and $k\in \BN$. Then 
		\[ 
		\RD_c^{-\infty}(M;\CO)=\RD_c^{-\infty}(N)\widetilde\otimes_{\mathrm{i}}
		[y_1^*,y_2^*,\dots,y_k^*]=\RD_c^{-\infty}(N)
		\widehat\otimes_{\mathrm{i}} \C[y_1^*,y_2^*,\dots,y_k^*]
		\] as LCS. 
	\end{prpd}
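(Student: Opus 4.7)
The plan is to combine Proposition \ref{prop:DFcM=FM'} with the Introduction's tensor product description of $\CO(M)$, and then dualize a completed projective tensor product of nuclear Fr\'echet spaces into a completed inductive tensor product of their strong duals, mirroring the strategy already used for $\mathrm{C}^{-\infty}(M;\CO)$ in Proposition \ref{prop:charGO}.

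First I would apply Proposition \ref{prop:DFcM=FM'} to the locally free rank-one sheaf $\CO$ to obtain the LCS identification $\RD_c^{-\infty}(M;\CO) = \CO(M)_b'$. The Introduction gives $\CO(M) = \RC^\infty(N) \wh\otimes_\pi \C[[y_1,y_2,\ldots,y_k]]$. Before taking duals I would reduce to the case in which $N$ is secondly countable: $N$ decomposes as a disjoint union of its connected components, each of which is secondly countable by paracompactness, and the corresponding decomposition $M = \sqcup_{Z \in \pi_0(N)} Z^{(k)}$ breaks $\RD_c^{-\infty}(M;\CO)$ into a direct sum by Corollary \ref{cor:Dc-infty=sum}, while the right-hand side of the target identity respects the same decomposition since inductive tensor products distribute over direct sums.

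Assuming $N$ secondly countable, both $\RC^\infty(N)$ and $\C[[y_1,y_2,\ldots,y_k]]$ are nuclear Fr\'echet spaces (the latter under term-wise convergence), so their completed projective tensor product $\CO(M)$ is too. The key step is the duality formula
\[
\left(\RC^\infty(N) \wh\otimes_\pi \C[[y_1,y_2,\ldots,y_k]]\right)_b' = \RC^\infty(N)_b' \wh\otimes_{\mathrm{i}} \C[[y_1,y_2,\ldots,y_k]]_b',
\]
a Grothendieck-style identification for projective tensor products of nuclear Fr\'echet spaces, which I expect to invoke via the appendix exactly as Lemma \ref{lem:indtensorLF} was used in the proof of Proposition \ref{prop:charGO}. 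Combined with the classical Schwartz identification $\RC^\infty(N)_b' = \RD_c^{-\infty}(N)$ and the direct term-wise pairing identifying $\C[[y_1,y_2,\ldots,y_k]]_b'$ with $\C[y_1^*,y_2^*,\ldots,y_k^*]$, this yields the desired completed inductive tensor product; the equality between the quasi-complete and complete versions then follows because $\RD_c^{-\infty}(N)$ is a complete nuclear DF space by Corollary \ref{cord:Dc-infynDF} and $\C[y_1^*,y_2^*,\ldots,y_k^*]$ is a strict LF space, parallel to the argument behind \eqref{eq:mathcalD=mathrmD}.

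The main obstacle is the duality step: writing the strong dual of a completed projective tensor product as a completed inductive tensor product of the strong duals is classical only for nuclear Fr\'echet factors, and its precise formulation in this paper's conventions must be traced through the appendix or supplied as an auxiliary lemma. If no off-the-shelf statement is available, one may argue directly by exhibiting the natural bilinear pairing (the evident extension of \eqref{eq:pairing}) and showing that both sides are complete nuclear DF spaces whose underlying algebraic tensor products carry the same topology.
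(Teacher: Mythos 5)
Your proposal is correct and follows essentially the same route as the paper: Proposition \ref{prop:DFcM=FM'} gives $\RD_c^{-\infty}(M;\CO)=\CO(M)_b'$, and the duality formula you describe is exactly Lemma \ref{lem:dualofprodFot} (not Lemma \ref{lem:indtensorLF}), which already handles products of nuclear Fr\'echet spaces and so absorbs your explicit reduction to secondly countable $N$; the $\wt\otimes_{\mathrm{i}}$ versus $\wh\otimes_{\mathrm{i}}$ equality is then Example \ref{ex:toponpolynomial}, as you anticipate.
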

	\begin{proof}
		By \eqref{eq:DFcM=FM'} and Lemma \ref{lem:dualofprodFot}, we have that 
		\[\RD_c^{-\infty}(M;\CO)=(\RC^\infty(N)\widehat\otimes \C[[y_1,y_2,\dots,y_k]])_b'=\RD_c^{-\infty}(N)
		\widehat\otimes_{\mathrm{i}} \C[y_1^*,y_2^*,\dots,y_k^*].\] Then the proposition follows from Example \ref{ex:toponpolynomial}.
	\end{proof}

	Let $n\in \BN$. It is known that  $\CL_b(\RC^\infty(\R^n),E)$ is topologically isomorphic to 
	$
	\RC^{\infty}(\R^n)_b'\widetilde\otimes E
	$, and it is also topologically isomorphic to $\RC^{\infty}(\R^n)_b'\widehat\otimes E$ if $E$ is complete (see \cite[Page 55, Examples]{Sc3}).
	We have the following generalizations.
	
	\begin{prpd}\label{prop:charLFME}
		Assume that $M$ is secondly countable, and  $\CF$ is locally free of finite rank. Then
		\[\CF(M)_b'\widetilde\otimes E=\CL_b(\CF(M),E)\] as
		LCS. And, if $E$ is complete, then
		\[\CF(M)_b'\widetilde\otimes E=\CL_b(\CF(M),E)=\CF(M)_b'\widehat\otimes E\] as LCS. 
	\end{prpd}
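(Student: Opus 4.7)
The plan is to reduce Proposition \ref{prop:charLFME} to the classical Schwartz--Grothendieck theorem on nuclear Fr\'echet spaces, which is cited in the paragraph preceding the statement from \cite[Page 55]{Sc3} in the model case $F=\RC^\infty(\R^n)$.

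First, I would exploit second countability to upgrade $\CF(M)$ from a product of nuclear Fr\'echet spaces to a single nuclear Fr\'echet space. Since $M$ is secondly countable, the set $\pi_0(M)$ of connected components is countable. By \cite[Corollary 4.7]{CSW}, each $\CF(Z)$ for $Z\in \pi_0(M)$ is a nuclear Fr\'echet space and $\CF(M)=\prod_{Z\in \pi_0(M)}\CF(Z)$. A countable product of nuclear Fr\'echet spaces is again a nuclear Fr\'echet space (both properties are preserved by countable products), so $\CF(M)$ itself is a nuclear Fr\'echet space.

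Second, I would invoke the following classical fact: for any nuclear Fr\'echet space $F$ and any quasi-complete LCS $E$, the canonical linear map $F_b'\otimes E\rightarrow\CL_b(F,E)$ extends to a topological linear isomorphism $F_b'\wt\otimes E=\CL_b(F,E)$; moreover, if $E$ is complete then $F_b'\wh\otimes E=\CL_b(F,E)$. This is a standard extension of the case for $F=\RC^\infty(\R^n)$ cited in the paper: the proof relies only on the nuclearity of $F$ (which forces the $\pi$- and $\varepsilon$-tensor product topologies on $F_b'\otimes E$ to coincide) together with the identification of $\CL_b(F,E)$ with the $\varepsilon$-product $F_b'\,\varepsilon\, E$, which holds whenever $F$ is a Fr\'echet space and $E$ is quasi-complete. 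Both ingredients are already formalized in Appendixes \ref{appendixA} and \ref{appendixB} of the paper (or can be cited directly from \cite{Sc3} and \cite{Tr}).

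Applying this classical fact with $F=\CF(M)$ immediately yields both identifications of the proposition. I do not anticipate any serious obstacle: the only substantive content is the reduction via second countability, after which the proposition is a direct application of a result already used in the excerpt (in the form \eqref{eq:charC-inftyNE} for generalized functions and \eqref{eq:charcsEdN} for compactly supported distributions).
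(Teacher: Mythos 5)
Your first step (second countability plus \cite[Corollary 4.7]{CSW} makes $\CF(M)$ a single nuclear Fr\'echet space) matches the paper's opening line. The gap is in the second step. The two ingredients you list --- nuclearity forcing $\otimes_\pi=\otimes_\varepsilon$ on $\CF(M)_b'\otimes E$, and the identification of $\CL_b(\CF(M),E)$ with the $\varepsilon$-product $\CF(M)_b'\,\varepsilon\,E$ --- only give you an inclusion $\CF(M)_b'\,\wt\otimes\, E\subset \CL_b(\CF(M),E)$. To get equality you must know that $\CF(M)_b'\otimes E$ is dense (resp.\ \emph{strictly} dense) in the $\varepsilon$-product, and that is precisely the approximation property (resp.\ strict approximation property) of $\CF(M)$. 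This is not a formality that the appendix already disposes of: Corollary \ref{cor:conmaps=tensorproduct} and Lemma \ref{lem:stricappro} carry the strict approximation property as an explicit \emph{hypothesis}, so ``applying the classical fact with $F=\CF(M)$'' presupposes exactly the point that still has to be proved.

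For the completed statement (with $E$ complete) you could patch this by invoking Grothendieck's theorem that every nuclear space has the approximation property. But the main assertion, $\CF(M)_b'\,\wt\otimes\,E=\CL_b(\CF(M),E)$ for $E$ merely quasi-complete, requires the \emph{strict} approximation property, which is not known to follow from nuclearity alone --- this is why Schwartz in \cite{Sc3} verifies it by hand for $\RC^\infty_c(\R^n)$ rather than citing nuclearity, and why the model identity \eqref{eq:charC-inftyNE} is not a free corollary of abstract nonsense. The paper's proof is devoted entirely to this verification: choosing an exhaustion $\overline{U_i}\subset U_{i+1}$ and cutoffs $f_i$, it shows that the multiplication operators $m_{f_i}:u\mapsto f_iu$ converge to $\mathrm{id}_{\CF(M)}$ in $\CL_b(\CF(M),\CF(M))$ and factor through $\CF_c(M)$, which has the strict approximation property by Lemma \ref{lem:CFcMappro}; the conclusion then follows from \cite[Pr\'eliminaires, Proposition 2]{Sc3}. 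Your proposal omits this argument entirely, so as written it does not establish the $\wt\otimes$ identification.
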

	\begin{proof} Note that in this case, $\CF(M)$ is a nuclear Fr\'echet space (see \cite[Corollary 4.7]{CSW}).
		By %In view of 
		Corollary \ref{cor:conmaps=tensorproduct}, it suffices to prove that $\CF(M)$ has the strict approximation property.
		
		Let $\{U_i\}_{i\in \BN}$ be a cover of $M$ by relatively compact open subsets of $M$ such that for every $i\in \BN$,
		$\overline{U_i}\subset U_{i+1}$.
		Let $f_i$ be a formal function supported in $U_{i+1}$ with  $f_i|_{\overline{U_i}}=1$.
		Set
		\[
		m_{f_i}:\ \CF(M)\rightarrow \CF_c(M),\quad u\mapsto f_iu,
		\]
		which is  a continuous linear map from $\CF(M)$ to $\CF_c(M)$.
		Identify  $\CL(\CF(M),\CF_c(M))$ as a subspace of $\CL(\CF(M),\CF(M))$ through the inclusion $\CF_c(M)\subset \CF(M)$.
		Then it  is obvious that the sequence 
		\[\{m_{f_i}\}_{i\in \BN}\] converges to the identity operator in $\CL_b(\CF(M),\CF(M))$.
		This, together with the fact that $\CF_c(M)$ has the strict approximation property (see Lemma \ref{lem:CFcMappro}), implies that $\CF(M)$ also has
		the strict approximation property (see \cite[Pr\'eliminaires, Proposition 2]{Sc3}).
	\end{proof}

	Also, we have the following result, which generalizes the identification \eqref{eq:charcsEdN}.
	
	\begin{prpd}\label{prop:DFE=LME} Assume that $M$ is secondly countable, $\CF$ is locally free of finite rank, and
		$E$ is a DF space. 
		Then  $\mathrm{D}^{-\infty}_c(M;\CF;E)$ is a complete  DF space, and the following LCS identifications hold:
		\be\mathrm{D}^{-\infty}_c(M;\CF)\wt\otimes E=\mathrm{D}^{-\infty}_c(M;\CF)\wh\otimes E=\mathrm{D}^{-\infty}_c(M;\CF;E)=\CL_b(\CF(M),E).\ee
	\end{prpd}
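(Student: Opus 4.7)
The plan is to realize $\mathrm{D}^{-\infty}_c(M;\CF;E)$ as $\CL_b(\CF(M),E)$ via the map $\iota_E$ of \eqref{eq:DFcMEtoLFME}, and then to feed this identification into the combination of Propositions \ref{prop:DFcM=FM'} and \ref{prop:charLFME} to get the tensor product descriptions. The set-theoretic bijectivity of $\iota_E$ is immediate from the corollary to Lemma \ref{lem:charDFcE}: since $M$ is secondly countable and $\CF$ is locally free of finite rank, \cite[Corollary 4.7]{CSW} says $\CF(M)$ is a nuclear Fr\'echet space, and with $E$ being DF, every continuous linear map $\CF(M)\to E$ is automatically bounded by the Fr\'echet--DF theorem cited in the paragraph preceding Lemma \ref{lem:iotaCtopiso}. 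Thus $\iota_E$ lands on all of $\CL_b(\CF(M),E)$.

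To upgrade to an identification of LCS, continuity of $\iota_E$ is clear: by Lemma \ref{CMK'=DFKM}, $\iota_E$ restricts to a topological embedding on each $\mathrm{D}^{-\infty}_K(M;\CF;E)$, and the universal property of the inductive limit topology then gives continuity of $\iota_E$ itself. For the opposite direction I plan to mimic the proof of Lemma \ref{lem:iotaCtopiso} in the vector-valued setting. Given a bounded subset $\mathcal{B}\subset \CL_b(\CF(M),E)$, barreledness of the nuclear Fr\'echet space $\CF(M)$ together with the uniform boundedness principle forces $\mathcal{B}$ to be equicontinuous. Equicontinuity supplies finitely many $D_1,\dots,D_n\in \mathrm{Diff}_c(\CF,\underline{\CO})$ and a continuous seminorm $\abs{\,\cdot\,}_\nu$ on $E$ such that $\abs{\phi(u)}_\nu$ is controlled by $\sum_i |u|_{D_i}$ for all $\phi\in \mathcal{B}$; as in the scalar case, this forces every $\phi\in \mathcal{B}$ to vanish on sections supported outside $K:=\bigcup_i \mathrm{supp}\, D_i$, so $\iota_E^{-1}(\mathcal{B})\subset \mathrm{D}^{-\infty}_K(M;\CF;E)$. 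Combining this with the fact that $\CL_b(\CF(M),E)\cong \CF(M)_b'\widetilde\otimes E$ (supplied below) is bornological---$\CF(M)_b'$ is bornological by Lemma \ref{lem:bornological} as in the proof of Proposition \ref{prop:DFcM=FM'}, and the projective tensor product with a DF space preserves this---every seminorm on $\CL_b(\CF(M),E)$ that is continuous on each $\mathrm{D}^{-\infty}_K(M;\CF;E)$ is automatically continuous on the whole space, which gives bicontinuity of $\iota_E$.

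Once $\iota_E$ is established as a topological isomorphism, Proposition \ref{prop:charLFME} yields $\CL_b(\CF(M),E)=\CF(M)_b'\widetilde\otimes E$, and Proposition \ref{prop:DFcM=FM'} rewrites $\CF(M)_b'$ as $\mathrm{D}^{-\infty}_c(M;\CF)$; chaining these gives three of the four required equalities. The remaining equality $\mathrm{D}^{-\infty}_c(M;\CF)\wt\otimes E=\mathrm{D}^{-\infty}_c(M;\CF)\wh\otimes E$ and the complete DF conclusion follow from Corollary \ref{cord:Dc-infynDF}, which tells us $\mathrm{D}^{-\infty}_c(M;\CF)$ is a complete nuclear DF space: the projective tensor product of a complete nuclear DF space with a DF space is itself a complete DF space by standard Grothendieck theory in \cite{Gr}, so the completion is redundant.

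The main obstacle I anticipate is the reverse-continuity argument for $\iota_E$. In the scalar setting, Lemma \ref{lem:iotaCtopiso} leveraged bornologicality of $\CF(M)_b'$ to convert ``seminorms bounded on bounded sets'' into continuity; translating that to the $E$-valued setting requires pinning down a usable substitute---essentially that $\CL_b(\CF(M),E)$, viewed through the tensor product realization, shares enough of the DF/bornological structure that the scalar argument carries through. The rest of the proof is essentially bookkeeping on tensor product topologies and invocation of previously established nuclearity and completeness.
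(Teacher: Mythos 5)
Your proposal inverts the paper's architecture, and the inversion is exactly where the difficulty concentrates. The paper never proves directly that $\iota_E$ of \eqref{eq:DFcMEtoLFME} is a topological isomorphism. Instead it first obtains $\mathrm{D}^{-\infty}_K(M;\CF;E)=\mathrm{D}^{-\infty}_K(M;\CF)\,\wh\otimes\, E$ on each compactly supported piece (via Proposition \ref{prop:charDFME} and Lemma \ref{lem:closedDisK}), passes to the inductive limit using Grothendieck's commutation of inductive limits with $\otimes_\pi E$ for $E$ a DF space (Lemma \ref{lem:indlimcommprotensorpre}), and only at the very end reads off $\CL_b(\CF(M),E)$ by chaining Propositions \ref{prop:DFcM=FM'} and \ref{prop:charLFME}. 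Your bijectivity and forward-continuity arguments for $\iota_E$ are fine, as is the final bookkeeping; but the reverse continuity, which your route must carry entirely, has two concrete gaps.

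First, equicontinuity of a bounded set $\mathcal{B}\subset\CL_b(\CF(M),E)$ does not by itself confine $\iota_E^{-1}(\mathcal{B})$ to a single $\mathrm{D}^{-\infty}_K(M;\CF;E)$: for each continuous seminorm $\abs{\,\cdot\,}_\nu$ on $E$, equicontinuity produces operators $D_1^\nu,\dots,D_{n_\nu}^\nu$ whose supports depend on $\nu$, and the scalar argument of Lemma \ref{lem:iotaCtopiso} then only yields $\abs{\phi(u)}_\nu=0$ for $u$ supported outside $\bigcup_i\mathrm{supp}\,D_i^\nu$ --- not $\phi(u)=0$ outside one fixed compact set. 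What you actually need is Grothendieck's equiboundedness theorem for bounded subsets of $\CL_b(F,E)$ with $F$ metrizable and $E$ a DF space (uniform boundedness on a single $0$-neighborhood of $F$), which is genuinely stronger than Banach--Steinhaus and should be cited explicitly. Second, your passage from ``$\iota_E^{-1}$ is bounded'' to ``$\iota_E^{-1}$ is continuous'' rests on $\CL_b(\CF(M),E)\cong\CF(M)_b'\,\wt\otimes\, E$ being bornological because ``the projective tensor product with a DF space preserves this''; that is not a standard fact, and since $E$ is only assumed to be a (complete) DF space --- not barreled or bornological, a distinction the paper itself maintains in Proposition \ref{prop:charDFcME} --- I do not see how to justify it. Both problems evaporate if you derive the topological identification from the tensor-product computation, as the paper does, rather than the other way around.
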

	
	\begin{proof} Recall  that for every compact subset $K$  of $M$,  $\mathrm{D}^{-\infty}_K(M;\CF;E)$ is closed in $\mathrm{D}^{-\infty}(M;\CF;E)$ (see Lemma \ref{lem:closedDisK}).
		It then follows from Proposition \ref{prop:charDFME} that $\mathrm{D}^{-\infty}_K(M;\CF;E)$ is a   complete   LCS. 
		%As a countably strict inductive limit of complete   LCS, 
		By  Lemma \ref{lem:basicsonindlim} (d), 
		$\mathrm{D}^{-\infty}_c(M;\CF;E)$ is also a  complete LCS.
		
		Note that
		\[\mathrm{D}^{-\infty}_K(M;\CF)\wt\otimes E=\mathrm{D}^{-\infty}_K(M;\CF;E)=\mathrm{D}^{-\infty}_K(M;\CF)\wh\otimes E
		\] by  Proposition \ref{prop:charDFME}.
		This implies that 
		\[
		\varinjlim_K (\mathrm{D}^{-\infty}_K(M;\CF)\widetilde\otimes E)=\varinjlim_K (\mathrm{D}^{-\infty}_K(M;\CF)\widehat\otimes E)
		=\mathrm{D}^{-\infty}_c(M;\CF;E)
		\]
		as LCS. %In this proof, $K$ runs over all compact subsets of $M$. 
		Consequently,  the inductive limit 
		$\varinjlim_K (\mathrm{D}_K^{-\infty}(M;\CF)\widetilde\otimes E)$ is quasi-complete, and the inductive limit 
		$\varinjlim_K (\mathrm{D}_K^{-\infty}(M;\CF)\widehat\otimes E)$
		is complete. 
		Then it follows from Lemma \ref{lem:indlimcommprotensorpre}
		that the canonical maps  
		\[\varinjlim_K (\mathrm{D}^{-\infty}_K(M;\CF)\widetilde\otimes E)
		\rightarrow (\varinjlim_K \mathrm{D}^{-\infty}_K(M;\CF))\widetilde\otimes E
		=\mathrm{D}^{-\infty}_c(M;\CF)\widetilde\otimes E\]
		and 
		\[\varinjlim_K (\mathrm{D}^{-\infty}_K(M;\CF)\widehat\otimes E)
		\rightarrow (\varinjlim_K \mathrm{D}^{-\infty}_K(M;\CF))\widehat\otimes E=
		\mathrm{D}^{-\infty}_c(M;\CF)\widehat\otimes E\]
		are both topological linear isomorphisms. Thus
		\[\mathrm{D}^{-\infty}_c(M;\CF)\wt\otimes E=\mathrm{D}^{-\infty}_c(M;\CF)\wh\otimes E=\mathrm{D}^{-\infty}_c(M;\CF;E)\]
		as LCS.  
		Since the completed projective tensor product of two DF spaces is still a DF space (see \cite[\S\,41.4 (7)]{Ko2}), 
		$\mathrm{D}^{-\infty}_c(M;\CF;E)$ is a complete  DF space by Corollary \ref{cord:Dc-infynDF}.
		
		By using 
		Propositions \ref{prop:DFcM=FM'} and \ref{prop:charLFME}, we conclude that 
		\[\mathrm{D}^{-\infty}_c(M;\CF;E)=\mathrm{D}^{-\infty}_c(M;\CF)\wh\otimes E=\CF(M)_b'\widehat\otimes E
		=\CL_b(\CF(M),E)\]
		as LCS. This completes the proof.
	\end{proof}

	Without assuming second countability in Proposition \ref{prop:DFE=LME},
	we still have  the following result by replacing the 
	projective tensor products by inductive tensor products.
	
	\begin{prpd} \label{prop:charDFcME}
		Assume that  $\CF$ is locally free of finite rank, and $E$ is a barreled DF space.
		Then $\mathrm{D}^{-\infty}_c(M;\CF;E)$ is a direct sum of complete barreled DF spaces, and 
		\[\mathrm{D}^{-\infty}_c(M;\CF)\wt\otimes_{\mathrm{i}} E=\mathrm{D}^{-\infty}_c(M;\CF;E)=\mathrm{D}^{-\infty}_c(M;\CF)\wh\otimes_{\mathrm{i}} E\]
		as LCS.
	\end{prpd}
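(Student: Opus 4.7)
The plan is to reduce to the secondly countable case, which is Proposition~\ref{prop:DFE=LME}, by decomposing $M$ into its connected components and exploiting the fact that the inductive tensor product (unlike the projective one) commutes with arbitrary locally convex direct sums.

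First, I would observe that each $Z\in\pi_0(M)$ is secondly countable: $Z$ is a connected component of the paracompact Hausdorff space $M$, which is locally secondly countable (after reduction it is locally Euclidean), and any such connected space is secondly countable. Since a barreled DF space is in particular a DF space, Proposition~\ref{prop:DFE=LME} applies to $Z$ and gives that $\mathrm{D}^{-\infty}_c(Z;\CF;E)$ is a complete DF space with
\[
\mathrm{D}^{-\infty}_c(Z;\CF)\wt\otimes_\pi E=\mathrm{D}^{-\infty}_c(Z;\CF)\wh\otimes_\pi E=\mathrm{D}^{-\infty}_c(Z;\CF;E).
\]
By Corollary~\ref{cord:Dc-infynDF}, $\mathrm{D}^{-\infty}_c(Z;\CF)$ is a complete nuclear DF space. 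Since nuclearity of either factor makes the projective, inductive and epsilon tensor product topologies coincide (as recorded in Appendix~\ref{appendixA}), and hence also their quasi-completions and completions agree, the same chain of identifications holds with $\wt\otimes_{\mathrm{i}}$ and $\wh\otimes_{\mathrm{i}}$:
\[
\mathrm{D}^{-\infty}_c(Z;\CF)\wt\otimes_{\mathrm{i}} E=\mathrm{D}^{-\infty}_c(Z;\CF)\wh\otimes_{\mathrm{i}} E=\mathrm{D}^{-\infty}_c(Z;\CF;E).
\]

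Second, I would verify that each summand $\mathrm{D}^{-\infty}_c(Z;\CF;E)$ is barreled. Since $\mathrm{D}^{-\infty}_c(Z;\CF)$ is a complete nuclear space it is reflexive, hence barreled, and $E$ is barreled by assumption; the completed projective tensor product of two barreled spaces is barreled, so together with the first step we conclude that $\mathrm{D}^{-\infty}_c(Z;\CF;E)$ is a complete barreled DF space for every $Z$.

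Finally, I would assemble the components via Corollary~\ref{cor:Dc-infty=sum}, which yields
\[
\mathrm{D}^{-\infty}_c(M;\CF;E)=\bigoplus_{Z\in\pi_0(M)}\mathrm{D}^{-\infty}_c(Z;\CF;E)
\quad\text{and}\quad
\mathrm{D}^{-\infty}_c(M;\CF)=\bigoplus_{Z\in\pi_0(M)}\mathrm{D}^{-\infty}_c(Z;\CF).
\]
The first of these directly gives the claim that $\mathrm{D}^{-\infty}_c(M;\CF;E)$ is a direct sum of complete barreled DF spaces. For the tensor product identification on $M$, I would invoke from the appendix the fact that the inductive tensor product (and its quasi-completion and completion) commutes with arbitrary locally convex direct sums, so that
\[
\mathrm{D}^{-\infty}_c(M;\CF)\wt\otimes_{\mathrm{i}} E=\bigoplus_Z\bigl(\mathrm{D}^{-\infty}_c(Z;\CF)\wt\otimes_{\mathrm{i}} E\bigr)=\bigoplus_Z\mathrm{D}^{-\infty}_c(Z;\CF;E)=\mathrm{D}^{-\infty}_c(M;\CF;E),
\]
and analogously for $\wh\otimes_{\mathrm{i}}$. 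The main obstacle I anticipate is precisely this last step: the compatibility of the completed (or quasi-completed) inductive tensor product with infinite direct sums. It is exactly here that the choice of inductive rather than projective tensor product is essential, since $\wh\otimes_\pi$ does not commute with direct sums in general; the barreledness hypothesis on $E$ should be what is needed to make the inductive tensor product and its completions behave well enough to pass through the possibly uncountable direct sum over $\pi_0(M)$.
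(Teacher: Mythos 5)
Your overall route is the same as the paper's: decompose $M$ into connected components, apply Proposition \ref{prop:DFE=LME} on each secondly countable component $Z$, upgrade the projective tensor identifications to inductive ones there, and reassemble via Corollary \ref{cor:Dc-infty=sum} and the compatibility \eqref{eq:sumcomiten} of $\wt\otimes_{\mathrm{i}}$ and $\wh\otimes_{\mathrm{i}}$ with arbitrary direct sums. However, there is a genuine gap in the middle step. You justify
\[
\mathrm{D}^{-\infty}_c(Z;\CF)\wt\otimes_{\mathrm{i}} E=\mathrm{D}^{-\infty}_c(Z;\CF)\wt\otimes_\pi E
\]
by asserting that nuclearity of one factor makes the projective, inductive and epsilon topologies all coincide. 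That is not what the appendix records, and it is not true: Grothendieck's theorem gives $\otimes_\pi=\otimes_\varepsilon$ under nuclearity of a factor, but $\otimes_{\mathrm{i}}$ is in general strictly finer than $\otimes_\pi$ (it corresponds to separately continuous rather than jointly continuous bilinear maps), and nuclearity does nothing to close that gap --- the paper itself exhibits $\RC_c^\infty(N)\wt\otimes_{\mathrm{i}}\C[z_1,\dots,z_k]$ versus $\RC_c^\infty(N)\wt\otimes\,\C[z_1,\dots,z_k]$ as an example where both factors are nuclear and the two topologies differ. The correct justification, and the one the paper uses, is that $\mathrm{D}^{-\infty}_c(Z;\CF)$ is a complete nuclear DF space (Corollary \ref{cord:Dc-infynDF}), hence barreled, and $E$ is a barreled DF space by hypothesis; for two barreled DF spaces the projective and inductive tensor products coincide (K\"othe \S\,40.2 (11), as quoted in the proof of Lemma \ref{lem:nFrF}). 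This also corrects your closing speculation: the barreledness of $E$ is needed precisely at this component-level identification $\otimes_\pi=\otimes_{\mathrm{i}}$, not to make the direct-sum commutation \eqref{eq:sumcomiten} work --- that commutation holds for arbitrary direct sums of Hausdorff LCS with no barreledness assumption. Once this step is repaired, the rest of your argument (second countability of each $Z$, barreledness of the summands as completed projective tensor products of barreled DF spaces, and the final assembly) goes through and matches the paper's proof.
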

	\begin{proof}
		Recall that for two complete barreled DF spaces, their completed projective tensor product is a complete barreled DF space, and coincides with their completed inductive tensor product  (see \cite[\S\,40.2 (11) and \S\,41.4 (7)]{Ko2}). Then 
		for every $Z\in \pi_0(M)$, it follows from Proposition 
		\ref{prop:DFE=LME} and Lemma \ref{lem:bornological} that 
		\begin{eqnarray*}
			&&\mathrm{D}^{-\infty}_c(Z;\CF)\wt\otimes_{\mathrm{i}} E=
			\mathrm{D}^{-\infty}_c(Z;\CF)\wt\otimes E=
			\mathrm{D}^{-\infty}_c(Z;\CF;E)\\
			&=&\mathrm{D}^{-\infty}_c(Z;\CF)\wh\otimes E=
			\mathrm{D}^{-\infty}_c(Z;\CF)\wh\otimes_{\mathrm{i}} E,
		\end{eqnarray*}
		which are all complete barreled DF spaces. 
		This, together with  Corollary \ref{cor:Dc-infty=sum} 
		and \eqref{eq:sumcomiten}, shows that 
		\begin{eqnarray*}
			&&\mathrm{D}^{-\infty}_c(M;\CF)\wt\otimes_{\mathrm{i}} E
			=\left(\bigoplus_{Z\in \pi_0(M)}\mathrm{D}^{-\infty}_c(Z;\CF)\right)\wt\otimes_{\mathrm{i}} E
			=\bigoplus_{Z\in \pi_0(M)}\left(\mathrm{D}^{-\infty}_c(Z;\CF)\wt\otimes_{\mathrm{i}} E\right)\\&=&\bigoplus_{Z\in \pi_0(M)} \mathrm{D}^{-\infty}_c(Z;\CF;E)
			=\mathrm{D}^{-\infty}_c(M;\CF;E)=
			\bigoplus_{Z\in \pi_0(M)} \mathrm{D}^{-\infty}_c(Z;\CF;E)\\&=&\bigoplus_{Z\in \pi_0(M)}\left(\mathrm{D}^{-\infty}_c(Z;\CF)\wh\otimes_{\mathrm{i}} E\right)
			=\left(\bigoplus_{Z\in \pi_0(M)}\mathrm{D}^{-\infty}_c(Z;\CF)\right)\wh\otimes_{\mathrm{i}} E=\mathrm{D}^{-\infty}_c(M;\CF)\wh\otimes_{\mathrm{i}} E,
		\end{eqnarray*} 
		as required. 
	\end{proof}

	\subsection{Formal distributions supported at a point} 
	Let $a\in M$. Write  \[
	\mathrm{Dist}_a(M):= \mathrm{D}^{-\infty}_{\{a\}}(M;\CO),\]
	%for the LCS of all  formal distributions on $M$ supported in $\{a\}$, 
	which  is equipped with 
	the subspace topology of $ \mathrm{D}^{-\infty}(M;\CO)$.
	On the other hand, we equip the space of polynomials with the usual 
	inductive limit topology  (see Example \ref{ex:E-valuedpowerseries}).
	
	The main goal of this subsection is to prove the following description of the LCS $\mathrm{Dist}_a(M)$ (\cf \cite[Theorem 26.4]{Tr}).
	\begin{prpd}\label{prop:distiso} For every $a\in M$, we have that 
		\[\mathrm{Dist}_a(M)\cong \C[x_1^*,x_2^*,\dots,x_n^*,y_1^*,y_2^*,\dots,y_k^*]\]
		as LCS, where $n:=\dim_a(M)$ and $k:=\deg_a(M)$.
	\end{prpd}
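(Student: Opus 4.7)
The strategy is to localize, reducing to the local model $M=(\R^n)^{(k)}$ with $a$ the origin, and then to combine the tensor product description of compactly supported formal distributions (Proposition \ref{prop:desDcMO}) with the classical Schwartz theorem on distributions supported at a point.

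For the localization step, I would show that for any open neighborhood $U$ of $a$ in $M$, the extension map $\mathrm{D}^{-\infty}_{\{a\}}(U;\CO)\to \mathrm{D}^{-\infty}_{\{a\}}(M;\CO)$ is a topological isomorphism. Injectivity follows from the flabby cosheaf property of $\mathrm{D}_c^{-\infty}(\CO)$. For surjectivity, pick $f\in \CO(M)$ supported in $U$ with $f\equiv 1$ in a neighborhood of $a$; then any $\eta$ supported at $\{a\}$ satisfies $\eta=\eta\circ f$ (as the difference pairs trivially with sections vanishing near $a$), and $\eta\circ f$ is visibly the extension of a distribution on $U$. Topological equivalence combines Lemma \ref{lem:mfofD} with the closedness of the supported-at-$\{a\}$ subspace (Lemma \ref{lem:closedDisK}). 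Thus we may assume $M=(\R^n)^{(k)}$ and $a=0$.

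By Proposition \ref{prop:desDcMO},
\[
\mathrm{D}_c^{-\infty}(M;\CO)=\mathrm{D}_c^{-\infty}(\R^n)\widehat\otimes_{\mathrm{i}}\C[y_1^*,\ldots,y_k^*],
\]
so every element is a finite sum $\eta=\sum_L \eta_L\otimes (y^*)^L$ with $\eta_L\in \mathrm{D}_c^{-\infty}(\R^n)$. Pairing with $\CO(M)=\RC^\infty(\R^n)[[y]]$ via \eqref{eq:pairing2} gives $\langle \eta,\sum_J g_J y^J\rangle=\sum_L L!\,\langle \eta_L,g_L\rangle$. Testing against $g\cdot y^J$ for arbitrary $g\in\RC^\infty_c(\R^n\setminus\{0\})$ and $J\in \BN^k$ shows that $\eta$ is supported at $\{0\}$ in $M$ if and only if each $\eta_L$ is supported at $\{0\}$ in $\R^n$. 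Applying the classical Schwartz theorem (cf.\ \cite[Theorem 26.4]{Tr}), the LCS of distributions on $\R^n$ supported at the origin is canonically identified with $\C[x_1^*,\ldots,x_n^*]$ via $P\mapsto P(\partial)\delta_0$, with the usual inductive limit topology on polynomials. Combining these two identifications yields a linear isomorphism
\[
\mathrm{Dist}_0(M)\cong \C[x_1^*,\ldots,x_n^*]\otimes \C[y_1^*,\ldots,y_k^*]=\C[x_1^*,\ldots,x_n^*,y_1^*,\ldots,y_k^*].
\]

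It remains to match the topologies. Both sides are exhausted by an increasing family of finite-dimensional subspaces (by total degree on the polynomial side; by joint bounds on differential order in $x$ and $y^*$-degree on the distribution side), and on each finite-dimensional Hausdorff LCS the topology is unique. The main technical point, which I expect to be the principal obstacle, is verifying that the subspace topology on $\mathrm{Dist}_0(M)$ inherited from the Hausdorff LCS $\mathrm{D}_c^{-\infty}(M;\CO)$ is itself a strict inductive limit of these finite-dimensional pieces. I would handle this by using the tensor product decomposition in Proposition \ref{prop:desDcMO} to reduce to the analogous assertion on $\R^n$, where it follows from the strictness of the filtration in Tr\`eves' proof, and then invoking Lemma \ref{lem:basicsonindlim} together with Example \ref{ex:toponpolynomial} to lift the polynomial filtration compatibly through $\widehat\otimes_{\mathrm{i}}$.
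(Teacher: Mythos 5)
Your route is genuinely different from the paper's. The paper never localizes to a chart: it topologizes the stalk $\CO_a$, shows via Lemmas \ref{lem:seminormbasis} and \ref{lem:Hausstalk} that the map $\CO(M)\to\wh\CO_a\cong\C[[x_1,\dots,x_n,y_1,\dots,y_k]]$ is a continuous open surjection of nuclear Fr\'echet spaces (for $M$ connected), and deduces from \cite[\S\,33.6\,(1)]{Ko2} that its transpose $(\wh\CO_a)'_b\to \CO(M)'_b=\RD_c^{-\infty}(M;\CO)$ is a topological embedding with image exactly $\mathrm{Dist}_a(M)$ (Lemma \ref{lem:dual=dist}); the proposition then drops out because the strong dual of the power series ring is the polynomial ring. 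You instead localize (which is legitimate and is the same reduction the paper performs in the second half of Lemma \ref{lem:dual=dist} via Lemma \ref{eq:KofD-infty}), split off the $y^*$-variables using Proposition \ref{prop:desDcMO}, and quote the classical one-point structure theorem on $\R^n$. Your support computation is correct, and since $\wh\otimes_{\mathrm{i}}\C[y_1^*,\dots,y_k^*]$ is just a locally convex direct sum of copies of $\RD_c^{-\infty}(\R^n)$, passing subspaces through it is unproblematic.

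The soft spot is exactly where you flag it, and your proposed fix does not close it. \cite[Theorem 26.4]{Tr} is a purely algebraic structure theorem (a distribution supported at a point has finite order and is a finite combination of derivatives of $\delta_0$); neither its statement nor its proof identifies the topology that $\mathrm{Dist}_0(\R^n)$ inherits from $\RD_c^{-\infty}(\R^n)=\RC^\infty(\R^n)'_b$, so there is no ``strictness of the filtration in Tr\`eves' proof'' to invoke. Moreover, being an increasing union of finite-dimensional subspaces says nothing about carrying the inductive limit (finest locally convex) topology: $\C[x]\subset\C[[x]]$ is the standard counterexample. The clean way to supply the missing fact is duality: the Taylor map $\RC^\infty(\R^n)\to\C[[x]]$ is a continuous open surjection of nuclear Fr\'echet spaces, hence its transpose is a topological embedding of strong duals with image $\mathrm{Dist}_0(\R^n)$, and $(\C[[x]])'_b=\C[x^*]$ carries the direct sum topology. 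But that is precisely the paper's Lemma \ref{lem:dual=dist} in the reduced ($k=0$) case, so once you prove it you have essentially reconstructed the paper's argument, and the localization plus tensor-splitting gain little over computing $(\wh\CO_a)'_b$ directly.
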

	In view of Proposition \ref{prop:DFcM=FM'},
	we identify   $\mathrm{Dist}_{a}(M)$ 
	as a subspace of $(\CO(M))_b'$.
	Recall the continuous character $\mathrm{Ev}_a\in (\CO(M))_b'$ defined in \cite[(5.1)]{CSW}. 
	%Then we have the following consequence of Proposition \ref{prop:distiso}.
	
	\begin{exampled}\label{dist} Let $n,k\in \BN$, and
		suppose that  $M=N^{(k)}$ for some  open submanifold $N$ of $\R^n$. Then for every $a\in M$, the family
		\[
		\{\mathrm{Ev}_a \circ(\partial_x^I \partial_y^J)\}_{ I\in \BN^n, J\in \BN^k}
		\]
		forms a basis of the complex-vector space $\mathrm{Dist}_{a}(M)$.
	\end{exampled}
	
	The rest of this subsection is devoted to a proof of 
	Proposition \ref{prop:distiso}.
	We endow the stalk
	\be\label{eq:CO_a}
	\CO_a:=\varinjlim_U \CO(U)
	\ee
	with the inductive limit topology, where $U$ runs over all open neighborhoods of $a$ in $M$.
	Then $\CO_a$ becomes an LCS that may not be Hausdorff.

	Note that every $D\in \mathrm{Diff}_c(\CO,\underline{\CO})$ gives rise to   a seminorm 
	\begin{eqnarray*}
		|\cdot|_{D,a}:\ \CO_a\rightarrow \BR,\quad g\mapsto |D_a(g)(a)|
	\end{eqnarray*}
	on $\CO_a$, where
	$D_a: \CO_a\rightarrow\underline{\CO}_a$ is the  differential operator defined as in \cite[(3.18)]{CSW}.
	
	\begin{lemd}\label{lem:seminormbasis}
		Let $a\in M$. The topology on $\CO_a$ is defined by the family 
		\be\label{seminormoa}
		\{|\cdot|_{D,a}\}_{D\in \mathrm{Diff}_c(\CO,\underline{\CO})}
		\ee
		of seminorms. Moreover, the linear map \be
		\label{eq:OMtoOa}
		\CO(M)\rightarrow \CO_a\ee is  continuous, open and surjective.
	\end{lemd}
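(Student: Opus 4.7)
The plan is to prove the second assertion first---that the canonical map $\iota_M\colon\CO(M)\to \CO_a$ is surjective, continuous, and open---and then use it to reduce the topology claim to a question about continuous seminorms on $\CO(M)$ depending only on the germ at $a$.

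Surjectivity follows by choosing, for a germ $g\in\CO_a$ represented by some $f\in\CO(U)$, a formal function $\phi\in\CO(M)$ with $\mathrm{supp}\,\phi\subset U$ and $\phi=1$ in a neighborhood of $a$ (such $\phi$ exists via the cosheaf $\CO_c$ from \cite{CSW}); then $\phi f\in\CO(M)$ has germ $g$ at $a$. Continuity of $\iota_M$ is the universal property of the inductive limit. For openness, the key observation is that for $\phi$ as above, the multiplication map $\phi_{*}\colon\CO(U)\to\CO(M)$ is continuous and satisfies $\iota_M\circ\phi_{*}=\iota_U$, where $\iota_U\colon\CO(U)\to\CO_a$ denotes the canonical map. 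Thus if $\iota_M^{-1}(V)$ is open in $\CO(M)$ for some $V\subset\CO_a$, then $\iota_U^{-1}(V)=\phi_{*}^{-1}(\iota_M^{-1}(V))$ is open in $\CO(U)$ for every open neighborhood $U$ of $a$, so $V$ is open in the inductive limit topology. This shows the inductive limit topology on $\CO_a$ coincides with the quotient topology from $\CO(M)$, making $\iota_M$ open.

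For the topology statement, each $|\cdot|_{D,a}$ is continuous on $\CO_a$: its pullback to $\CO(U)$ equals $f\mapsto|(D|_U)(f)(a)|$, which factors as the continuous differential operator $D|_U\colon\CO(U)\to\underline{\CO}(U)$ followed by evaluation at $a$. Conversely, let $q$ be any continuous seminorm on $\CO_a$. By openness, $p:=q\circ\iota_M$ is continuous on $\CO(M)$ and depends only on the germ of its argument at $a$, so there exist $E_1,\dots,E_n\in\mathrm{Diff}_c(\CO,\underline{\CO})$ of orders $\le r$ with $p(f)\le C\sum_i\sup_b|E_i(f)(b)|$. The main step is to show $p(f)$ depends only on the $r$-jet of $f$ at $a$. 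Choose a formal chart $(U,\CO|_U)\cong(\R^n)^{(k)}$ around $a$ with coordinates $(x_1,\dots,x_n,y_1,\dots,y_k)$, fix a cutoff $\chi\in\CO(M)$ with $\chi=1$ near $a$ and $\mathrm{supp}\,\chi\subset U$, and set $g:=\chi\cdot T_r^a(f)$, where $T_r^a(f):=\sum_{|I|+|J|\le r}\tfrac{1}{I!J!}(\partial_x^I\partial_y^J f)(a)(x-a)^I y^J$.

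Since $f-g$ has vanishing $r$-jet at $a$, I apply a shrinking family of cutoffs $\chi_m\in\RC^\infty(N)\subset\CO(M)$ with $\chi_m=1$ on shrinking neighborhoods of $a$ and $|\partial_x^\beta\chi_m|\lesssim m^{|\beta|}$ on their supports. A Leibniz-rule computation on the local expression of $\chi_m(f-g)$, combined with the Taylor estimate $|\partial_x^\alpha(f-g)_J(b)|\lesssim m^{-(r+1-|\alpha|-|J|)}$ on $\mathrm{supp}\,\chi_m$ for $|\alpha|+|J|\le r$, gives $\sup_b|E_i(\chi_m(f-g))(b)|\lesssim m^{-1}\to 0$; since $\chi_m(f-g)$ and $f-g$ share the same germ at $a$, $p(f-g)=p(\chi_m(f-g))$, forcing $p(f)=p(g)$. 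Because $g$ is a linear function of the finite family $\{(\partial_x^I\partial_y^J f)(a)\}_{|I|+|J|\le r}$ of Taylor coefficients, continuity of $p$ on this finite-dimensional subspace gives $p(f)\le C'\sum_{|I|+|J|\le r}|(\partial_x^I\partial_y^J f)(a)|$; and each such term equals $|g|_{D_{I,J},a}$ for $D_{I,J}:=\chi'\circ\partial_x^I\partial_y^J\in\mathrm{Diff}_c(\CO,\underline{\CO})$ with any cutoff $\chi'\in\CO(M)$ satisfying $\chi'(a)=1$, yielding $q\le C'\sum|\cdot|_{D_{I,J},a}$. The main obstacle is the Leibniz-rule estimate: arranging that the derivatives of the shrinking cutoff, multiplied against the Taylor remainder, produce a supremum vanishing in the limit requires careful multi-index bookkeeping in the local model, but this is standard once the Taylor remainder is bounded on $\mathrm{supp}\,\chi_m$.
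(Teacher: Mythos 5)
Your proposal is correct, and while its overall architecture (compare everything to the quotient topology induced by $\CO(M)\rightarrow\CO_a$) parallels the paper's, the key technical steps are genuinely different. The paper's proof rests entirely on the single identity $|g|_{D,a}=\inf_{f\in\CO(M);\,f_a=g}|f|_D$, which it states without proof: this identifies each $|\cdot|_{D,a}$ as exactly the quotient seminorm of $|\cdot|_D$, so the seminorm topology, the quotient topology, and (by the same reasoning applied to each $\CO(U)$) the inductive limit topology all coincide in one stroke. You instead decouple the two halves: openness is obtained from the cutoff sections $\phi_*\colon\CO(U)\to\CO(M)$ satisfying $\iota_M\circ\phi_*=\iota_U$, a clean argument that never mentions the seminorms; and the identification of the seminorm topology is obtained by pulling a continuous seminorm $q$ back to $p=q\circ\iota_M$, truncating to the $r$-jet via shrinking cutoffs and the Taylor remainder estimate, and invoking finite-dimensionality of the jet space. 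Your Taylor-remainder computation is, in effect, a proof of (the substance of) the paper's unproved formula \eqref{eq:inf} — the "$\le$" direction of that infimum requires constructing representatives with controlled $|\cdot|_D$-norm, which is precisely the shrinking-cutoff-plus-Leibniz estimate you carry out. So your route is longer but more self-contained, at the cost of having to verify (as you do) that the continuity estimate for $p$ involves only finitely many operators $E_i$ of bounded finite order, which holds because elements of $\mathrm{Diff}_c(\CO,\underline{\CO})$ are compactly supported and hence of finite order.
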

	\begin{proof}
		The family \eqref{seminormoa} of seminorms defines another locally convex topology on the underlying vector space of $\CO_a$.  Denote by $\CO_a^\circ$ the resulting LCS. 
		Recall that the canonical map \be\label{eq:Mtoformalstalk}\CO(M)\rightarrow \CO_a^\circ\ee is surjective (see \cite[Corollary 2.12]{CSW}). Note that for every $D\in \mathrm{Diff}_c(\CO, \underline{\CO})$,
		\be\label{eq:inf} |g|_{D,a}=\inf_{f\in \CO(M);f_a=g}|f|_D\qquad  \textrm{for all $g\in \CO_a^\circ$}.\ee
		Thus the topology on $\CO_a^\circ$ agrees with the quotient topology with respect to the map \eqref{eq:Mtoformalstalk}. This implies that \eqref{eq:Mtoformalstalk} is continuous and open. Likewise, for each open neighborhood $U$ of $a$ in $M$, the  linear map $\CO(U)\rightarrow \CO_a^\circ$ is also   continuous, open and surjective. Then it is clear that $\CO_a^\circ=\varinjlim_U \CO(U)$ as LCS. This finishes the proof of the lemma.  
	\end{proof}

	The transpose of 
	\eqref{eq:OMtoOa} yields an  injective continuous linear map
	\be \label{eq:disttocompact}
	(\CO_a)'_b\rightarrow \CO(M)_b'=\mathrm{D}_c^{-\infty}(M;\CO)\quad\text{(see Proposition \ref{prop:DFcM=FM'})}.
	\ee  
	The following result is straightforward. 
	\begin{lemd}\label{lem:imoftocomapct}
		The image of \eqref{eq:disttocompact} is exactly the space $\mathrm{Dist}_a(M)$.
	\end{lemd}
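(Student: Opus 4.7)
The plan is to use Lemma \ref{lem:seminormbasis} to reduce everything to an annihilator identification. Since $\pi\colon\CO(M)\to\CO_a$ is a continuous open surjection by that lemma, it is a topological quotient map, so a continuous linear functional $\psi\in\CO(M)'_b$ factors as $\phi\circ\pi$ with $\phi\in(\CO_a)'_b$ if and only if $\psi$ vanishes on $\ker\pi=\{u\in\CO(M)\mid u_a=0\}$; continuity of $\phi$ is then automatic from the universal property of the quotient topology. Hence the image of \eqref{eq:disttocompact} equals $(\ker\pi)^{\perp}$ inside $\CO(M)'_b=\RD_c^{-\infty}(M;\CO)$ (Proposition \ref{prop:DFcM=FM'}), and the task becomes to identify this annihilator with $\mathrm{Dist}_a(M)$.

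For the inclusion $\mathrm{image}\subseteq\mathrm{Dist}_a(M)$: given $\phi\in(\CO_a)'_b$ and $\psi:=\phi\circ\pi$, the distribution on $\CO_c(M)$ corresponding to $\psi$ via $\iota_\C$ is simply $\psi|_{\CO_c(M)}$. If $u\in\CO_c(M)$ satisfies $a\notin\mathrm{supp}\,u$, then on the open neighborhood $V:=M\setminus\mathrm{supp}\,u$ of $a$ the section $u$ vanishes, so $u_a=0$ and thus $\psi(u)=\phi(\pi(u))=0$. Therefore $\psi|_{\CO_c(M)}$ is supported in $\{a\}$, placing $\psi$ in $\mathrm{Dist}_a(M)$.

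For the reverse inclusion: take $\psi\in\mathrm{Dist}_a(M)$, written as $\psi=\iota_\C(\eta)$ with $\eta\in\RD^{-\infty}_{\{a\}}(M;\CO)$, and show $\psi$ annihilates $\ker\pi$. Fix $u\in\CO(M)$ with $u_a=0$; by the stalk definition \eqref{eq:CO_a}, there is an open neighborhood $V$ of $a$ with $u|_V=0$. Choose a compactly supported formal function $f$ on $M$ with $\mathrm{supp}\,f\subseteq V$ and $f\equiv 1$ on a neighborhood of $a$ inside $V$. By the construction of $\iota_\C$ in Lemma \ref{lem:DFcMEinLFME}, and because $f\equiv 1$ on $\mathrm{supp}\,\eta\subseteq\{a\}$, one has $\langle\psi,u\rangle=\langle\eta,fu\rangle$. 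But $(fu)|_V=f|_V\cdot u|_V=0$ and $\mathrm{supp}\,f\subseteq V$, so $fu\in\CO_c(M)$ is supported in $M\setminus V$, which is disjoint from $\{a\}=\mathrm{supp}\,\eta$. A standard partition-of-unity argument covering the compact set $\mathrm{supp}(fu)$ by open subsets of $M\setminus\{a\}$ on which $\eta$ vanishes then gives $\langle\eta,fu\rangle=0$, so $\psi(u)=0$ as required.

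The delicate point in this outline is the identity $\langle\iota_\C(\eta),u\rangle=\langle\eta,fu\rangle$ for the specific $f$ chosen above: this is the defining formula of $\iota_\C$ applied to any compactly supported $f$ equal to $1$ on a neighborhood of $\mathrm{supp}\,\eta$, and the independence from the particular choice of such $f$ is itself an instance of the support-vanishing principle applied to the difference of two admissible choices. Once this is in place, the argument reduces to unwinding the definitions of the stalk \eqref{eq:CO_a}, of the support of a distribution, and of the identifications provided by Proposition \ref{prop:DFcM=FM'} and Lemma \ref{lem:DFcMEinLFME}.
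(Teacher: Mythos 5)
The paper states this lemma without proof (``The following result is straightforward''), so there is no argument to compare against; your proof is correct and is evidently the intended one. The reduction to the annihilator of $\ker(\CO(M)\to\CO_a)$ via the quotient-map property from Lemma \ref{lem:seminormbasis}, and the two inclusions via germs versus supports (using the defining formula of $\iota_\C$ from Lemma \ref{lem:DFcMEinLFME}), correctly fill in the omitted details --- indeed, in your second inclusion one can note that $fu=0$ outright, since $\mathrm{supp}\,f\subset V$ while $u|_V=0$.
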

	% \begin{proof}
		%     Let $\eta\in \mathrm{Dist}_a(M)$. Note that $\la \eta, f\ra=0$ for every $f\in \CO(M)$ with $f_a=0$, where $f_a$ is the germ of $f$ at $a$. Then it induces a continuous map $\eta':\ (\CO_a)'_b\rightarrow \C$ such that the diagram 
		%     \[\xymatrix{\CO(M)\ar[d]\ar[r]^{\eta}&\BC\\\CO_a\ar[ur]_{\eta'}}\]
		%     commutes. 
		%    Consequently, the map \eqref{eq:disttocompact} maps $\eta'$ to $\eta$. This finishes the proof.  
		% \end{proof}

	Recall from \cite[Section 2.3]{CSW} that $\wh{\CO}_a$ is a Hausdorff LCS under the  projective limit topology.
	The following result arrests that $\wh{\CO}_a$ is precisely the Hausdorff LCS associated to $\CO_a$, namely \[
	\CO_a/\overline{\{0\}}=\wh\CO_a\] as LCS.
	
	\begin{lemd}\label{lem:Hausstalk} The intersection $\cap_{r\in \BN} \m_a^r$ is the closure of $\{0\}$ in $\CO_a$. Furthermore, 
		the natural homomorphism (see \cite[(2.6)]{CSW})
		\[\CO_a/\cap_{r\in \BN}\m_a^r\rightarrow \wh{\CO}_a\]  is a topological algebra isomorphism.
	\end{lemd}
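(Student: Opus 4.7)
The plan is to reduce to a chart, compute $\overline{\{0\}}$ via the seminorm description of Lemma \ref{lem:seminormbasis}, match it with the algebraic intersection $\cap_r\m_a^r$, and then upgrade the resulting algebra isomorphism $\CO_a/\cap_r \m_a^r \to \wh\CO_a$ to a homeomorphism.

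First, since everything is local at $a$, I may assume $M = U^{(k)}$ for some open neighborhood $U \subset \R^n$ of $a$, with $n = \dim_a(M)$ and $k = \deg_a(M)$; then $\CO_a = \RC^\infty_a(\R^n)[[y_1,\ldots,y_k]]$, and every $f \in \CO_a$ expands uniquely as $f = \sum_{J \in \BN^k} f_J\, y^J$ with $f_J \in \RC^\infty_a(\R^n)$. The maximal ideal $\m_a$ is generated by the maximal ideal $\n_a \subset \RC^\infty_a(\R^n)$ together with $y_1,\ldots,y_k$. By Lemma \ref{lem:seminormbasis}, $f \in \overline{\{0\}}$ iff $D_a(f)(a) = 0$ for every $D \in \mathrm{Diff}_c(\CO, \underline\CO)$; taking $D = \chi\cdot\partial_x^I\partial_y^J$ with a compactly supported cutoff $\chi \equiv 1$ near $a$ yields $D_a(f)(a) = J!\,(\partial_x^I f_J)(a)$, so $\overline{\{0\}}$ consists precisely of those $f$ for which every coefficient $f_J$ is flat at $a$.

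Next, I would show $\overline{\{0\}} = \cap_r \m_a^r$. The inclusion $\cap_r \m_a^r \subset \overline{\{0\}}$ is a Leibniz-rule argument: any $D$ of finite order $s$ annihilates $\m_a^{s+1}$ at $a$, because in the Leibniz expansion of $D(g_1\cdots g_{s+1})$ every term retains some unhit factor lying in $\m_a$ and therefore vanishes at $a$. For the reverse inclusion, fix $r$ and assume every $f_J$ is flat at $a$. The classical identity $\cap_s \n_a^s = \{\text{flat germs}\}$ in $\RC^\infty_a(\R^n)$ gives $f_J \in \n_a^{\max(r-|J|,0)} \subset \m_a^{\max(r-|J|,0)}$. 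Split $f = \sum_{|J| < r} f_J\, y^J + \sum_{|J| \geq r} f_J\, y^J$; the first piece is a finite sum of products in $\m_a^r$, and the second is reorganized by choosing, for each $J$ with $|J| \geq r$, a factorization $y^J = y^{I(J)} y^{J-I(J)}$ with $|I(J)| = r$, which rewrites $\sum_{|J| \geq r} f_J\, y^J = \sum_{|I| = r} y^I h_I$ for suitable $h_I \in \CO_a$; this is a finite sum of $r$-fold products from $\m_a$ and therefore lies in $\m_a^r$.

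Finally, I would establish the topological algebra isomorphism. Each $\CO_a/\m_a^r$ is finite-dimensional (with basis the classes of monomials $(x-a)^I y^J$, $|I|+|J|<r$) and hence carries a unique Hausdorff TVS topology; the projective limit $\wh\CO_a = \varprojlim_r \CO_a/\m_a^r$ is identified with $\C[[x_1,\ldots,x_n,y_1,\ldots,y_k]]$ equipped with term-wise convergence. The induced map $\CO_a/\cap_r \m_a^r \to \wh\CO_a$ is injective by construction, a ring homomorphism, and surjective by Borel's theorem applied to each $f_J$ independently to realize an arbitrary formal power series. For the homeomorphism, each seminorm $|\cdot|_{D,a}$ on $\CO_a$ depends only on Taylor coefficients of $f$ up to the order $s$ of $D$, so it factors through a continuous seminorm on the finite-dimensional $\CO_a/\m_a^{s+1}$, hence extends continuously to $\wh\CO_a$; conversely, the defining seminorms of $\wh\CO_a$ are finite combinations of Taylor-coefficient evaluations, each of which equals a seminorm of the form $|\cdot|_{\chi\partial_x^I\partial_y^J, a}$, so the two topologies coincide. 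The main obstacle I anticipate is the reverse inclusion in the second step: realizing a germ with all flat $\RC^\infty$-coefficients as an element of the algebraic (not merely topologically closed) ideal $\m_a^r$ requires both the classical flatness $=$ $\cap_s \n_a^s$ statement and the combinatorial regrouping of the infinite $y$-expansion into finitely many monomials of total $y$-degree $r$. The rest is a routine unwinding of definitions.
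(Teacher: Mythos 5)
Your proof is correct and follows essentially the same route as the paper: reduce to a chart, use the seminorm basis of Lemma \ref{lem:seminormbasis} to identify $\overline{\{0\}}$ with the germs whose derivatives all vanish at $a$, match this with $\cap_{r}\m_a^r$, and compare seminorms to get the topological isomorphism with $\wh{\CO}_a$. The only difference is one of packaging: where you prove the two inclusions for $\cap_r\m_a^r$ by hand (the Leibniz argument for one direction, flatness of the coefficients $f_J$ plus the regrouping of $\sum_{|J|\ge r}f_Jy^J$ into $\sum_{|I|=r}y^Ih_I$ for the other), the paper simply invokes the characterization of $\m_a^r$ as the germs with vanishing derivatives of order $<r$ from the companion paper \cite[Lemma 2.5]{CSW}.
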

	\begin{proof} Without loss of generality, we  assume that $M=(\R^n)^{(k)}$ for some $n,k\in \BN$.
		For each $r\in \BN$,  \cite[Lemma 2.5]{CSW} implies  that
		\[\m_a^r=\{g\in\CO_a\mid ((\partial_x^I\partial_y^J)(g))(a)=0\, \,\text{for all } I\in \BN^n, J\in \BN^k \,\, \text{with} \,\, |I|+|J|< r\}.\] According to Lemma \ref{lem:seminormbasis}, the intersection\[\cap_{r\in \BN} \m_a^r=\{g\in\CO_a\mid ((\partial_x^I\partial_y^J)(g))(a)=0\, \,\text{for all } I\in \BN^n, J\in \BN^k\}\] is a closed subspace.
		This, together with Lemma \ref{lem:seminormbasis}, implies that
		$\cap_{r\in \BN} \m_a^r$ is exactly the  the closure of $\{0\}$ in $\CO_a$.
		The second assertion follows from \cite[Proposition 2.6]{CSW} and Lemma \ref{lem:seminormbasis}, by comparing the seminorms that define the two topologies.
	\end{proof}
	
	In view of Lemma \ref{lem:Hausstalk}, there is an LCS identification 
	\[
	\left(\widehat\CO_a\right)'_b=(\CO_a)'_b.
	\] 
	Together with 
	Lemma \ref{lem:imoftocomapct}, we get a continuous linear isomorphism \be\label{eq:dualtodist} \left(\widehat\CO_a\right)'_b\rightarrow \mathrm{Dist}_a(M) . \ee
	\begin{lemd}\label{lem:dual=dist} The map \eqref{eq:dualtodist}
		is an isomorphism between LCS.
	\end{lemd}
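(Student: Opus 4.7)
The plan is to verify that the continuous linear bijection \eqref{eq:dualtodist} is open. First I would reduce to the case $M = (\R^n)^{(k)}$. Choosing a coordinate neighborhood $U$ of $a$ and a formal function $f \in \CO(M)$ with $\mathrm{supp}\,f \subset U$ and $f \equiv 1$ near $a$, an application of Lemma \ref{eq:KofD-infty} (to the cover $\{U\}$ of the singleton $\{a\}$) shows that the extension map $\mathrm{Dist}_a(U) \to \mathrm{Dist}_a(M)$ is a topological isomorphism; since $\widehat\CO_a$ depends only on germs at $a$, this reduces the problem to a coordinate chart. In this setting, $\CO(M) = \RC^\infty(\R^n) \widehat\otimes_\pi \C[[y_1,\ldots,y_k]]$ is a nuclear Fr\'echet space (by \cite[Corollary 4.7]{CSW}) and $\widehat\CO_a \cong \C[[x_1,\ldots,x_n,y_1,\ldots,y_k]]$ is a Fr\'echet space under termwise convergence.

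Next, I would observe that the canonical map $q\colon \CO(M) \to \widehat\CO_a$ factors as $\CO(M) \to \CO_a \to \CO_a/\overline{\{0\}} = \widehat\CO_a$, and is therefore a topological quotient between Fr\'echet spaces: the first arrow is continuous, open, and surjective by Lemma \ref{lem:seminormbasis}, while the second is the Hausdorff quotient, also continuous, open, and surjective by Lemma \ref{lem:Hausstalk}.

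The crux is then the classical bounded-set lifting for quotients of Fr\'echet spaces: for every bounded $B \subset \widehat\CO_a$ there exists a bounded $\widetilde B \subset \CO(M)$ with $q(\widetilde B) \supset B$ (see, e.g., \cite[\S\,31.5]{Ko1}). Granting this, for every $\phi \in (\widehat\CO_a)'_b$ with image $\widetilde\phi \in \mathrm{Dist}_a(M)$ under \eqref{eq:dualtodist},
\[
\sup_{x\in B}\lvert\phi(x)\rvert \;\le\; \sup_{y\in \widetilde B}\lvert\phi(q(y))\rvert \;=\; \sup_{y\in \widetilde B}\lvert\widetilde\phi(y)\rvert.
\]
Hence every defining seminorm of $(\widehat\CO_a)'_b$ is dominated by a defining seminorm of $\mathrm{Dist}_a(M)$ viewed as a subspace of $\CO(M)'_b$, so the inverse of \eqref{eq:dualtodist} is continuous. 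The main obstacle will be precisely this bounded-set lifting step, which is the essential non-formal input and is the reason for reducing to a Fr\'echet chart; without it $\CO(M)$ is only a possibly uncountable product of Fr\'echet spaces and the standard quotient theorems do not apply directly.
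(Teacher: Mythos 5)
Your strategy is essentially the paper's: both proofs come down to dualizing the continuous open surjection $\CO(M)\rightarrow\wh\CO_a$ (factored through $\CO_a$ exactly as you describe) and showing that the induced map of strong duals is a topological embedding, which is equivalent to lifting bounded subsets of $\wh\CO_a$ to bounded subsets of $\CO(M)$. The gap is in the step you yourself flag as the crux: the ``classical bounded-set lifting for quotients of Fr\'echet spaces'' is not a theorem. For a quotient map $q\colon E\rightarrow E/F$ of Fr\'echet spaces it is in general \emph{false} that every bounded subset of $E/F$ is contained in the image of a bounded subset of $E$; a classical example of K\"othe and Grothendieck exhibits a Fr\'echet--Montel space with a quotient isomorphic to $\ell^1$, and if bounded sets lifted there, every bounded subset of $\ell^1$ would be relatively compact. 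So, as written, your argument rests on a false general principle, and no hypothesis you have invoked up to that point (metrizability, nuclearity of $\CO(M)$) rules out this obstruction for the quotient.

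The gap is repairable, and the repair is exactly where the paper inserts the Montel hypothesis. Since $\wh\CO_a\cong\C[[x_1,\dots,x_n,y_1,\dots,y_k]]$ is a nuclear Fr\'echet space, it is Montel, so every bounded subset $B$ has compact closure; and compact subsets \emph{do} lift along open continuous linear surjections of Fr\'echet spaces (null sequences lift, and every compact set is contained in the closed absolutely convex hull of a null sequence, which is again compact by completeness). Taking $\wt B$ compact with $q(\wt B)\supset\overline{B}$, your seminorm estimate then goes through verbatim. The paper packages this same point by observing that $\CO(M)$ and $\wh\CO_a$ are Montel and citing K\"othe for the statement that the transpose of an open continuous map is then a topological embedding of strong duals; it also reduces only to the connected case (where $\CO(M)$ is already nuclear Fr\'echet) rather than to a coordinate chart, though your further reduction via Lemma \ref{eq:KofD-infty} is harmless and correct.
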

	\begin{proof}
		Assume first that  $M$ is connected. Then both $\CO(M)$ and $\wh{\CO}_a$ are nuclear 
		Fr\'echet spaces (see \cite[Propositions 4.8 and 2.6]{CSW}), and hence are Montel spaces  (see \cite[Corollary 3 of Proposition 50.2]{Tr}). On the other hand, Lemmas \ref{lem:seminormbasis} and \ref{lem:Hausstalk} imply that the composition of 
		\be \label{eq:OMtoOatohOa3} 
		\CO(M)\rightarrow \CO_a\rightarrow \wh{\CO}_a
		\ee
		is open and continuous. As a result, it follows from  \cite[\S\,33.6(1)]{Ko2} that the composition  of \[(\wh \CO_a)'_b\rightarrow (\CO_a)'_b\rightarrow (\CO(M))'_b=\mathrm{D}^{-\infty}_c(M;\CO)\] is a linear topological embedding. Consequently, the map \eqref{eq:dualtodist} is an isomorphism of LCS.
		
		In the general case, let $Z$ be the connected component of $M$ that contains $a$. It is clear that the extension map \[\mathrm{ext}_{M,Z}:\ \mathrm{Dist}_a(Z)\rightarrow \mathrm{Dist}_a(M)\] is an isomorphism of LCS by Lemma \ref{eq:KofD-infty}. Note that the map \eqref{eq:dualtodist} coincides with the composition  of \[(\wh\CO_a)'_b\rightarrow \mathrm{Dist}_a(Z)\xrightarrow{\mathrm{ext}_{M,Z}}\mathrm{Dist}_a(M).\] Then it is an isomorphism of LCS, as desired. 
	\end{proof}

	Proposition \ref{prop:distiso}  follows from Lemma \ref{lem:dual=dist} and \cite[Proposition 2.6]{CSW}.

	\appendix 
	\section{Preliminaries on topological tensor products}\label{appendixA}
	This appendix provides an overview of topological tensor products of LCS, along with related results necessary for the paper.

	\subsection{Topological tensor products of LCS}\label{appendixA2}
	%Here we  introduce %fixour notations on topological tensor products of LCS.
	
	Let $E$ and $F$ be two  LCS. %{\color{cyan}As indicated in \cite{Gr}, there are at least three useful  locally convex topologies one can put on the algebraic tensor product $E\otimes F$. Namely, the inductive tensor product $E\otimes_{\mathrm{i}} F$, the projective tensor product $E\otimes_{\pi} F$, and the epsilon (injective) tensor product $E\otimes_{\varepsilon} F$. One may see \cite{Gr} for the explicit definitions.} {\footnotesize
		%As noted in \cite{Gr}, there are at least three valuable locally convex topologies on the algebraic tensor product $E\otimes F$: the inductive tensor product $E\otimes_{\mathrm{i}} F$, the projective tensor product $E\otimes_{\pi} F$, and the epsilon (injective) tensor product $E\otimes_{\varepsilon} F$. Explicit definitions can be found in \cite{Gr}.
		As discussed in \cite{Gr}, the algebraic tensor product $E\otimes F$ admits at least three valuable locally convex topologies:  the inductive tensor product $E\otimes_{\mathrm{i}} F$, the projective tensor product $E\otimes_{\pi} F$, and the epsilon (injective) tensor product $E\otimes_{\varepsilon} F$. Explicit definitions can be found in \cite{Gr}.
		In this context, as mentioned  in the Introduction,  we denote  the quasi-completions and completions of these topological tensor product spaces as follows:
		\[\text{$E\widetilde\otimes_{\mathrm{i}} F$,
			$E\widetilde\otimes_{\pi} F$, $E\widetilde\otimes_{\varepsilon} F$\quad and\quad $E\widehat\otimes_{\mathrm{i}} F$,
			$E\widehat\otimes_{\pi} F$, $E\widehat\otimes_{\varepsilon} F$}.\]
		When $E$ or $F$ is nuclear, it is a classical result of Grothendieck that their projective tensor product coincides with the epsilon tensor product (see \cite[Theorem 50.1]{Tr}).
		In this case, we
		will simply write
		\bee
		E\widetilde\otimes F:=E\widetilde\otimes_{\pi} F=E\widetilde\otimes_{\varepsilon} F
		\quad
		\text{and}
		\quad
		E\widehat\otimes F:=E\widehat\otimes_{\pi} F=E\widehat\otimes_{\varepsilon} F.
		\eee

		Given two continuous linear maps $\phi_1:E_1\rightarrow F_1$ and $\phi_2:E_2\rightarrow F_2$  between  LCS, we use $\phi_1\otimes \phi_2$ to denote the continuous linear map on various topological tensor products (or their quasi-completions or completions) obtained by the tensor product of $\phi_1$ and $\phi_2$. For example, we have the maps \[\phi_1\otimes \phi_2:\ E_1\widehat\otimes_{\pi} E_2\rightarrow F_1\widehat\otimes_{\pi} F_2\quad \text{and}\quad 
		\phi_1\otimes \phi_2:\ E_1\widetilde\otimes_{\mathrm{i}} E_2\rightarrow F_1\widetilde\otimes_{\mathrm{i}} F_2.\]

		%Let  $\phi_1:E_1\rightarrow F_1$ and $\phi_2:E_2\rightarrow F_2$ be  two continuous linear maps  of  LCS.
		%If there is no confusion,we will still denote by $\phi_1\otimes \phi_2$ the various quasi-completed  or completed topological tensor products of $\phi_1$ and $\phi_2$.

		\subsection{Inductive limits of LCS}\label{appendixA3}
		% \begin{dfn}
			%     Let $(I,\le)$ be a preordered set. A directed system in the category of LCS is a family $\{E_i\}_{i\in I}$ of LCS together with a family $\{f_{ij}:E_i\rightarrow E_j\}_{i,j\in I; i\leq j}$  of continuous linear maps such that %with the following property: 
			% \begin{itemize}
				%         \item $f_{ii}$ is the identity map of $E_i$ for all $i\in I$; and 
				%         \item $f_{ik}=f_{jk}\circ f_{ij}$ for all $i,j,k\in I$ with $i\leq j\leq k$.
				%     \end{itemize}
			
			% \end{dfn} 
		
		% The above directed system is denoted by $(E_i,f_{ij})^{(I,\leq)}$, and when  no confusion is possible, we will not distinguish this   directed system with the family $\{E_i\}_{i\in I}$.
		%\begin{dfn}In the category of LCS, an inductive limit of a directed system $(E_i,f_{ij})^{(I)}$  is an LCS $E$, together with  a family of continuous linear maps $\{\varphi_i:E_i\rightarrow E\}_{i\in I}$ such that $\varphi_{i}=\varphi_j\circ f_{ij}$ whenever $i\leq j$, with the following universal property: for every LCS $F$ and every family of continuous linear maps $\{_i:E_i\rightarrow F\}_{i\in I}$ such that $\varphi_{i}=\varphi_j\circ f_{ij}$ whenever $i\leq j$\end{dfn}
		For a directed set $(I,\le)$ and 
		a directed system 
		$\{E_i\}_{i\in I}$ of LCS, % we write $\varinjlim_{i\in I}E_i$ for the corresponding unique inductive limit 
		the inductive limit $\varinjlim_{i\in I}E_i$  exists and is unique in the category of LCS. It is topologically linearly  isomorphic to the LCS
		\[\left(\bigoplus_{i\in I} E_i\right)/\left(\sum_{i,j\in I;i \leq j}\{v_i-f_{ij}(v_i):v_i\in E_i\}\right),\]
		which is equipped with the quotient topology. %Here, for an LCS $E$ and a subset $S\subset E$, the subspace $\mathrm{span}(S)$ denotes the linear span of the set $S$ in $E$. 
		See \cite[Page 110]{Ja} for details. %Here, $\bigoplus_{i\in I} E_i$ is endowed with 
		%,where $N\subset\bigoplus_{i\in I} E_i$ is the linear span of the set \[\bigcup_{i,j\in I;i \leq j}\{u_i-f_{ij}(u_i):u_i\in E_i\}.\]
		%Note that $(\bigoplus_{i\in I} E_i)$ is the inductive limit of 
		
		%Here, $\bigoplus_{i\in I}E_i$ is endowed with the topology determined  by 
		% In the rest part of this section, all preordered sets appearing in inductive limits  are assumed to be directed sets. 
		\begin{dfn}\label{de:regualrlim}
			An inductive limit 
			$\varinjlim_{i\in I}E_i$ is said to be strict if 
			for every $i\le j$, the canonical map $E_i\rightarrow E_j$ is a linear topological embedding.
			Furthermore, an inductive limit 
			$\varinjlim_{i\in I}E_i$  is said to be regular if it is strict, and 
			every bounded subset of it is contained and bounded in
			some $E_i$.
		\end{dfn}
		
		We remark that when an inductive limit $\varinjlim_{i\in \BN}E_i$ is strict, the map \[E_j\rightarrow \varinjlim_{i\in \BN}E_i \] is a linear topological embedding for all $j\in \BN$ (see \cite[\S\,4.6 Proposition 9 (i)]{Bo}).
		
		%Throughout this paper, we refer to a strict inductive limit of the form $\varinjlim_{r\in \BN} E_{r}$ as a countably strict inductive limit of LCS. 
		
		\begin{dfn} \label{de:LFspace}
			An LCS is said to be  an LF space  if it is  isomorphic to a %countably
			strict inductive limit of the form 
			$\varinjlim_{r\in \BN} E_{r}$,
			where $E_r$ is a Fr\'echet space for every $r\in \BN$. 
		\end{dfn}

		The inductive limits of LCS  considered  in this paper often have the following form:
		\be \label{eq:gencouindlim}
		E=\bigoplus_{i\in I}E_i\quad (E_i=\varinjlim_{r\in \BN} E_{i,r}\ \text{is a %countably
			strict inductive limit of LCS}).
		\ee
		Here $E$ is regarded as the inductive limit of the family $\{\bigoplus_{j\in J} E_{j,r}\}_{J\in \mathfrak{F}, r\in \BN}$ consisting of finite partial sums, where $\mathfrak{F}$ is the set of all finite subsets of $I$. 
		In the subsequent lemma, we collect some basic properties about these inductive limits. 
		
		\begin{lemd} \label{lem:basicsonindlim}
			Let $E$ be an LCS as in \eqref{eq:gencouindlim}.

			\noindent (a) Let $J$ be a subset of $I$, and $r_j\in \BN$ for all $j\in J$. Then the subspace topology on the partial sum $\bigoplus_{j\in J} E_{j,r_j}$ induced by $E$ coincides with its direct sum topology.

			\noindent (b) If every $E_{i,r}$ is Hausdorff, then $E$ is Hausdorff.

			\noindent (c) If every $E_{i,r}$ is Hausdorff and  closed in $E_{i,r+1}$, then the inductive
			limit \[E=\varinjlim_{J\in \mathfrak{F}, r\in \BN}\left(\bigoplus_{j\in J} E_{j,r}\right)\] is regular, and
			$E_{i,r}$ is closed in $E$.
			
			\noindent (d) If every $E_{i,r}$ is %Hausdorff and 
			complete, then $E$ is complete. 
			
			\noindent (e) If every $E_{i,r}$ is %Hausdorff, 
			quasi-complete and closed in $E_{i,r+1}$, then 
			$E$ is quasi-complete.
		\end{lemd}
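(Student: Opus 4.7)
The plan is to exploit the two-layer structure of $E$: each $E_i = \varinjlim_{r\in\BN} E_{i,r}$ is a (countable) strict inductive limit, and $E = \bigoplus_{i\in I} E_i$ is a direct sum of these. Accordingly, each of the five assertions will be reduced to a combination of a standard property of strict inductive limits (often of Dieudonn\'e--Schwartz type) and the analogous standard property of direct sums of LCS.

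For (a), I would view $\bigoplus_{j\in J} E_{j,r_j}$ as sitting inside $\bigoplus_{j\in J} E_j$, which in turn sits inside $E$. Strictness of $E_j = \varinjlim_r E_{j,r}$ gives that $E_{j,r_j}\hookrightarrow E_j$ is a topological embedding, and a direct check (using that absolutely convex hulls of families of zero-neighborhoods generate the direct sum topology) shows that an arbitrary direct sum of topological subspaces is again a topological subspace of the corresponding direct sum. Applying this observation twice yields (a).

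For parts (b), (d), and (e), each assertion reduces by the same two-step pattern: verify the property first at the level of a single $E_i$, then at the level of $E = \bigoplus_i E_i$. For (b), strict inductive limits of Hausdorff LCS are Hausdorff (standard extension-of-seminorms argument), and direct sums of Hausdorff LCS are Hausdorff. For (d), a complete subspace of a Hausdorff LCS is automatically closed, so each complete $E_{i,r}$ is closed in $E_{i,r+1}$ with no additional hypothesis needed; each $E_i$ is then a strict inductive limit of complete spaces with closed embeddings, hence complete; and the direct sum of complete Hausdorff LCS is complete. For (e), the quasi-complete analogue needs the explicit closedness hypothesis, but the same reduction then applies.

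The main step is (c). Applying the Dieudonn\'e--Schwartz theorem to each $E_i$ (valid under the Hausdorff-and-closed hypothesis) shows both that $E_i$ is a regular strict inductive limit and that each $E_{i,r}$ is closed in $E_i$. Given a bounded $B\subset E$, the classical fact that a bounded subset of a direct sum of LCS is contained in finitely many summands produces a finite $J\subset I$ with $B\subset \bigoplus_{j\in J} E_j$, and Dieudonn\'e--Schwartz in each factor yields $\{r_j\}_{j\in J}$ for which the projection of $B$ to $E_j$ is bounded in $E_{j,r_j}$. Combining this with (a) gives that $B$ is bounded in $\bigoplus_{j\in J} E_{j,r_j}$, proving regularity. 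For closedness of $E_{i,r}$ in $E$, note that $E_i$ is closed in $E$ as the kernel of the continuous projection $E\to \bigoplus_{j\ne i} E_j$, and $E_{i,r}$ is closed in $E_i$ by what has just been shown. The most delicate point I anticipate is the last bookkeeping step in the regularity argument: passing from componentwise bounded projections to actual topological boundedness in the partial sum $\bigoplus_{j\in J} E_{j,r_j}$ genuinely uses the topology identification provided by (a), and it is here that the strict-and-closed hypothesis is indispensable.
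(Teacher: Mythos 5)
Your proposal is correct and follows essentially the same route as the paper, which handles all five parts by the identical two-layer reduction (strict countable inductive limit within each $E_i$, then locally convex direct sum over $i$) but simply cites the relevant Bourbaki results (Chapitre II, \S\,4.5--4.6 and Chapitre III) rather than spelling out the arguments. Your elaborations --- the subspace-of-a-direct-sum computation for (a), the seminorm-extension argument for (b), the automatic closedness of complete steps for (d), and the Dieudonn\'e--Schwartz plus finite-support argument for (c) --- are exactly the content of the cited references.
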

		\begin{proof} 
			The  assertion (a) follows from \cite[Chapitre II, \S\,4.5 Proposition 8 (i) and \S\,4.6 Proposition 9 (i)]{Bo}.
			The  assertion (b) is a direct consequence of \cite[Chapitre II, \S\,4.5 Corolliare 2 and \S\,4.6 Proposition 9 (i)]{Bo}.
			The assertion (c)  is a consequence of \cite[Chapitre II, \S\,4.5 Corollaire 2 and \S\,4.6 Proposition 9 (ii)]{Bo} and \cite[Chapitre III, \S\,1.4 Propositions 5 and 6]{Bo}.
			The assertion (d) follows from (b), \cite[Chapitre III, \S\,3.6 Corolliare 2 and Chapitre II, \S\,4.6 Proposition 9 (iii)]{Bo}.
			The assertion (e) is a  consequence of (b), \cite[Chapitre III, \S\,1.6 Proposition 9 (iii) and (iv)]{Bo}.
		\end{proof}
		
		%If  two Hausdorff LCS \[E=\varinjlim_{i\in I} E_i \quad \text{  and  } \quad F=\varinjlim_{j\in J}F_j\] are inductive limits of Hausdorff LCS,  then we have that \be \label{eq:ilimcomtenpre} E \otimes_{\mathrm{i}} F=\varinjlim_{(i,j)\in I\times J} (E_i\otimes_{\mathrm{i}} F_j) \eeas LCS (see \cite[Chapitre I, Proposition 14]{Gr}). Consequently, we have the following result. 
		%It is known that inductive limits commute with inductive tensor products in the category of  Hausdorff LCS(see \cite[Chapitre I, Proposition 14]{Gr}). Namely,  l
		For two  LCS $E=\varinjlim_{i\in I} E_i$ and $F=\varinjlim_{j\in J}F_j$,  it is easy to check that
		\be \label{eq:ilimcomtenpre} E \otimes_{\mathrm{i}} F=\varinjlim_{(i,j)\in I\times J}  (E_i\otimes_{\mathrm{i}} F_j) \ee
		as 
		LCS by using the universal property. Consequently, we have the following result (\cf \cite[Chapitre I, Proposition 14]{Gr}). 
		
		\begin{lemd}\label{lem:limofcomplete} Let $E=\varinjlim_{i\in I} E_i$ and $F=\varinjlim_{j\in J}F_j$ be inductive limits of LCS.   If the
			inductive limit $\varinjlim_{(i,j)\in I\times J}  (E_i\widetilde\otimes_{\mathrm{i}} F_j)$
			is 
			quasi-complete, then  
			\be  \label{eq:ilimcomiten}E \widetilde\otimes_{\mathrm{i}} F=\varinjlim_{(i,j)\in I\times J}  (E_i\widetilde\otimes_{\mathrm{i}} F_j)
			\ee
			as LCS. 
			Similarly, if the inductive limit $\varinjlim_{(i,j)\in I\times J}  (E_i\widehat\otimes_{\mathrm{i}} F_j)$ is complete, then
			\be \label{eq:ilimcomiten1}
			E \widehat\otimes_{\mathrm{i}} F=\varinjlim_{(i,j)\in I\times J}  (E_i\widehat\otimes_{\mathrm{i}} F_j) \ee as LCS.
		\end{lemd}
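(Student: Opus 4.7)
The plan is to leverage the identification $E\otimes_{\mathrm{i}} F = \varinjlim_{(i,j)} (E_i\otimes_{\mathrm{i}} F_j)$ already supplied by \eqref{eq:ilimcomtenpre} and exhibit mutually inverse topological isomorphisms between $G := \varinjlim_{(i,j)} (E_i\widetilde\otimes_{\mathrm{i}} F_j)$ and $E\widetilde\otimes_{\mathrm{i}} F$ by invoking, in parallel, the universal property of the inductive limit and the universal property of the quasi-completion. Recall that under the paper's conventions, "quasi-complete" entails "Hausdorff", so both $G$ (by hypothesis) and $E\widetilde\otimes_{\mathrm{i}} F$ (by definition) are Hausdorff quasi-complete LCS.

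First I would build a continuous linear map $\widetilde\Phi : E\widetilde\otimes_{\mathrm{i}} F \rightarrow G$. For each pair $(i,j)$ the composition $E_i\otimes_{\mathrm{i}} F_j \hookrightarrow E_i\widetilde\otimes_{\mathrm{i}} F_j \rightarrow G$ is continuous, and these maps are compatible with the transition maps of the directed system $\{E_i\otimes_{\mathrm{i}} F_j\}_{(i,j)}$. Combining the universal property of the inductive limit with \eqref{eq:ilimcomtenpre}, they assemble to a continuous linear map $\Phi : E\otimes_{\mathrm{i}} F \rightarrow G$. Since $G$ is quasi-complete Hausdorff and $\Phi$ sends bounded sets to bounded sets, the universal property of the quasi-completion yields the desired extension $\widetilde\Phi$. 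For the inverse direction, for each $(i,j)$ the composition $E_i\otimes_{\mathrm{i}} F_j \rightarrow E\otimes_{\mathrm{i}} F \rightarrow E\widetilde\otimes_{\mathrm{i}} F$ lands in a quasi-complete Hausdorff LCS, so it extends uniquely to $E_i\widetilde\otimes_{\mathrm{i}} F_j \rightarrow E\widetilde\otimes_{\mathrm{i}} F$; uniqueness forces compatibility with the transition maps, and the universal property of the inductive limit then produces $\Psi : G \rightarrow E\widetilde\otimes_{\mathrm{i}} F$. Density of (the image of) $E\otimes_{\mathrm{i}} F$ in both targets—established from the density of $E_i\otimes_{\mathrm{i}} F_j$ in $E_i\widetilde\otimes_{\mathrm{i}} F_j$ together with the fact that the canonical surjection $\bigoplus_{(i,j)} E_i\widetilde\otimes_{\mathrm{i}} F_j \rightarrow G$ is open—combined with Hausdorffness forces $\Psi\circ\widetilde\Phi$ and $\widetilde\Phi\circ\Psi$ to be the identity. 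The statement \eqref{eq:ilimcomiten1} for $\widehat\otimes_{\mathrm{i}}$ is proved by the same argument, substituting completeness for quasi-completeness and the universal property of the completion for that of the quasi-completion.

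The main technical obstacle is the universal property of the quasi-completion that the argument rests on, namely: any continuous linear map $f : V \rightarrow W$ with $W$ quasi-complete Hausdorff extends (uniquely) to $\widetilde V$. This requires describing $\widetilde V$ concretely as the union, inside the completion of the associated Hausdorff LCS $V/\overline{\{0\}}$, of the closures of all bounded subsets, and then observing that $f$ preserves boundedness so that any bounded Cauchy net in $V$ is sent to a bounded Cauchy net in $W$, which converges by quasi-completeness of $W$. A secondary delicate point is that $E\otimes_{\mathrm{i}} F$ need not be Hausdorff, which is absorbed by first passing to its Hausdorff quotient before taking the quasi-completion; the construction of $\Phi$, $\Psi$ and $\widetilde\Phi$ above passes through this quotient without further modification.
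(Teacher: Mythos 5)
Your argument is correct and takes essentially the same route as the paper, which derives the lemma directly from \eqref{eq:ilimcomtenpre} together with the universal properties of inductive limits and of the (quasi-)completion (citing Grothendieck); your proposal simply writes out those details. The only imprecision is your description of the quasi-completion as the union of the closures of all bounded subsets inside the completion --- in general one must iterate this process (equivalently, take the smallest quasi-complete subspace of the completion containing the image of $V$) --- but the extension property your argument actually uses holds for that correct definition as well, so nothing is lost.
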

		The following lemma is a special case of Lemma \ref{lem:limofcomplete}.
		\begin{lemd}
			Let  \[
			E=\bigoplus_{i\in I}E_i\quad \text{and}\quad F=\bigoplus_{j\in J}F_j\] be both direct sums of Hausdorff LCS. Then 
			% we conclude from Lemma \ref{lem:basicsonindlim} that
			\be  \label{eq:sumcomiten}E\widetilde\otimes_{\mathrm{i}}F=\bigoplus_{(i,j)\in I\times J}  (E_i\widetilde\otimes_{\mathrm{i}} F_j)\quad\text{and}\quad 
			E \widehat\otimes_{\mathrm{i}} F=\bigoplus_{(i,j)\in I\times J}  (E_i\widehat\otimes_{\mathrm{i}} F_j)\ee
			as LCS.
		\end{lemd}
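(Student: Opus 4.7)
The plan is to realize both direct sums as strict inductive limits over finite partial sums and then invoke Lemma \ref{lem:limofcomplete}. Write $\mathfrak F(I)$ and $\mathfrak F(J)$ for the directed sets of finite subsets of $I$ and $J$ ordered by inclusion, and set $E_{I'}:=\bigoplus_{i\in I'}E_i$ for $I'\in\mathfrak F(I)$, and similarly $F_{J'}$ for $J'\in\mathfrak F(J)$. By definition of the direct sum topology, we have
\[
E=\varinjlim_{I'\in\mathfrak F(I)} E_{I'}\qquad\text{and}\qquad F=\varinjlim_{J'\in\mathfrak F(J)} F_{J'}
\]
as LCS, and these inductive limits are strict since each inclusion $E_{I'}\hookrightarrow E_{I''}$ for $I'\subset I''$ is a topological embedding (and analogously for $F$). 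Moreover, each $E_{I'}$ and $F_{J'}$ is Hausdorff.

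Next, for finite index sets, the inductive tensor product distributes over direct sums; that is,
\[
E_{I'}\otimes_{\mathrm i} F_{J'}=\bigoplus_{(i,j)\in I'\times J'}E_i\otimes_{\mathrm i}F_j
\]
as LCS, and the corresponding statement holds after passing to quasi-completions and completions, since finite direct sums commute with both quasi-completion and completion. Combined with \eqref{eq:ilimcomtenpre}, this gives
\[
E\otimes_{\mathrm i} F=\varinjlim_{(I',J')\in\mathfrak F(I)\times\mathfrak F(J)}\bigoplus_{(i,j)\in I'\times J'}\bigl(E_i\otimes_{\mathrm i}F_j\bigr)=\bigoplus_{(i,j)\in I\times J}\bigl(E_i\otimes_{\mathrm i}F_j\bigr)
\]
as LCS at the level of the algebraic (uncompleted) inductive tensor product. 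Thus the identifications to prove reduce to commuting the quasi-completion (resp.\,completion) past the outer direct sum.

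The key point is then to verify the hypothesis of Lemma \ref{lem:limofcomplete}, which requires that $\varinjlim_{(I',J')}\bigl(E_{I'}\widetilde\otimes_{\mathrm i}F_{J'}\bigr)$ be quasi-complete, and that $\varinjlim_{(I',J')}\bigl(E_{I'}\widehat\otimes_{\mathrm i}F_{J'}\bigr)$ be complete. Using the finite distributivity above, these inductive limits are respectively equal to the direct sums
\[
\bigoplus_{(i,j)\in I\times J}\bigl(E_i\widetilde\otimes_{\mathrm i}F_j\bigr)\qquad\text{and}\qquad\bigoplus_{(i,j)\in I\times J}\bigl(E_i\widehat\otimes_{\mathrm i}F_j\bigr).
\]
A direct sum of Hausdorff quasi-complete (resp.\,complete) LCS is itself quasi-complete (resp.\,complete); this is a standard fact and is also recorded in Lemma \ref{lem:basicsonindlim}\,(d),(e) applied with each $E_{i,r}$ replaced by the relevant factor.

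The main (and essentially only) subtlety is the finite-distributivity step: one must check that for finitely many Hausdorff LCS, the canonical algebraic isomorphism $(\bigoplus E_i)\otimes_{\mathrm i}(\bigoplus F_j)\cong\bigoplus(E_i\otimes_{\mathrm i}F_j)$ is a topological isomorphism, and that this property is preserved under quasi-completion and completion. This follows by comparing the universal properties (both sides classify the same separately continuous bilinear maps into LCS) and from the fact that finite direct sums commute with the relevant (quasi-)completion functors. Granting this, Lemma \ref{lem:limofcomplete} applies and yields both desired LCS identifications.
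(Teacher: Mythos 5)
Your proposal is correct and follows exactly the route the paper intends: the paper states this lemma as a special case of Lemma \ref{lem:limofcomplete} without further argument, and you carry out precisely that reduction (finite partial sums as a strict inductive limit, finite distributivity of $\otimes_{\mathrm i}$ over direct sums via the universal property, and quasi-completeness/completeness of the direct sum via Lemma \ref{lem:basicsonindlim}). The only substance you add beyond the paper is the explicit verification of the finite-distributivity step, which you correctly identify and justify.
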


		% \begin{dfn}\label{de:dfspace} A Hausdorff LCS is said to be a DF space if 
			%  it has  a fundamental sequence of bounded sets (namely, there exists a countable sequence \[ B_1\subset B_2\subset\cdots \] of bounded subsets 
			%  such that every bounded subset $B$ 
			% is contained in some $B_i$), and
			% every union of countably many equicontinuous sets in its strong dual that is bounded is also equicontinuous.
			% \end{dfn}
		Recall the notion of  DF spaces introduced by  Grothendieck (see \cite[\S\,29.3]{Ko1}). 
		Note that a DF space is quasi-complete if and only if it is complete (see \cite[\S29.5 (3) a]{Ko1}).
		Let $F$ be a DF space, and let $E=\varinjlim_{r\in \BN} E_r$ be an inductive limit  of Hausdorff LCS,  
		it is proved in \cite[Chapitre \uppercase\expandafter{\romannumeral1}, Proposition 6 (2)]{Gr} that if $E$ and $\varinjlim_{r\in \BN}(E_r\otimes_\pi F)$ are Hausdorff, then
		\be 
		\label{eq:indlimcommprotensorpre}E\otimes_\pi F=\varinjlim_{r\in \BN}(E_r\otimes_\pi F)\ee
		as  LCS.  Consequently, we have the following result. 
		
		\begin{lemd}\label{lem:indlimcommprotensorpre}Let the notation be as above. If the inductive limit 
			$\varinjlim_{r\in \BN}(E_r\widetilde\otimes_\pi F)$ is quasi-complete, then
			\be\label{eq:indlimcommprotensor}
			E\widetilde\otimes_\pi F=\varinjlim_{r\in \BN}(E_r\widetilde\otimes_\pi F)
			\ee
			as LCS. Similarly, if the inductive limit $\varinjlim_{r\in \BN}(E_r\widehat\otimes_\pi F)$ is complete, then 
			\be \label{eq:indlimcommprotensor1}
			E\widehat\otimes_\pi F=\varinjlim_{r\in \BN}(E_r\widehat\otimes_\pi F)\ee as LCS.
		\end{lemd}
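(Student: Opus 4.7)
The plan is to derive the identity from Grothendieck's uncompleted version \eqref{eq:indlimcommprotensorpre} by a universal-property argument. Each inclusion $E_r\otimes_\pi F \hookrightarrow E_r\widetilde\otimes_\pi F$ is a topological embedding with dense image, and the target $H := \varinjlim_{r\in \BN}(E_r\widetilde\otimes_\pi F)$ is quasi-complete, hence Hausdorff, by hypothesis. Composing with the inductive limit maps yields a compatible family of continuous injections $E_r\otimes_\pi F \to H$; since filtered colimits of injections remain injective in the category of vector spaces, the induced continuous map $\varinjlim_r(E_r\otimes_\pi F) \to H$ is an injection, so $\varinjlim_r(E_r\otimes_\pi F)$ is Hausdorff. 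Hausdorffness of $E$ follows by fixing $y_0\in F\setminus\{0\}$ and $\lambda\in F'$ with $\lambda(y_0)=1$ (possible by Hahn--Banach since $F$ is Hausdorff): the composition $E \to E\otimes_\pi F \to E$ given by $x\mapsto x\otimes y_0$ followed by $\mathrm{id}_E\otimes\lambda$ is the identity, so $E$ injects continuously into a Hausdorff LCS. Grothendieck's identity \eqref{eq:indlimcommprotensorpre} thus applies, giving $E\otimes_\pi F = \varinjlim_r(E_r\otimes_\pi F)$ as LCS, and in particular a continuous linear map $E\otimes_\pi F \to H$ with dense image.

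I then construct mutually inverse continuous linear maps between $E\widetilde\otimes_\pi F$ and $H$. The continuous map $E\otimes_\pi F\to H$ just obtained extends, by the universal property of quasi-completion applied to the quasi-complete target $H$, to a continuous $\phi: E\widetilde\otimes_\pi F \to H$. Conversely, for each $r$ the composition $E_r\otimes_\pi F \to E\otimes_\pi F \hookrightarrow E\widetilde\otimes_\pi F$ extends (again by quasi-completion of the target) to a continuous $E_r\widetilde\otimes_\pi F \to E\widetilde\otimes_\pi F$; these extensions are compatible in $r$, since any two compositions agree on the dense subspace $E_r\otimes_\pi F$ and the target is Hausdorff, so the universal property of inductive limits yields a continuous $\psi: H \to E\widetilde\otimes_\pi F$. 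Both compositions $\psi\circ\phi$ and $\phi\circ\psi$ restrict to the identity on the dense subspace $E\otimes_\pi F$ (respectively its image in $H$), and both target spaces are Hausdorff, so the two compositions equal the identity. Hence $\phi$ is a topological linear isomorphism, establishing \eqref{eq:indlimcommprotensor}.

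The completion version \eqref{eq:indlimcommprotensor1} is proved by the same argument, with the universal property of completion replacing that of quasi-completion, using that the target $\varinjlim_r(E_r\widehat\otimes_\pi F)$ is complete by hypothesis. The main technical delicacy is the first step: verifying the Hausdorffness hypotheses of Grothendieck's \eqref{eq:indlimcommprotensorpre} in our setting, since the lemma statement does not explicitly assume $E$ is Hausdorff. Once this is secured, the rest is a routine chase of universal properties.
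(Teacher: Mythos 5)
Your overall strategy --- verify the two Hausdorffness hypotheses, invoke Grothendieck's uncompleted identity \eqref{eq:indlimcommprotensorpre}, and then pass to (quasi-)completions by chasing universal properties --- is exactly what the paper intends; the paper offers no written proof beyond the word ``Consequently'', and your fleshing-out of the details is essentially correct. One step is circular as written: to prove $E$ is Hausdorff you inject $E$ into $E\otimes_\pi F$ via $x\mapsto x\otimes y_0$, but $E\otimes_\pi F$ is Hausdorff only if $E$ is (if $x\neq 0$ satisfies $p(x)=0$ for every continuous seminorm $p$ on $E$, then $x\otimes y_0$ is a nonzero element annihilated by every seminorm $p\otimes q$), so it cannot serve as the ambient Hausdorff space. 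The repair is immediate with what you already have: the maps $E_r\to E_r\otimes_\pi F$, $x\mapsto x\otimes y_0$, commute with the transition maps and hence induce a continuous map $E=\varinjlim_r E_r\to\varinjlim_r(E_r\otimes_\pi F)$, which is injective because $x\otimes y_0=0$ in $E_s\otimes F$ with $y_0\neq 0$ forces the image of $x$ in $E_s$ to vanish; since your first step shows the target is Hausdorff, so is $E$. The remaining points --- the injectivity of $\varinjlim_r(E_r\otimes_\pi F)\to\varinjlim_r(E_r\widetilde\otimes_\pi F)$ (using that each $E_r\otimes_\pi F\to E_r\widetilde\otimes_\pi F$ is injective because $E_r\otimes_\pi F$ is Hausdorff), the construction of $\phi$ and $\psi$, the density arguments identifying both composites with the identity, and the completed variant --- are all sound.
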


		\subsection{Strong dual of LCS}\label{appendixA4}
		Let $E$ and $F$ be two LCS. As in the Introduction, we use the notation  $\CL(E,F)$ to denote the space of all continuous linear maps from $E$ to $F$.
		For a continuous linear map $f:E\rightarrow G$,
		let
		\be \label{eq:contrans} {}^tf:\ \CL(G,F)\rightarrow \CL(E,F),\quad u'\mapsto (v\mapsto \la u', f(v)\ra)\ee
		denote the transpose of $f$,  where $G$ is another LCS.
		
		Unless otherwise mentioned, $\CL(E,F)$ is endowed with the strong topology, and the resulting LCS is denoted
		by $\CL_b(E,F)$.
		Let $B$ be a bounded set of $E$, and let $\abs{\,\cdot\,}_\nu$ be a continuous seminorm on $F$. Then
		\be \label{eq:defabsBnu}\abs{\,\cdot\,}_{B,\abs{\,\cdot\,}_\nu}:\ u'\mapsto \sup_{v\in B}|\la u',v\ra|_\nu\qquad (u'\in \CL(E,F))\ee
		is a continuous seminorm on $\CL_b(E,F)$, and the topology on $\CL_b(E,F)$ is defined by all such seminorms.
		When $F=\C$, we also write
		\[E':=\CL(E,\C),\quad E'_b:=\CL_b(E,\C)\quad\text{and}\quad \abs{\,\cdot\,}_{B}:=\abs{\,\cdot\,}_{B,\abs{\,\cdot\,}}.\]
		
		\begin{lemd}\label{lem:dualofDF}
			The strong dual of a metrizable LCS is a complete DF space (see \cite[\S\,29.3]{Ko1}), and the strong dual of a DF space is a
			Fr\'echet space (see \cite[\S\,29.3 (1)]{Ko1}).
		\end{lemd}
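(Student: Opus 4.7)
Both assertions are classical and ultimately due to Grothendieck, and the natural plan is to verify them directly from the definitions, treating the two directions separately.

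For the first part, suppose $E$ is metrizable with a countable basis $U_1 \supseteq U_2 \supseteq \cdots$ of absolutely convex neighborhoods of $0$. The first step is to observe that the polars $U_n^\circ$ (computed inside $E'_b$) form a fundamental sequence of bounded sets: every bounded subset of $E'_b$ is equicontinuous by the Banach--Steinhaus theorem applied to $E$ (or by direct argument using that $E$ is barrelled enough for this purpose in the metrizable setting), hence contained in some $U_n^\circ$. This gives $E'_b$ a countable fundamental system of bounded sets. To conclude that $E'_b$ is a DF space, I would verify the remaining axiom: every countable intersection $\bigcap_m V_m$ of closed absolutely convex neighborhoods of $0$ in $E'_b$ that absorbs every bounded set is itself a neighborhood of $0$. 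This is proved by a diagonal argument against the sequence $\{U_n^\circ\}$: for each $m$ choose $\lambda_m > 0$ with $\lambda_m U_m^\circ \subseteq V_m$, and then use metrizability of $E$ to extract a suitable neighborhood of $0$ in $E'_b$. For completeness of $E'_b$, I would invoke Grothendieck's completeness criterion: a linear functional on $E$ is continuous provided its restriction to each bounded set is continuous in the induced topology, which forces pointwise limits of equicontinuous nets to lie in $E'$.

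For the second part, suppose $F$ is a DF space with fundamental sequence of bounded sets $B_1 \subseteq B_2 \subseteq \cdots$. Then the polars $B_n^\circ$ form a countable basis of absolutely convex neighborhoods of $0$ in $F'_b$, so $F'_b$ is metrizable. Completeness of $F'_b$ reduces, via metrizability, to convergence of Cauchy sequences $\{f_k\}$; each such sequence is bounded in $F'_b$, hence equicontinuous by the DF axiom, and therefore converges to a continuous linear functional by Banach--Alaoglu-type reasoning together with the fact that pointwise limits of equicontinuous nets are continuous.

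The expected main obstacle is the verification of the DF axiom for $E'_b$ in the first part: the diagonal construction requires one to balance the countable basis at $0$ in $E$ against the given sequence $\{V_m\}$ of neighborhoods of $0$ in $E'_b$ in a quantitatively careful way. Since both statements are textbook results in Köthe's treatise, in the final write-up I would simply cite \cite[\S\,29.3]{Ko1} and \cite[\S\,29.3 (1)]{Ko1} rather than reproduce these arguments, which is what the present excerpt already does.
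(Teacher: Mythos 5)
The paper offers no proof of this lemma beyond the two citations to K\"othe, and your proposal likewise ends by deferring to those same citations, so the approaches coincide. Your sketch of the underlying arguments is the standard one; the only imprecision is that for a merely metrizable (not necessarily barrelled) $E$, the equicontinuity of strongly bounded subsets of $E'$ is not an instance of Banach--Steinhaus but a separate theorem (K\"othe \S\,29.1), though you hedge on exactly this point.
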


		The following result is about the strong dual of a product and  a direct sum of LCS, 
		whose proof can be found in \cite[Chapitre \uppercase\expandafter{\romannumeral4}, \S\,1.5]{Bo} for example. 
		
		\begin{lemd}
			Let $\{E_i\}_{i\in I}$ be a family of Hausdorff LCS. Then 
			\be \label{eq:sdualofprodandsum}\left(\prod_{i\in I}E_i\right)_b'=\bigoplus_{i\in I}(E_i)_b'\quad \text{and}\quad \left(\bigoplus_{i\in I}E_i\right)_b'=\prod_{i\in I}(E_i)_b'
			\ee
			as LCS.
		\end{lemd}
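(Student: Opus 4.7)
The plan is to prove each identification in two steps: first establish the algebraic isomorphism of underlying vector spaces, then verify that the strong topologies agree. Throughout we rely on three standard facts: (i) a subset of $\bigoplus_{i\in I} E_i$ is bounded iff it is contained in, and bounded within, a finite partial sum $\bigoplus_{i\in J} E_i$; (ii) a subset of $\prod_{i\in I} E_i$ is bounded iff each projection $\pi_i(B)$ is bounded in $E_i$; and (iii) every continuous linear functional on $\prod_{i\in I} E_i$ factors through some finite subproduct, since by continuity it is dominated by a seminorm of the form $\max_{i\in J} p_i\circ \pi_i$ with $J$ finite, which forces vanishing on $\prod_{i\notin J} E_i$.

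For the identification $\left(\bigoplus_i E_i\right)'_b = \prod_i (E_i)'_b$, the algebraic bijection follows from the universal property of the LCS direct sum: restriction to the summands gives a map $\left(\bigoplus_i E_i\right)' \to \prod_i (E_i)'$, and conversely any family $(f_i)_i$ defines a continuous functional $(x_i)\mapsto \sum_i f_i(x_i)$ (a finite sum on each element). For the topology, use (i): any bounded $B\subset \bigoplus_i E_i$ is contained in $\bigoplus_{i\in J} B_i$ for some finite $J$ with $B_i\subset E_i$ bounded, so $|\cdot|_B$ is dominated by $\sum_{i\in J} |\cdot|_{B_i}$, and conversely each $|\cdot|_{B_i}$ is of the form $|\cdot|_B$ for $B\subset E_i\subset \bigoplus E_i$. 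This matches exactly the defining seminorms of the product topology on $\prod_i (E_i)'_b$.

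For the identification $\left(\prod_i E_i\right)'_b = \bigoplus_i (E_i)'_b$, fact (iii) yields the algebraic bijection: any $f \in \left(\prod_i E_i\right)'$ is of the form $\sum_{i\in J} f_i\circ \pi_i$ with $J$ finite and $f_i\in E_i'$. For the topological part, (ii) shows that a bounded $B\subset \prod_i E_i$ satisfies $B\subset \prod_i \pi_i(B)$ with each $\pi_i(B)$ bounded, so on any finite partial sum $\bigoplus_{i\in J}(E_i)'$ the seminorm $|\cdot|_B$ is dominated by $\sum_{i\in J} |\cdot|_{\pi_i(B)}$, while conversely any product of bounded sets $\prod_i B_i$ with prescribed $B_i$ is bounded in $\prod_i E_i$. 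This matches the direct sum topology on $\bigoplus_i (E_i)'_b$, which is characterized as the finest locally convex topology whose restriction to each summand $(E_j)'_b$ is continuous.

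The main (though routine) technical point is fact (iii) together with the topological comparison for the second identity: one must carefully use the neighborhood base of the product topology to produce the finite index set $J$, and then match the resulting seminorm description with the precise characterization of continuous seminorms on the LCS direct sum in terms of their restrictions to each summand. No obstacle beyond standard manipulations arises, and the argument appears in \cite[Chapitre \uppercase\expandafter{\romannumeral4}, \S\,1.5]{Bo}.
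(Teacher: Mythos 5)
Your proposal is correct. The paper does not actually prove this lemma; it simply cites Bourbaki (Chapitre IV, \S 1.5), and your argument is precisely the standard one found there: the three boundedness/factorization facts you list are exactly what is needed, and both topological comparisons go through as you describe. The only step worth spelling out a little more is the lower bound in the second identification: to dominate a direct-sum seminorm $\sum_{i\in J}\abs{f_i}_{B_i}$ by a single strong-dual seminorm $\abs{\,\cdot\,}_{B}$ with $B=\prod_i B_i$, you should first replace each $B_i$ by its balanced hull so that $\sup_{x\in\prod_i B_i}\bigl|\sum_{i\in J}f_i(x_i)\bigr|$ actually equals $\sum_{i\in J}\sup_{x_i\in B_i}\abs{f_i(x_i)}$; this is routine and does not affect the argument.
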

		As an abstract vector space, the continuous dual of an inductive limit $F=\varinjlim_{j\in J} F_j$ of Hausdorff LCS is canonically 
		isomorphic to the projective limit $\varprojlim_{j\in J} F_j'$ (see \cite[\S\,8.8, Corollary 11]{Ja}). This isomorphism may not be a topological linear isomorphism in general 
		(under the strong topologies). However,
		when the inductive limit $F=\varinjlim_{j\in J} F_j$ is regular, one immediately has the following result. 
		\begin{lemd}
			If $F=\varinjlim_{j\in J} F_j$ is a  regular inductive limit of  Hausdorff LCS, then
			\be \label{eq:sdualofindlim}
			\left(\varinjlim_{j\in J} F_j\right)_b'= 
			\varprojlim_{j\in J} (F_j)_b'
			\ee
			as LCS.
		\end{lemd}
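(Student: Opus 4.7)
The plan is to upgrade the algebraic isomorphism cited from \cite[\S\,8.8, Corollary 11]{Ja} to a topological one. First I would exhibit the canonical map $\Phi:F'_b\rightarrow \varprojlim_{j\in J}(F_j)'_b$: the structural maps $f_j:F_j\rightarrow F$ yield, by transposition, continuous linear maps ${}^t f_j: F'_b\rightarrow (F_j)'_b$ (see \eqref{eq:trancont}), and these assemble via the universal property of the projective limit. The cited reference gives that $\Phi$ is bijective as a linear map, so the whole task is to verify that the two topologies on this common underlying vector space agree.

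To do this I would simply compare the two families of defining seminorms. The strong topology on $F'_b$ is generated by the seminorms $|\cdot|_B$ (see \eqref{eq:defabsBnu}) with $B\subset F$ bounded. The projective limit topology on $\varprojlim_{j\in J}(F_j)'_b$ is generated by the pullbacks of the strong seminorms of each $(F_j)'_b$, that is, by seminorms of the form $\phi\mapsto \sup_{v\in B_j}|\phi(f_j(v))|=|\phi|_{f_j(B_j)}$, where $j\in J$ and $B_j\subset F_j$ is bounded. Since each $f_j$ is continuous, $f_j(B_j)$ is bounded in $F$, which shows every seminorm of the second family is a seminorm of the first; this reconfirms that $\Phi$ is continuous.

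The reverse comparison is exactly where the regularity hypothesis enters, and it is the main (though still mild) point. Given any bounded $B\subset F$, regularity provides some $j\in J$ with $B\subset f_j(F_j)$ and with $B_j:=f_j^{-1}(B)\subset F_j$ bounded; then $f_j(B_j)=B$, so $|\cdot|_B$ is itself one of the defining seminorms of the projective limit topology. This shows $\Phi^{-1}$ is continuous, completing the proof. The hard part is purely conceptual: isolating that regularity is used precisely to replace an arbitrary bounded subset of $F$ by a bounded subset of some single $F_j$. Without this hypothesis one still has a continuous linear bijection $\Phi$, but the projective limit topology would in general be strictly coarser than the strong dual topology.
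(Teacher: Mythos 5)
Your proof is correct and is exactly the argument the paper leaves implicit: it states the algebraic isomorphism via \cite[\S\,8.8, Corollary 11]{Ja} and asserts the topological identification follows "immediately" from regularity, which is precisely your seminorm comparison (continuity of $\Phi$ from continuity of the structural maps, continuity of $\Phi^{-1}$ from the fact that every bounded set of $F$ is contained and bounded in some $F_j$). Nothing is missing.
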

		\subsection{Complete reflexive LCS}\label{appendixA5}

		Recall that an LCS $E$ is called reflexive  if the canonical map 
		$E\rightarrow(E_b')_b'$ is an isomorphism of  LCS. 
		The strong dual of a reflexive LCS is reflexive, a product of reflexive LCS is reflexive, and  a direct sum of reflexive LCS is reflexive (see \cite[\S\,23.5 (9)]{Ko1}). 
		It is known that every nuclear Fr\'echet space, or more generally, every nuclear LF space, is reflexive
		(see \cite[Corollary of Proposition 36.9, Corollary 3 of Proposition 50.2 and 
		Corollaries 1, 3 of Proposition 33.2]{Tr}).
		Thus,  we have the following  examples of complete reflexive  LCS:
		\begin{itemize}
			\item the products  of nuclear Fr\'echet spaces as well as their strong dual; and
			\item the direct sums of nuclear LF spaces as well as their strong dual.
		\end{itemize}
		We would like to point out that various function spaces on formal manifolds
		studied in this paper all have the above forms.
		% \begin{dfn}\label{de:barreled}
			%     An LCS is called barreled if it is Hausdorff and 
			%  every barrel in it is a neighborhood of $0$ (see \cite[\S\,27.1]{Ko1}). 
			% \end{dfn}
		% \begin{dfn}\label{de:bornological}
			% An LCS  is called bornological if it is Hausdorff and each seminorm on it that is bounded on bounded subsets is continuous.
			% \end{dfn}
		
		\begin{lemd}\label{lem:bornological}
			Let $E$ be a product of nuclear Fr\'echet spaces, and let $F$ be a direct sum of nuclear LF spaces. Then $E, E_b', F, F_b'$ are all barreled, and $E_b', F$ are bornological. 
		\end{lemd}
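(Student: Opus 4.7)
The plan is to reduce everything to the building blocks via the duality identities in \eqref{eq:sdualofprodandsum}, namely $E_b'=\bigoplus_{i}(E_i)_b'$ when $E=\prod_i E_i$ is a product of nuclear Fr\'echet spaces $E_i$, and $F_b'=\prod_j (F_j)_b'$ when $F=\bigoplus_j F_j$ is a direct sum of nuclear LF spaces $F_j$. Once the problem is reduced to these building blocks, the statement becomes an assembly of standard stability properties of barreledness and bornologicity under products, locally convex direct sums, and duality.

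For barreledness: Fr\'echet spaces and LF spaces are barreled, and barreledness is preserved by arbitrary products and arbitrary locally convex direct sums (see e.g.\ \cite[\S\,27.1]{Ko1}); this handles $E$ and $F$ directly. For the duals, each $(E_i)_b'$ is the strong dual of a nuclear Fr\'echet space, hence of a Fr\'echet--Montel space, and is therefore reflexive (a complete nuclear DF space is reflexive); reflexive LCS are in particular barreled, so $E_b'=\bigoplus_i(E_i)_b'$ is a direct sum of barreled spaces and is barreled. Likewise each $(F_j)_b'$ is the strong dual of a nuclear LF space, which is reflexive by the discussion preceding the lemma in Appendix \ref{appendixA5}, so $F_b'=\prod_j(F_j)_b'$ is a product of barreled spaces and is barreled.

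For bornologicity: nuclear LF spaces are bornological (LF spaces are bornological, being countable strict inductive limits of metrizable LCS), and locally convex direct sums of bornological spaces are bornological; hence $F$ is bornological. For $E_b'$, the key classical input is that the strong dual of a Fr\'echet--Montel space is bornological (see e.g.\ \cite[\S\,29.4]{Ko1}); since every nuclear Fr\'echet space is Montel (\cite[Corollary of Proposition 50.2]{Tr}), each $(E_i)_b'$ is bornological, and then the locally convex direct sum $E_b'=\bigoplus_i(E_i)_b'$ is bornological as well.

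The only nonformal ingredient is the theorem that the strong dual of a Fr\'echet--Montel (in particular, nuclear Fr\'echet) space is bornological; this is the single step that I expect a careful reader to want a precise citation for, but it is a standard classical result and the remainder of the argument is a bookkeeping exercise combining the three stability principles: (i) products preserve barreledness, (ii) direct sums preserve both barreledness and bornologicity, and (iii) reflexive $\Rightarrow$ barreled. Thus no genuine technical obstacle arises; the proof amounts to applying \eqref{eq:sdualofprodandsum} and quoting the listed stability properties in the correct combinations.
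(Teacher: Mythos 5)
Your proposal is correct and follows essentially the same route as the paper: reduce to the factors via the duality identities $E_b'=\bigoplus_i(E_i)_b'$ and $F_b'=\prod_j(F_j)_b'$, then quote standard stability facts. The only cosmetic differences are that the paper derives barreledness of all four spaces uniformly from reflexivity (established just before the lemma) rather than from product/sum stability, and cites ``the strong dual of a reflexive Fr\'echet space is bornological'' where you invoke the equivalent Fr\'echet--Montel version.
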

		\begin{proof}
			The first assertion follows from the fact that every reflexive LCS is barreled (see \cite[\S\,23.3 (4)]{Ko1}), and the second one follows from the following facts:
			\begin{itemize}
				\item all Fr\'echet spaces are bornological (\cite[\S\,28.1 (4)]{Ko1});
				\item the strong dual of every reflexive Fr\'echet space is bornological (see \cite[\S\,29.4 (4)]{Ko1}); and
				\item if an inductive limit of bornological LCS is Hausdorff, then it is still bornological (see \cite[\S\,28.4 (1)]{Ko1}).
			\end{itemize}
		\end{proof}
		
		%It is known that every complete nuclear DF space is barreled (see \cite[\S 21.5.4]{Ja}), and hence it is reflexive  (see \cite[Corollary 3 of Proposition 50.2]{Tr}). Since a Fr\'echet space is nuclear if and only its strong dual is nuclear (\cite[Proposition 50.6]{Tr}), we see that the strong dual of a nuclear Fr\'echet space is a complete nuclear DF space, and vice versa. On the other hand,   the strong dual of a reflexive Fr\'echet space is a reflexive DF space, and vice versa. Additionally, for two Fr\'echet  (resp.\, barreled DF) spaces, it is known that their projective tensor product coincides with the inductive tensor product (see \cite[\S\,40.2 (1),(11)]{Ko2}).  By combining the above facts together, one concludes the following result. 
		
		Together with Lemma \ref{lem:bornological}, the following lemma \ref{lem:nFrF} is a direct consequence of the following facts.
		\begin{itemize}
			\item For arbitrary two Fr\'echet  (resp. barreled DF) spaces,  their projective tensor product coincides with their inductive tensor product (see \cite[\S\,40.2 (1),(11)]{Ko2}).
			\item The strong dual of all reflexive Fr\'echet spaces are  reflexive DF spaces, and vice versa (by Lemma \ref{lem:dualofDF}).
			\item  Every complete nuclear DF space is barreled (see \cite[\S 21.5.4]{Ja} and \cite[\S\,27.1 (2)]{Ko1}) and reflexive (see \cite[Corollary of Proposition 36.9 and Corollary 3 of Proposition 50.2]{Tr}). 
			\item The strong dual of all nuclear Fr\'echet spaces are  complete nuclear DF spaces, and vice versa (see (\cite[Proposition 50.6]{Tr})).
			\item   Every reflexive LCS is barreled (see \cite[\S\,23.3 (4)]{Ko1}).

		\end{itemize}
		
		\begin{lemd}\label{lem:nFrF}
			Let $E$ be a nuclear Fr\'echet 
			(resp.\,complete nuclear DF) space, and let
			$F$ be a reflexive Fr\'echet (resp.\,reflexive DF) space. Then we have  the following LCS identifications:
			\be \label{eq:threetopontenprod=}
			E\otimes_{\mathrm{i}} F
			=E\otimes_\pi F=E\otimes_\varepsilon F
			\quad\text{and}\quad 
			E_b'\otimes_{\mathrm{i}} F_b'=E_b'\otimes_{\pi} F_b'
			=E_b'\otimes_{\varepsilon} F_b'.
			\ee
		\end{lemd}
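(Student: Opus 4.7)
The plan is to reduce the lemma to a bookkeeping argument using exactly the five bullet points stated immediately before the lemma, together with Grothendieck's theorem recorded in Appendix~\ref{appendixA2} that $\otimes_\pi=\otimes_\varepsilon$ whenever at least one factor is nuclear. Each chain of identifications in \eqref{eq:threetopontenprod=} has the form $X\otimes_{\mathrm{i}}Y=X\otimes_\pi Y=X\otimes_\varepsilon Y$, so for each such chain it suffices to verify that (i) $X$ and $Y$ are either both Fréchet or both barreled DF, which yields the first equality by the first bullet, and (ii) at least one of $X,Y$ is nuclear, which yields the second equality by Grothendieck's theorem.

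First I would dispatch the Fréchet case of the lemma, in which $E$ is nuclear Fréchet and $F$ is reflexive Fréchet. Both factors are Fréchet, so the first bullet gives $E\otimes_{\mathrm{i}}F=E\otimes_\pi F$, and nuclearity of $E$ gives $E\otimes_\pi F=E\otimes_\varepsilon F$. For the dual chain I would then observe that $E_b'$ is complete nuclear DF by bullet~4 and hence barreled by bullet~3, while $F_b'$ is reflexive DF by bullet~2 and hence barreled by bullet~5. The same two ingredients---\emph{both barreled DF} and \emph{one factor nuclear}---then deliver $E_b'\otimes_{\mathrm{i}}F_b'=E_b'\otimes_\pi F_b'=E_b'\otimes_\varepsilon F_b'$.

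The DF case is entirely symmetric. Here $E$ is complete nuclear DF, hence barreled by bullet~3, and $F$ is reflexive DF, hence barreled by bullet~5, so the first chain follows at once from the first bullet together with nuclearity of $E$. For the dual chain, $E_b'$ is nuclear Fréchet by bullet~4 and $F_b'$ is reflexive Fréchet by bullet~2; both being Fréchet, the first bullet gives $\otimes_{\mathrm{i}}=\otimes_\pi$, and the nuclearity of $E_b'$ gives $\otimes_\pi=\otimes_\varepsilon$.

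There is no real obstacle: the entire argument is the verification that the same two hypotheses---\emph{both Fréchet or both barreled DF}, and \emph{at least one factor nuclear}---hold for four different pairs of spaces, with each verification being a one-line invocation of a listed fact. The only point requiring any care is ensuring that the DF spaces appearing on the dual side are actually barreled, which is precisely why the reflexivity hypothesis on $F$ (and nuclearity of $E$) is imposed in the statement.
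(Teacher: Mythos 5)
Your proposal is correct and follows essentially the same route as the paper, which itself only asserts that the lemma is a direct consequence of the five listed facts (the first bullet for $\otimes_{\mathrm{i}}=\otimes_\pi$ when both factors are Fr\'echet or both are barreled DF, Grothendieck's theorem for $\otimes_\pi=\otimes_\varepsilon$ when one factor is nuclear, and the remaining bullets to transfer these properties to the strong duals). Your write-up simply makes the four-fold verification explicit, which is exactly the intended argument.
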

		
		\subsection{Strong dual of topological tensor products}\label{appendixA6}
		Let $E$ and $F$ be two Fr\'echet spaces. It is known that, if  $E$ or $F$ is nuclear, then  
		\be \label{eq:dualprincipal}
		(E\widehat\otimes F)_b'=
		E_b'\widehat\otimes F_b'
		\ee
		as LCS (see \cite[Proposition 50.7]{Tr}).
		We finish Appendix \ref{appendixA} by presenting three simple generalizations of \eqref{eq:dualprincipal} as follows.

		\begin{lemd}\label{lem:dualofprodFot} Let $E=\prod_{i\in I} E_i$ be a product of nuclear Fr\'echet spaces, and  let
			$F=\prod_{j\in J}F_j$ be a product of reflexive Fr\'echet spaces. Then $E\widehat\otimes F=\prod_{(i,j)\in I\times J}
			E_i\widehat\otimes F_j
			$ is a product of  Fr\'echet spaces, and
			\be
			(E\widehat\otimes F)_b'=E_b'\widehat\otimes_{\mathrm{i}} F_b'
			\ee
			as LCS.
		\end{lemd}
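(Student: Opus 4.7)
My plan is to prove the identification in three movements: first establish the product decomposition of $E\widehat\otimes F$, then dualize summand-wise using the known Fréchet duality \eqref{eq:dualprincipal}, and finally convert the resulting direct sum of completed tensor products into a single completed inductive tensor product on the dual side.

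First, I would show that $E\widehat\otimes F=\prod_{(i,j)\in I\times J}E_i\widehat\otimes F_j$ as LCS. Since $E$ is a product of nuclear Fréchet spaces, it is nuclear, so its completed projective tensor product with $F$ coincides with the completed epsilon tensor product. The completed epsilon tensor product commutes with arbitrary products in each variable when applied to Fréchet factors (this can be seen from its description via equicontinuous families in the duals together with $\CL(A,\prod_j B_j)=\prod_j\CL(A,B_j)$), so
\[
E\widehat\otimes_\varepsilon F=\prod_{(i,j)\in I\times J}E_i\widehat\otimes_\varepsilon F_j=\prod_{(i,j)\in I\times J}E_i\widehat\otimes F_j,
\]
where in the second equality I again used nuclearity of $E_i$. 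Each $E_i\widehat\otimes F_j$ is a Fréchet space, so the right-hand side is a product of Fréchet spaces.

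Second, I would take the strong dual and decompose it. By \eqref{eq:sdualofprodandsum},
\[
(E\widehat\otimes F)_b'=\Bigl(\prod_{(i,j)}E_i\widehat\otimes F_j\Bigr)_b'=\bigoplus_{(i,j)\in I\times J}(E_i\widehat\otimes F_j)_b'.
\]
Because $E_i$ is a nuclear Fréchet space and $F_j$ is a Fréchet space, the classical identity \eqref{eq:dualprincipal} gives $(E_i\widehat\otimes F_j)_b'=(E_i)_b'\widehat\otimes(F_j)_b'$.

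Third, I would convert $\widehat\otimes$ to $\widehat\otimes_{\mathrm{i}}$ inside each summand. Since $E_i$ is nuclear Fréchet, $(E_i)_b'$ is a complete nuclear DF space, and since $F_j$ is reflexive Fréchet, $(F_j)_b'$ is a reflexive DF space. Lemma \ref{lem:nFrF} then yields $(E_i)_b'\widehat\otimes(F_j)_b'=(E_i)_b'\widehat\otimes_{\mathrm{i}}(F_j)_b'$. Assembling everything and invoking \eqref{eq:sumcomiten} to commute the completed inductive tensor product with direct sums,
\[
\bigoplus_{(i,j)}(E_i)_b'\widehat\otimes_{\mathrm{i}}(F_j)_b'=\Bigl(\bigoplus_{i\in I}(E_i)_b'\Bigr)\widehat\otimes_{\mathrm{i}}\Bigl(\bigoplus_{j\in J}(F_j)_b'\Bigr)=E_b'\widehat\otimes_{\mathrm{i}} F_b',
\]
where the last equality uses \eqref{eq:sdualofprodandsum} to identify $E_b'$ and $F_b'$ with the corresponding direct sums.

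The main obstacle is the first step: the completed projective tensor product does \emph{not} commute with arbitrary products in general, so nuclearity of $E$ is crucial in order to switch to the epsilon tensor product, for which the compatibility with products is available. Once this product decomposition is established, the rest of the proof is a mechanical chain of applications of \eqref{eq:sdualofprodandsum}, \eqref{eq:dualprincipal}, Lemma \ref{lem:nFrF}, and \eqref{eq:sumcomiten}.
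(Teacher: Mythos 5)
Your proof is correct and takes essentially the same route as the paper's: the paper simply cites \cite[Lemma 6.6]{CSW} for the product decomposition $E\widehat\otimes F=\prod_{(i,j)\in I\times J}E_i\widehat\otimes F_j$ (which you instead justify directly via nuclearity and the epsilon tensor product), and then runs exactly your chain of \eqref{eq:sdualofprodandsum}, \eqref{eq:dualprincipal}, Lemma \ref{lem:nFrF} and \eqref{eq:sumcomiten}. The only substantive difference is that your first step re-proves the commutation of the completed tensor product with products rather than quoting it, which is fine.
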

		\begin{proof} 
			Since the completed  projective tensor product of arbitrary two Fr\'echet spaces is also a Fr\'echet space, the first assertion follows by \cite[Lemma 6.6]{CSW}. 
			
			For the second one, we have that 
			\begin{eqnarray*}
				&& (E\widehat\otimes F)_b'\\
				&=&
				\left(\left(\prod_{i\in I} E_i\right)\widehat\otimes \left(\prod_{j\in J} F_j\right) \right)_b'
				= \left(\prod_{(i,j)\in I\times J} E_i\widehat\otimes F_j \right)_b' \quad(\text{by \cite[Lemma 6.6]{CSW}})\\
				& =&\bigoplus_{(i,j)\in I\times J}
				(E_i\widehat\otimes F_j)_b'
				=\bigoplus_{(i,j)\in I\times J}
				(E_i)_b'\widehat\otimes (F_j)_b'\qquad \qquad\quad\, \,\, (\text{by}\ \eqref{eq:sdualofprodandsum}\ \text{and}\  \eqref{eq:dualprincipal})\\
				&=&\bigoplus_{(i,j)\in I\times J}(E_i)_b'\widehat\otimes_{\mathrm{i}} (F_j)_b'
				=\left(\bigoplus_{i\in I} (E_i)_b'\right)
				\widehat\otimes_{\mathrm{i}} \left(\bigoplus_{j\in J}(F_j)_b'\right)\ 
				(\text{by\ \eqref{eq:threetopontenprod=}\ and\ \eqref{eq:sumcomiten}})\\
				&=&
				\left(\prod_{i\in I} E_i\right)_b'\widehat\otimes_{\mathrm{i}} \left(\prod_{j\in J} F_j\right)_b'
				=E_b'\widehat\otimes_{\mathrm{i}} F_b'\qquad\qquad\quad\qquad\, \, \, (\text{by}\ 
				\eqref{eq:sdualofprodandsum}).
			\end{eqnarray*}
		\end{proof}

		\begin{lemd} Let $E=\bigoplus_{i\in I} E_i$ and 
			$F=\bigoplus_{j\in J} F_j$ be direct sums of complete nuclear DF spaces. Then 
			$E\widehat\otimes_{\mathrm{i}} F=\bigoplus_{(i,j)\in I\times J} E_i\wh\otimes_{\mathrm{i}} F_j$ is a direct sum of complete nuclear
			DF spaces, and
			\be
			(E\widehat\otimes_{\mathrm{i}} F)_b'=E_b'\widehat\otimes F_b'
			\ee
			as LCS.
		\end{lemd}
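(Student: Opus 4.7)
The plan is to imitate the proof of the preceding lemma (the one about products of Fréchet spaces), but now working on the ``dual'' side. First I would observe that every complete nuclear DF space is barreled (by the bullets preceding Lemma \ref{lem:nFrF}), so for each pair $(i,j)$ the inductive and projective tensor products on $E_i \otimes F_j$ coincide by \eqref{eq:threetopontenprod=}. Since the completed projective tensor product of two complete DF spaces is a complete DF space (\cite[\S\,41.4 (7)]{Ko2}) and the projective tensor product of two nuclear spaces is nuclear, each $E_i \widehat\otimes_{\mathrm{i}} F_j = E_i \widehat\otimes_\pi F_j = E_i \widehat\otimes F_j$ is a complete nuclear DF space. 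Applying \eqref{eq:sumcomiten} then gives the first asserted identification
\[
E\widehat\otimes_{\mathrm{i}} F = \bigoplus_{(i,j)\in I\times J} E_i\widehat\otimes_{\mathrm{i}} F_j.
\]

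Next I would compute the strong dual. By \eqref{eq:sdualofprodandsum}, the strong dual of the direct sum above is the product
\[
(E\widehat\otimes_{\mathrm{i}} F)_b' = \prod_{(i,j)\in I\times J} \bigl(E_i\widehat\otimes_{\mathrm{i}} F_j\bigr)_b'.
\]
The key step is then to identify each factor on the right with $(E_i)_b' \widehat\otimes (F_j)_b'$. Since $E_i$ and $F_j$ are reflexive (they are complete nuclear DF spaces), their strong duals $(E_i)_b'$ and $(F_j)_b'$ are nuclear Fréchet spaces, and in particular are reflexive. Applying \eqref{eq:dualprincipal} to the pair of nuclear Fréchet spaces $(E_i)_b'$ and $(F_j)_b'$ yields
\[
\bigl((E_i)_b' \widehat\otimes (F_j)_b'\bigr)_b' = (E_i)_b'{}_b' \widehat\otimes (F_j)_b'{}_b' = E_i \widehat\otimes F_j = E_i\widehat\otimes_{\mathrm{i}} F_j.
\]
Now $(E_i)_b'\widehat\otimes(F_j)_b'$ is itself a nuclear Fréchet space (hence reflexive), so dualizing again gives the desired identity $(E_i\widehat\otimes_{\mathrm{i}} F_j)_b' = (E_i)_b' \widehat\otimes (F_j)_b'$.

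Finally I would assemble everything using \cite[Lemma 6.6]{CSW} (which distributes completed tensor products over products of Fréchet spaces):
\[
(E\widehat\otimes_{\mathrm{i}} F)_b' = \prod_{(i,j)} (E_i)_b' \widehat\otimes (F_j)_b' = \Bigl(\prod_i (E_i)_b'\Bigr) \widehat\otimes \Bigl(\prod_j (F_j)_b'\Bigr) = E_b' \widehat\otimes F_b',
\]
where in the last step I use \eqref{eq:sdualofprodandsum} once more, applied to $E$ and $F$ themselves.

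The main (mild) obstacle is the reflexivity step: one must be careful that \eqref{eq:dualprincipal} is only available for Fréchet spaces, so the trick is to apply it to the nuclear Fréchet spaces $(E_i)_b',(F_j)_b'$ rather than directly to $E_i,F_j$, and then use reflexivity of both $E_i,F_j$ and of $(E_i)_b'\widehat\otimes(F_j)_b'$ to recover the identification of $(E_i\widehat\otimes_{\mathrm{i}} F_j)_b'$. Everything else is formal manipulation with products, direct sums, and the already-established fact that inductive and projective tensor products agree on barreled DF spaces.
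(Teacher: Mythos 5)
Your proposal is correct and follows essentially the same route as the paper: both arguments rest on the facts that each $E_i\widehat\otimes_{\mathrm{i}}F_j=E_i\widehat\otimes F_j$ is a complete nuclear DF space, on \eqref{eq:sumcomiten}, and on combining \eqref{eq:dualprincipal} for the nuclear Fr\'echet duals $(E_i)_b'$, $(F_j)_b'$ with reflexivity and the product/direct-sum duality \eqref{eq:sdualofprodandsum}. The only difference is organizational: the paper compresses the dual computation into the single chain $(E\widehat\otimes_{\mathrm{i}}F)_b'=((E_b')_b'\widehat\otimes_{\mathrm{i}}(F_b')_b')_b'=((E_b'\widehat\otimes F_b')_b')_b'=E_b'\widehat\otimes F_b'$ by citing Lemma \ref{lem:dualofprodFot} globally, whereas you unpack the same content factor by factor and reassemble with \cite[Lemma 6.6]{CSW}.
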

		\begin{proof} 
			Since the  projective tensor product of arbitrary two nuclear DF spaces is a nuclear DF space (see \cite[Proposition 50.1 (9)]{Tr} and \cite[\S\,41.4\,(7)]{Ko2}),
			it follows that  $E_i\wh\otimes F_j=E_i\wh\otimes_{\mathrm{i}}F_j$ is a complete nuclear  DF space.
			Then by \eqref{eq:sumcomiten}, \[E\widehat\otimes_{\mathrm{i}} F=\bigoplus_{(i,j)\in I\times J} E_i\wh\otimes_{\mathrm{i}} F_j\] is a direct sum of complete nuclear DF spaces. And, by applying Lemma \ref{lem:dualofprodFot}, we have that 
			\[(E\widehat\otimes_{\mathrm{i}} F)_b'=((E_b')_b'\widehat\otimes_{\mathrm{i}} (F_b')_b')_b'=((E_b'\widehat\otimes F_b')_b')_b'=
			E_b'\widehat\otimes F_b'\]
			as LCS. 
		\end{proof}
		
		\begin{lemd}\label{lem:indtensorLF}
			Let $E=\bigoplus_{i\in I}E_i$  and $F= \bigoplus_{j\in J}F_j$ be two direct sums of LCS, where 
			$E_i=\varinjlim_{r\in \BN}E_{i,r}$ is a nuclear LF space for every $i\in I$, and $F_j=\varinjlim_{r\in \BN}F_{j,r}$ is an LF space for every $j\in J$. Then  
			\be \label{eq:indtensorLF}
			E\widehat\otimes_{\mathrm{i}} F=\bigoplus_{(i,j)\in I\times J}
			\left(\varinjlim_{r\in \BN} (E_{i,r}\widehat\otimes F_{j,r})\right)
			\ee
			is a direct sum of %nuclear 
			LF spaces, and 
			\be (E\widehat\otimes_{\mathrm{i}} F)_b'=E'_b\widehat\otimes F'_b\ee
			as LCS.
		\end{lemd}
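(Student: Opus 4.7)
The strategy is to reduce to a single pair $(E_i,F_j)$ via \eqref{eq:sumcomiten} and \eqref{eq:sdualofprodandsum}, realize $E_i\widehat\otimes_{\mathrm{i}} F_j$ as the inductive limit of the Fréchet tensor products $E_{i,r}\widehat\otimes F_{j,r}$ using Lemma \ref{lem:limofcomplete}, and finally dualize using regularity of LF inductive limits together with Grothendieck's formula \eqref{eq:dualprincipal}.

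For the first assertion, I would first note that each step $E_{i,r}$ is a closed topological subspace of the nuclear LF space $E_i$ and is therefore nuclear Fréchet; hence $E_{i,r}\widehat\otimes F_{j,r}=E_{i,r}\widehat\otimes_\pi F_{j,r}=E_{i,r}\widehat\otimes_{\mathrm{i}} F_{j,r}$ is a Fréchet space. Strictness of the LF systems for $E_i$ and $F_j$, combined with the fact that nuclearity makes $\widehat\otimes=\widehat\otimes_\varepsilon$ preserve closed embeddings, yields that the transitions $E_{i,r}\widehat\otimes F_{j,r}\to E_{i,r+1}\widehat\otimes F_{j,r+1}$ are closed topological embeddings, so $\varinjlim_{r\in\BN}(E_{i,r}\widehat\otimes F_{j,r})$ is an LF space, in particular complete. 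Applying Lemma \ref{lem:limofcomplete} to the two inductive systems $E_i=\varinjlim_r E_{i,r}$ and $F_j=\varinjlim_s F_{j,s}$ and restricting to the cofinal diagonal in $\BN\times\BN$ gives $E_i\widehat\otimes_{\mathrm{i}} F_j=\varinjlim_r(E_{i,r}\widehat\otimes F_{j,r})$. Assembling over $(i,j)$ via \eqref{eq:sumcomiten} proves the first identity.

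For the dual identity, regularity of the LF space $E_i\widehat\otimes_{\mathrm{i}} F_j$ together with \eqref{eq:sdualofindlim} and the nuclearity-driven formula \eqref{eq:dualprincipal} give
\[
(E_i\widehat\otimes_{\mathrm{i}} F_j)'_b=\varprojlim_{r\in\BN}\bigl((E_{i,r})'_b\widehat\otimes(F_{j,r})'_b\bigr).
\]
Using the regular-limit identifications $(E_i)'_b=\varprojlim_r(E_{i,r})'_b$, $(F_j)'_b=\varprojlim_r(F_{j,r})'_b$, and the commutation of countable projective limits with the completed tensor product (valid here because each $(E_{i,r})'_b$ is a nuclear DF space, so $\widehat\otimes=\widehat\otimes_\varepsilon$), one obtains $(E_i\widehat\otimes_{\mathrm{i}} F_j)'_b=(E_i)'_b\widehat\otimes(F_j)'_b$ for each single pair. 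Combining this with \eqref{eq:sdualofprodandsum} and an adaptation of Lemma \ref{lem:dualofprodFot} to commute the arbitrary products $\prod_i(E_i)'_b$ and $\prod_j(F_j)'_b$ with $\widehat\otimes$ then yields $(E\widehat\otimes_{\mathrm{i}} F)'_b=E'_b\widehat\otimes F'_b$.

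The main technical obstacle I anticipate concerns the dual step, where two distinct commutations must be verified as identifications of LCS (not merely bicontinuous bijections): countable projective limits of complete nuclear DF spaces with $\widehat\otimes_\varepsilon$, and arbitrary products with $\widehat\otimes$ in the style of Lemma \ref{lem:dualofprodFot}. Both rely on nuclearity and on realizing projective limits as closed subspaces of products, but matching the strong topologies and invoking Lemma \ref{lem:dualofprodFot} in a setting where the factors are duals of nuclear LF spaces (rather than nuclear Fréchet spaces themselves) will require careful bookkeeping with the identifications in \eqref{eq:sdualofindlim} and \eqref{eq:sdualofprodandsum}.
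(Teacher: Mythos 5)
Your proposal is correct and follows essentially the same route as the paper: the first identity via the fact that nuclearity makes the transition maps $E_{i,r}\widehat\otimes F_{j,r}\to E_{i,r+1}\widehat\otimes F_{j,r+1}$ topological embeddings (the paper cites \cite[Proposition 43.7]{Tr} for this), completeness of the resulting direct sum of LF spaces, and Lemma \ref{lem:limofcomplete}; the dual identity via \eqref{eq:sdualofprodandsum}, \eqref{eq:sdualofindlim} with regularity, \eqref{eq:dualprincipal} termwise, and then commuting products and projective limits with $\widehat\otimes$. The final commutation step that you flag as the main obstacle is exactly the step the paper also handles only by citation (to \cite[\S\,15.4, Theorem 2]{Ja} and \cite[(6.2)]{CSW}), and the tools you name for it are the right ones.
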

		\begin{proof} For every $r\in \BN$ and $(i,j)\in I\times J$,   
			the canonical linear map 
			\[E_{i,r}\widehat\otimes_{\mathrm{i}}F_{j,r}
			=E_{i,r}\widehat\otimes F_{j,r}
			\rightarrow E_{i,r+1}\widehat\otimes_{\mathrm{i}}F_{j,r+1}
			=E_{i,r+1}\widehat\otimes F_{j,r+1}\]
			is a topological embedding by \cite[Proposition 43.7]{Tr}. 
			Then by  Lemma \ref{lem:basicsonindlim} (d),  
			$\bigoplus_{(i,j)\in I\times J}
			\varinjlim_{r\in \BN} (E_{i,r}\widehat\otimes F_{j,r})$ is complete. 
			Then \eqref{eq:indtensorLF} follows from \eqref{eq:ilimcomiten1}.

			For the second assertion, 
			we have that 
			\begin{eqnarray*}
				&&(E\widehat\otimes_{\mathrm{i}} F)_b'\\
				&=&\left(\bigoplus_{(i,j)\in I\times J}\left(\varinjlim_{r\in \BN} E_{i,r}\widehat\otimes F_{j,r}\right)\right)_b'\qquad\quad(\text{by \eqref{eq:indtensorLF}})\\
				&=&\prod_{(i,j)\in I\times J}\left(\varprojlim_{r\in \BN} (E_{i,r}\widehat\otimes F_{j,r})_b'\right)\qquad\qquad\,(\text{by\ \eqref{eq:sdualofprodandsum}, \eqref{eq:sdualofindlim}\ and\ Lemma \ref{lem:basicsonindlim}\,(c)})\\
				&=&\prod_{(i,j)\in I\times J}\left(\varprojlim_{r\in \BN}((E_{i,r})_b'\widehat\otimes (F_{j,r})_b')\right)\quad\quad\, \, (\text{by \ \eqref{eq:dualprincipal}})\\
				&=&\left(\prod_{i\in I}\varprojlim_{r\in \BN}(E_{i,r})_b'\right)\widehat\otimes \left(\prod_{j\in J}\varprojlim_{r\in \BN} (F_{j,r})_b'\right)\ (\text{by \cite[\S\,15.4, Theorem 2]{Ja}, \cite[(6.2)]{CSW}})\\
				&=&E_b'\widehat\otimes F_b'\quad\qquad\qquad\qquad\qquad\qquad\qquad\,(\text{by\ \eqref{eq:sdualofindlim}\ and\ Lemma \ref{lem:basicsonindlim}\,(c)}).
			\end{eqnarray*}
		\end{proof}

		\section{Vector-valued smooth functions }\label{appendixB}
		
		In this appendix, we review some basics of vector-valued smooth functions, and present some examples that are necessary in Sections \ref{sec:formalden}-\ref{sec:comformaldis}. 
		We also review the notion of 
		strict approximation property on LCS introduced in \cite{Sc3}.
		
		\subsection{Vector-valued smooth functions}\label{appendixB1}
		In Appendixes \ref{appendixB1} and \ref{appendixB2}, let 
		$N$ be a smooth manifold, and let  $E$ be a quasi-complete  LCS. 
		
		Write $\RC^\infty(N;E)$ for the space of all $E$-valued  smooth functions on $N$.
		Endow $\RC^\infty(N;E)$ with
		the topology defined by the seminorms 
		\[
		\abs{f}_{D, \abs{\,\cdot\,}_\mu}:= \sup_{a\in N}\abs{(Df)(a)}_\mu\qquad (f\in \RC^\infty(N;E)),
		\]
		where $D$ is a compactly supported differential operator on $N$, and $\abs{\,\cdot \,}_\mu$ is a continuous seminorm on $E$.
		Then $\RC^\infty(N;E)$ is a quasi-complete LCS.
		
		Let $\mathrm{C}_c^\infty(N;E)$ denote the space of all compactly supported $E$-valued smooth functions on $N$,
		and let $\RC^\infty_K(N;E)$ denote the space of those $E$-valued smooth functions  supported in $K$, where
		$K$ is a compact subset of $N$.
		Equip  $\RC^\infty_K(N;E)$ with the subspace topology of $\RC^\infty(N;E)$, and equip
		$\mathrm{C}_c^\infty(N;E)$ with the inductive limit topology. We also write 
		\[\RC^\infty_K(N):=\RC^\infty_K(N;\C)\quad \text{and}\quad \RC^\infty_c(N):=\RC^\infty_c(N;\C).\]
		
		Identify $\RC^\infty(N)\otimes E$ as a linear subspace of $\RC^\infty(N;E)$, which consists of the smooth functions
		whose image is contained in a finite-dimensional subspace of $E$.
		By %In view of
		\cite[Th\'{e}mr\`{e}me 1]{Sc}, we have that
		\be\label{eq:vvsfiso}
		\RC^\infty(N)\widetilde \otimes E=\RC^\infty(N;E)
		\ee as LCS, 
		and if $E$ is complete, then
		\be
		\label{eq:vvsfiso2}
		\RC^\infty(N)\wt\otimes E=\RC^\infty(N;E)=\RC^\infty(N)\wh\otimes E
		\ee
		as LCS.
		
		The identification \eqref{eq:vvsfiso} induces an LCS identification
		\be\label{eq:vvsfinKiso}
		\RC^\infty_K(N)\widetilde \otimes E= \RC^\infty_K(N;E)
		\ee
		for every compact subset $K$ of $N$.
		By taking the inductive limit, the identifications \eqref{eq:vvsfinKiso} for various $K$ yield a  continuous linear map 
		\begin{eqnarray}\label{eq:vvsfinciso}
			\mathrm{C}_c^\infty(N;E)\rightarrow \mathrm{C}_c^\infty(N)\widetilde\otimes_{\pi} E.
		\end{eqnarray}
		Note that the map \eqref{eq:vvsfinciso} is injective, but it may not be a linear topological embedding, and may not be a linear isomorphism (as abstract vector spaces). See \cite[Pages 107-108]{Sc} for details.
		
		\begin{remarkd}\label{rem:tildeCcNE} Assume that $N$ is secondly countable. In \cite{Sc}, Schwartz introduced the following variant of $\RC_c^\infty(N;E)$:
			\[ \widetilde{\mathrm{C}}_c^\infty(N;E):=\{f\in \RC^\infty(N;E)\mid
			(a\mapsto \la f(a), u'\ra)\in \RC_c^\infty(N)\ \text{for all}\ u'\in E'\}.\]
			The topology on $\widetilde{\mathrm{C}}_c^\infty(N;E)$ is given as follows: a net $\{f_i\}_{i\in I}$ in it converges to zero if and only if for
			every equicontinuous subset $H'$ of $E'$, the nets $\{\la f_i, u'\ra\}_{i\in I}$ for all $u'\in H'$ converge uniformly to zero in
			$\RC_c^\infty(N)$.
			There is an LCS identification  (see \cite[Th\'{e}mr\`{e}me 1]{Sc})
			\[\RC_c^\infty(N)\widetilde \otimes E=\widetilde{\RC}_c^\infty(N;E),\] and if $E$ is complete,  we also have that \quad \[
			\RC_c^\infty(N)\widehat \otimes E=\widetilde{\RC}_c^\infty(N;E)\] as LCS.
			Note that,
			if there exists a continuous norm on $E$, then $\widetilde{\RC}_c^\infty(N;E)=\RC_c^\infty(N;E)$ as abstract vector spaces, and their bounded subsets coincide.
			Furthermore, if $E$ is a DF space, then $\widetilde{\RC}_c^\infty(N;E)=\RC_c^\infty(N;E)$ as LCS. See
			\cite[Chapitre \uppercase\expandafter{\romannumeral2}, Pages 83-84]{Gr} for details.
		\end{remarkd}
		
		\subsection{Examples} \label{appendixB2}
		Here we present some examples of vector-valued smooth functions, which arise naturally in the study of vector-valued formal functions.
		
		\begin{exampled}\label{ex:E-valuedpowerseries}Let $k\in \BN$ and write
			\[
			E[[y_1, y_2, \dots, y_k]]:=\left\{\sum_{J\in \BN^k} v_J y^J\,:\, v_J\in E \textrm{ for all }J\in \BN^k\right\}
			\]
			for the space of the formal power series.
			Equip  $E[[y_1, y_2, \dots, y_k]]$ with the term-wise convergence topology, which is defined  by the seminorms
			\be \label{eq:defabsnuJ}\abs{\,\cdot\,}_{\nu,J}:\quad \sum_{I\in \BN^k} f_I y^I \mapsto \abs{f_J}_{\nu},\ee
			where $\abs{\,\cdot\,}_\nu$ is a continuous seminorm on $E$ and $J\in \BN^k$.
			When  $N=\BN^k$, we have  LCS identifications
			\begin{eqnarray*}
				E[[y_1, y_2, \dots, y_k]]
				=\RC^\infty(N; E)
				=\RC^\infty(N)\widetilde \otimes E
				=E \widetilde \otimes \BC[[y_1, y_2, \dots, y_k]],
			\end{eqnarray*}
			and $ E[[y_1, y_2, \dots, y_k]]=E \widehat \otimes \BC[[y_1, y_2, \dots, y_k]]$ is complete provided that $E$ is complete.
			Furthermore, if $E$ is a Fr\'echet space, then
			\[E[[y_1, y_2, \dots, y_k]]
			=E \widetilde \otimes_{\mathrm{i}} \BC[[y_1, y_2, \dots, y_k]]= E \widehat \otimes_{\mathrm{i}} \BC[[y_1, y_2, \dots, y_k]].\]
		\end{exampled}
		\begin{exampled}\label{ex:toponpolynomial}
			Let
			$k,r\in \BN$. Write $E[z_1,z_2,\dots,z_k]$ for the space of the  polynomials with coefficients in $E$, and write
			\[E[z_1,z_2,\dots,z_k]_{\le r}:=\left\{\sum_{L=(l_1,l_2,\dots,l_k)\in \BN^k;|L|\le r} v_L z^L\mid v_L\in E\ \text{for all}\ L\in \BN^k\right\}\]
			for the space of the polynomials with degree $\le r$, where  $z^L:=z_1^{l_1}z_2^{l_2}\cdots z_k^{l_k}$.
			Equip $E[z_1,z_2,\dots,z_k]_{\le r}$  with the term-wise convergence topology,
			and equip \[E[z_1,z_2,\dots,z_k]=\varinjlim_{r\in \BN}E[z_1,z_2,\dots,z_k]_{\le r}\] with the inductive limit topology.
			When $N=\BN^k$, we have 
			\be\label{eq:E[z]} E[z_1,z_2,\dots,z_k]=\RC_c^\infty(N;E)=\RC_c^\infty(N)\widetilde\otimes_{\mathrm{i}}E =E\wt\otimes_{\mathrm{i}} \C[z_1,z_2,\dots,z_k],
			\ee
			as LCS by  \eqref{eq:ilimcomiten} and Lemma \ref{lem:basicsonindlim} (e),
			and $E[z_1,z_2,\dots,z_k]=E\wh\otimes_{\mathrm{i}} \C[z_1,z_2,\dots,z_k]$ is complete provided that $E$ is complete.
			Furthermore, if $E$ is a   DF space, then (see Remark \ref{rem:tildeCcNE})
			\[E\wt\otimes \C[z_1,z_2,\dots,z_k]=E[z_1,z_2,\dots,z_k]=E\wh\otimes \C[z_1,z_2,\dots,z_k].\]
		\end{exampled}
		
		\begin{exampled}\label{ex:E-valuedcptsm}
			Suppose that  $E=\C[[y_1,y_2,\dots,y_k]]$ ($k\geq 1$). Then we have the following LCS identifications by  Lemmas \ref{lem:limofcomplete} and  \ref{lem:basicsonindlim} (d), (e):
			\begin{eqnarray*}
				&& \mathrm{C}_c^\infty(N;E)=\varinjlim_{K}\mathrm{C}_K^\infty(N; E)\\&=&\varinjlim_{K}(\mathrm{C}_K^\infty(N)\widetilde\otimes E)
				=\varinjlim_{K}(\mathrm{C}_K^\infty(N)\wh\otimes E)=\varinjlim_{K}\mathrm{C}_K^\infty(N)[[y_1,y_2,\dots,y_k]]\\
				&=&\mathrm{C}_c^\infty(N)\widetilde \otimes_{\mathrm{i}} E
				=\mathrm{C}_c^\infty(N)\widehat\otimes_{\mathrm{i}} E.
			\end{eqnarray*} Here $K$ runs over all compact subsets of $N$.
			If $N$ is not compact, then the space
			$\mathrm{C}_c^\infty(N;E)
			$ is  identified with  a proper linear subspace of
			\[\mathrm{C}_c^\infty(N)\widetilde\otimes E=
			\mathrm{C}_c^\infty(N)[[y_1,y_2,\dots,y_k]].\]
			
		\end{exampled}
		
		\begin{exampled} Suppose that $N$ is secondly countable, and $E=\C[z_1,z_2,\dots,z_k]$ ($k\geq 1$).
			By Remark \ref{rem:tildeCcNE} and \eqref{eq:E[z]},  we have  that 
			\[\mathrm{C}_c^\infty(N;E)=\widetilde{\mathrm{C}}_c^\infty(N;E)=\mathrm{C}_c^\infty(N)\widetilde\otimes E=\widetilde{\mathrm{C}}_c^\infty(\BN^k;\mathrm{C}_c^\infty(N))\] 
			and 
			\[\mathrm{C}_c^\infty(\BN^k;\mathrm{C}_c^\infty(N))=\mathrm{C}_c^\infty(N)[z_1,z_2,\dots,z_k]=\mathrm{C}_c^\infty(N)\widetilde\otimes_{\mathrm{i}}E\]
			as LCS. Furthermore, the above LCS are isomorphic to each other 
			as abstract vector spaces.
			However, when $N$ is not discrete, the topology on $\mathrm{C}_c^\infty(N)\widetilde\otimes_{\mathrm{i}}E$ is 
			strictly finer than that on $\mathrm{C}_c^\infty(N)\widetilde\otimes E$ (\cf \cite[Chapitre \uppercase\expandafter{\romannumeral2}, Page 85]{Gr}).
		\end{exampled}

		\subsection{The strict approximation property}
		\label{appendixB3} Let $E$ and $F$ be two LCS. 
		In the study of  vector-valued distributions (see \cite{Sc3}),
		it is more natural to equip $\CL(E,F)$
		with the topology of convex compact convergence. 
		As in \cite{Sc3}, we denote the resulting LCS by $\CL_c(E,F)$, and also denote $E_c':=\CL_c(E,\C)$.

		In what follows, we will often identify  $E'\otimes F$  with the space of all continuous linear maps of finite rank from $E$ to $F$.
		Following \cite{Gr,Sc3}, we make the following definition. 
		\begin{dfn}\label{de:strictappro}
			An LCS $E$ is said to  have the  approximation (resp.\,strict approximation) property if
			$E'\otimes E$ is dense (resp.\,strictly dense) in $\CL_c(E,E)$. 
		\end{dfn}
		
		Note that $E$ has the  approximation (resp.\,strict approximation) property if and only if the identity operator in $\CL(E,E)$ is contained in the closure (resp.\,quasi-closure) of
		$E'\otimes E$ in $\CL_c(E,E)$ (see \cite[Pr\'eliminaires, Page 5, Remarques]{Sc3}).
		Furthermore, if $E=(E_c')_c'$ as LCS,
		then the canonical map
		\[\CL_c(E,E)\rightarrow \CL_c(E_c',E_c'),\qquad f\mapsto {}^t f\]
		is a topological linear isomorphism (see \cite[Pr\'eliminaires, Page 11, Remarques 2]{Sc3}).
		Thus, in this case, $E$ has the approximation (resp.\,strict approximation) property if and only if  $E_c'$ does.

		According to \cite{Sc3}, we have the following result, which is useful in the studying of vector-valued distributions.
		
		\begin{lemd}\label{lem:stricappro}
			Assume that
			\begin{itemize}
				\item $E_c'$ and $F$ are quasi-complete (resp.\,complete);
				\item $(E_c')_c'=E$ as LCS; and
				\item $E$ has the strict approximation (resp.\,approximation) property.
			\end{itemize}
			Then the canonical injective  linear map $E'\otimes F\rightarrow \CL(E,F)$
			induces an LCS identification \[E'_c\wt\otimes_\varepsilon F=\CL_c(E,F)\quad (\text{resp.}\ E'_c\wh\otimes_\varepsilon F=\CL_c(E,F)).\]
		\end{lemd}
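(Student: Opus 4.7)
\medskip

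\noindent\textbf{Proof plan for Lemma \ref{lem:stricappro}.}

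The plan is to realize $E'_c \otimes_\varepsilon F$ as a topological subspace of $\CL_c(E,F)$ whose (quasi-)closure is all of $\CL_c(E,F)$, and then invoke (quasi-)completeness of $\CL_c(E,F)$ to identify the (quasi-)completion of the former with the latter. First, using the hypothesis $(E'_c)'_c = E$, I would show that the canonical bilinear map
\[
E'_c \times F \longrightarrow \CL_c(E,F), \qquad (u',v)\mapsto \bigl(x\mapsto \langle u',x\rangle v\bigr),
\]
induces a topological embedding $E'_c \otimes_\varepsilon F \hookrightarrow \CL_c(E,F)$ with image the algebraic tensor product $E'\otimes F$ viewed as finite-rank operators. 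This is essentially the definition of the $\varepsilon$-topology combined with the reflexivity-type identity $(E'_c)'_c = E$: the $\varepsilon$-seminorms on $E'_c\otimes F$ are exactly the seminorms of uniform convergence on equicontinuous $\times$ equicontinuous sets, which via $(E'_c)'_c=E$ become the seminorms of convex compact convergence on $\CL_c(E,F)$ restricted to finite-rank operators.

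Second, I would establish that $\CL_c(E,F)$ is quasi-complete (resp.\ complete) under the stated hypotheses on $E'_c$ and $F$. This is the Banach--Steinhaus-type completeness result already used in Schwartz's vector-valued distribution theory; it follows from the quasi-completeness (resp.\ completeness) of $F$ together with the fact that $E'_c$ being quasi-complete forces $\CL_c(E,F)$ to inherit the corresponding completeness property when one restricts to sequences or nets that are Cauchy uniformly on convex compact subsets. I would cite the relevant passages from \cite{Sc3} for this.

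Third, I would translate the strict approximation (resp.\ approximation) property of $E$ from endomorphisms to $\CL_c(E,F)$. The point is that for every $T \in \CL_c(E,F)$, the post-composition map
\[
T_* : \CL_c(E,E)\longrightarrow \CL_c(E,F),\qquad \phi\mapsto T\circ\phi,
\]
is continuous and carries $E'\otimes E$ into $E'\otimes F$. Since $\mathrm{id}_E$ lies in the (quasi-)closure of $E'\otimes E$ in $\CL_c(E,E)$ and quasi-closures are preserved under continuous linear maps (as used in Corollary \ref{lem:strictapprFMsm'}), $T = T_*(\mathrm{id}_E)$ lies in the (quasi-)closure of $E'\otimes F$ in $\CL_c(E,F)$. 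Hence $E'\otimes F$ is strictly dense (resp.\ dense) in $\CL_c(E,F)$.

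Combining these three steps, the (quasi-)closure of $E'\otimes F$ inside the quasi-complete (resp.\ complete) LCS $\CL_c(E,F)$ equals $\CL_c(E,F)$ itself, so $\CL_c(E,F)$ is the quasi-completion (resp.\ completion) of $E'\otimes F$ equipped with the subspace topology. Since by the first step this subspace topology coincides with the $\varepsilon$-topology coming from $E'_c\otimes_\varepsilon F$, we conclude
\[
E'_c \widetilde\otimes_\varepsilon F = \CL_c(E,F) \qquad (\text{resp.}\ E'_c \widehat\otimes_\varepsilon F = \CL_c(E,F)).
\]
The main obstacle I anticipate is the first step: verifying that the canonical embedding $E'_c\otimes_\varepsilon F \hookrightarrow \CL_c(E,F)$ is genuinely \emph{topological} (not merely continuous), which requires the identification $(E'_c)'_c=E$ to convert equicontinuous subsets of $(E'_c)'_c$ into convex compact subsets of $E$ and vice versa. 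The remaining two steps are more routine, being direct applications of Schwartz's completeness theorem for $\CL_c$ and a functoriality argument for quasi-closures.
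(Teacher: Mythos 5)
Your plan is correct and amounts to essentially the same argument as the paper's proof, which simply cites Schwartz for the two key facts: that the quasi-completeness hypotheses make $\CL_c(E,F)$ topologically isomorphic to the $\varepsilon$-product $E'_c\,\varepsilon\, F$ (covering your steps 1 and 2), and that the identification $(E'_c)'_c=E$ together with the (strict) approximation property identifies this $\varepsilon$-product with $E'_c\wt\otimes_\varepsilon F$ (resp.\ $E'_c\wh\otimes_\varepsilon F$), which is exactly your density-plus-(quasi-)completeness argument in steps 3 and 4. Your unpacking of these citations is accurate, including the role of $(E'_c)'_c=E$ in converting equicontinuous subsets of $(E'_c)'$ into convex compact subsets of $E$ and the preservation of quasi-closures under the continuous post-composition map.
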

		\begin{proof}
			The first  condition in the lemma implies that
			$\CL_c(E,F)$ is topologically isomorphic to the $\varepsilon$-product $E_c' \varepsilon F$ of
			$E_c'$ and $F$ (see \cite[Chapitre I, Corolliare of Poposition 5]{Sc3}).
			On the other hand, the second and the third conditions imply that
			the $\varepsilon$-product $E_c' \varepsilon F$  is topologically isomorphic to
			$E'_c\wt\otimes_\varepsilon F$ (resp.\,$E'_c\wh\otimes_\varepsilon F$) (see \cite[Chapitre I, Corolliare 1 of Poposition 11]{Sc3}).
			This proves the lemma.
		\end{proof}
		Note that if $E$  is quasi-complete and nuclear, then $\CL_c(E,F)=\CL_b(E,F)$ as LCS (see \cite[Corollary 1 of Proposition 50.2]{Tr}).
		According to \cite[\S\,21.5, Corollary 5]{Ja}, we have the following result.
		\begin{lemd}\label{lem:dualofnF}
			Assume that  $E$ is a nuclear LF space.
			Then $E'_b$ is complete and nuclear. Moreover $E'_b=E_c'$ and $E=(E_b')_b'=(E_c')_c'$ as LCS.
		\end{lemd}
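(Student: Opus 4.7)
The plan is to exploit the structure of $E$ as a strict inductive limit $E = \varinjlim_{r \in \BN} E_r$ of nuclear Fréchet spaces, together with the reflexivity/Montel machinery reviewed in Appendix \ref{appendixA5}. The strategy breaks into three pieces: (i) compute $E_b'$ as a projective limit of complete nuclear DF spaces, (ii) upgrade $E_b' = E_c'$ via the Montel property, and (iii) deduce the two reflexivity identifications.

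First I would establish that $E_b'$ is complete and nuclear. Since each $E_r$ is a nuclear Fréchet space and is closed in $E_{r+1}$, Lemma \ref{lem:basicsonindlim}\,(c) gives that the inductive limit is regular; applying \eqref{eq:sdualofindlim} one obtains $E_b' = \varprojlim_{r} (E_r)_b'$ as LCS. By Lemma \ref{lem:dualofDF} together with \cite[Proposition 50.6]{Tr}, each $(E_r)_b'$ is a complete nuclear DF space. Since a projective limit of complete (resp.\ nuclear) LCS is complete (resp.\ nuclear), this yields the first assertion.

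Next I would show $E_b' = E_c'$ as LCS. This reduces to showing that $E$ is Montel, because on the dual of a Montel space every bounded set is relatively compact, so the topologies of bounded and compact convergence coincide on $E'$. To see that $E$ is Montel, note that $E$ is nuclear (an inductive limit of nuclear spaces is nuclear), barreled (every LF space is barreled), and quasi-complete (Lemma \ref{lem:basicsonindlim}\,(e), since nuclear Fréchet spaces are complete). A nuclear, barreled, quasi-complete LCS is Montel, as recalled in Appendix \ref{appendixA5}. Finally, for the reflexivity identifications, the fact that nuclear LF spaces are reflexive (see Appendix \ref{appendixA5}) gives $E = (E_b')_b'$ as LCS; since $E_b'$ itself is complete, nuclear, and barreled (see Lemma \ref{lem:bornological} applied to its structure as a product of complete nuclear DF spaces), it is Montel, so $(E_b')_b' = (E_b')_c'$ as LCS, and substituting $E_b' = E_c'$ delivers $E = (E_c')_c'$.

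The main obstacle will be keeping track of which equalities hold as LCS rather than merely as vector spaces. In particular, the last step requires showing that $E_b'$ is itself Montel so that the passage from bounded to compact convergence on its dual is legitimate; this is where one most directly uses that projective limits of complete nuclear DF spaces behave well under dualization, and the verification uses \eqref{eq:sdualofprodandsum}--\eqref{eq:sdualofindlim} together with Lemma \ref{lem:bornological} in much the same spirit as the arguments in Section \ref{subsec:struofcompactdis}.
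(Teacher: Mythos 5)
Your argument is essentially correct, but it takes a completely different route from the paper: the paper proves this lemma by a single citation to \cite[\S\,21.5, Corollary 5]{Ja} (together with the observation, recorded just before the statement, that $\CL_c(E,F)=\CL_b(E,F)$ for $E$ quasi-complete and nuclear by \cite[Corollary 1 of Proposition 50.2]{Tr}), whereas you reconstruct the statement from the appendix machinery. Your decomposition works: regularity of the strict limit via Lemma \ref{lem:basicsonindlim}\,(c) gives $E_b'=\varprojlim_r(E_r)_b'$ by \eqref{eq:sdualofindlim}, each $(E_r)_b'$ is a complete nuclear DF space, and projective limits preserve completeness and nuclearity; quasi-completeness plus nuclearity gives $E_b'=E_c'$ (here you invoke the Montel property, which requires barreledness, but in fact the weaker statement that bounded sets lie in convex compact sets already suffices and is exactly what the paper's pre-lemma remark uses); reflexivity of nuclear LF spaces gives $E=(E_b')_b'$. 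The one imprecise step is your justification that $E_b'$ is barreled ``as a product of complete nuclear DF spaces'': $E_b'$ is a projective limit, i.e.\ a closed subspace of such a product, and barreledness does not pass to closed subspaces in general. The conclusion is nonetheless available from the paper's own toolkit --- either apply Lemma \ref{lem:bornological} with $F=E$ viewed as a (one-term) direct sum of nuclear LF spaces, which asserts $F_b'$ is barreled, or more simply note that $E_b'$ is reflexive (strong duals of reflexive spaces are reflexive, as recalled in Appendix \ref{appendixA5}) and reflexive spaces are barreled. With that repair your self-contained proof is a legitimate alternative to the paper's citation; what the paper's approach buys is brevity, while yours makes visible exactly which structural facts about strict nuclear LF limits are being used.
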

		
		%If $E$ satisfies the conditions of Lemma \ref{lem:dualofnF}, then it is reflexive, and we have that  \[E=(E_b')_b'=(E_c')_c'\] as LCS.
		Using Lemma \ref{lem:dualofnF}, we have the following consequence of Lemma \ref{lem:stricappro}.

		\begin{cord}\label{cor:conmaps=tensorproduct}  Assume that $F$ is quasi-complete, and $E$ is a nuclear LF space that has the strict approximation property. Then $\CL_b(E,F)$ is quasi-complete and
			\[E_b'\,\wt\otimes F=\CL_b(E,F)\]
			as LCS. Furthermore, if $F$ is complete, then 
			$\CL_b(E,F)$ is complete and
			\[E_b'\,\wt\otimes F=\CL_b(E,F)=E_b'\,\wh\otimes F.\]
			as LCS.
		\end{cord}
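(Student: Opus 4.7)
The proof will be a straightforward assembly of Lemma \ref{lem:stricappro}, Lemma \ref{lem:dualofnF}, and the basic fact (noted right before the corollary) that $\CL_c(E,F)=\CL_b(E,F)$ when $E$ is quasi-complete and nuclear. The idea is that the hypotheses of the corollary are precisely tailored so that every input required by Lemma \ref{lem:stricappro} is available, and nuclearity collapses the epsilon tensor product to the projective one.

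First I would verify the ambient hypotheses. Since $E$ is a nuclear LF space, Lemma \ref{lem:dualofnF} gives $E_b'=E_c'$ as LCS, with $E_b'$ complete and nuclear, and also $(E_c')_c'=E$ as LCS. Moreover, as an LF space, $E$ is a strict inductive limit of Fr\'echet spaces, so by Lemma \ref{lem:basicsonindlim}(d) it is complete and in particular quasi-complete. Combined with nuclearity of $E$, this yields $\CL_c(E,F)=\CL_b(E,F)$ as LCS (the remark immediately preceding the corollary). At this point all three conditions of Lemma \ref{lem:stricappro} are satisfied: $E_c'=E_b'$ is complete (so quasi-complete) and $F$ is quasi-complete by assumption; $(E_c')_c'=E$ as LCS; and $E$ has the strict approximation property.

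Applying the quasi-complete version of Lemma \ref{lem:stricappro}, I obtain $E_c'\,\wt\otimes_\varepsilon F=\CL_c(E,F)$ as LCS. Since $E_b'=E_c'$ is nuclear, the projective and epsilon tensor products coincide, so $E_b'\,\wt\otimes F=E_b'\,\wt\otimes_\varepsilon F$. Putting these together with $\CL_c(E,F)=\CL_b(E,F)$, I get the desired identification $E_b'\,\wt\otimes F=\CL_b(E,F)$, and the quasi-completeness of $\CL_b(E,F)$ follows from this identification (the quasi-completion on the left is quasi-complete by construction).

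For the complete case, assume $F$ is complete. Then both $E_c'$ and $F$ are complete, so the completion clause of Lemma \ref{lem:stricappro} (which requires only the approximation property, implied by the strict approximation property) gives $E_c'\,\wh\otimes_\varepsilon F=\CL_c(E,F)$. Nuclearity of $E_b'$ again identifies $\wh\otimes_\varepsilon$ with $\wh\otimes$, and combining with the previous step yields the chain $E_b'\,\wt\otimes F=\CL_b(E,F)=E_b'\,\wh\otimes F$; in particular $\CL_b(E,F)$ is complete.

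There is essentially no obstacle once the setup is in place: the proof is purely a matter of matching the hypotheses of Lemma \ref{lem:stricappro} and invoking nuclearity to collapse $\otimes_\varepsilon$ to $\otimes_\pi=\otimes$. The only point requiring small care is the verification that a nuclear LF space is quasi-complete (so that $\CL_c=\CL_b$), but this is immediate from Lemma \ref{lem:basicsonindlim}(d) since Fr\'echet spaces are complete.
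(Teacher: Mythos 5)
Your proposal is correct and follows exactly the route the paper intends: the paper states the corollary as an immediate consequence of Lemma \ref{lem:stricappro} via Lemma \ref{lem:dualofnF} and the remark that $\CL_c(E,F)=\CL_b(E,F)$ for quasi-complete nuclear $E$, which is precisely the assembly you carry out (including the correct observation that the completion clause of Lemma \ref{lem:stricappro} only needs the approximation property). No gaps.
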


		It is proved in \cite[Pr\'eliminaires, Proposition 1]{Sc3} that the LCS
		$\RC^\infty_c(\R^n), n\in \BN$
		have the strict approximation property.
		For the purpose of studying the vector-valued formal distributions and formal generalized functions,  the following generalization is necessary.
		
		We use $\mathrm{id}_X$ to denote the identity morphism of an object $X$ in a category. 
		
		\begin{lemd}\label{lem:strictapppoly} Let $n,k\in \BN$. Then the LCS
			\[
			\RC_c^\infty(\R^n)\wt\otimes_{\mathrm{i}} \C[[y_1,y_2,\dots,y_k]]\quad \text{and}\quad
			\RC^\infty_c(\R^n)\wt\otimes_{\mathrm{i}}\C[y_1,y_2,\dots,y_k]\]
			have the strict approximation property.
		\end{lemd}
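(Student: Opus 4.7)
The plan is to lift Schwartz's classical result—that $E:=\RC_c^\infty(\R^n)$ itself has the strict approximation property \cite[Pr\'eliminaires, Proposition 1]{Sc3}—to the tensor products in question, by combining it with natural finite-rank truncations on the auxiliary factor. Let $F$ denote either $\C[[y_1,\dots,y_k]]$ or $\C[y_1,\dots,y_k]$. For $r\in\BN$ I introduce the truncation $\pi_r\colon F\to F$, $\sum_L a_Ly^L\mapsto\sum_{|L|\le r}a_Ly^L$, which is a finite-rank continuous endomorphism; the family $\{\pi_r\}_r$ is equicontinuous and $\pi_r\to\mathrm{id}_F$ in $\CL_c(F,F)$. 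I also fix a bounded subset $B_0\subset E'\otimes E$ realizing the strict approximation property of $E$, and by inspecting the standard Riemann-sum approximation of convolution with a compactly supported mollifier, one may additionally require that $B_0$ has the uniform support property: for each compact $K_0\subset\R^n$ there exists a compact $K_0^*\subset\R^n$ with $\phi(\RC^\infty_{K_0}(\R^n))\subset\RC^\infty_{K_0^*}(\R^n)$ for every $\phi\in B_0$.

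Setting $G:=E\wt\otimes_{\mathrm{i}}F$, which by Examples \ref{ex:toponpolynomial} and \ref{ex:E-valuedcptsm} is a regular inductive limit whose steps $\RC^\infty_{K_0}(\R^n)[[y]]$ (resp.\ $\RC^\infty_{K_0}(\R^n)[y]_{\le s}$) carry the product topology in the coefficient index $L$, I will take as approximating set
\[
B:=\{\phi\otimes\pi_r:\phi\in B_0,\ r\in\BN\}\subset G'\otimes G,
\]
where $\phi\otimes\pi_r\colon G\to G$ acts by $\sum_L f_Ly^L\mapsto\sum_{|L|\le r}\phi(f_L)y^L$. The task then reduces to showing that $B$ is bounded in $\CL_c(G,G)$ and that $\mathrm{id}_G$ lies in its closure there.

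For boundedness, any compact $K\subset G$ sits in a single step $\RC^\infty_{K_0}(\R^n)[[y]]$ by regularity of the inductive limit, so its coordinate projections $K^{(L)}:=\{f_L:\sum_M f_My^M\in K\}$ are compact in $\RC^\infty_{K_0}(\R^n)$. The uniform support property of $B_0$ places each $\phi(K^{(L)})$ inside $\RC^\infty_{K_0^*}(\R^n)$ for every $\phi\in B_0$ and every $L$, while the $\CL_c(E,E)$-boundedness of $B_0$ makes $\bigcup_\phi\phi(K^{(L)})$ bounded in $\RC^\infty_{K_0^*}(\R^n)$ for each individual $L$; since the step $\RC^\infty_{K_0^*}(\R^n)[[y]]$ carries the product topology in $L$, these pointwise bounds aggregate to boundedness of $\bigcup_{\phi,r}(\phi\otimes\pi_r)(K)$ in that step. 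For the convergence, a continuous seminorm on $G$ evaluated at the step containing $K$ involves only finitely many coefficient indices $L_1,\dots,L_p$; I first choose $r\ge\max|L_m|$ so that $\pi_r$ acts as the identity on those coefficients of $K$, then choose $\phi\in B_0$ sufficiently close to $\mathrm{id}_E$ on the compact set $\bigcup_m K^{(L_m)}\subset E$, which makes the seminorm of $(\phi\otimes\pi_r-\mathrm{id}_G)(g)$ uniformly small for $g\in K$.

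The hardest point will be securing the uniform support property of $B_0$; this is essential in the power series case, where compact sets $K\subset G$ may have coordinates $K^{(L)}$ whose union over $L\in\BN^k$ is unbounded in $E$, so that a global bound on $\bigcup_{\phi,L}\phi(K^{(L)})$ cannot be extracted from mere $\CL_c$-boundedness of $B_0$. Fortunately this uniform control is visible in the standard convolution-plus-quadrature construction of Schwartz's approximating net, where the underlying mollifier has fixed compact support and the quadrature points lie in a compact region determined by $K_0$.
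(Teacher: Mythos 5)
Your proposal is correct and follows essentially the same route as the paper's proof: both rest on Schwartz's finite\--rank approximants $L_j$ of $\mathrm{id}_{\RC_c^\infty(\R^n)}$ together with their uniform support property $L_j(\RC^\infty_K(\R^n))\subset\RC^\infty_{K^*}(\R^n)$ (the paper likewise simply extracts this from the proof of \cite[Pr\'eliminaires, Proposition 1]{Sc3}), combined with truncations in the $y$-variables and the coefficientwise description of seminorms on the steps of the inductive limit. The only cosmetic difference is that the paper couples the two indices diagonally into a single sequence $P_j:\sum_I f_Iy^I\mapsto\sum_{|I|\le j}L_j(f_I)y^I$ and proves convergence uniformly on bounded sets, whereas you keep a two-parameter family and verify convergence on compact sets, which suffices equally well for strict density.
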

		\begin{proof}
			In the proof of \cite[Pr\'eliminaires, Proposition 1]{Sc3}, Schwartz constructed  a sequence $\{L_j\}_{j\in \BN}$  of  linear
			maps  from $\RC_c^\infty(\R^n)$ to $\RC_c^\infty(\R^n)$  such that
			\begin{itemize}
				\item for every $j\in \BN$, $L_j$ is continuous and has finite rank;
				\item $\lim_{j\in \BN} L_j=\mathrm{id}_{\RC_c^\infty(\R^n)}$ in $\CL_b(\RC_c^\infty(\R^n),\RC_c^\infty(\R^n))$; and
				\item for every compact subset $K$ of $\R^n$, there is a compact subset $K^*$ of $\R^n$ such that
				$L_j(\RC^\infty_K(\R^n))\subset \RC_{K^*}^\infty(\R^n)$ for all $j\in \BN$.
			\end{itemize}
			
			For every $j\in \BN$, define the  continuous linear map
			\begin{eqnarray*}
				P_j: \RC_c^\infty(\R^n)[[y_1,y_2,\dots,y_k]]&\rightarrow& \RC_c^\infty(\R^n)[[y_1,y_2,\dots,y_k]],\\
				\sum_{I\in \BN^k} f_I y^I
				&\mapsto& \sum_{|I|\le j} L_j(f_I) y^I.
			\end{eqnarray*}
			Using Example \ref{ex:E-valuedcptsm}, we  view
			\[\RC_c^\infty(\R^n)\wt\otimes_{\mathrm{i}}\C[[y_1,y_2,\dots,y_k]]=\varinjlim_{ \text{$K$ is a compact subset of } \BR^n} \RC_K^\infty(\R^n)[[y_1,y_2,\dots,y_k]]
			\]
			as a linear subspace of $\RC_c^\infty(\R^n)[[y_1,y_2,\dots,y_k]]$.
			By restriction, $P_j$ becomes a continuous linear map  of finite rank from $\RC_c^\infty(\R^n)\wt\otimes_{\mathrm{i}}\C[[y_1,y_2,\dots,y_k]]$ to
			itself.

			Let $B$ be a bounded subset of $\RC_c^\infty(\R^n)\wt\otimes_{\mathrm{i}}\C[[y_1,y_2,\dots,y_k]]$, and $\abs{\,\cdot\,}_{\nu}$ a continuous seminorm on
			$\RC_c^\infty(\R^n)\wt\otimes_{\mathrm{i}}\C[[y_1,y_2,\dots,y_k]]$. 
			Then by Lemma \ref{lem:basicsonindlim} (c),  there is a compact subset $K$ of $\R^n$ and a family $\{B_I\}_{I\in \BN^k}$ of bounded subsets in
			$\RC_K^\infty(\R^n)$ such that
			\be \label{eq:desboundedB} B\subset\left\{\sum_{I\in \BN^k} f_I y^I\mid f_I\in B_I\ \text{for all $I\in \BN^k$}\right\}.\ee
			Since $P_j\left(\RC_K^\infty(\R^n)[[y_1,y_2,\dots,y_k]]\right)\subset \RC_{K^*}^\infty(\R^n)[[y_1,y_2,\dots,y_k]]$ for all $j\in \BN$,
			there exist finite  continuous seminorms $\abs{\,\cdot\,}_{\nu_1},\abs{\,\cdot\,}_{\nu_2},\dots,\abs{\,\cdot\,}_{\nu_r}$  on $\RC^\infty(\R^n)$ and finite $k$-tuples
			$J_1,J_2,\dots,J_r$ in $\BN^k$ such that %(see \eqref{eq:defabsBnu} and \eqref{eq:defabsnuJ})
			\begin{eqnarray*}
				\abs{P_j-\mathrm{id}_{\RC_c^\infty(\R^n)\wt\otimes_{\mathrm{i}}\BC[[y_1,y_2,\dots,y_k]]}}_{B,\abs{\,\cdot\,}_{\nu}}
				\le \sum_{i=1}^r \abs{P_j-\mathrm{id}_{\RC_c^\infty(\R^n)\wt\otimes_{\mathrm{i}}\BC[[y_1,y_2,\dots,y_k]]}}_{B,\abs{\,\cdot\,}_{\nu_i,J_i}}.
			\end{eqnarray*} Here $\abs{\,\cdot\,}_{B,\abs{\,\cdot\,}_\nu}$ is defined as in \eqref{eq:defabsBnu}, and $\abs{\,\cdot\,}_{\nu_i,J_i}$ is defined as in \eqref{eq:defabsnuJ}.
			On the other hand, for $i=1,2,\dots,r$ and $j\ge |J_i|$ we have that
			\begin{eqnarray*}
				\abs{P_j-\mathrm{id}_{\RC_c^\infty(\R^n)\wt\otimes_{\mathrm{i}}\BC[[y_1,y_2,\dots,y_k]]}}_{B,\abs{\,\cdot\,}_{\nu_i,J_i}}
				\leq \abs{L_j-\mathrm{id}_{\RC_c^\infty(\R^n)}}_{B_{J_i},\abs{\,\cdot\,}_{\nu_i}}.
			\end{eqnarray*}
			Thus the sequence $\{P_j\}_{j\in \BN}$ converges to the identity operator uniformly on the  bounded subsets of
			$\RC_c^\infty(\R^n)\wt\otimes_{\mathrm{i}}\C[[y_1,y_2,\dots,y_k]]$,
			and so $\RC_c^\infty(\R^n)\wt\otimes_{\mathrm{i}}\C[[y_1,y_2,\dots,y_k]]$ has the strict approximation property.
			
			Similarly, by Lemma \ref{lem:indtensorLF}, we view
			\begin{eqnarray*}
				\RC_c^\infty(\R^n)\wt\otimes_{\mathrm{i}}\C[y_1,y_2,\dots,y_k]
				=\varinjlim_{(K,r)\in \mathcal{C}(\R^n)} \RC_K^\infty(\R^n)[y_1,y_2,\dots,y_k]_{\le r}
			\end{eqnarray*}
			as a linear subspace of $\RC_c^\infty(\R^n)[[y_1,y_2,\dots,y_k]]$, where the directed set $\mathcal{C}(\R^n)$ is defined as in \eqref{eq:C(M)}.
			By taking the restriction, each $P_j$  becomes a continuous linear map  of finite rank from $\RC_c^\infty(\R^n)\wt\otimes_{\mathrm{i}}\C[y_1,y_2,\dots,y_k]$ to
			itself.
			Furthermore,  the sequence $\{P_j\}_{j\in \BN}$ converges to the identity operator uniformly on the bounded subsets  of $\RC_c^\infty(\R^n)\wt\otimes_{\mathrm{i}}\C[y_1,y_2,\dots,y_k]$, as required.
		\end{proof}

		\section*{Acknowledgement}
		F. Chen is supported  by the National Natural Science Foundation of China (Nos. 12131018, 12161141001) and the Fundamental Research Funds for the Central Universities (No. 20720230020).
		B. Sun is supported by  National Key R \& D Program of China (Nos. 2022YFA1005300 and 2020YFA0712600) and New Cornerstone Investigator Program. 
		The first and the third authors would like to thank Institute for Advanced Study in Mathematics, Zhejiang University. Part of this work was carried out while they were visiting the institute.


\begin{thebibliography}{99}
			
			
			
			%\bibitem[Bj]{Bj}J. Bj\"ork, \textit{Analytic $\mathcal{D}$-Modules and Applications}, Mathematics and Its Applications, Springer-Science+Business Media, B.V., 1993.
			
			
			
			\bibitem[Bo]{Bo} N. Bourbaki,
			\textit{\'{E}l\'{e}ments de Math\'{e}matique: Espaces Vectoriels Topologiques,} Chapitre 1 $\acute{a}$ 5, Springer-Verlag Berlin Heidelberg, 2007.
			
			%\bibitem[BFGP]{BFGP}
			%P. Bonneau, M. Flato , M. Gerstenhaber , G. Pinczon \textit{The hidden hroup structure of quantum groups:
				%	strong duality, rigidity and preferred deformations }, Commun. Math. Phys., 161,125-156 (1994).
			
			%\bibitem[B1]{Br}G. E. Bredon, \textit{Topology and Geometry}, Graduate Texts in Mathematics 139, Springer-Verlag, New York, 1993.
			
			\bibitem[B]{Br2}
			G. E. Bredon, \textit{Sheaf Theory}, Graduate Texts in Mathematics 170, Springer-Verlag, New York, 1997.
			
			
			
			
			%\bibitem[BT]{BT} R. Bott and L. W. Tu, \textit{Differential Forms in Algebraic Topology,} Springer Science+Business Media New York, 1982.
			
			\bibitem[BW]{BW}
			P. Blanc and D. Wigner, \textit{Homology of Lie groups and Poincar\'{e} duality},  Letters in Math. Phys., \textbf{7}, 1983, 259--270.
			
			\bibitem[CSW]{CSW} F. Chen, B. Sun, and C. Wang, \textit{Formal manifolds: foundations}, 	arXiv:2401.01535.
			
			
			\bibitem[G]{Gr} A. Grothendieck, \textit{Produits tensoriels topologiques et espaces nucl\'{e}aires}, Mem. AMS.,%Produits tensoriels topologiques et espaces nucléaires
			Vol. 16, 1955.
			
			%\bibitem[H]{Ha} M. Hazewinkel, \textit{Formal Groups and Applications}, AMS Chelsea Publishing, Providence, RI, 2012.
			
			%\bibitem[Har]{Har} Robin Hartshorne, \textit{Algebraic Geometry}, Springer Science+Business Media, Inc, 1977.
			
			\bibitem[HM]{HM}
			G. Hochschild and G. Mostow, \textit{Cohomology of Lie groups}, Illinois Jour. of Math., \textbf{6}, 1962, 367--401.
			
			%\bibitem[HP]{HP} J-S. Huang and P. Pand{\v{z}}i\'{c}, \textit{Dirac Operators in Representation Theory,} Mathematics: Theory \& Applications, Birkh\"{a}user Boston, 2006.
			
			\bibitem[J]{Ja}
			H. Jarchow, \textit{Locally Convex Spaces}, Math. Leitf\"{a}den, B.G. Teubner, Stuttgart, 1981.
			
			\bibitem[KS]{KS}  M. Kashiwara and W. Schmid, \textit{Quasi-equivariant $\mathcal{D}$-modules, equivariant derivedcategory, and representations of reductive Lie groups}, Lie theory and geometry, Progr. Math.,vol. 123, Birkhauser Boston, Boston, MA, 1994, 457-488.
			
			\bibitem[KV]{KV} W. Knapp and D. Vogan, \textit{Cohomological Induction and Unitary Representations}, Princeton Mathematical Series, vol. 45, Princeton University Press, Princeton, NJ, 1995.
			
			
			\bibitem[K1]{Ko1} G. K\"{o}the, \textit{Topological Vector Spaces I}, Berlin-Heidelberg-New York, 1969.
			
			\bibitem[K2]{Ko2} G. K\"{o}the, \textit{Topological Vector Spaces II}, Berlin-Heidelberg-New York, 1979.
			
			\bibitem[L]{L} J.M. Lee, \textit{Introduction to Smooth Manifolds}, Springer Science+Business Media New York, 2013.
			
			%\bibitem[M]{Man}M. P. do Carmo, \textit{Differential Forms and Applications}, Springer-Verlag Berlin Heidelberg GmbH, 1991.
			
			%\bibitem[MS]{MS} W. Milnor and D. Stasheff, \textit {Characteristic Classes}, Annals of Mathematics Studies, No. 76. Princeton University Press, Princeton, N. J.; University of Tokyo Press, Tokyo, 1974.
			
			%\bibitem[N]{N} J. Nestruev,\textit{Smooth Manifolds and Observables}, Graduate Texts in Mathematics 220, Springer Nature Switzerland, 2020.
			%\bibitem[Ka]{Ka}
			%Anthony W.Knapp, \textit{Lie Groups Beyond an Introduction Second Edition}, Department of Mathematics,2002.
			%
			% \bibitem[KV]{KV} A. W. Knapp, and D. A. Vogan, \textit{Cohomological Induction and Unitary Representations},
			% Princeton University Press, 1995.
			
			%\bibitem[Scha]{Scha}  H. H. Schaefer, \textit{Topological Vector Spaces}, Graduate Texts in Mathematics 03, Springer-Verlag, New York, 1971.
			
			
			\bibitem[Schm]{Schm} W. Schmid, \textit{Boundary value problems for group invariant differential equations,} Ast\'erisque Num\'ero Hors S\'erie, 1985, 311-321.
			
			\bibitem[Sc1]{Sc} L. Schwartz, \textit{Espaces de fonctions diff$\acute{e}$rentiables $\grave{a}$ valeurs vectorielles}, Journal d$'$Analyse, 4, 1954,
			88-148.
			\bibitem[Sc2]{Sc2} L. Schwartz,
			\textit{Th$\acute{e}$orie des Distributions}, tome I, Paris, Hermann, 1957.
			
			\bibitem[Sc3]{Sc3} L. Schwartz, \textit{Th$\acute{e}$orie des distributions $\grave{a}$ valeurs vectorielles I},
			Annales de l'institut Fourier, tome 7, 1957, 1-141.
			
			\bibitem[Sc4]{Sc4} L. Schwartz, \textit{Th$\acute{e}$orie des distributions $\grave{a}$ valeurs vectorielles II},
			Annales de l'institut Fourier, tome 8, 1958, 1-209.
			
			
			
			
			\bibitem[T]{Tr}
			F. Treves, \textit{Topological Vector Spaces, Distributions and
				Kernels}, Academic Press, New York, 1970.
			
			%\bibitem[Wa1]{Wa1}
			%N. Wallach, \textit{Real Reductive Groups I}, Academic Press, San
			%Diego, 1988.
			
			%\bibitem[Vo1]{Vo1}
			%D. A. Vogan, \textit{Gelfand-Kirillov dimension for Harish-Chandra modules}, Invent. Math. 48, No. 1 (1978), 75--98.
			%
			%\bibitem[Vo2]{Vo2}
			%D. A. Vogan, \textit{The unitary dual of GL(n) over an Archimedean field}, Invent. Math. 83, No. 3 (1986), 449--505.
			%
			%\bibitem[Wal]{Wa2}
			%N. Wallach, \textit{Real Reductive Groups II}, Academic Press, San
			%Diego, 1992.
			%\bibitem[Wa]{War}F. Warner, \textit{Foundations of Differentiable Manifolds and Lie Groups}, Graduate Texts in Mathematics 96, Springer-Verlag, New York, 1983.
			
			%\bibitem[Wo1]{Wo1} H.W. Wong, \textit{Dolbeault cohomological realization of Zuckerman modules associated with finite rank representations}, J. Funct. Anal., 129, 1995, 428-454.
			
			%\bibitem[Wo2]{Wo2} H.W. Wong,\textit{Cohomological induction in various categories and the maximal globalization conjecture}, Duke Math. J., 96, 1999, 1-27.
			
			
			
			
			%\bibitem[Ze]{Ze} A.V. Zelevinsky, \textit{Induced representations of reductive p-adic groups, II. On irreducible representations
				%of GL(n)}, Ann. scient. \'Ec. Norm. Sup., $4^e$ s\'erie 13 (1980), 165-210.
			
		\end{thebibliography}
	\end{document}